\definecolor{light-gray}{gray}{0.95}
\numberwithin{equation}{section}
\theoremstyle{plain}
\newtheorem{theorem}{Theorem}[section]
\newtheorem{lemma}[theorem]{Lemma}
\newtheorem{proposition}[theorem]{Proposition}
\newtheorem{corollary}[theorem]{Corollary}
\newtheorem{remark}[theorem]{Remark}
\newtheorem{definition}[theorem]{Definition}
\newcommand{\mc}[1]{{\mathcal #1}}
\newcommand{\mf}[1]{{\mathfrak #1}}
\newcommand{\bb}[1]{{\mathbb #1}}
\newcommand{\bs}[1]{{\boldsymbol #1}}
\newcommand{\eps}{\varepsilon}
\newcommand{\<}{\langle}
\renewcommand{\>}{\rangle}
\newcommand{\p}{\partial}
\newcommand{\pfrac}[2]{\genfrac{}{}{}{1}{#1}{#2}}
\newcommand{\at}[2]{\genfrac{}{}{0pt}{}{#1}{#2}}
\newcommand{\ioe}{\iota_\eps}
\newcommand{\iog}{\iota_\gamma^{\textrm{s}}}
\newcommand{\ioz}{\iota_\zeta}
\newcommand{\radon}{\textrm{\textbf{d}}\bb P^H_{\mu_N}/\textrm{\textbf{d}}\bb P_{\mu_N}}
\newcommand{\radonN}{\frac{\textrm{\textbf{d}}\bb P^H_{\mu_N}}{\textrm{\textbf{d}}\bb P_{\mu_N}}}
\newcommand{\radonNinv}{\frac{\textrm{\textbf{d}}\bb P_{\mu_N}}{\textrm{\textbf{d}}\bb P^H_{\mu_N}}}
\newcommand{\C}{C^{1,2}([0,T]\times [0,1])}
\newcommand{\T}{\mathbb{T}_N}
\newcommand{\LN}{{L}_N}
\newcommand{\LNH}{{L}_{N,t}^H}
\newcommand{\ON}{\Omega_N}
\newcommand{\Sob}{L^2(0,T;\mc H^1(0,1))}
\newcommand{\Ck}{C^{0,1}_{\textrm{\rm k}}([0,T]\times (0,1))}
\newcommand{\Ddiscreto}{\mc D_{\Omega_N}}
\newcommand{\DM}{\mc D_{\mc M}}
\newcommand{\DMO}{\mc D_{\mc M_0}}
\newcommand{\DME}{\mc D_{\mc M_0}^{\textrm{\rm eq}}}
\newcommand{\DMS}{\mc D^{\mc S}_{ \mc M_0}}
\def\centerarc[#1](#2)(#3:#4:#5){\draw[#1] ($(#2)+({#5*cos(#3)},{#5*sin(#3)})$) arc (#3:#4:#5);}
\begin{document}

\begin{frontmatter}
\title{Large deviations for the exclusion\\ process with a slow bond} 
\runtitle{Large deviations for exclusion with a slow bond}

\begin{aug}
\author{\fnms{Tertuliano} \snm{Franco}\thanksref{}\ead[label=e1]{tertu@ufba.br}}
\and
\author{\fnms{Adriana} \snm{Neumann}\thanksref{}\ead[label=e2]{aneumann@mat.ufrgs.br}}

\runauthor{T. Franco and A. Neumann}

\affiliation{UFBA\thanksmark{m1} and UFRGS\thanksmark{m2}}

\address{Tertuliano Franco\\
 Instituto de Matem\'atica,\\
  Campus de Ondina, \\ Av. Adhemar de Barros, S/N.\\ CEP 40170-110,
Salvador, Brasil\\
\printead{e1}\\
\phantom{E-mail:\ }}

\address{Adriana Neumann\\
Instituto de Matem\'atica, \\
Campus do Vale,\\ Av. Bento Gon\c calves, 9500. \\ 
CEP 91509-900, Porto Alegre, Brasil\\
\printead{e2}}
\end{aug}

\begin{abstract}
We consider the one-dimensional symmetric simple exclusion process with a slow bond. In this model, whilst all the transition rates  are equal to one, a particular bond, the \emph{slow bond}, has associated transition rate  of value $N^{-1}$, where $N$ is the scaling parameter. This model has been considered in previous works on the subject of hydrodynamic limit and fluctuations. 
In this paper, assuming uniqueness for weak solutions of hydrodynamic equation associated to the perturbed process, we obtain dynamical large deviations estimates in the diffusive scaling. The main challenge here is the fact that the presence of the slow bond gives rise to Robin's boundary conditions in the \emph{continuum},   substantially complicating the large deviations scenario.
\end{abstract}

\begin{keyword}[class=MSC]
\kwd[Primary ]{60K35, 82C22}
\kwd[; secondary ]{60F10}
\end{keyword}

\begin{keyword}
\kwd{large deviations}
\kwd{exclusion process}
\end{keyword}

\end{frontmatter}

\tableofcontents

\section{Introduction}\label{s1}
In this paper we present dynamical large deviations estimates for the Symmetric Simple Exclusion Process (SSEP) with a slow bond. 
The SSEP is a largely studied process both in Probability and Statistical Mechanics. It consists of particles that perform   independent random walks in a certain graph,  except for the exclusion rule that prevents two or more particles from occupying the same site.

The SSEP with \emph{a slow bond} is characterized by a defect at a fixed bond. The graph here considered is $\bb T_N=\bb Z / {N\bb Z}$, the discrete one-dimensional torus with $N$ sites. Let us describe this process in terms of clocks. At each bond we associate a different Poisson clock, all of them independent.  When a clock rings, the occupation at the sites connected by the corresponding bond are exchanged. Of course, if both sites are empty or occupied, nothing happens. We call the parameters of those Poisson clocks of \emph{exchange rates}. 
All exchange rates are equal to one, except  at the slow bond which has exchange rate $N^{-1}$, which slows down the passage of particles there. Notice that the choice of the exchange rates  characterizes the non-homogeneity of the environment.

This model has  origin in the models considered in \cite{fjl,fl}. In \cite{fjl}, 
 the exchange rate at a bond of vertices $x$ and $x+1$ is taken as $[N(W(x+1/N)-W(x/N))]^{-1}$, 
where $W$ is a $\alpha$-stable subordinator of a 
L\'evy process.  In
the same line, \cite{fl} dealt with exchange rates driven by a general, non-random, strictly increasing function $W$. The SSEP with a slow bond is in fact a particular case of the model considered in \cite{fl}.

In order to understand the collective behavior of the microscopic system, a natural question is the limit for the time evolution of the spatial density of particles, usually called \emph{hydrodynamic limit}, see \cite{kl} and references therein.
The limiting density of a given system is usually characterized as the weak solution of some partial differential equation, being the associated equation denominated \emph{hydrodynamic equation}.

By \cite{fl,fgn1,fgn2}, the hydrodynamic limit of the SSEP with a slow bond is  well understood, being the hydrodynamic equation given by following heat equation with Robin's boundary conditions:
\begin{equation}\label{edp0}
\begin{cases}
\;\p_t\rho(t,u)=\p_u^2 \rho(t,u), &t>0, u\in\bb T\backslash\{0\},\\
  \; \p_u\rho(t,0^+)=\p_u\rho(t,0^-)= \rho(t,0^+)-\rho(t,0^-), &t>0,\\
  \rho(0,u)=\gamma(u), & u\in\bb T,\\
\end{cases}
\end{equation}
 where $\bb T$ denotes the continuous one-dimensional torus, $0^+$ and $0^-$ denote the side limits around $0\in \bb T$ and $\gamma:\bb T\to [0,1]$ is a density profile. The boundary condition above can be interpreted as the \emph{Fick's Law}: the rate in which mass is exchanged between two media is proportional to the difference of concentration in each medium.
 
 The natural questions that emerge in the sequence are fluctuations and large deviations with respect to the expected limit. Equilibrium fluctuations for the SSEP with a slow bond has been studied in \cite{fgn3}. In this work we analyze the corresponding large deviations, consisting in the occurrence rate of events differing from the expected limit in the scaling of the hydrodynamic limit. The large deviations of a Markov process comes from two origins. One part are deviations from the initial measure, and the second are deviations from the dynamics. These are called statical and dynamical large deviations, respectively. Since the invariant measures for the dynamics here considered are Bernoulli product measure, for which the large deviations are well known, we will treat only the dynamical large deviations: the system will start  from \textit{deterministic} initial configurations associated in some sense (Definition \ref{associated}) to a macroscopic profile.

 The main difficulty for establishing  large deviations for the SSEP with a slow bond of parameter $N^{-1}$ 
comes from the fact that the limiting occupations at the vertices of the slow bond depend on time, as we can see in the Robin's boundary conditions above. In important previous papers \cite{blm} and \cite{flm}, the authors have considered exclusion process with fixed rate of incoming and outcoming particles at the boundaries leading to Dirichlet's boundary conditions, therefore with time independent values at the boundaries. 

Here it has been considered a single slow bond. An extension to a finite number of slow bonds (in the setting of \cite{fgn1}) would be straightforward, with no additional obstacles. However, it would carry on the notation and probably would imply a loss of clarity. For this reason we decided to focus in the single slow bond case.
What is still far from manageability are the large deviations for the model of \cite{fl}, which deals with much stronger spatial non-homogeneity (a dense set of slow bonds is allowed there). This is a very interesting  and challenging problem.

An important ingredient in the large deviations proof consists in establishing the law of large numbers for a suitable set of perturbations of the original systems.
The family of perturbations we have considered is \emph{the weakly asymmetric exclusion process (WASEP) with a slow bond}. Its hydrodynamic equation is a non-linear diffusive partial differential equation with non-linear Robin's boundary conditions. Assuming uniqueness of weak solutions of this equation, which is a delicate question due to the non-linearity at the boundary, we prove the corresponding hydrodynamic limit. Existence of weak solutions is granted by the tightness  of the processes.

The Radon-Nikodym derivative of the perturbed process  with respect 
to the original process naturally leads to the expression of the large deviations rate functional.  A difficulty in the proof of the upper bound comes from fact the Radon-Nikodym derivative obtained is not a function of the empirical measure. To overpass of this obstacle, we  show that the Radon-Nikodym derivative  is superexponentially close to a function of the empirical measure. Moreover, following steps of \cite{blm,flm}  
we define an energy and then proving that trajectories with infinity energy are not relevant in the large deviations regime. Carefully handling this facts together we organize the scenario in order to invoke the Minimax Lemma attaining the upper bound for compact sets. Exponential tightness finally leads to the upper bound for closed sets.

Since  the upper bound is achieved via an optimization over perturbations, the rate functional obtained  turns to be expressed by a variational expression.  On the other hand, for the large deviations lower bound, it is required to find the cheapest perturbation that leads the system to a given profile distinct of the expected limit. In other words, it is necessary to solve the variational expression of the rate function, at least for a sufficiently large class of density profiles. This is precisely what we do in the large deviations lower bound, by means of a proof surprisingly simple. In fact, the proof (of Proposition \ref{charact_H_suave}) consists essentially in checking that the perturbation $H$ that leads the system to a limit $\rho^H$ is the cheapest one. Indeed, a difficult part of the work was to find the correct class of perturbations for the dynamics and fulfil the technical details. 

Then, since the rate functional is convex in a specific sense, by a density argument we extend the lower bound for the class of smooth profiles. The extension for general profiles is  a hard problem of convex analysis and illustrates that there is much to be  develop in terms in of Orlicz Spaces as devices in large deviations schemes. This is subject of future work.

The paper is divided as follows.
In Section~\ref{s2}, we introduce notation and state the main results, namely:  Theorem~\ref{t02} and Theorem~\ref{t03}.  In
Section~\ref{chapter super}, we establish the replacement lemma and the energy estimates.
In Section~\ref{weakly chapter}, we prove the Theorem \ref{t02}.
In Section~\ref{upper}, we prove the upper bound. Finally, the lower bound for smooth profiles is  presented in the Section~\ref{lower}.


\section{Model and statements}\label{s2}
Let  $\T=\bb Z/N\bb Z=\{0,1,2,\ldots, N-1\}$ be the one-dimensional discrete torus with $N$ points. In each site of $\T$ we allow at most one particle.  In other words, we consider configurations of particles $\eta\in \{0,1\}^{\bb T_{N}}$. We say that $\eta(x)=0$, 
if the site $x\in \bb T_{N}$ is vacant and $\eta(x)=1$, if the site $x\in \T$ is occupied. Notice that $x=0$ and $x=N$ are the same site.
Denote by $\Omega_N=\{0,1\}^{\bb T_{N}}$ this state space.

The exclusion process with a slow bond at the bond of vertices $-1,0$, which has been considered in \cite{fl,fgn1,fgn2}, can be described as follows. To each bond of $\bb T_N$ we associate
a Poisson clock, all of them independent. If the bond is that one of vertices $-1,0$, the parameter of the Poisson is taken as $1/N$. All the others Poisson clocks have parameter one.
When a clock rings,  the occupation values of $\eta$ at the vertices of the associated bond are exchanged. The smaller parameter at the bond of vertices $-1,0$ slows the passage of particles cross it, from where the name \emph{slow bond}.
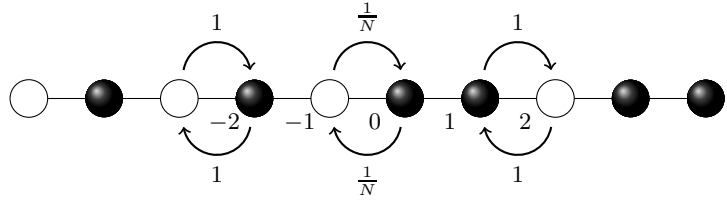
\begin{figure}[H]
\centering
\begin{tikzpicture}
\centerarc[thick,<-](2.5,0.3)(10:170:0.45);
\centerarc[thick,->](2.5,-0.3)(-10:-170:0.45);
\centerarc[thick,->](4.5,-0.3)(-10:-170:0.45);
\centerarc[thick,<-](4.5,0.3)(10:170:0.45);
\centerarc[thick,->](6.5,-0.3)(-10:-170:0.45);
\centerarc[thick,<-](6.5,0.3)(10:170:0.45);

\draw (0,0) -- (9,0);

\shade[ball color=black](1,0) circle (0.25);
\shade[ball color=black](3,0) circle (0.25);
\shade[ball color=black](5,0) circle (0.25);
\shade[ball color=black](6,0) circle (0.25);
\shade[ball color=black](8,0) circle (0.25);
\shade[ball color=black](9,0) circle (0.25);

\filldraw[fill=white, draw=black]
(0,0) circle (.25)
(2,0) circle (.25)
(4,0) circle (.25)
(7,0) circle (.25);

\draw (2.6,-0.1) node[anchor=north] {\small $-2$}
(3.6,-0.1) node[anchor=north] {\small $-1$}
(4.6,-0.1) node[anchor=north] {\small $0$}
(5.6,-0.1) node[anchor=north] {\small $1$}
(6.6,-0.1) node[anchor=north] {\small $2$};
\draw (2.5,0.8) node[anchor=south]{\small $1$};
\draw (2.5,-0.8) node[anchor=north]{\small $1$};
\draw (6.5,0.8) node[anchor=south]{\small $1$};
\draw (6.5,-0.8) node[anchor=north]{\small $1$};
\draw (4.5,0.8) node[anchor=south]{\small $\frac{1}{N}$};
\draw (4.5,-0.8) node[anchor=north]{\small $\frac{1}{N}$};
\end{tikzpicture}
\caption{The bond of vertices $\{-1,0\}$, the slow bond, has   particular rates associated to it.}\label{fig:1}
\end{figure}
This Markov process can also be characterized in terms of its infinitesimal generator $\LN$, which acts on  functions $f:\Omega_N\rightarrow \bb{R}$ as
\begin{equation}\label{ln}
(\LN f)(\eta)=\frac{1}{N}\big[f(\eta^{-1,0})-f(\eta)\big]+\sum_{\at{x\in \bb T_{N}}{x\neq -1}} \big[f(\eta^{x,x+1})-f(\eta)\big]\,,
\end{equation}
where $\eta^{x,x+1}$ is the configuration obtained from $\eta$ by exchanging the variables $\eta(x)$, $\eta(x\!+\!1)$:
\begin{equation}\label{eta}
\eta^{x,x+1}(y)\;=\;\left\{\begin{array}{cl}
\eta(x\!+\!1),& \mbox{if}\,\,\, y=x\,,\\ 
\eta(x),& \mbox{if} \,\,\,y=x+1\,,\\ 
\eta(y),& \mbox{otherwise\,.}
\end{array}
\right.
\end{equation}
Denote by $\{\eta_t ; t\ge 0\}$ the Markov process on $\Omega_N=\{0,1\}^{\bb
  T_N}$ associated to the generator $\LN$, defined in \eqref{ln}, \emph{speeded up} by
$N^2$. The dependency of $\eta_t$ in $N$ will be omitted to keep notation as simple as possible.
\begin{remark} \rm
This is a notion that often causes confusion and for this reason we explain it in detail. By $\eta_t$ we mean the Markov process which generator is $N^2\LN$.  Equivalently, $\eta_t$ could be defined as the Markov process with generator $\LN$ (without the factor $N^2$) seen at time $tN^2$.
\end{remark}

 Let $\mc D\big([0,T], \ON\big)$ be the path space of
c\`adl\`ag time trajectories with values in $\ON=\{0,1\}^{\bb T_N}$. For short, we will denote this space just by $\Ddiscreto$. Given a
measure $\mu_N$ on $\ON$, denote by $\bb P_{\mu_N}$ the
probability measure on $\Ddiscreto$ induced by the
initial state $\mu_N$ and the Markov process $\{\eta_t^N ; t\ge 0\}$.
Expectation with respect to $\bb P_{\mu_N}$ will be denoted by $\bb
E_{\mu_N}$.
Let $\nu^N_\alpha$ be the Bernoulli product
measure on $\Omega_N$ with marginals given by
\begin{equation*}
\nu^N_\alpha \{\eta ; \eta(x) =1\} \;=\; \alpha\,,\quad\forall\; x\in\bb T_N\,.
\end{equation*}
These measures $\{\nu^N_\alpha ; 0\le \alpha \le 1\}$ are invariant, in fact
reversible, for the dynamics described above. 
Denote by $\bb T=[0,1]$ the one-dimensional continuous torus, where we identify the points $0$ and $1$.

\begin{definition}\label{associated}
A sequence of probability measures $\{\mu_N ; N\geq 1 \}$  is
said to be associated to a profile $\rho_0 :\bb T \to [0,1]$ if
\begin{equation}
\label{f09}
\lim_{N\to\infty}
\mu_N \Big[\,\eta\;;\; \Big\vert \pfrac 1N \sum_{x\in\bb T_N} H(\pfrac{x}{N}) \eta(x)
- \int H(u) \rho_0(u) du \Big\vert > \delta\, \Big] \,=\, 0\,,
\end{equation}
for every $\delta>0$ and every continuous functions $H: \bb T \to \bb
R$.
\end{definition}

The quantity introduced in the definition above can be reformulated in terms of
empirical measures.
We start by defining the set
\begin{equation}\label{M}
\mc M\,=\,\big\{\mu\,;\,\mu\mbox{ is a positive measure on }\bb T\mbox{ with }  \mu(\bb T)\leq 1\,\big\},
\end{equation} this space is endowed with the weak topology. 
Consider the measure $\pi^{N}\in\mc M$, which is obtained by reescaling space by $N$ and
by assigning mass $N^{-1}$ to each particle:
\begin{equation*}
\pi^{N}(\eta,du) \,=\, \pfrac{1}{N} \sum _{x\in \bb T_N} \eta(x)\,
\delta_{\frac{x}{N}}(du)\,,
\end{equation*}
where $\delta_u$ is the Dirac measure concentrated on $u$. The measure 
$\pi^{N}(\eta,du)$ is called the empirical measure associated to the configuration $\eta$.
With this notation, $\frac 1N \sum_{x\in\bb T_N} H(\frac{x}{N}) \eta(x)$ is the integral of 
$H$ with respect to the empirical measure $\pi^N$,
denoted by $\<\pi^N, H\>$.  

We consider the time evolution of the empirical measure
$\pi^N_t$ associated to the Markov process $\{\eta_t ; t\ge 0\}$ by:
\begin{equation}
 \label{med imp temp}
\pi_t^{N}(du) \,=\, \pi^{N}(\eta_t,du) \,=\,\pfrac{1}{N} \sum _{x\in \bb T_N} \eta_t(x)\,
\delta_{\frac{x}{N}}(du)\,.
\end{equation}
Note that \eqref{f09} 
is equivalent to say that $\pi^N_0$ converges in distribution to $\rho_0(u)du$.
Throughout the entire paper, it is fixed a time-horizon $T>0$. Let $\mc D\big([0,T], \mc M\big)$ be the space of $\mc M$-valued
c\`adl\`ag trajectories $\pi:[0,T]\to\mc M$  endowed with the
\emph{Skorohod} topology. For short, we will use the notation $\DM=\mc D\big([0,T], \mc M\big)$.
Denote by $\bb Q_{\mu_N}^N$ the measure on
the path space $\DM$ induced by the measure $\mu_N$ and
the empirical process $\pi^N_t$ introduced in \eqref{med imp temp}.

\subsection{Frequently used notations} Before stating results we present some important notations to be used in the entire paper.\medskip

\textbullet \; The indicator function of a set $A$ will be written by $\textbf 1_{A}(u)$, which is
one when $u\in A$ and zero otherwise.\medskip

\textbullet\; 	Given a function $H:\bb T\to \bb R$, we will denote $H(0^-)$ and $H(0^+)$, respectively, for the left and right side limits of $H$ 
at the point $0\in \bb T$.\medskip

\textbullet\; Given a function $H:\bb T\to \bb R$, denote $\delta H(0)=H(0^+)-H(0^-)$ its jump size at zero.
And denote $\delta_N H_x= H(\pfrac{x+1}{N})-H(\pfrac{x}{N})$. Hence, provided $H$ is right continuous at zero, $\delta_N H_{-1}$ converges to  $\delta H(0)$.
\medskip

\textbullet\;   Given a function $g:[0,T]\times \bb T$, we write down
 $g_t(u)$ to denote  $g(t,u)$. 
It should   not be misunderstood  with the notation for time derivative, namely $\partial_t g(t,u)$.\medskip

\textbullet\; Given a non-negative integer $k$, denote by $C^k(\mathbb{T})$ the set of real-valued functions with domain $\mathbb{T}$  with continuous derivatives
 up to order $k$. As natural,  $C(\bb T)$ denotes the set of continuous functions.
For  non-negative integers $j$ and $k$, denote by $C^{j,k}([0,T]\times \bb T)$ the set of real valued functions 
with domain $[0,T]\times \bb T$ with continuous derivatives
  up to order $j$ in the first variable (time), and continuous derivatives up to order $k$ in the second variable (space).\medskip
  
  \textbullet\; The notation $C_{\textrm{k}}$  means compact support contained in $[0,T]\times (0,1)$. For instance,
  $C^{j,k}_{\textrm{k}}([0,T]\times (0,1))$ denotes the subset  of $C^{j,k}([0,T]\times (0,1))$ composed of  functions with compact support contained in $[0,T]\times (0,1)$.  
\medskip


\textbullet\; The notation $g(N)=O(f(N))$ means  $g(N)$ is bounded from above by $Cf(N)$, where the constant $C$ does not depend on $N$. The notation $g(N)=o(f(N))$ means 
$\displaystyle \lim_{N\to\infty} g(N)/f(N)=0$.

\textbullet\; Despite we have denoted $\<\pi^N_t, H\>=\frac 1N\sum_{x\in \bb T_N} H(\frac{x}{N})\eta_t(x)$, the bracket $\<\cdot,\cdot\>$ will also mean  the inner product  in $L^2(\mathbb{T})$ and in $L^2[0,1]$. 
The double bracket $\<\!\<\cdot,\cdot\>\!\>$ will denote  the inner product in $L^2([0,T]\times\mathbb{T})$.

\subsection{The  hydrodynamic equation}
\label{ss2.3}

The slow bond, as we will see, yields a discontinuity at the origin in the continuum limit. 
Therefore, discontinuous functions at the origin are naturally required.
\begin{definition}\label{C2descont} 
Denote by $\C$ the space of functions $H:[0,T]\times \bb T\to \bb R$ such that
\begin{enumerate}
 \item $H$ restricted to $[0,T]\times \bb T\backslash \{0\}$ belongs to $C^{1,2}([0,T]\times \bb T\backslash \{0\})$;\medskip
 
 \item Identifying  $\bb T\backslash \{0\}$ with the open interval $(0,1)$, $H$ has a $C^{1,2}$ extension to $[0,T]\times [0,1]$;\medskip
 
 \item For any $t\in [0,T]$, $H$ is right continuous  at zero,  i.e., $H(t,0)=\lim_{x\to 0^+}H(t,x)$.
\end{enumerate}
\end{definition}
This space of test functions should not be misunderstood with 
$C^{1,2}([0,T]\times \bb T)$. In words, a function $H$ belongs to this space $\C$ if, ``opening" the torus at $0$, the function has a $C^{1,2}$ extension  to the closed interval $[0,1]$. 

The bracket $\<\cdot,\cdot\>$ will denote indistinctly the inner product in $L^2(\mathbb{T})$ and in $L^2[0,1]$. 
Let $L^2([0,T]\times \bb T)$ be the Hilbert space of
measurable functions $H: [0,T]\times \bb T\to\bb R$ such that
\begin{equation*}
\int_0^T  \int_{\bb T}  \, \big(H (s, u)\big)^2\, du\, ds\,<\, \infty\,,
\end{equation*}
endowed with the inner product $\<\!\< \cdot ,\cdot \>\!\>$ defined by
\begin{equation*}
\<\!\< H,G \>\!\> \,=\, \int_0^T \int_{\bb T} \, H (s, u)
\, G(s,u)\,du\, ds\,.
\end{equation*}

\begin{definition}[Sobolev Space]\label{Sobolev}
Let  $\mc H^1(0,1)$ be the set of all locally summable functions $\zeta: (0,1)\to\bb R$ such that
there exists a function $\p_u\zeta\in L^2(0,1)$ satisfying
$ \<\partial_uG,\zeta\>\,=\,-\<G,\partial_u\zeta\>$,
for all $G\in C^{\infty}_{\textrm{k}}((0,1))$.
For $\zeta\in\mc H^1(0,1)$, we define the norm
\begin{equation*}
 \Vert \zeta\Vert_{\mc H^1(0,1)}\,:=\, \Big(\Vert \zeta\Vert_{L^2(0,1)}^2+\Vert\partial_u\zeta\Vert_{L^2(0,1)}^2\Big)^{1/2}\,.
\end{equation*}
Let $\Sob$ be the space of
 all measurable functions
$\xi:[0,T]\to \mc H^1(0,1)$ such that
\begin{equation*}
\Vert\xi \Vert_{\Sob}^2 \,
:=\,\int_0^T \Vert \xi_t\Vert_{\mc H^1(0,1)}^2\,dt\,<\,\infty\,.
\end{equation*}
\end{definition}
We refer the reader  to \cite{e,l,te} for more on Sobolev spaces.
\begin{remark}\rm  An  equivalent and useful definition  for the Sobolev space 
$\Sob$ is the  set  of bounded functions
$\xi:[0,T]\times \bb T\to \bb R$ such that there exists a function 
$\partial\xi\in L^2([0,T]\times \bb T)$ 
satisfying 
\begin{equation*}
\<\!\< \partial_u H,\xi \>\!\> \,=\,- \<\!\<  H,\partial\xi \>\!\>\,,
\end{equation*}
for all functions $H\in \Ck$.
\end{remark}

\begin{definition}[The hydrodynamic equation for the SSEP with a slow bond]
 Consider a measurable density profile $\gamma : \bb T
\to [0,1]$. A  function $\rho : [0,T] \times \bb T \to [0,1]$
is said to be a weak solution of the parabolic differential equation with Robin boundary conditions
\begin{equation}\label{edp}
\left\{
\begin{array}{l}
{\displaystyle \partial_t \rho \, =\, \Delta \rho } \\
{\displaystyle \rho_0(\cdot) \,=\, \gamma(\cdot) }\\
\p_u \rho_t (0^+)=\p_u \rho_t(0^-)=\rho_t(0^+)-\rho_t (0^-)\,,
\end{array}
\right.
\end{equation}
if the following two conditions are fulfilled:\medskip

(1) $\rho\in \Sob$\,;\medskip

(2) For all functions  $G\in\C$ and for all $t\in [0,T]$, $\rho$ satisfies the integral equation
\begin{equation}\label{eqint1}
\begin{split}
\< \rho_t, G_t\> - \< \gamma , G_0\> 
= & \int_0^t \< \rho_s , (\partial_s+\Delta)  G_s \>ds\\
& +\int_0^t\big\{\rho_s(0^+)\partial_uG_s(0^+)
-\rho_s(0^-)\partial_uG_s(0^-)\big\}ds\\
&-\int_0^t\big(\rho_s(0^+)-\rho_s(0^-)\big)\,\big(G_s(0^+)-G_s(0^-)\big)ds.
\end{split}
\end{equation} 
\end{definition}
  Assumption (1) guarantees that the boundary integrals are well defined, see \cite{e,l} on the notion of trace of Sobolev spaces.   
The Robin (mixed) boundary conditions in \eqref{edp} can be interpreted as the Fick Law at the point $x=0$. This is discussed in more detail in \cite{fgn1}. The uniqueness and existence of weak solutions of \eqref{edp} was proved in \cite{fgn2}. Moreover, it was proved in \cite{fl,fgn1,fgn2} that

\begin{theorem}\label{t01}
Fix a measurable density profile $\gamma : \bb T
\to [0,1]$ and
consider a sequence of probability measures $\mu_N$ on $\ON$ associated to $\gamma$ in the sense \eqref{f09}. Then, for any $t\in [0,T]$,
\begin{equation}\label{eq888}
\lim_{N\to\infty}
\bb P_{\mu_N} \Big[\, \Big\vert \pfrac 1N \sum_{x\in\bb T_N} 
G(\pfrac{x}{N})\; \eta_t(x) - \int G(u)\, \rho_t(u) \;du \,\Big\vert 
> \delta\, \Big] \,=\, 0\,,
\end{equation}
for every $\delta>0$ and every function $G\in C(\bb{T})$. Here, $\rho$
is the unique weak solution of the linear partial differential equation \eqref{edp}
with $\rho_0=\gamma$.
\end{theorem}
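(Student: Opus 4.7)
The plan is to follow the classical martingale/tightness/uniqueness scheme for hydrodynamic limits, adapted to the slow-bond geometry. Fix a test function $G\in\C$. Dynkin's formula gives that
\[
M^{N,G}_t \;=\; \<\pi^N_t, G_t\> - \<\pi^N_0, G_0\> - \int_0^t \big(\p_s + N^2\LN\big)\<\pi^N_s, G_s\>\, ds
\]
is a mean-zero martingale whose quadratic variation is $O(1/N)$, hence it vanishes in $L^2(\bb P_{\mu_N})$. The whole analysis therefore reduces to identifying the limit of the drift $N^2\LN\<\pi^N,G\>$.

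A direct calculation from \eqref{ln}, followed by a discrete summation by parts performed separately on the two arcs of $\bb T_N\setminus\{-1,0\}$, yields a decomposition
\[
N^2\LN\<\pi^N,G\> \;=\; \pfrac{1}{N}\sum_{x\in\bb T_N}\eta(x)\,\Delta_N G\big(\tfrac{x}{N}\big) \;+\; R_N(G,\eta),
\]
where $\Delta_N$ is the discrete Laplacian and $R_N$ collects the boundary contributions near the slow bond involving $\eta(-1),\eta(0)$ together with $G$ and its discrete derivatives at $\tfrac{-1}{N},\tfrac{0}{N},\tfrac{1}{N}$. Because $G\in\C$ has one-sided limits and $C^{1,2}$ extensions on each side of $0$, the bulk term converges to $\<\rho_s,\Delta G_s\>$ (off the singular point), while a careful Taylor expansion of $R_N$, using the $N^{-1}$ factor coming from the slow bond to cancel the naive divergence $N\,\delta G(0)$, recovers exactly the Fick-law correctors
\[
\rho_s(0^+)\p_u G_s(0^+) - \rho_s(0^-)\p_u G_s(0^-) \;-\; \big(\rho_s(0^+)-\rho_s(0^-)\big)\big(G_s(0^+)-G_s(0^-)\big)
\]
appearing in \eqref{eqint1}. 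To identify $\eta_t(0),\eta_t(-1)$ with the macroscopic traces $\rho_t(0^{\pm})$ in the limit, I would invoke a one-block/two-block replacement lemma applied separately to boxes $\{-\eps N,\ldots,-1\}$ and $\{0,\ldots,\eps N -1\}$; this is the place where the slowness of the bond is really used, since the Dirichlet form weights the exchange across $\{-1,0\}$ only by $N^{-1}$, so each side of the defect must be treated as an independent region.

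To close the argument I would establish tightness of $\{\bb Q^N_{\mu_N}\}$ in $\DM$ via Aldous' criterion (which is immediate from the martingale decomposition and the bound $|\<\pi^N_t,G\>|\leq\|G\|_\infty$), together with an $H^1$ energy estimate showing that any limit trajectory lies in $\Sob$ and therefore possesses well-defined traces at $0^{\pm}$. Passing to the limit in Dynkin's identity along a convergent subsequence, every limit point is concentrated on trajectories $\pi_t(du)=\rho(t,u)\,du$ satisfying the integral formulation \eqref{eqint1} for all $G\in\C$. The uniqueness theorem from \cite{fgn2} then collapses the set of limit points to the Dirac mass on $\rho(t,u)\,du$, which gives \eqref{eq888}. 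The main obstacle throughout is the boundary layer around the slow bond: standard averaging arguments must be re-derived on each side of $0$ separately, and Taylor expansions of $G$ must be controlled carefully because the discrete Laplacian evaluated at $x=-1$ or $x=0$ involves $G$ values straddling the jump and therefore does \emph{not} converge to $\p_u^2 G$ — it is precisely the combination of this badly-behaved contribution with the slow-bond prefactor $N^{-1}$ that produces the Robin terms.
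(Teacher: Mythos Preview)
The paper does not actually prove Theorem~\ref{t01}: it is stated as a known result, with the sentence immediately following the theorem attributing the proof to \cite{fl,fgn1,fgn2} and the uniqueness of weak solutions to \cite{fgn2}. Your sketch is nonetheless correct and is precisely the scheme the paper carries out in full for the more general WASEP case (Theorem~\ref{t02}, proved in Section~\ref{weakly chapter}): tightness via the Dynkin martingale and Aldous' criterion (Proposition~\ref{tight asy prop}), energy estimates placing limit points in $\Sob$ (Proposition~\ref{p5.3.1}), replacement lemmas with one-sided boxes avoiding the slow bond (Definition~\ref{media} and Proposition~\ref{replace}), and characterization of limit points (Proposition~\ref{prop charat asy}). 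Setting $H\equiv 0$ in that argument recovers exactly your outline, including the mechanism you identify whereby the bad discrete Laplacian at $x=-1,0$ combines with the $N^{-1}$ slow-bond rate to produce the Robin boundary terms.
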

We notice that the result above is a particular case of that considered in \cite{fl}, being the  characterization in terms of a classical partial differential equation given in \cite{fgn1,fgn2}. Moreover, the statement \eqref{eq888} is equivalent to say that $\pi^N_t$ converges in probability to $\rho_t(u)du$.

\subsection{The Weakly Asymmetric Exclusion Process with a slow bond}
 In order to obtain the Large Deviations of a Markov process, a natural step is to prove the LLN for a class of perturbations of the original Markov process. In our case, the correct perturbations will be given by the class of \emph{weakly asymmetric exclusion processes with a slow bond}, to be defined ahead. For short, we will call it just \emph{WASEP with a slow bond}. 

Recall Definition \ref{C2descont}. Given a function $H\in \C$, consider the time non-homogeneous Markov 
process whose generator at time $t$ acts on functions $f:\ON\rightarrow \bb{R}$ as
\begin{equation}\label{lhn}
\begin{split}
(L_{N,t}^H f)(\eta)=&\sum_{x\in \bb T_N}\xi^N_x\,e^{H_t(\frac{x+1}{N})-H_t(\frac{x}{N})}\,\eta(x)\big(1-\eta(x\!+\!1)\big)\Big[f(\eta^{x,x+1})-f(\eta)\Big]\\
+&\sum_{x\in \bb T_N} \xi^N_x\, e^{-H_t(\frac{x+1}{N})+H_t(\frac{x}{N})}\,\eta(x\!+\!1)\big(1-\eta(x)\big)\Big[f(\eta^{x,x+1})-f(\eta)\Big],\\
\end{split}
\end{equation}
where $\eta^{x,x+1}$ is defined in \eqref{eta} and
\begin{equation}\label{xi}
\xi^N_x\;=\;\
\begin{cases}
1\;,\; & \textrm{ if } x\in \bb T_N\backslash \{-1\}\,,\\
N^{-1} \;,\; & \textrm{ if } x=-1\,.
\end{cases}
\end{equation}
In the particular case   $H$ is a constant function, 
the generator  $ L_{N,t}^H$ turns out to be equal to the  generator  $\LN$ defined in \eqref{ln}. We emphasize that the asymmetry is weak in all the bonds except at the bond of vertices $-1,0$. Since the function $H$ is possibly discontinuous at the origin, the asymmetry in that bond does not go to zero in the limit, appearing indeed in the hydrodynamical equation. 

Let $\{\eta_t^H\,; t\geq 0\}$ be the non-homogeneous Markov process  with
 generator $\LNH$ defined in \eqref{lhn} \emph{speeded up by} 
$N^2$.
Given a probability measure $\mu_N$ on $\ON$, denote by $\bb P^H_{\mu_N}$  the probability measure on the space of trajectories $\Ddiscreto$ induced by the Markov process $\{\eta_t^H\,; t\geq 0\}$  starting from the measure $\mu_N$.  

The empirical measure $\pi^{N}_t$ corresponding to $\{\eta_t^H\,; t\geq 0\}$ is defined in the same way of  \eqref{med imp temp}.
Denote  $\chi(\alpha)=\alpha(1-\alpha)$ the mobility function and $\delta H_t(0)=H_t(0^+)-H_t(0^-)$. Next, we present the hydrodynamic equation for the WASEP with a slow bond.
\begin{definition}\label{6.1.1}
\quad
Let   $\gamma : \bb T
\to \bb R$ be a bounded density profile and  fix $H\in \C$.  A function $\rho : [0,T] \times \bb T \to [0,1]$
is said to be a weak solution of the partial differential equation
\begin{equation}\label{edpasy}
\begin{cases}
 \;\partial_t \rho \; =\; \Delta \rho -2\,\p_u\big(\chi(\rho)\p_u H \big) \\
\; \rho_0(\cdot) \;=\; \gamma(\cdot)\\
\;\p_u \rho_t (0^+)\,=\, 2\,\chi\big(\rho_t(0^+)\big)\,\p_u H_t(0^+)-\varphi_t(\rho,H) \,,\\
\;\p_u \rho_t (0^-)\;=\;2\,\chi\big(\rho_t(0^-)\big)\,\p_u H_t(0^-)-\varphi_t(\rho,H) \,,\\
\end{cases}
\end{equation}
where
 \begin{equation} \label{varphi}\varphi_t(\rho,H) =\rho_t(0^-)\big(1-\rho_t(0^+)\big)\,e^{\delta H_t(0)}-\rho_t(0^+)\big(1-\rho_t(0^-)\big)\,e^{-\delta H_t(0)}\,,
 \end{equation}
 if the following two conditions are fulfilled:
\medskip

(1) $\rho\in \Sob$\,;
\medskip

(2) For all functions  $G$ in $\C$, and all $t\in[0,T]$, $\rho$ satisfies the integral equation
\begin{equation}\label{eqint12}
\begin{split}
&\< \rho_t, G_t\> -\< \gamma, G_0\> 
=\int_0^t \big\< \rho_s , (\p_s+\Delta)   G_s \big\> \,ds+2 \int_0^t \big\< \chi (\rho_s) \partial_u H_s , \partial_u G_s \big\> \,ds\\
&+\int_0^t\big\{\rho_s(0^+)\partial_u G_s(0^+)-\rho_s(0^-)\partial_u G_s(0^-) \big\}\,ds +\int_0^t \varphi_s(\rho,H) \,\delta G_s(0)\,ds\,.
\end{split}
\end{equation}
\end{definition}

 \begin{remark} \rm Any classical solution  of \eqref{edpasy} is actually a weak solution of \eqref{edpasy}. To verify it, suppose that $\rho$ is a classical solution.  Then, multiply both sides of the partial differential equation \eqref{edpasy} by a test function $G$ and integrate in time and space. Performing twice integration by parts and applying the boundary conditions leads to the integral equation \eqref{eqint12}. 
\end{remark}
We emphasize the fact we were not able to show uniqueness of weak solutions of \eqref{edpasy} despite the effort of different techniques we have tried. The non-linearity in mixed boundary conditions of \eqref{edpasy} lead to a very complicated problem of uniqueness. Sustaining our point of view that this is only a technical question, in Subsection \ref{unique asy} we prove uniqueness of strong solutions of \eqref{edpasy}. 

Existence of weak solutions of \eqref{edpasy} is a consequence of the tightness of the process, as we will see in Section \ref{weakly chapter}.  
The assumption on uniqueness of weak solutions of \eqref{edpasy} is also needed in the proof of large deviations, because its proof depends on the hydrodynamic limit for the WASEP with a slow bond.

Our first  result is the hydrodynamic limit for the WASEP with a slow bond:

\begin{theorem}\label{t02} Suppose uniqueness of weak solutions of PDE \eqref{edpasy}.
 Let $H\in\C$.
Fix a continuous initial profile $\gamma : \bb T \to [0,1]$ and
consider a sequence of probability measures $\mu_N$ on $\{0,1\}^{\bb
  T_N}$ associated to $\gamma$ in the sense \eqref{f09}. Then, for any $t\in [0,T]$,
\begin{equation*}
\lim_{N\to\infty}
\bb P_{\mu_N}^H \Big[ \, \Big\vert \pfrac 1N \sum_{x\in\bb T_N} 
G(\pfrac{x}{N}) \,\eta_t^H(x) - \int G(u)\, \rho_t(u) \;du\, \Big\vert 
> \delta\, \Big]\,=\, 0\,,
\end{equation*}
for every $\delta>0$ and every function $G\in C(\bb{T})$, where $\rho$
is the unique weak solution of \eqref{edpasy} with $\rho_0=\gamma$.
\end{theorem}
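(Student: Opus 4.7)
The plan is to follow the standard tightness-plus-uniqueness route, with the main novelty located at the slow bond. Denote by $\bb Q^{N,H}_{\mu_N}$ the law on $\DM$ of the empirical trajectory $\{\pi^N_t\}_{t\in[0,T]}$ under $\bb P^H_{\mu_N}$. I would show that $\{\bb Q^{N,H}_{\mu_N}\}_N$ is tight and that every limit point is concentrated on the assumed unique weak solution of \eqref{edpasy} with initial datum $\gamma$; this upgrades to convergence in probability and yields the theorem.

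The engine is the Dynkin martingale
\begin{equation*}
M^{N,G}_t \,=\, \<\pi^N_t, G_t\> - \<\pi^N_0, G_0\> - \int_0^t (\p_s + N^2 \LNH)\<\pi^N_s, G_s\>\, ds
\end{equation*}
for $G\in\C$, which vanishes in $L^2(\bb P^H_{\mu_N})$ via a routine $O(N^{-1})$ quadratic-variation bound. The crucial computation is the identification of the limit of $N^2\LNH\<\pi^N_s, G_s\>$. For the bulk bonds $x\neq -1$, Taylor-expanding $e^{\pm\delta_N H_s}$ to second order and performing two discrete integrations by parts produces $\<\pi^N_s, \Delta G_s\>$, a discrete version of $2\<\chi(\pi^N_s)\,\p_u H_s, \p_u G_s\>$ (a sum of local products $\eta_s(x)(1-\eta_s(x+1))$ weighted by gradients of $H_s$ and $G_s$), and a surface correction caused by the missing bond at $x=-1$ that converges to $\rho_s(0^+)\p_u G_s(0^+) - \rho_s(0^-)\p_u G_s(0^-)$. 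The slow bond is special: the factor $N^2 \xi^N_{-1}=N$ cancels one power of $N^{-1}$ in the jump $\frac{1}{N}\delta_N G_{-1}$, yielding an $O(1)$ term
\begin{equation*}
\delta_N G_{-1}\,\bigl[e^{\delta_N H_{-1}}\eta_s(-1)(1-\eta_s(0)) - e^{-\delta_N H_{-1}}\eta_s(0)(1-\eta_s(-1))\bigr].
\end{equation*}
Using $\delta_N G_{-1}\to\delta G_s(0)$ and $\delta_N H_{-1}\to\delta H_s(0)$ (valid since $G_s,H_s\in\C$ are right-continuous at $0$), and replacing the occupations $\eta_s(-1),\eta_s(0)$ by the traces $\rho_s(0^-),\rho_s(0^+)$, this converges to $\delta G_s(0)\,\varphi_s(\rho, H)$. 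Both the bulk replacement of $\eta_s(x)(1-\eta_s(x+1))$ by $\chi$ of $\eps N$-block averages and the boundary replacement at the slow bond are delivered by the replacement lemma of Section~\ref{chapter super}; the energy estimate there guarantees that the limit density lies in $\Sob$ and thus admits well-defined one-sided traces.

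Tightness of $\{\bb Q^{N,H}_{\mu_N}\}$ follows from the Aldous--Rebolledo criterion applied to $\<\pi^N_t, G\>$ for a countable dense family of $G\in\C$, with bounds from the generator computation above. Any subsequential limit $\bb Q^*$ is supported on trajectories $\pi_t(du)=\rho_t(u)du$ with $\rho\in[0,1]$ (since $\pi^N_t$ has density at most $1$), on $\rho\in\Sob$ (by the energy estimate), and on solutions of the integral identity \eqref{eqint12} (by passing to the limit in the martingale relation using the generator identification above), with initial datum $\rho_0=\gamma$ (since $\mu_N$ is associated to $\gamma$). Assumed uniqueness of weak solutions of \eqref{edpasy} then forces $\bb Q^*$ to be the Dirac mass at the weak solution, completing the proof. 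The main obstacle I anticipate is the slow-bond replacement: unlike in the bulk one cannot symmetrize around $x=-1$ (since the density is genuinely different on the two sides), so the argument must proceed via separate one-sided box averages together with the existence of traces from the energy estimate; the allowance of discontinuous test functions in $\C$ is precisely what makes the slow-bond contribution survive in the limiting equation.
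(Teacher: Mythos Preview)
Your proposal is correct and follows essentially the same route as the paper: tightness via the Dynkin martingale and generator bounds, Sobolev regularity via the energy estimate, characterization of limit points through the integral identity \eqref{eqint12}, and the assumed uniqueness to conclude. Two technical points you gloss over that the paper treats explicitly: (i) the replacement and energy estimates of Section~\ref{chapter super} are stated for $\bb P_{\mu_N}$, not $\bb P^H_{\mu_N}$; the paper transfers them to the perturbed process via the uniform bound $\Vert \textrm{\textbf{d}}\bb P^H_{\mu_N}/\textrm{\textbf{d}}\bb P_{\mu_N}\Vert_\infty \le e^{C(H,T)N}$ on the Radon--Nikodym derivative (Proposition~\ref{replaceasy} and Proposition~\ref{p5.3.1}); (ii) the boundary terms $\rho_s(0^\pm)$ and $\chi(\rho_s)$ are not continuous functionals on all of $\DM$, so Portmanteau cannot be applied directly---the paper inserts the convolution $\rho_s*\iota_\eps$ first, applies Portmanteau to the resulting continuous functional, and only afterwards sends $\eps\downarrow 0$ using the Sobolev regularity already established. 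Neither is a genuine gap in your strategy, but both are needed to make the argument rigorous.
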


\subsection{Large deviations principle}\label{statement LDP}

Denote by $\mc M_0$ the subset of $\mc M$ of all absolutely continuous
measures with density bounded by $1$:
\begin{equation*}
\mc M_0=\Big\{\omega\in \mc M\;;\;\omega(du)=\rho(u)\,du \quad\mbox{and}\quad
0\leq \rho\leq 1 \quad \mbox{almost surely }\Big\}\,.
\end{equation*}
The set $\mc M_0$ is a closed subset of  $\mc M$ endowed with the weak topology. This property is 
inherited by $\mc D \big([0,T], \mc M_0\big)$, which  is a closed subset of $\DM$ 
for the Skorohod topology. We will denote $\mc D \big([0,T], \mc M_0\big)$ simply by $\DMO$.\medskip

\begin{definition}\label{energy} Given $H\in \Ck$ define
$\mc E_H: \DM\to \bb R\cup\{\infty\}$ by
\begin{equation*}
\mc E_H(\pi)\,=\,\left\{\begin{array}{cl}
\<\!\< \p_u H,\rho \>\!\>
- 2 \<\!\< H,H\>\!\> \,, &  \mbox{if}\,\,\,\,\pi\in \DMO\mbox{ and }\pi(du)=\rho(t,u)\,du\,,\\ 
\infty\,, &\mbox{otherwise\,.}
\end{array}
\right. 
\end{equation*}
Furthermore, define the energy functional $\mc E:\DM\to \bb R_+\cup\{\infty\}$ by
\begin{equation}\label{energia}
 \mc E(\pi)=\sup_{H}\mc E_H(\pi)\,, 
\end{equation}
where the supremum is taken over functions $H\in \Ck$.
\end{definition}
In Section \ref{sobolev section} we prove that  if $\pi\in \DM$ and  $\mc E(\pi)<\infty$, then
there exists $\rho\in \Sob$ such that 
$\pi(t,du)=\rho_t(u)du$. Keeping this in mind, given $H\in \C$ and $\pi\in \DM$,  define
\begin{equation}\label{J hat}
\begin{split}
\hat{J}_H(\pi) & =  \ell_H(\pi)-\Phi_H(\pi)\,,
\end{split}
\end{equation}
where 
\begin{equation}\label{ell}
\begin{split}
  \ell_H(\pi)\;=\;&\<\rho_T,H_T\>-\<\rho_0,H_0\>-\int_0^T \<\rho_t,(\partial_t +\Delta) H_t\> \,dt\\
  &  -\int_0^T\{\rho_t(0^+)\partial_u H_t(0^+)-\rho_t(0^-)\,\partial_u H_t(0^-)\}\,dt\\
&+\int_0^T(\rho_t(0^+)-\rho_t(0^-))\,\delta H_t(0)\,dt
\end{split}
\end{equation}
and  
\begin{equation}\label{Phi}
\begin{split}
\Phi_H(\pi)\;=\;&\int_0^T\<\chi(\rho_t),(\partial_u H_t)^2\>\,dt +\int_0^T\rho_t(0^-) (1-\rho_t( 0^+))\,\psi(\delta H_t(0))\,dt\\
 &+\int_0^T\rho_t( 0^+) (1-\rho_t(0^-))\,\psi(-\delta H_t(0))\,dt\,,
\end{split}
\end{equation}
where $\psi(x)=e^{x}-x-1$ and $\delta H_t(0)=H_t(0^+)-H_t(0^-)$. It is worth highlighting that, as functions of $H$, $\ell_H(\pi)$ is linear and $\Phi_H(\pi)$ is convex.

\begin{definition}\label{J def}
Given $H\in \C$, define the functional  $J_H:\DM\to\bb R\cup\{\infty\}$ by
\begin{equation*}
J_H(\pi)\,=\,\left\{\begin{array}{cl}
\hat{J}_H(\pi), &  \mbox{if}\,\,\,\,\mc E(\pi)<\infty\,,\\ 
\infty, &\mbox{otherwise.}
\end{array}
\right. 
\end{equation*}
\end{definition}

\begin{definition}\label{I def} Let the rate functional $I:\DM\to\bb R_+\cup\{\infty\}$ be
 \begin{equation*}
 I(\pi)=\sup_{H}\, J_H(\pi)\,,
 \end{equation*}
being the supremum above over functions $H\in \C$.
\end{definition}

The large deviations study is decomposed in the study of deviations from the initial measure and deviations from the expected trajectory, see \cite[Chapter 10]{kl}. Since the large deviations for Bernoulli product measures are well known, we restrict ourselves to the deviations from the expected trajectory. We start henceforth the process from a sequence of \textit{deterministic} initial configurations. This avoids the analysis of statical large deviations, since we interested here in dynamical large deviations.
Recall that $\bb Q_{\mu_N}^N$ is the measure on
the path space $\DM$ induced by the initial measure $\mu_N$ and the empirical process $\pi^N_t$ introduced in \eqref{med imp temp}.
We are now in position to state the main result of the paper.

\begin{theorem}\label{t03}
 Let  $\mu_N$ be a sequence of deterministic initial configurations associated to a bounded density profile $\gamma : \bb T
\to \bb R$  in the sense of the Definition \ref{associated}. Then, the sequence of measures $\{\bb Q_{\mu_N}^N;N\geq 1\}$ satisfies the following large deviation estimates:\medskip

  {(i)} {\bf Upper bound:} For any $\mc C$  closed subset of $\DM$, 
 \begin{equation*}
 \varlimsup_{N\to\infty}\pfrac{1}{N}\log \bb Q_{\mu_N}^N\big[\,\mc C\,\big]
\;\leq\; -\inf_{\pi\in\mc C} I(\pi)\,.
 \end{equation*}
 \medskip
 
{(ii)} {\bf Lower bound for smooth profiles:} For any $\mc O$ open subset of $\DM$,
\begin{equation*}
 \varliminf_{N\to\infty}
\pfrac{1}{N}\log\bb Q_{\mu_N}^N\big[\,\mc O\,\big]\; \geq \; -\inf_{\pi\in \mc O\cap \mc D^{\mc S}_{ \mc M_0}}I(\pi)\,,
\end{equation*}
where $\mc D^{\mc S}_{ \mc M_0}$ denotes the set of paths $\pi\in \DM$ such that
$\pi_t(du)=\rho_t(u)\,du$ with $\rho\in \C$.

\end{theorem}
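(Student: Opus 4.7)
My approach is the exponential-tilting scheme of Kipnis--Olla--Varadhan adapted to the slow-bond setting. For each test function $H\in\C$, the Radon--Nikodym derivative $\radonN$ is the exponential martingale associated with the perturbed generator $\LNH$. Applying Dynkin's formula to $\exp\{N\<\pi^N_t,H_t\>\}$ and then the replacement lemma of Section~\ref{chapter super}, one should show that $\radonNinv$ is superexponentially close to $\exp\{-N\hat J_H(\pi^N)\}$ with $\hat J_H$ as in \eqref{J hat}: the slow-bond contribution $e^{\pm\delta H_t(0)}$, being of order one, survives the scaling and produces precisely the boundary terms appearing in $\ell_H$ and $\Phi_H$. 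Chebyshev's inequality then gives, for every compact $\mc K\subset\DM$,
\begin{equation*}
\bb Q_{\mu_N}^N[\mc K]\;\le\;\exp\Bigl\{-N\inf_{\pi\in\mc K}\hat J_H(\pi)+o(N)\Bigr\},
\end{equation*}
and the Minimax Lemma exchanges $\sup_H$ and $\inf_\pi$. The energy estimate of Section~\ref{chapter super} shows that trajectories with $\mc E(\pi)=\infty$ are superexponentially negligible, so the supremum can be restricted to the set where $\hat J_H=J_H$ (Definition~\ref{J def}). Finally, exponential tightness of $\{\bb Q_{\mu_N}^N\}$---obtained via Aldous-type bounds on the time oscillations of $\<\pi^N_t,G\>$ for $G\in C^2(\bb T)$, noting that the slow bond contributes only $O(N^{-1})$ to the Dirichlet form---extends the upper bound from compact to arbitrary closed sets.

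\textbf{Plan for the lower bound.} The key input is Theorem~\ref{t02}: for $H\in\C$, the law $\bb P^H_{\mu_N}$ concentrates on the unique weak solution $\rho^H$ of \eqref{edpasy}. Fix a smooth $\pi\in\DMS$ with $\pi_t(du)=\rho_t(u)\,du$. By Proposition~\ref{charact_H_suave} one produces $H\in\C$ such that $\rho=\rho^H$ and moreover $I(\pi)=J_H(\pi)$; informally, $H$ is the cheapest perturbation steering the dynamics to $\rho$. For any open neighborhood $\mc O_\delta$ of $\pi$ contained in a given open set $\mc O$, the change of measure
\begin{equation*}
\bb Q_{\mu_N}^N[\mc O_\delta]\;=\;\bb E^H_{\mu_N}\Bigl[\radonNinv\,\mathbf 1_{\mc O_\delta}(\pi^N)\Bigr]
\end{equation*}
combined with Jensen's inequality and the superexponential approximation of $\radonNinv$ by $\exp\{-N\hat J_H(\pi^N)\}$ yields
\begin{equation*}
\tfrac 1N\log\bb Q^N_{\mu_N}[\mc O_\delta]\;\ge\;-\hat J_H(\pi)\;-\;\tfrac 1N\log\tfrac{1}{\bb P^H_{\mu_N}[\mc O_\delta]}\;+\;o(1).
\end{equation*}
By Theorem~\ref{t02}, $\bb P^H_{\mu_N}[\mc O_\delta]\to 1$, so the right-hand side tends to $-J_H(\pi)=-I(\pi)$. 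Optimizing over $\pi\in\mc O\cap\DMS$ gives the claimed bound.

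\textbf{Main obstacle.} The hardest technical step is the superexponential replacement at the slow bond: since that bond has rate $N^{-1}$ while $H$ may be discontinuous at the origin, the usual two-block estimate cannot be applied across the defect and must be supplemented by a boundary-layer analysis on each side of the bond, justifying the replacement of local averages near $-1$ and $0$ by the traces $\rho_t(0^-)$ and $\rho_t(0^+)$. Equivalently delicate is verifying that $\radonNinv$ is a function of the empirical measure $\pi^N$ up to superexponentially small error, which is essential both for the Minimax step in the upper bound and for Jensen's inequality in the lower bound. These are precisely the issues addressed in the replacement lemma and energy estimates of Section~\ref{chapter super}, on which the rest of the proof rests.
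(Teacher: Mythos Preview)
Your overall architecture is correct and matches the paper's strategy, but there are two genuine gaps that would block a complete proof.

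\medskip
\textbf{Lower bound.} You write that ``by Proposition~\ref{charact_H_suave} one produces $H\in\C$ such that $\rho=\rho^H$.'' This misreads that proposition: it goes in the direction $H\mapsto\rho^H\mapsto I(\rho^H)$, not $\rho\mapsto H$. The construction of $H$ from a given smooth $\rho$ is a separate elliptic problem (the paper's Proposition~\ref{eliptic lemma}), and it requires $\eps\le\rho\le 1-\eps$ for some $\eps>0$, because one must divide by $\chi(\rho)=\rho(1-\rho)$. A generic $\rho\in\C$ may touch $0$ or $1$, and then no $H\in\C$ with $\rho=\rho^H$ need exist. The paper closes this gap in two steps: (a) a convexity-type inequality for $I$ along segments joining $\rho$ to the constant profiles $0$ and $1$ (Proposition~\ref{convex}), and (b) the approximation $\rho^\eps=\eps+(1-2\eps)\rho$, which stays in $\DMS$, is bounded away from $0$ and $1$, and satisfies $\varlimsup_{\eps\downarrow 0}I(\rho^\eps)\le I(\rho)$ (Proposition~\ref{1aprox}). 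Without this detour your Jensen argument only reaches paths in $\DME$, not all of $\DMS$.

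\medskip
\textbf{Upper bound.} Your plan says ``the Minimax Lemma exchanges $\sup_H$ and $\inf_\pi$,'' but the Minimax Lemma requires the functional $\pi\mapsto -\hat J_H(\pi)$ to be upper semicontinuous on $\DM$, and it is not: $\hat J_H$ involves the boundary traces $\rho_t(0^\pm)$, which are not continuous functionals of $\pi$ in the Skorohod topology, and the set $\{\mc E(\pi)<\infty\}$ on which $J_H$ is finite is not closed. The paper resolves this by a double mollification $(\pi*\iog)*\ioe$ (so that boundary values become honest continuous functionals), and by replacing the energy constraint by the closed sets $A_{k,l}^{\eps,\gamma}$ and $E_m^j$; only after this repackaging is the Minimax step legitimate, and one then has to undo all the regularizations in the right order (Proposition~\ref{label}). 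Your sentence ``$\radonNinv$ is a function of $\pi^N$ up to superexponentially small error'' captures part of this, but does not address the semicontinuity obstruction, which is the actual reason the paper introduces the auxiliary machinery of Section~\ref{upper}.
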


The item $(i)$ of theorem above is proved in Section \ref{upper}. The proof of item $(ii)$  is presented in Section \ref{lower}.

\section{Superexponential replacement lemmas and energy estimate}\label{chapter super}
Both in the proof of  hydrodynamic limit for the WASEP with a slow bond and in the proof of the large deviations principle for the SSEP with a slow bond, replacement lemma and energy estimates play an important role. 
By a \emph{replacement lemma} we mean a result  that allows to replace the average time occupation in a site for the average time occupation in a box around that site. And by \emph{energy estimates} we mean a result assuring that time trajectories of the empirical measure are asymptotically close to elements of a certain Sobolev space.

In the proof of large deviations we will need such results in a superexponential setting.  In other words,   the corresponding probabilities must converge to one in a faster way than exponentially. 
\subsection{Definitions and estimates lemmas}\label{estimates}

Denote by $\bs{H} (\mu_N | \nu_\alpha^N)$ the entropy of a probability
measure $\mu_N$ with respect to the invariant measure $\nu_\alpha^N$. 
For a precise definition and properties of the entropy, see \cite{kl}. 
It is well known the existence of a constant $K_0:=K_0(\alpha)$ such that
\begin{equation}
\label{f06}
\bs{H} (\mu_N | \nu_\alpha^N) \,\le\, K_0 N\,,
\end{equation}
for any probability measure $\mu_N$ in $\ON$. See for instance the appendix of \cite{fgn1}.

Denote by $\< \cdot, \cdot \>_{\nu_\alpha^N}$ the scalar product of
$L^2(\nu_\alpha^N)$  and denote by $\mf D_N$  the Dirichlet form, which is the convex and lower
semicontinuous functional (see \cite[Corollary A1.10.3]{kl}) defined by
\begin{equation*}
\mf D_N (f) \,=\, \< - \LN \sqrt f \,,\, \sqrt f\>_{\nu_\alpha^N}\,,
\end{equation*}
where $f$ is a probability density with respect to $\nu_\alpha^N$ (i.e.
$f\ge 0$ and $\int \! f d\nu_\alpha^N =1$). An elementary computation shows
that
\begin{equation*}
\mf D_N (f) \,=\, \sum_{x\in \bb T_N} \frac{ \xi_{x}^N}{2}
\int  \Big( \sqrt{f(\eta^{x,x+1})} -
\sqrt{f(\eta)} \Big)^2 \, d\nu_\alpha^N(\eta) \,,
\end{equation*}
where $\xi_{x}^N$ is defined in \eqref{xi}.

\medskip

From this point on, abusing of notation, we denote the  biggest integer small or equal to $\eps N$ simply by $\eps N$.
Next, we define the local average of particles,
which corresponds to the mean occupation in a box around a given site. The idea is to define a box around the site $x$ in such a  way it avoids the slow bond.

\begin{definition}\label{media}
If $x\in \bb T_N$ is such that $\pfrac{x}{N}\in\bb T\backslash (-\eps,0)$,  we define the local average by
\begin{equation*}
\eta^{\eps N}(x)\,=\,\pfrac{1}{\eps N}\sum_{y=x+1}^{x+\eps N}\eta(y)\,.
\end{equation*}
If  $\pfrac{x}{N}\in (-\eps,0)$, define the local average by
\begin{equation*}
\eta^{\eps N}(x)\,=\,\pfrac{1}{\eps N}\sum_{y=-\eps N}^{-1}\eta(y)\,.
\end{equation*}
\end{definition}
In accordance with to the previous definition of local density of particles, we define an approximation of identity $\iota_\eps$ in the continuous torus  by
\begin{equation}\label{iota}
\iota_\eps(u,v)=\left\{\begin{array}{ll}
\pfrac{1}{\eps}\,\textbf 1_{(v,v+\eps)}(u)\,, &\mbox{~if}
\,\,\,\,v\in \bb T\backslash (-\eps,0)\,,\\
\quad \\
\pfrac{1}{\eps}\,\textbf 1_{(-\eps,0)}(u)\,, &  \mbox{~if}\,\,\,\,v\in (-\eps,0)\,.\\ 
\end{array}
\right.
\end{equation}
We also define the convolution 
\begin{equation*}
 (\psi*\ioe)(v)=\< \psi,\iota_\eps(\cdot,v)\>\,,
\end{equation*}
for a function $\psi:\bb T\to \bb R$ or a measure $\psi$ on the torus $\bb T$. The following identity is relevant:
\begin{equation}\label{identity}
(\pi^N*\ioe)(\pfrac{x}{N})\;=\;\eta^{\eps N}(x)\,,\quad \textrm{ for all }\; x\in\bb T_N\,.
\end{equation}
To simplify notation,  define the functions
\begin{equation}\label{g1}
g_1: \{0,1\}^{\bb T} \to \bb R \,\,\, \textrm{by}\,\,\, g_1(\eta)=\eta(0)(1-\eta(1))
\end{equation}
and
\begin{equation*}
\tilde g_1 :[0,1]\times[0,1]\to \bb R\,\,\, \textrm{by}\,\,\,
\tilde g_1 (\alpha,\beta) \,=\,\alpha(1-\beta)\,.
\end{equation*}
Also,
\begin{equation}\label{g2}
g_2: \{0,1\}^{\bb T} \to \bb R \,\,\, \textrm{by}\,\,\,
 g_2(\eta)=\eta(1)(1-\eta(0))
\end{equation}
and
\begin{equation*}
\tilde g_2 :[0,1]\times[0,1]\to \bb R\,\,\, \textrm{by}\,\,\,
\tilde g_2 (\alpha,\beta) \,=\,\beta(1-\alpha)\,.
\end{equation*}

\begin{lemma}\label{s01}
Fix  $F:\bb T\to\bb R $ and let $f$ be a density with respect to $\nu_\alpha^N$. Then, for any $A>0$ hold the inequalities
\begin{equation}\label{for super1}
\displaystyle
\begin{split}
&\pfrac{1}{N}\sum_{x\neq  -1} 
\int F(\pfrac{x}{N})\Big\{\tau_x g_i(\eta)-\tilde{g}_i(\eta^{\eps N}(x), 
\eta^{\eps N}(x\!+\!1))\Big\}
f(\eta)\,d\nu_\alpha^N (\eta)\\
&\leq \,12 A\eps\sum_{x\neq  -1} \big(F(\pfrac{x}{N})\big)^2\,+\,\pfrac{3}{A}\, \mf D_N(f),
\end{split}
\end{equation}
\begin{equation}\label{for replace}
\begin{split}
&\pfrac{1}{N} \sum_{x\in{\bb{T}_N}}\int  F(\pfrac{x}{N})\{\eta(x)-\eta^{\eps N}(x)\}f(\eta)\,d\nu_\alpha^N (\eta) 
\\
&\leq\, 4 A \eps\sum_{x\in{\bb{T}_N}} \big(F(\pfrac{x}{N})\big)^2\,+\,\pfrac{1}{A}\,\mf D_N(f)\,,
\end{split}
\end{equation}
\begin{equation}\label{for super2}
\displaystyle
\begin{split}
 F(\pfrac{-1}{N})&\int\Big\{\tau_{-1}
 g_i(\eta)-\tilde{g}_i(\eta^{\eps N}(-1),\eta^{\eps N}(0))\Big\}
f(\eta)\,d\nu_\alpha^N (\eta) \\
& \leq \,6A\eps N \big(F(\pfrac{-1}{N})\big)^2\,+\,\pfrac{3}{A}\,\mf D_N(f),
\end{split}
\end{equation}
\begin{equation}\label{for super one}
\int  \{\eta(x)-\eta^{\eps N}(x)\}f(\eta)\,d\nu_\alpha^N (\eta) 
\leq \, 4NA \eps\,+\,\pfrac{1}{A}\,\mf D_N(f)\,,\qquad\forall x\in\bb T_N\,,
\end{equation}
with $i=1,2$.
\end{lemma}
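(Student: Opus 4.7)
All four inequalities will be derived from a single ``elementary exchange estimate'' applied only along \emph{fast} bonds $\{z,z+1\}$, i.e. those with $\xi^N_z=1$ (equivalently $z\ne -1$). The tool is: for any function $H:\ON\to\bb R$ that does not depend on $\eta(z)$ nor $\eta(z+1)$, and any density $f$ with respect to $\nu_\alpha^N$,
\begin{equation*}
\int H(\eta)\,\big(\eta(z{+}1)-\eta(z)\big)\,f(\eta)\,d\nu_\alpha^N
\;\leq\; A\!\int\! H(\eta)^2 \,d\nu_\alpha^N\,+\,\frac{1}{A}\!\int\!\big(\sqrt{f(\eta^{z,z+1})}-\sqrt{f(\eta)}\big)^{\!2}\,d\nu_\alpha^N,
\end{equation*}
for any $A>0$. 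To prove it, I use that $\nu_\alpha^N$ is invariant under the swap $\eta\mapsto\eta^{z,z+1}$ to symmetrize the left-hand side as $\tfrac12\int H(\eta)(\eta(z{+}1)-\eta(z))[f(\eta)-f(\eta^{z,z+1})]\,d\nu_\alpha^N$, factor $f(\eta)-f(\eta^{z,z+1})=(\sqrt f-\sqrt{f(\eta^{z,z+1})})(\sqrt f+\sqrt{f(\eta^{z,z+1})})$, apply Young's inequality with weight $A$, and bound $(\sqrt f+\sqrt{f(\eta^{z,z+1})})^2\leq 2(f(\eta)+f(\eta^{z,z+1}))$, using invariance to collapse the latter contribution to a multiple of $\int H(\eta)^2\,d\nu_\alpha^N$.

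For \eqref{for replace} and \eqref{for super one}, I write $\eta^{\eps N}(x)-\eta(x)=\frac{1}{\eps N}\sum_{y=x+1}^{x+\eps N}(\eta(y)-\eta(x))$ (and the analogous expression rooted at $-1$ when $x/N\in(-\eps,0)$), and expand $\eta(y)-\eta(x)=\sum_{z=x}^{y-1}(\eta(z{+}1)-\eta(z))$. Applying the elementary estimate to each exchange with $H\equiv F(x/N)$, summing in $y$ and $z$, and using $\sum_{z\ne-1}\int(\sqrt{f(\eta^{z,z+1})}-\sqrt f)^2\,d\nu_\alpha^N=2\,\mf D_N(f)$ produces the $\eps$-factor in front of $\sum_x F(x/N)^2$ and the $\mf D_N(f)/A$ term. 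The key point is that the averaging box in Definition \ref{media} is placed entirely on one side of the slow bond, so every telescoping step is over a fast bond and no $N$-factor is paid. For \eqref{for super one} the same telescope is applied to a single $x$, hence the $\eps$-factor is multiplied by the length $\eps N$ of the box.

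For \eqref{for super1} and \eqref{for super2}, the bilinearity of $\tilde g_i$ lets me split
\begin{equation*}
\tau_x g_i(\eta)-\tilde g_i\big(\eta^{\eps N}(x),\eta^{\eps N}(x{+}1)\big)
\end{equation*}
into a sum of two one-coordinate replacements (first replace $\eta(x{+}1)$ in $\tau_x g_i$ by $\eta^{\eps N}(x{+}1)$ keeping $\eta(x)$, then replace $\eta(x)$ by $\eta^{\eps N}(x)$), each of which reduces to the previous step after a telescoping expansion; the factors of $12$ and $6$ in the statement track the fact that each elementary exchange is used in both substitutions and then multiplied by the $L^\infty$-bound of $\tilde g_i$-partials. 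For \eqref{for super2} the boxes $\eta^{\eps N}(-1)$ and $\eta^{\eps N}(0)$ are placed on opposite sides of the slow bond precisely so that neither telescope crosses it.

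The one genuinely delicate point is the bookkeeping of the constants $12A\eps$, $4A\eps$, $6A\eps N$, $4NA\eps$, because each fast bond may be used multiple times (once per length-$y$ telescope it belongs to, and, for \eqref{for super1}--\eqref{for super2}, once per coordinate of the bilinear split). After interchanging the order of summation and using Cauchy--Schwarz in the multiplicity, each bond is weighted uniformly and the Dirichlet form appears with a fixed prefactor ($1$ or $3$), while the $\eps$ in front of $\sum_x F(x/N)^2$ is exactly the average multiplicity. Once one verifies this counting, the constants in the lemma follow at once.
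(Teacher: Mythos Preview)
Your plan is correct and matches the paper's proof almost exactly: the paper also symmetrizes via $\eta\mapsto\eta^{z,z+1}$, factors $f-f^{z,z+1}=(\sqrt f-\sqrt{f^{z,z+1}})(\sqrt f+\sqrt{f^{z,z+1}})$, applies Young's inequality, and telescopes only over fast bonds (this is precisely why the boxes in Definition~\ref{media} are placed to avoid the slow bond). The paper splits $\tau_x g_1-\tilde g_1$ into \emph{three} pieces,
\[
\big(\eta(x)-\eta^{\eps N}(x)\big)\;-\;\eta(x)\big(\eta(x{+}1)-\eta^{\eps N}(x{+}1)\big)\;-\;\eta^{\eps N}(x{+}1)\big(\eta(x)-\eta^{\eps N}(x)\big),
\]
which is your two-term split with the second term expanded; each piece is bounded by $4A\eps\sum_{x\neq -1} F(x/N)^2+\tfrac1A\,\mf D_N(f)$, whence the constants $12$ and $3$.

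One point to watch: your elementary exchange estimate assumes $H$ is independent of $\eta(z)$ and $\eta(z{+}1)$, but in the third piece above (and in your second substitution) the ``frozen'' factor $\eta^{\eps N}(x{+}1)$ depends on $\{x{+}2,\ldots,x{+}1{+}\eps N\}$, which overlaps the telescope range $\{x,\ldots,x{+}\eps N\}$. This is harmless because swapping two sites both lying \emph{inside} an averaging box leaves the box average unchanged, so the hypothesis fails for at most the single boundary bond $\{x{+}1,x{+}2\}$, a negligible error; the paper glosses over this as well (``analogous bounds for the remaining parcels'').

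Finally, the paper does not interchange the $x$- and $z$-sums or use Cauchy--Schwarz on multiplicities as you suggest. Instead, for each fixed $x$ it crudely extends the inner $z$-sum to the whole torus so that the full Dirichlet form appears, then uses that the $y$-sum has $\eps N$ terms to cancel the outer $\tfrac{1}{\eps N}$. After the final $\tfrac1N\sum_{x}$ this yields exactly $\tfrac1A\,\mf D_N(f)$.
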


\begin{proof} The method of proof for the four inequalities is exactly the same. For this reason, we detail only the inequality \eqref{for super1} with $i=1$. The reader can check the remaining inequalities.
First, adding and subtracting terms, we rewrite $\tau_x g_1(\eta)-\tilde{g}_1(\eta^{\eps N}(x), 
\eta^{\eps N}(x\!+\!1))$ as
\begin{equation}\label{eq27a}
 \eta(x)-\eta^{\eps N}(x)-\eta(x)(\eta(x\!+\!1)-\eta^{\eps N}(x\!+\!1))-\eta^{\eps N}(x\!+\!1)(\eta(x)-\eta^{\eps N}(x))\,.
\end{equation}
We handle the parcel $\eta(x)(\eta(x\!+\!1)-\eta^{\eps N}(x\!+\!1))$ of above first.
We claim that for $f$ density with respect to $\nu_\alpha^N$ and for any $A>0$, it is true that
\begin{equation}\label{af 1}
\begin{split}
&\pfrac{1}{N}\sum_{x\neq  -1} 
\int F(\pfrac{x}{N})\eta(x)\Big\{\eta(x\!+\!1)-\eta^{\eps N}(x\!+\!1)\Big\}
f(\eta)\,d\nu_\alpha^N (\eta)\\
&\leq  4A\eps\sum_{x\neq  -1} \big(F(\pfrac{x}{N})\big)^2+\pfrac{1}{A}  \mf D_N(f)\,.
\end{split}
\end{equation}
Recall Definition \ref{media}. Let $x$ be such that $\pfrac{x+1}{N}\notin (-\eps,0]$.
In this case,
\begin{equation*}
\begin{split}
&\int F(\pfrac{x}{N})\eta(x)(\eta(x\!+\!1)-\eta^{\eps N}(x\!+\!1))f(\eta)\,d\nu_\alpha^N (\eta)\\
&=\int F(\pfrac{x}{N})\eta(x)\Big\{\pfrac{1}{\eps N}\sum_{y=x+2}^{x+1+\eps N}(\eta(x\!+\!1)
-\eta(y))\Big\}f(\eta)\,d\nu_\alpha^N (\eta)\,.
\end{split}
\end{equation*}
Replacing $\eta(x\!+\!1)-\eta(y)$ by a telescopic sum, one can rewrite the expression above as
\begin{equation*}
\int F(\pfrac{x}{N})\eta(x)\Big\{\pfrac{1}{\eps N}\sum_{y=x+2}^{x+1+\eps N}
\sum_{z=x+1}^{y-1}(\eta(z)-\eta(z+1))\Big\}f(\eta)\,d\nu_\alpha^N (\eta)\,.
\end{equation*}
Rewriting the last expression as twice the half and making the change of variables
$\eta\mapsto \eta^{z,z+1}$ (and using that the probability $\nu_\alpha^N$ is invariant for this map) it becomes
\begin{equation*}
 \pfrac{1}{2\eps N}\sum_{y=x+2}^{x+1+\eps N}
\sum_{z=x+1}^{y-1}F(\pfrac{x}{N})\int \eta(x)(\eta(z)-\eta(z+1))(f(\eta)-f(\eta^{z,z+1}))\,d\nu_\alpha^N (\eta)\,.
\end{equation*}
By means of  $a-b=(\sqrt{a}-\sqrt{b})(\sqrt{a}+\sqrt{b})$ and the Cauchy-Schwarz inequality,  we bound the previous expression
from above by
\begin{equation}\label{eqpag17}
\begin{split}
&\pfrac{1}{2\eps N}\sum_{y=x+2}^{x+1+\eps N}
\sum_{z=x+1}^{y-1}\pfrac{A}{\xi^N_{z}}\big(F(\pfrac{x}{N})\big)^2
\int \Big(\sqrt{f(\eta)}+\sqrt{f(\eta^{z,z+1})}\Big)^2\,d\nu_\alpha^N (\eta)\\
&+\pfrac{1}{2\eps N}\sum_{y=x+2}^{x+1+\eps N}
\sum_{z=x+1}^{y-1} \pfrac{\xi^N_{z}}{A}\int
\Big(\sqrt{f(\eta)}-\sqrt{f(\eta^{z,z+1})}\Big)^2 \,d\nu_\alpha^N (\eta),
\end{split}
\end{equation}
for any $A>0$ and where $\xi^N_{z}$ was defined in \eqref{xi}. The second sum above is bounded by
\begin{equation*}
\pfrac{1}{A\eps N}\sum_{y=x+2}^{x+1+\eps N}
\sum_{z\in \bb T_N} \pfrac{\xi^N_{z}}{2}\int \Big(\sqrt{f(\eta)}-\sqrt{f(\eta^{z,z+1})}\Big)^2 
\,d\nu_\alpha^N (\eta)\leq \pfrac{1}{A} \mf D_N(f)\,.
\end{equation*}
Since $\xi^N_{z}=1$ for all $z\in\{x+1,\ldots,x+\eps N\}$ and  $f$ is   a density with respect to $\nu_\alpha^N$,
the first term in \eqref{eqpag17} is bounded  by
\begin{equation*}
\pfrac{1}{\eps N}\sum_{y=x+2}^{x+1+\eps N}
\sum_{z=x+1}^{y-1}2A\big(F(\pfrac{x}{N})\big)^2\leq 2A\eps N\big(F(\pfrac{x}{N})\big)^2\,.
\end{equation*}
Therefore, for any site $x$ such that $\pfrac{x+1}{N}\notin (-\eps,0]$,
\begin{equation*}
\int\! F(\pfrac{x}{N})\eta(x)(\eta(x\!+\!1)-\eta^{\eps N}(x\!+\!1))f(\eta)\,d\nu_\alpha^N (\eta)\leq 
  2A\eps N\big(F(\pfrac{x}{N})\big)^2+\pfrac{1}{A} \mf D_N(f)\,.
\end{equation*}
Now, let $x$ be a site such that $\pfrac{x+1}{N}\in (-\eps,0]$. In this case,
\begin{equation*}
\begin{split}
&\int F(\pfrac{x}{N})\eta(x)(\eta(x\!+\!1)-\eta^{\eps N}(x\!+\!1))f(\eta)\,d\nu_\alpha^N (\eta)\\
&=\int F(\pfrac{x}{N})\eta(x)\Big\{\pfrac{1}{\eps N}\sum_{y=-\eps N}^{-1}(\eta(x\!+\!1)-\eta(y))\Big\}f(\eta)\,d\nu_\alpha^N (\eta)\,.
\end{split}
\end{equation*}
We split the last sum into two blocks: $\{-1-\eps N+1,\ldots,x\}$ and $\{x+1,\ldots,-1\}$. Then we proceed by
 writing $\eta(x\!+\!1)-\eta(y)$ as a telescopic sum, getting
\begin{equation*}
\begin{split}
&F(\pfrac{x}{N})\int \eta(x)\Big\{\pfrac{1}{\eps N}\sum_{y=-\eps N}^{x}\sum_{z=y}^{x}(\eta(z+1)-\eta(z))\Big\}f(\eta)\,d\nu_\alpha^N (\eta)\\
&+F(\pfrac{x}{N})\int \eta(x)\Big\{\pfrac{1}{\eps N}\sum_{y=x+2}^{-1}\sum_{z=x+1}^{y-1}(\eta(z)-\eta(z+1))\Big\}f(\eta)\,d\nu_\alpha^N (\eta)\,.
\end{split}
\end{equation*}
By the same arguments used above and since $\xi^N_{z}=1$
for all $z$ in the range $\{-\eps N,\ldots,-2\}$,
we bound the previous expression by
\begin{equation*}
4A\eps N\big(F(\pfrac{x}{N})\big)^2+\pfrac{1}{A} \mf D_N(f)\,.
\end{equation*}
This proves \eqref{af 1}. Analogous bounds for the remaining parcels in \eqref{eq27a} lead to \eqref{for super1}. 

\end{proof}

\begin{lemma}\label{certo}
Fix any function $H:\bb T\to\bb R$ and let $f$ be a density with respect to $\nu_\alpha^N$. Then, 
\begin{equation}\label{eq_do_lema35}
\begin{split}
&\int\pfrac 1{\eps N}\sum_{x\in\bb T_N} H(\pfrac{x}{N})  
\, \big\{ \eta (x-\eps N) -
\eta (x) \big\}
f(\eta)\,d\nu_\alpha^N (\eta)\\
&\leq N \,\mf D_N(f) +\pfrac {2}{N} \sum_{x\in\bb T_N} 
\big(H(\pfrac{x}{N})\big)^2 \Big\{ 1+\pfrac{1}{\eps}{\mathbf{1}}_{(-\eps,0]}(\pfrac{x}{N})\Big\}\,.
\end{split}
\end{equation}
Moreover, this inequality  remains valid replacing $\{ \eta (x-\eps N) -\eta (x) \}$ by $\{ \eta (x) -\eta (x+\eps N) \}$.  
\end{lemma}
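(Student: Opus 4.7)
The plan is to proceed by the standard telescoping–symmetrization–Young's-inequality technique, with the crucial twist that the Young parameter must be chosen bond-dependently in order to absorb the anomalous slow-bond rate $\xi^N_{-1}=N^{-1}$.

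First, I would invoke the telescopic identity
\begin{equation*}
\eta(x-\eps N)-\eta(x) \;=\; \sum_{z=x-\eps N}^{x-1}\bigl[\eta(z)-\eta(z+1)\bigr],
\end{equation*}
and then, for each $z$, symmetrize using the $\nu_\alpha^N$-invariance of the swap $\eta\mapsto \eta^{z,z+1}$ (under which $\eta(z)-\eta(z+1)$ flips sign) to write
\begin{equation*}
\int(\eta(z)-\eta(z+1))\,f(\eta)\,d\nu_\alpha^N \;=\; \tfrac{1}{2}\!\int(\eta(z)-\eta(z+1))\bigl(f(\eta)-f(\eta^{z,z+1})\bigr)\,d\nu_\alpha^N.
\end{equation*}
Factoring $f(\eta)-f(\eta^{z,z+1})=(\sqrt{f(\eta)}-\sqrt{f(\eta^{z,z+1})})(\sqrt{f(\eta)}+\sqrt{f(\eta^{z,z+1})})$, applying Young's inequality $ab\le \tfrac{A_z}{2}a^2+\tfrac{b^2}{2A_z}$ with the bond-dependent choice $A_z:=\tfrac{N\xi^N_z}{2}$, and using $(\eta(z)-\eta(z+1))^2\le 1$ and $\int(\sqrt{f(\eta)}+\sqrt{f(\eta^{z,z+1})})^2\,d\nu_\alpha^N\le 4$ (the latter since $f$ is a density), I arrive at
\begin{equation*}
H(\pfrac{x}{N})\!\int\!(\eta(z)-\eta(z+1))\bigl(f(\eta)-f(\eta^{z,z+1})\bigr)\,d\nu_\alpha^N \;\le\; \tfrac{A_z}{2}\,D_z(f) \;+\; \tfrac{2\,(H(\pfrac{x}{N}))^2}{A_z},
\end{equation*}
where $D_z(f):=\int(\sqrt{f(\eta^{z,z+1})}-\sqrt{f(\eta)})^2\,d\nu_\alpha^N$ so that $\mf D_N(f)=\tfrac12\sum_z \xi^N_z D_z(f)$.

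Next, I would substitute this pointwise estimate back into the LHS of \eqref{eq_do_lema35}, divide by $2\eps N$, and sum over $z\in\{x-\eps N,\ldots,x-1\}$ and $x\in \bb T_N$. After swapping the order of summation, each bond $z$ is seen to appear in exactly $\eps N$ windows (one per $x$), and with $A_z=N\xi^N_z/2$ the aggregated Dirichlet contribution collapses to $\tfrac{N}{4}\mf D_N(f)\le N\,\mf D_N(f)$. For the $H^2$ piece, the reciprocal $A_z^{-1}=\tfrac{2}{N\xi^N_z}$ equals $\tfrac{2}{N}$ at every regular bond but jumps to $2$ at $z=-1$, whence for each fixed $x$
\begin{equation*}
\tfrac{1}{\eps N}\sum_{z=x-\eps N}^{x-1}\tfrac{1}{A_z}\;\le\; \tfrac{2}{N}\Bigl(1+\tfrac{1}{\eps}\,\mathbf{1}_{\{-1\in [x-\eps N,\,x-1]\}}\Bigr),
\end{equation*}
and summing against $H(\pfrac{x}{N})^2$ delivers exactly the second term on the right-hand side of \eqref{eq_do_lema35}. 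The set of $x$ whose telescopic window contains the slow bond sits inside an $\eps$-neighbourhood of $0$, as captured by the indicator in the statement.

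The alternative estimate obtained by replacing $\{\eta(x-\eps N)-\eta(x)\}$ with $\{\eta(x)-\eta(x+\eps N)\}$ is proved identically, using the telescoping $\eta(x)-\eta(x+\eps N)=\sum_{z=x}^{x+\eps N-1}[\eta(z)-\eta(z+1)]$; the set of $x$'s whose window contains the slow bond is now $\{-\eps N,\ldots,-1\}$, which matches the indicator $\mathbf{1}_{(-\eps,0]}(x/N)$ in the statement. The main obstacle (and the only point where the slow bond genuinely interferes) is the necessity of the bond-dependent Young parameter: a choice uniform in $z$ would either multiply the coefficient of $\mf D_N(f)$ by a factor of $N$ at the slow bond (if $A_z$ is too small there) or multiply the coefficient of $H^2$ by a factor of $N$ at the regular bonds (if $A_z$ is too large there). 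The local scaling $A_z\asymp N\xi^N_z$ balances these competing demands and confines the penalty from the slow bond to the $\eps^{-1}$ enhancement captured by the indicator.
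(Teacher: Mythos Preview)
Your proposal is correct and follows essentially the same route as the paper: telescoping the increment across nearest-neighbour bonds, symmetrizing via the $\nu_\alpha^N$-invariance of swaps, and applying a bond-dependent Young inequality (the paper inserts the $\xi^N_y$ weights directly into the Young split and then chooses the residual constant $A=1/N$, which is exactly your choice $A_z\asymp N\xi^N_z$ in disguise). Your explicit emphasis on why the Young parameter must be bond-dependent is a nice clarification that the paper leaves implicit.
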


\begin{proof} We begin by writing the left hand side of inequality \eqref{eq_do_lema35} as a telescopic sum: 
 \begin{equation*}
\begin{split}
 & \int\pfrac 1{\eps N}\sum_{x\in\bb T_N} H(\pfrac{x}{N})  
\, \big\{ \eta (x_0) -
\eta (x_1) \big\}
f(\eta)\,d\nu_\alpha^N (\eta)\\
 & \;=\;\pfrac 1{\eps N}\sum_{x\in\bb T_N} H(\pfrac{x}{N})\sum_{y=x_0}^{x_1-1}\int  
\, \big\{ \eta (y) -
\eta (y+1) \big\}
f(\eta)\,d\nu_\alpha^N (\eta)\,,
\end{split}
\end{equation*}
where $x_0=x-\eps N$ and $x_1=x$ (or $x_0=x$ and $x_1=x+\eps N$ for the second case). Rewrite the expression above as twice the half. Then, making the changing of variables $\eta\mapsto \eta^{x,x+1}$ on one piece and applying Young's inequality, we bound the previous expression by 
\begin{equation}\label{2sum}
\begin{split}
&\pfrac{1}{\eps N}\sum_{x\in\bb T_N} \big(H(\pfrac{x}{N})\big)^2\,\sum_{y=x_0}^{x_1-1}\pfrac{A}{2\xi^N_{y}}\int  
\, \Big\{\sqrt{f(\eta)}+\sqrt{f(\eta^{y,y+1})}\,\Big\}^2\,d\nu_\alpha^N (\eta)\\
&+\pfrac{1}{\eps N}\sum_{x\in\bb T_N}\,\sum_{y=x_0}^{x_1-1}\pfrac{\xi^N_{y}}{2A}\int  
\Big\{\sqrt{f(\eta)}-\sqrt{f(\eta^{y,y+1})}\,\Big\}^2\,d\nu_\alpha^N (\eta)\,,\qquad \forall A>0\,,
\end{split}
\end{equation}
where $\xi^N_y$ was defined in \eqref{xi}.
The second sum above is less than or equal to
$\pfrac{1}{A }\mf D_N(f)$.
Since $f$ is  a density with respect to $\nu_\alpha^N$,
the first sum in \eqref{2sum} is smaller or equal than
\begin{equation*}
 \pfrac{1}{\eps N}\sum_{x\in\bb T_N} \big(H(\pfrac{x}{N})\big)^2 \sum_{y=x_0}^{x_1-1}\pfrac{2A}{\xi^N_{y}}
\;\leq\; \pfrac{2A}{\eps N}\sum_{x\in\bb T_N}\big(H(\pfrac{x}{N})\big)^2\Big\{\eps N+N\mathbf{1}_{(-\eps,0]}(\pfrac{x}{N})\Big\}\,.
\end{equation*}
This inequality is true for $x_0=x-\eps N$ and $x_1=x$ or $x_0=x$ and $x_1=x+\eps N$.
Choosing $A=\frac{1}{N}$ completes the proof.
\end{proof}

\subsection{Superexponential replacement lemmas}\label{super_replac_lem}
In the large deviations proof, the replacement lemma presented in Section \ref{replace} is not enough,
 because we need to prove that the difference between cylinder
functions and functions of the density field are superexponentially
 small, that is, of order
smaller that $\exp \{-CN\}$, for any $C>0$. We begin by exhibiting a superexponential replacement for the invariant measure $\nu_\alpha^N$.

\begin{proposition}\label{super}
Let  $F_i\!:[0,T]\times\bb T\to \bb R$, $i=1,2$, such that 
\begin{equation*}
\varlimsup_{N\to\infty} \int_0^T  \Big( (F_2(t,\pfrac{-1}{N}))^2+
\pfrac{1}{N}\sum_{x\neq -1}\big(F_1(t,\pfrac{x}{N})\big)^2\Big)\,dt\;<\;\infty\,.
\end{equation*}
For each $\eps>0$, consider
\begin{equation*}
\begin{split}
V^{F_1,F_2}_{N, \eps}(t,\eta)\;:=\;&\pfrac{1}{N}\sum_{x\neq -1} 
F_1(t,\pfrac{x}{N})\Big\{\tau_x g_1(\eta)-\tilde{g}_1(\eta^{\eps N}(x), 
\eta^{\eps N}(x+1))\Big\}\\
&+F_2(t,\pfrac{-1}{N})\Big\{\tau_{-1}
 g_1(\eta)-\tilde{g}_1(\eta^{\eps N}(-1),\eta^{\eps N}(0))\Big\}\,,
\end{split}
\end{equation*}
where $g_1$ and  $\tilde{g}_1$ have been defined in \eqref{g1}.
Then, for any $\delta>0$, 
\begin{equation}\label{super lim}
 \varlimsup_{\eps\downarrow 0}\varlimsup_{N\to\infty}\pfrac 1N \log \bb P_{\nu_\alpha^N}\Big[\,
\Big\vert \int_0^T V^{F_1,F_2}_{N, \eps}(t,\eta_t)\,dt\Big\vert>\delta\,\Big]\;=\;-\infty\,.
\end{equation}
Finally, it is true the same result with $g_2$ and
$\tilde g_2$ in lieu of $g_1$ and $\tilde g_1$.
\end{proposition}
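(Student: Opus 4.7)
My plan follows the classical Feynman--Kac superexponential scheme. Because the event $\{|\int_0^T V_{N,\eps}^{F_1,F_2}\,dt|>\delta\}$ is contained in $\{\int_0^T V_{N,\eps}^{F_1,F_2}\,dt>\delta\}\cup\{\int_0^T V_{N,\eps}^{-F_1,-F_2}\,dt>\delta\}$, and the hypothesis on $(F_1,F_2)$ is invariant under change of sign, it suffices to control the one-sided event $\{\int_0^T V_{N,\eps}^{F_1,F_2}(t,\eta_t)\,dt>\delta\}$. Exponential Chebyshev with parameter $aN$ gives, for every $a>0$,
\begin{equation*}
\bb P_{\nu_\alpha^N}\Big[\int_0^T V_{N,\eps}^{F_1,F_2}(t,\eta_t)\,dt>\delta\Big]\;\leq\; e^{-aN\delta}\,\bb E_{\nu_\alpha^N}\Big[\exp\Big(aN\int_0^T V_{N,\eps}^{F_1,F_2}(t,\eta_t)\,dt\Big)\Big].
\end{equation*}
By the Feynman--Kac formula for a reversible time-homogeneous generator with a time-dependent perturbation, the last expectation is bounded by $\exp\bigl(\int_0^T\Lambda_N^a(t)\,dt\bigr)$, where $\Lambda_N^a(t)$ is the principal eigenvalue of the self-adjoint operator $N^2\LN+aN\,V_{N,\eps}^{F_1,F_2}(t,\cdot)$ on $L^2(\nu_\alpha^N)$. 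The Rayleigh characterization then yields
\begin{equation*}
\Lambda_N^a(t)\;\leq\; \sup_{f}\Big\{aN\!\int V_{N,\eps}^{F_1,F_2}(t,\eta)\,f(\eta)\,d\nu_\alpha^N-N^2\,\mf D_N(f)\Big\},
\end{equation*}
the supremum running over densities $f$ with respect to $\nu_\alpha^N$.

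The heart of the argument is now Lemma \ref{s01}. I would apply \eqref{for super1} with $F=F_1(t,\cdot)$ to the bulk part of $V_{N,\eps}^{F_1,F_2}$ and \eqref{for super2} with $F=F_2(t,\cdot)$ to the single-site boundary term. Using the same parameter $A$ in both and multiplying by $aN$ gives
\begin{equation*}
aN\!\!\int\! V_{N,\eps}^{F_1,F_2}(t,\eta)f\,d\nu_\alpha^N\leq 12aNA\eps\!\!\sum_{x\neq -1}\!F_1(t,\pfrac{x}{N})^2+6aNA\eps N\,F_2(t,\pfrac{-1}{N})^2+\pfrac{6aN}{A}\mf D_N(f).
\end{equation*}
The choice $A=6a/N$ turns the last term into $N^2\,\mf D_N(f)$, which is exactly absorbed by the $-N^2\,\mf D_N(f)$ in the variational expression, leaving
\begin{equation*}
\Lambda_N^a(t)\;\leq\; 72\,a^2\eps\sum_{x\neq -1}F_1(t,\pfrac{x}{N})^2\;+\;36\,a^2\eps N\,F_2(t,\pfrac{-1}{N})^2.
\end{equation*}

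Integrating in $t$ and using the hypothesis, which gives $\int_0^T\sum_{x\neq -1}F_1(t,x/N)^2\,dt=O(N)$ and $\int_0^T F_2(t,-1/N)^2\,dt=O(1)$, one obtains $\int_0^T\Lambda_N^a(t)\,dt\leq C\,a^2\eps N$ for a constant $C$ independent of $N$. Combining the estimates,
\begin{equation*}
\pfrac{1}{N}\log\bb P_{\nu_\alpha^N}\Big[\int_0^T V_{N,\eps}^{F_1,F_2}(t,\eta_t)\,dt>\delta\Big]\;\leq\; -a\delta+Ca^2\eps\,.
\end{equation*}
Passing to $\varlimsup_{N\to\infty}$, then to $\varlimsup_{\eps\downarrow 0}$, leaves $-a\delta$, and finally $a\to\infty$ drives the bound to $-\infty$, establishing \eqref{super lim}. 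The case with $g_2,\tilde g_2$ in place of $g_1,\tilde g_1$ is identical since Lemma \ref{s01} is stated for both $i=1,2$ simultaneously. The one delicate point I anticipate is the boundary contribution: the term $F_2(t,-1/N)\{\tau_{-1}g_i-\tilde g_i(\eta^{\eps N}(-1),\eta^{\eps N}(0))\}$ carries no $1/N$ prefactor, and the associated Dirichlet-form bound \eqref{for super2} already incorporates a factor $\eps N$; the specific scaling $A\sim a/N$ is precisely what calibrates this boundary contribution against the entropy-free $-N^2\,\mf D_N(f)$ coming from the $N^2$-speedup of the generator.
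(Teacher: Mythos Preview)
Your proof is correct and follows essentially the same approach as the paper's: reduce to the one-sided event, apply exponential Chebyshev with parameter $aN$, bound the Laplace transform via Feynman--Kac and the variational formula for the principal eigenvalue, then invoke the two estimates \eqref{for super1} and \eqref{for super2} of Lemma~\ref{s01} with the calibration $A=6a/N$ so that the Dirichlet-form term is exactly absorbed by $-N^2\mf D_N(f)$. The only cosmetic difference is that the paper carries the computation after dividing by $N$ (so the Dirichlet term appears as $-N\mf D_N(f)$ and the choice is $A=6C/N$), whereas you keep the factor $N$ throughout and divide at the end; the constants and the order of limits $N\to\infty$, $\eps\downarrow 0$, $a\to\infty$ match.
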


\begin{proof}

By 
\begin{equation}\label{bound_log}
\varlimsup_N N^{-1} \log \{a_N + b_N\}\;= \;
\max\Big\{\varlimsup_N N^{-1} \log a_N,\varlimsup_N N^{-1} \log b_N\Big\}\,,
\end{equation} it is enough to prove \eqref{super lim} without the absolute value for 
$ V^{F_1,F_2}_{N,\eps}$ and $ V^{-F_1,-F_2}_{N,\eps}$.
Let $C>0$. By Chebyshev exponential inequality, we get 
\begin{equation*}
\begin{split}
 &\bb P_{\nu_\alpha^N}\Big[\int_0^T V^{F_1,F_2}_{N,\eps}(s,\eta_s)\,ds>\delta\Big]\\
 &\leq\;\exp{\{-C\delta N\}}\bb E_{\nu_\alpha^N}\Big[\exp\Big\{CN\int_0^T V^{F_1,F_2}_{N,\eps}(s,\eta_s) \,ds\Big\}\Big]\,.
\end{split}
 \end{equation*}
To conclude the proof it is enough to assure that
\begin{equation}\label{3.3}
 \varlimsup_{\eps\downarrow 0}\varlimsup_{N\to\infty}\pfrac 1N \log \bb E_{\nu_\alpha^N}\Bigg[\exp{\Big\{
\int_0^T \!\! CN \,\,V^{F_1,F_2}_{N, \eps}(t,\eta_t)\,dt\Big\}}\Bigg]\;\leq\; 0\,,
\end{equation}
for every $C>0$, because in this case we would have proved that left hand side of \eqref{super lim} 
is bounded from above by $-C\delta$ for an arbitrary $C>0$. By the Feynman-Kac formula, for each fixed $N$ the previous expectation is
bounded from above by
\begin{equation*}
\exp\Big\{\int_0^T\sup_{f}\Big[\int C\!N \,V^{F_1,F_2}_{N,\eps}(t,\eta)f(\eta)d\nu_\alpha^N(\eta)-N^2\mf D_N(f)\Big] \,dt\Big\}\,,
\end{equation*}
where  the supremum  is carried  over all  density functions  $f$ with
respect to  $\nu_\alpha^N$. Replacing the expression of $V^{F_1,F_2}_{N,\eps}(t,\eta)$ 
and using the Lemma \ref{s01} (notice that this lemma works for $g_1$ and $g_2$), we bound the expression in \eqref{3.3}  by
\begin{equation*}
\int_0^T \!\!\sup_{f}\Big[6CA\eps \Big(2\!\sum_{x\neq -1}\!\big(F_1(t,\pfrac{x}{N})\big)^2+
 N(F_2(t,\pfrac{-1}{N}))^2\Big)+\pfrac{6C}{A}\mf D_N(f)-N\mf D_N(f)\Big]dt.
\end{equation*}
Choosing $A=\pfrac{6C}{N}$, the expression above becomes
\begin{equation*}
36C^2\eps\int_0^T  \Big(\pfrac{2}{N}\sum_{x\neq -1}\big(F_1(t,\pfrac{x}{N})\big)^2+ (F_2(t,\pfrac{-1}{N}))^2\Big)\,dt\,,
\end{equation*}
which vanishes as $N\to\infty$ and then
$\eps\downarrow 0$.
\end{proof}

\begin{corollary}\label{super_lemma}
Under the same hypothesis of the Proposition \ref{super}, 
 for any $\delta>0$, 
\begin{equation}\label{super lim_Non_equi}
 \varlimsup_{\eps\downarrow 0}\varlimsup_{N\to\infty}\pfrac 1N \log \bb P_{\mu_N}\Big[\,
\Big\vert \int_0^T V^{F_1,F_2}_{N, \eps}(t,\eta_t)\,dt\Big\vert>\delta\,\Big]\;=\;-\infty\,.
\end{equation}
Finally, the same result is still valid with $g_2$ and
$\tilde g_2$ in lieu of $g_1$ and $\tilde g_1$.
\end{corollary}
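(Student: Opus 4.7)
The plan is to transfer the superexponential bound of Proposition \ref{super} from the reference measure $\nu_\alpha^N$ to the arbitrary initial distribution $\mu_N$ by means of a pointwise density comparison. Define
\begin{equation*}
B_N^{\eps,\delta}\;=\;\Big\{\omega\in\Ddiscreto\;;\;\Big|\int_0^T V^{F_1,F_2}_{N,\eps}(t,\eta_t)\,dt\Big|>\delta\Big\},
\end{equation*}
so that Proposition \ref{super} reads $\limsup_{\eps\downarrow 0}\limsup_{N\to\infty}\frac{1}{N}\log\bb P_{\nu_\alpha^N}[B_N^{\eps,\delta}]=-\infty$. The key remark is that $\nu_\alpha^N$ is a Bernoulli product measure on the \emph{finite} set $\ON=\{0,1\}^{\bb T_N}$; consequently, writing $K_0=-\log\min(\alpha,1-\alpha)$, every configuration satisfies the uniform lower bound $\nu_\alpha^N(\{\eta\})\geq e^{-K_0 N}$.

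The transition probabilities of the Markov process are independent of the initial law, so for any fixed $\eta_0\in\ON$ and any measurable event $B\subset\Ddiscreto$ one has $\bb P_{\nu_\alpha^N}[B]=\sum_\eta\nu_\alpha^N(\{\eta\})\bb P_\eta[B]\geq\nu_\alpha^N(\{\eta_0\})\,\bb P_{\eta_0}[B]$, and therefore
\begin{equation*}
\bb P_{\eta_0}[B]\;\leq\;\frac{1}{\nu_\alpha^N(\{\eta_0\})}\,\bb P_{\nu_\alpha^N}[B]\;\leq\; e^{K_0 N}\,\bb P_{\nu_\alpha^N}[B].
\end{equation*}
Averaging the leftmost expression against $\mu_N$ yields the uniform comparison
\begin{equation*}
\bb P_{\mu_N}[B]\;\leq\; e^{K_0 N}\,\bb P_{\nu_\alpha^N}[B],\qquad B\subset\Ddiscreto,
\end{equation*}
valid for any probability measure $\mu_N$ on $\ON$.

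Applying this comparison to $B=B_N^{\eps,\delta}$, taking logarithms, dividing by $N$, and passing to the limit first in $N$ and then in $\eps$,
\begin{equation*}
\limsup_{\eps\downarrow 0}\limsup_{N\to\infty}\pfrac{1}{N}\log\bb P_{\mu_N}[B_N^{\eps,\delta}]\;\leq\; K_0\;+\;\limsup_{\eps\downarrow 0}\limsup_{N\to\infty}\pfrac{1}{N}\log\bb P_{\nu_\alpha^N}[B_N^{\eps,\delta}]\;=\;-\infty,
\end{equation*}
the last equality being Proposition \ref{super}. The analogous statement for the pair $(g_2,\tilde g_2)$ follows by the identical argument. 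There is no genuine technical obstacle here: the only non-trivial point is the density bound $\nu_\alpha^N(\{\eta\})\geq e^{-K_0 N}$, and the finite entropy cost $K_0$ it introduces is dominated by the superexponential rate already produced in Proposition \ref{super}.
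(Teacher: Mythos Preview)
Your proof is correct and follows essentially the same approach as the paper: both arguments use the pointwise lower bound $\nu_\alpha^N(\{\eta\})\geq\big(\alpha\wedge(1-\alpha)\big)^N$ to obtain the comparison $\bb P_{\mu_N}[B]\leq\big(\alpha\wedge(1-\alpha)\big)^{-N}\bb P_{\nu_\alpha^N}[B]$, and then invoke Proposition~\ref{super}. One small caveat on notation: in the paper the symbol $K_0$ is reserved for the relative-entropy constant in \eqref{f06}, so it would be cleaner to write $-\log\big(\alpha\wedge(1-\alpha)\big)$ explicitly rather than overloading $K_0$.
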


\begin{proof} 
By the bound $\nu_\alpha^N(\eta)\geq (\alpha\wedge(1-\alpha))^N$, we get 


\begin{equation*}
\begin{split}
&  \pfrac 1N \log \bb P_{\mu_N}\Big[\,
\Big\vert \int_0^T V^{F_1,F_2}_{N, \eps}(t,\eta_t)\,dt\Big\vert>\delta\,\Big]\\
&\;=\;\pfrac 1N \log \Bigg(\sum_{\eta\in \Omega_N}\bb P_{\eta}\Big[\,
\Big\vert \int_0^T V^{F_1,F_2}_{N, \eps}(t,\eta_t)\,dt\Big\vert>\delta\,\Big]\,\frac{\mu_N(\eta)}{\nu_\alpha^N(\eta)}\,\nu_\alpha^N(\eta)\Bigg)\\
&\leq\; \pfrac 1N \log \Bigg(\frac{1}{(\alpha\wedge(1-\alpha))^N}\,\sum_{\eta\in \Omega_N}\bb P_{\eta}\Big[\,
\Big\vert \int_0^T V^{F_1,F_2}_{N, \eps}(t,\eta_t)\,dt\Big\vert>\delta\,\Big]\,\nu_\alpha^N(\eta)\Bigg)\\
&=\;  \log \Big(\frac{1}{(\alpha\wedge(1-\alpha))}\Big)+\pfrac 1N \log \bb P_{\nu_\alpha^N}\Big[\,
\Big\vert \int_0^T V^{F_1,F_2}_{N, \eps}(t,\eta_t)\,dt\Big\vert>\delta\,\Big]\;.\end{split}
\end{equation*}
 Recalling Proposition \ref{super} and $0<\alpha<1$, the limit \eqref{super lim_Non_equi} follows.
\end{proof}

\begin{corollary}\label{super one}
Given a bounded function $F:[0,T]\times \bb T$ and $x=-1$ or $x=0$, let
\begin{equation*}
\hat{V}^{F,x}_{N, \eps}(t,\eta)=F(t,\pfrac{x}{N})\{\eta(x)-\eta^{\eps N}(x)\}\,.
\end{equation*}
Then, for any $\delta>0$, 
\begin{equation}\label{Fhat}
 \varlimsup_{\eps\downarrow 0}\varlimsup_{N\to\infty}\pfrac 1N \log \bb P_{\mu_N}\Big[\,
\Big\vert\! \int_0^T \hat{V}^{F,x}_{N, \eps}(t,\eta_t)\,dt\Big\vert>\delta\,\Big]\;=\;-\infty\,.
\end{equation}
\end{corollary}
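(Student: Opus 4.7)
The strategy is to specialize the proof of Proposition~\ref{super} to a single-site quantity, replacing inequality \eqref{for super2} by the single-site estimate \eqref{for super one} of Lemma~\ref{s01}. By the elementary identity \eqref{bound_log}, it suffices to bound the event $\int_0^T \hat V^{F,x}_{N,\eps}(t,\eta_t)\,dt>\delta$ (without absolute value), since the event with $-F$ in place of $F$ is treated identically. Next, repeating the argument of Corollary~\ref{super_lemma}, the pointwise bound $\nu_\alpha^N(\eta)\geq (\alpha\wedge(1-\alpha))^N$ reduces the problem from $\bb P_{\mu_N}$ to $\bb P_{\nu_\alpha^N}$ at the price of a fixed additive constant $\log[(\alpha\wedge(1-\alpha))^{-1}]$ in $\pfrac{1}{N}\log$, which is harmless for the $-\infty$ limit.

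Working under $\bb P_{\nu_\alpha^N}$ and fixing $C>0$, the Chebyshev exponential inequality combined with the Feynman-Kac formula gives
\begin{equation*}
\pfrac{1}{N}\log\bb P_{\nu_\alpha^N}\Big[\int_0^T \hat V^{F,x}_{N,\eps}(t,\eta_t)\,dt>\delta\Big]\leq -C\delta+\pfrac{1}{N}\int_0^T\sup_f\Big\{\int CN\,\hat V^{F,x}_{N,\eps}(t,\eta)\,f\,d\nu_\alpha^N\,-\,N^2\mf D_N(f)\Big\}\,dt,
\end{equation*}
where the supremum is over densities $f$ with respect to $\nu_\alpha^N$. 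Set $M=\|F\|_\infty$. The proof of \eqref{for super one} rests on $a-b=(\sqrt a-\sqrt b)(\sqrt a+\sqrt b)$ together with Cauchy--Schwarz, hence is insensitive to the sign exchange between $\eta(x)$ and $\eta^{\eps N}(x)$; consequently $\big|\int\{\eta(x)-\eta^{\eps N}(x)\}\,f\,d\nu_\alpha^N\big|\leq 4NA\eps+(1/A)\,\mf D_N(f)$ for every $A>0$. Pulling the scalar $F(t,\pfrac{x}{N})$ outside and bounding it by $M$, the integrand inside the supremum is at most $4CMN^2 A\eps+\big(CNM/A-N^2\big)\mf D_N(f)$.

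Now pick $A=CM/N$, so that the Dirichlet-form coefficient vanishes and the residue $4C^2M^2 N\eps$ is uniform in $f$ and in $t$. Substituting back and integrating over $[0,T]$ yields the $N$-independent bound
\begin{equation*}
\pfrac{1}{N}\log\bb P_{\nu_\alpha^N}\Big[\int_0^T \hat V^{F,x}_{N,\eps}(t,\eta_t)\,dt>\delta\Big]\leq -C\delta+4C^2M^2T\eps.
\end{equation*}
Taking $\varlimsup_{N\to\infty}$ and then $\varlimsup_{\eps\downarrow 0}$ gives $-C\delta$, and sending $C\to\infty$ produces $-\infty$, which together with the two preliminary reductions establishes \eqref{Fhat}. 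No step is genuinely delicate; the conceptual point is simply that the single-site estimate \eqref{for super one} absorbs the uniform prefactor $\|F\|_\infty$ without requiring the empirical Riemann-sum normalization of the $F^2$ terms that was needed for the extensive sum handled in Proposition~\ref{super}.
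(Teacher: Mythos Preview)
Your proof is correct and follows essentially the same approach as the paper's own argument: reduce to the one-sided event via \eqref{bound_log}, pass from $\mu_N$ to $\nu_\alpha^N$ exactly as in Corollary~\ref{super_lemma}, and then apply Chebyshev plus Feynman--Kac together with the single-site bound \eqref{for super one} with the Dirichlet-form-cancelling choice $A=CM/N$. The paper's proof is stated only in outline and refers back to Proposition~\ref{super} and Corollary~\ref{super_lemma}; your write-up simply fills in the arithmetic explicitly.
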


\begin{proof}
We will prove  the limit \eqref{Fhat} for $\mu_N=\nu_\alpha^N$, to do this    is enough to prove
\begin{equation*}
 \varlimsup_{\eps\downarrow 0}\varlimsup_{N\to\infty}\pfrac 1N \log \bb P_{\nu_\alpha^N}\Big[\,
\int_0^T \hat{V}^{G,x}_{N, \eps}(t,\eta_t)\,dt\,>\delta\,\Big]\;=\;-\infty\,,
\end{equation*}
for $G=F$ and $G=-F$. This limit follows in the same sense as in the Proposition \ref{super} and  using \eqref{for super one} from Lemma \ref{s01}.  The extension for a general $\mu_N$
   follows the same scheme in the proof of Corollary \ref{super_lemma} and it is omitted here.

\end{proof}

The next  lemma is  useful to get the results of the Subsection \ref{replace_section}  from the results of this subsection.

\begin{lemma}\label{superexp_replace}
If for any function $W_\eps^N(t,\eta_t)$ uniformly bounded by $ C$ and 
 for any $\delta>0$ we have
\begin{equation*}
 \varlimsup_{\eps\downarrow 0}\varlimsup_{N\to\infty}\pfrac 1N \log \bb P_{\mu_N}\Big[\,
\Big\vert \int_0^T W_\eps^N(t,\eta_t)\,dt\Big\vert>\delta\,\Big]\;=\;-\infty\,,
\end{equation*} 
then 
\begin{equation*}
 \varlimsup_{\eps\downarrow 0}\varlimsup_{N\to\infty} \bb E_{\mu_N}\Big[\,
\Big\vert \int_0^TW_\eps^N(t,\eta_t)\,dt\Big\vert\,\Big]\;=\;0\,.
\end{equation*}
\end{lemma}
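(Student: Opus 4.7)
The plan is to split the expectation according to whether the event $A_{\eps,N,\delta}=\big\{\big|\int_0^T W_\eps^N(t,\eta_t)\,dt\big|>\delta\big\}$ occurs or not, for an arbitrarily fixed $\delta>0$. On $A_{\eps,N,\delta}^c$ the integrand is at most $\delta$, so this contributes at most $\delta$ to the expectation. On $A_{\eps,N,\delta}$, the trivial bound $|W_\eps^N|\leq C$ gives $\big|\int_0^T W_\eps^N(t,\eta_t)\,dt\big|\leq CT$, so the contribution is at most $CT\cdot \bb P_{\mu_N}[A_{\eps,N,\delta}]$.

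The hypothesis says that $\varlimsup_{\eps\downarrow 0}\varlimsup_{N\to\infty}N^{-1}\log \bb P_{\mu_N}[A_{\eps,N,\delta}]=-\infty$. In particular this forces $\bb P_{\mu_N}[A_{\eps,N,\delta}]\to 0$ in the iterated limit $\varlimsup_{\eps\downarrow 0}\varlimsup_{N\to\infty}$, since for all sufficiently small $\eps$ and sufficiently large $N$ the probability is bounded by, say, $e^{-N}$. Combining the two pieces gives
\[
\varlimsup_{\eps\downarrow 0}\varlimsup_{N\to\infty}\bb E_{\mu_N}\Big[\,\Big|\int_0^T W_\eps^N(t,\eta_t)\,dt\Big|\,\Big]\;\leq\;\delta+CT\cdot 0\;=\;\delta\,.
\]
Since $\delta>0$ was arbitrary, the left hand side equals $0$, which is the claim.

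I do not anticipate any real obstacle here: the statement is the standard observation that superexponential decay of a tail probability combined with a uniform $L^\infty$ bound upgrades convergence in probability to convergence in $L^1$. The only mild care required is to perform the iterated limit in the correct order ($N\to\infty$ first, then $\eps\downarrow 0$) so as to match the hypothesis, and to notice that $\log$--scale superexponential decay is vastly more than what is needed for the plain convergence of probabilities used above.
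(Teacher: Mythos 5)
Your proof is correct and follows essentially the same route as the paper: decompose the expectation over the event $\{|\int_0^T W^N_\eps\,dt|>\delta\}$ and its complement, bound the two contributions by $\delta$ and $CT\cdot\bb P_{\mu_N}[\cdot]$ respectively, use the superexponential hypothesis to kill the second term in the iterated limit, and let $\delta\downarrow 0$. No gaps.
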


\begin{proof}
Using that $W_\eps^N(t,\eta_t)$ is uniformly bounded by $ C$, the expectation  
$ \bb E_{\mu_N}\Big[\,
\Big\vert \int_0^T W_\eps^N(t,\eta_t)\,dt\Big\vert\,\Big]$ is bounded from above by
\begin{equation*}
\delta\,\bb P_{\mu_N}\Big[\,
\Big\vert \int_0^T W_\eps^N(t,\eta_t)\,dt\Big\vert\leq \delta\,\Big]+CT\,\bb P_{\mu_N}\Big[\,
\Big\vert \int_0^T W_\eps^N(t,\eta_t)\,dt\Big\vert>\delta\,\Big]\,,
\end{equation*} 
for any $\delta>0$.
Since for all $\delta>0$ and $M>0$,  there exists $\eps_0$ and $N_0$ such that
\begin{equation*}
 \bb P_{\mu_N}\Big[\,
\Big\vert \int_0^T W_\eps^N(t,\eta_t)\,dt\Big\vert>\delta\,\Big]\;\leq \;e^{-N\,M}<\delta/C\,,\quad\forall N\geq N_0\mbox{ and }\forall \eps<\eps_0\,,
\end{equation*} 
we have
\begin{equation*}
 \bb E_{\mu_N}\Big[\,
\Big\vert \int_0^T W_\eps^N(t,\eta_t)\,dt\Big\vert\,\Big]\;\leq \;
2\delta\,,\quad\forall N\geq N_0\mbox{ and }\forall \eps<\eps_0\,,
\end{equation*} which finishes the proof.
\end{proof}

\subsection{Superexponential energy estimate}\label{energy section}

Our goal here is to exclude trajectories
with infinite energy in the large deviations regime.  The  next proposition is the key in the energy estimates. 
\begin{proposition}\label{-l}  Recall the Definition \ref{energy} of $\mc E_H$. For any function  $H\in\Ck$, the following inequality holds:
\begin{equation*}
 \varlimsup_{\eps\downarrow 0}\varlimsup_{N\to \infty}\pfrac{1}{N}
\log \bb P_{\mu_N}\Big[\,\mc E_H(\pi^N*\ioe)\geq l\,\Big]\;\leq\; -l+K_0\;,\qquad\, \forall \;l\in \bb R\,.
\end{equation*} 
\end{proposition}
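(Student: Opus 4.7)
The plan is to combine a pointwise Radon--Nikodym reduction to the invariant measure, the exponential Chebyshev inequality, and a Feynman--Kac estimate for the resulting exponential moment, in the standard spirit of Kipnis--Landim energy estimates. Since the Bernoulli measure satisfies $\nu_\alpha^N(\eta) \ge (\alpha\wedge(1-\alpha))^N = e^{-NK_0}$ for every $\eta\in\Omega_N$ (with $K_0 := -\log(\alpha\wedge(1-\alpha))$, which is precisely the constant in \eqref{f06}), the pointwise bound $d\mu_N/d\nu_\alpha^N \le e^{NK_0}$ gives
$$
\bb P_{\mu_N}\bigl[\mc E_H(\pi^N*\ioe) \ge l\bigr] \;\le\; e^{NK_0}\,\bb P_{\nu_\alpha^N}\bigl[\mc E_H(\pi^N*\ioe) \ge l\bigr],
$$
so it suffices to show $\varlimsup_\eps\varlimsup_N\tfrac{1}{N}\log\bb P_{\nu_\alpha^N}[\mc E_H(\pi^N*\ioe) \ge l] \le -l$.

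For this, exponential Chebyshev combined with pulling out the deterministic quantity $-2N\<\!\<H,H\>\!\>$ yields
$$
\bb P_{\nu_\alpha^N}[\mc E_H(\pi^N*\ioe) \ge l] \;\le\; e^{-Nl-2N\<\!\<H,H\>\!\>}\,\bb E_{\nu_\alpha^N}\bigl[e^{N\<\!\<\p_u H,\,\pi^N*\ioe\>\!\>}\bigr].
$$
Unwinding the convolution via \eqref{iota} and Fubini writes $N\<\!\<\p_u H,\pi^N*\ioe\>\!\> = \int_0^T V^{\eps,N,H}(s,\eta_s)\,ds$ with $V^{\eps,N,H}(s,\eta) = \sum_x b^{\eps,N}_s(x)\eta(x)$ linear in $\eta$, for coefficients $b^{\eps,N}_s(x) = \eps^{-1}[H_s(x/N) - H_s(x/N-\eps)] \to \p_u H_s(x/N)$. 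Feynman--Kac then bounds the exponential moment by $\exp\bigl(\int_0^T \Gamma_s^{\eps,N,H}\,ds\bigr)$ with
$$
\Gamma_s^{\eps,N,H} \;:=\; \sup_{f}\Bigl\{\int V^{\eps,N,H}(s,\eta)f(\eta)\,d\nu_\alpha^N(\eta) - N^2\mf D_N(f)\Bigr\},
$$
the supremum being over densities $f$ with respect to $\nu_\alpha^N$.

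The decisive step is to show $\int_0^T \Gamma_s^{\eps,N,H}\,ds \le 2N\<\!\<H,H\>\!\> + o(N)$. Using $\sum_x b^{\eps,N}_s(x) = 0$ (telescoping on the torus), Abel summation writes $V^{\eps,N,H}(s,\eta) = -\sum_x B^{\eps,N}_s(x)\,\delta_N\eta(x)$ with partial sums $B^{\eps,N}_s(x) := \sum_{y\le x}b^{\eps,N}_s(y) \approx NH_s(x/N)$. The identity $\int \delta_N\eta(x)\,f\,d\nu_\alpha^N = \tfrac12\int \delta_N\eta(x)[f(\eta)-f(\eta^{x,x+1})]\,d\nu_\alpha^N$ (from $\nu_\alpha^N$-invariance under the swap) combined with Cauchy--Schwarz gives $|\int\delta_N\eta(x) f\,d\nu_\alpha^N| \le \sqrt{2\,\mf d_x^N(f)/\xi_x^N}$, where $\mf d_x^N(f) := \tfrac{\xi_x^N}{2}\int(\sqrt{f(\eta^{x,x+1})}-\sqrt{f(\eta)})^2 d\nu_\alpha^N$ is the bond-$x$ contribution to $\mf D_N$. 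Summing in $x$ with Cauchy--Schwarz and Young's inequality (with parameter tuned to $N^2$), the gradient-squared part is absorbed into $N^2\mf D_N(f)$ and one obtains
$$
\Gamma^{\eps,N,H}_s \;\le\; \frac{C}{N^2}\sum_{x\in\bb T_N}\frac{B^{\eps,N}_s(x)^2}{\xi^N_x}.
$$
The compact support of $H\in\Ck$ away from the origin makes the slow-bond contribution $B^{\eps,N}_s(-1)^2\cdot N$ vanish in the limit, and the bulk produces the required $O(N\|H_s\|^2_{L^2(\bb T)})$ estimate.

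The \textbf{main obstacle} is the microscopic execution of the variational estimate in Step 3: the Abel summation together with the bond-wise Cauchy--Schwarz--Young manipulations must be tuned so that (i) the discrete-gradient residue is absorbed into $N^2\mf D_N(f)$, (ii) the surviving coefficient matches (or is dominated by) the constant $2$ in $-2\<\!\<H,H\>\!\>$, and (iii) the slow bond---whose factor $\xi^N_{-1}=N^{-1}$ is dangerously small in the denominator $B(-1)^2/\xi^N_{-1}$---contributes negligibly, which rests crucially on the compact support of $H$ in $(0,1)$. Once Step 3 is in place, combining with Steps 1--2 yields the asserted bound $\varlimsup_\eps\varlimsup_N \tfrac{1}{N}\log\bb P_{\mu_N}[\mc E_H(\pi^N*\ioe)\ge l] \le K_0 - l$.
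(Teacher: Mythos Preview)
Your proposal is correct and follows essentially the same strategy as the paper: reduce from $\mu_N$ to $\nu_\alpha^N$, apply exponential Chebyshev, invoke Feynman--Kac, and absorb the linear functional into $N^2\mf D_N(f)$ via a telescoping/Cauchy--Schwarz/Young argument that exploits the compact support of $H$ to neutralize the slow bond. The two organizational differences are minor. First, the paper passes from $\mu_N$ to $\nu_\alpha^N$ via the entropy inequality and \eqref{f06} rather than the pointwise bound $d\mu_N/d\nu_\alpha^N\le e^{NK_0}$; both yield the same constant $K_0$. Second, for the variational estimate the paper does not Abel-sum globally but instead writes $\eta(x)-\eta(x+\eps N)$ directly as a telescoping sum over $\eps N$ consecutive bonds (this is the content of Lemma~\ref{certo}), which produces the bound $N\mf D_N(f)+\tfrac{2}{N}\sum_x H(\tfrac{x}{N})^2\{1+\eps^{-1}\mathbf 1_{(-\eps,0]}(\tfrac{x}{N})\}$; the constant $2$ here is exact (arising from the choice $A=1/N$ in Young's inequality) and cancels precisely against $-2\<\!\<H,H\>\!\>$, leaving only the indicator term that vanishes for small $\eps$ by compact support. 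Your Abel-summation route is algebraically the same rearrangement---your $B^{\eps,N}_s(x)$ equals $\eps^{-1}\sum_{y=x-\eps N+1}^{x}H_s(\tfrac{y}{N})$, and indeed $B^{\eps,N}_s(-1)=0$ exactly once $\eps$ is small enough that the support of $H_s$ avoids $(1-\eps,1)$, so the slow-bond term $B(-1)^2/\xi_{-1}^N$ drops out cleanly rather than merely being ``negligible''. A careful execution of your Cauchy--Schwarz/Young step (with parameter $A=N^2$) in fact yields constant $\tfrac12$ rather than $2$, which is more than sufficient.
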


\begin{proof}

We begin by claiming that, for  $\eps>0$  small enough, 
\begin{equation}\label{claim}
\int_0^T\!\!\!\int_{\bb T}\p_v H(t,v)(\pi^N_t*\ioe)(v)\,dv\,dt
=\int_ 0^T\!\!\pfrac{1}{\eps N}\sum_{x\in {\bb T}_N}H(t,\pfrac{x}{N})\,[\eta_t(x)-\eta_t(x+\eps N)]\,
 dt.
\end{equation}
Since $H$ has support contained in $[0,T]\times (\bb T\backslash \{0\})$ (using the identification of $(0,1)$ with $\bb T\backslash \{0\}$), there exists some $\eps_0>0$ such that
$H(t,v)$ vanishes if $v\in  (-\eps_o, \eps_0)$, for all $t\in [0,T]$. Applying Fubini's Theorem,
\begin{equation*}
\begin{split}
&\int_ 0^T\!\!\!\!\int_{\bb T} \partial_u H(t,v)(\pi^N_t*\ioe)(v)dv\,dt \!=\!\!\int_ 0^T\!\!\!\!\pfrac 1N\!\! \sum_{x\in {\bb T}_N}
\eta_t(x)\Big(\!\int_{\bb T} \partial_u H(t,v)\ioe (\pfrac{x}{N},v)dv\!\Big)dt.
\end{split}
\end{equation*}
From the definition of $\ioe$ given in \eqref{iota} and taking $0<\eps<\eps_0$, the last expression is equal to
\begin{equation*}
\begin{split}
&\int_ 0^T\pfrac 1N \sum_{x\in {\bb T}_N}\eta_t(x)
\Big( \int_{\bb T\backslash (-\eps,\eps)} \partial_u H(t,v)
\pfrac{1}{\eps}{\bf 1}_{(v,v+\eps)}(\pfrac{x}{N})\,dv\Big)\,dt \\
&\;=\;\int_ 0^T\pfrac 1N \sum_{x\in {\bb T}_N}
\eta_t(x)\Big(\pfrac{1}{\eps}{\bf 1}_{\bb T\backslash (-\eps,\eps)}(\pfrac{x}{N})
[H_t(\pfrac{x}{N})-H_t(\pfrac{x}{N}-\eps)]\Big)\,dt\,.
\end{split}
\end{equation*}
 Using again that $H(t,v)$ vanishes if $v\in  (-\eps, \eps)$, for all $t\in [0,T]$, the expression above is equal to
\begin{equation*}
\int_ 0^T\pfrac{1}{\eps N} \sum_{x\in {\bb T}_N}
\eta_t(x)
[H_t(\pfrac{x}{N})-H_t(\pfrac{x}{N}-\eps)]\,dt\,,
\end{equation*}
proving the claim. Applying the definition of energy and \eqref{claim}, for  $\eps>0$ sufficiently small we have
\begin{equation*}
\begin{split}
 \mc E_H(\pi^N*\ioe)\;=\; &\int_ 0^T\!\pfrac{1}{\eps N}\sum_{x\in {\bb T}_N}H(t,\pfrac{x}{N})\,[\eta_t(x)-\eta_t(x+\eps N)]
 dt\\
 & - \, 2 \int_0^T  \int_{\bb T} \big(H (t, u)\big)^2\, du \,dt\,.
\end{split}
\end{equation*}Let us introduce the notation 
\begin{equation*}
V_N(\eps, H , \eta ) \; :=\; \pfrac{1}{\eps N}\sum_{x\in {\bb T}_N}H(\pfrac{x}{N})\{\eta(x)-\eta(x+\eps N)\}
- \,  \pfrac{2}{N} \sum_{x\in\bb T_N} \big(H(\pfrac{x}{N})\big)^2 \,.
\end{equation*}
To achieve the statement of the proposition it is enough to have
\begin{equation*}
 \varlimsup_{\eps\downarrow 0}\varlimsup_{N\to \infty}\pfrac{1}{N}
\log \bb P_{\mu_N}\Big[\int_0^T V_N(\eps, H_t , \eta_t )\,dt\geq l\Big]\;\leq\; -l+K_0\,.
\end{equation*}
By the Chebyshev exponential inequality, 
\begin{equation*}
\begin{split}
 &\pfrac{1}{N} \log \bb P_{\mu_N}\Big[\int_0^T V_N(\eps, H_t , \eta_t )dt\geq l\,\Big] \\
 &\leq\: \pfrac{1}{N} \log \bb E_{\mu_N}\Big[\exp{\Big\{N\int_0^T V_N(\eps, H_t , \eta_t )dt\Big\}}\Big]-l\,.\\
\end{split}
\end{equation*}
From Jensen's inequality, the entropy's inequality and the bound \eqref{f06} of the relative entropy, the
expectation in the right hand side of inequality above is bounded  from above by 
\begin{equation*}
K_0 +\pfrac 1{N} \log \bb E_{\nu_\alpha^N}\Big[\exp{\Big\{\!N\!\!\int_0^T\!\! \!\!V_N(\eps, H_t , \eta_t )dt\Big\}}\Big]\,.
\end{equation*}
By the Feynman-Kac formula and the variational formula for the
largest eigenvalue of a symmetric operator,
\begin{equation*}
\begin{split}
&\pfrac 1N \log \bb E_{\nu_\alpha^N}\Big[\exp{\Big\{\!N\!\!\int_0^T\!\! \!\!V_N(\eps, H_t , \eta_t )dt\Big\}}\Big] \\
&\leq\; \int_0^T \, \sup_{f} \Big\{  \int V_N(\eps, H_t , \eta )
f(\eta)d \nu^N_{\alpha} (\eta) - N \mf D_N (f) \Big\}\, dt\,,
\end{split}
\end{equation*}
being the supremum above taken over all probability densities
$f$ with respect to $\nu^N_{\alpha}$. Recalling Lemma \ref{certo}, we bound the last expression by
\begin{equation*}
 \int_0^T \pfrac{2}{\eps N}\sum_{\frac{x}{N}\in(-\eps,0]}\big(H_t(\pfrac{x}{N})\big)^2\,dt\,.
\end{equation*}
Since $H$ has compact support, for $\eps>0$  small enough the expression above vanishes.
\end{proof}

\begin{corollary}\label{4.7}
 For any functions $H_1,\ldots,H_k\in \Ck$ holds
\begin{equation}\label{energia l}
 \varlimsup_{\eps\downarrow 0}\varlimsup_{N\to \infty}\pfrac{1}{N}
\log \bb P_{\mu_N}\Big[\,\max_{1\leq j\leq k}\mc E_{H_j}(\pi^N*\ioe)\geq l\,\Big]\;\leq\; -l+K_0\,.
\end{equation}
\end{corollary}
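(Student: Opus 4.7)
The proof will be a straightforward union-bound extension of Proposition \ref{-l}. The plan is as follows.

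First I would apply the union bound to write
\begin{equation*}
\bb P_{\mu_N}\Big[\,\max_{1\leq j\leq k}\mc E_{H_j}(\pi^N*\ioe)\geq l\,\Big]\;\leq\; \sum_{j=1}^k \bb P_{\mu_N}\Big[\,\mc E_{H_j}(\pi^N*\ioe)\geq l\,\Big]\,.
\end{equation*}
Next, for each fixed $j\in\{1,\ldots,k\}$, Proposition \ref{-l} applied to the test function $H_j\in\Ck$ gives
\begin{equation*}
\varlimsup_{\eps\downarrow 0}\varlimsup_{N\to\infty}\pfrac{1}{N}\log \bb P_{\mu_N}\Big[\,\mc E_{H_j}(\pi^N*\ioe)\geq l\,\Big]\;\leq\; -l+K_0\,.
\end{equation*}

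Finally, I would iterate the elementary identity \eqref{bound_log} (which extends by induction from two terms to any finite number $k$ of terms) to conclude that the $\varlimsup$ of $N^{-1}$ times the logarithm of a sum of $k$ non-negative sequences equals the maximum of the individual $\varlimsup$'s. Applying this to the union bound above, we obtain
\begin{equation*}
\varlimsup_{\eps\downarrow 0}\varlimsup_{N\to\infty}\pfrac{1}{N}\log \sum_{j=1}^k \bb P_{\mu_N}\Big[\,\mc E_{H_j}(\pi^N*\ioe)\geq l\,\Big] \;\leq\; \max_{1\leq j\leq k}(-l+K_0)\;=\;-l+K_0\,,
\end{equation*}
which yields the desired bound \eqref{energia l}.

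There is essentially no obstacle here: the combinatorial cost $k$ of the union bound is independent of $N$, so $\frac{1}{N}\log k\to 0$ and does not affect the superexponential rate. The only subtlety is verifying that identity \eqref{bound_log} propagates to finite sums, which is immediate by induction on $k$.
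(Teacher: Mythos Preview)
Your proof is correct and follows essentially the same approach as the paper, which also cites Proposition~\ref{-l} together with \eqref{bound_log}. The only cosmetic difference is that the paper additionally cites inequality \eqref{bound_exp}, presumably having in mind the route of applying Chebyshev to $\exp\{N\max_j \mc E_{H_j}\}$ and then bounding the exponential of a maximum by a sum of exponentials; your direct union bound on the event $\{\max_j \mc E_{H_j}\geq l\}=\bigcup_j\{\mc E_{H_j}\geq l\}$ accomplishes the same thing without needing \eqref{bound_exp}.
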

\begin{proof} Straightforward from Proposition \ref{-l} and inequalities \eqref{bound_log} and
\begin{equation}\label{bound_exp}
\exp\Big\{
\max_{1\le j\le k} a_j \Big\} \leq \sum_{1\le j\le k}
\exp\{a_j\}\,.
\end{equation}

\end{proof}

We present the following lemma to get the results of the Subsection \ref{sobolev section}  from the results of this subsection.

\begin{lemma}\label{superexp_energy}
If for any function $W_\eps^N(\eta_\cdot)$ uniformly bounded by $ C$ and 
 for any $\ell\in \mathbb R$, we have
\begin{equation*}
 \varlimsup_{\eps\downarrow 0}\varlimsup_{N\to\infty}\pfrac 1N \log \bb P_{\mu_N}\Big[\,
W_\eps^N(\eta_\cdot)\geq \ell\,\Big]\;\leq\;-\ell+K_0\,,
\end{equation*} 
then 
\begin{equation*}
 \varlimsup_{\eps\downarrow 0}\varlimsup_{N\to\infty} \bb E_{\mu_N}\Big[\,
W_\eps^N(\eta_\cdot)\,\Big]\;\leq\;K_0\,.
\end{equation*}
\end{lemma}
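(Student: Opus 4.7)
The plan is to follow essentially the same tail-integration scheme used in Lemma \ref{superexp_replace}, but with the superexponential bound read off at level $K_0+\delta$ rather than at an arbitrary level $\delta$. Since $W^N_\eps$ is bounded by $C$, its expectation can be split according to whether the random variable exceeds the threshold $K_0+\delta$ or not.

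First, I would write, for arbitrary $\delta>0$,
\begin{equation*}
\bb E_{\mu_N}\bigl[W^N_\eps(\eta_\cdot)\bigr]
\,\leq\, (K_0+\delta)\,\bb P_{\mu_N}\bigl[W^N_\eps(\eta_\cdot)< K_0+\delta\bigr]
\,+\,C\,\bb P_{\mu_N}\bigl[W^N_\eps(\eta_\cdot)\geq K_0+\delta\bigr],
\end{equation*}
so that
\begin{equation*}
\bb E_{\mu_N}\bigl[W^N_\eps(\eta_\cdot)\bigr]
\,\leq\, K_0+\delta\,+\,C\,\bb P_{\mu_N}\bigl[W^N_\eps(\eta_\cdot)\geq K_0+\delta\bigr].
\end{equation*}

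Next, I would apply the hypothesis of the lemma with $\ell=K_0+\delta$, which yields
\begin{equation*}
\varlimsup_{\eps\downarrow 0}\varlimsup_{N\to\infty}\pfrac{1}{N}\log \bb P_{\mu_N}\bigl[\,W^N_\eps(\eta_\cdot)\geq K_0+\delta\,\bigr]\;\leq\;-\delta.
\end{equation*}
Unfolding the double $\limsup$, for every fixed $\delta>0$ there exist $\eps_0=\eps_0(\delta)>0$ and, for each $\eps<\eps_0$, $N_0=N_0(\eps,\delta)$ such that
\begin{equation*}
\bb P_{\mu_N}\bigl[\,W^N_\eps(\eta_\cdot)\geq K_0+\delta\,\bigr]\,\leq\, e^{-N\delta/2}, \qquad \forall\, N\geq N_0, \;\forall\,\eps<\eps_0.
\end{equation*}
Plugging this into the bound on the expectation gives
\begin{equation*}
\bb E_{\mu_N}\bigl[W^N_\eps(\eta_\cdot)\bigr]\,\leq\, K_0+\delta+Ce^{-N\delta/2},
\end{equation*}
for $N\geq N_0$ and $\eps<\eps_0$.

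Finally, I would take $\varlimsup_{N\to\infty}$ followed by $\varlimsup_{\eps\downarrow 0}$ to obtain
\begin{equation*}
\varlimsup_{\eps\downarrow 0}\varlimsup_{N\to\infty}\bb E_{\mu_N}\bigl[W^N_\eps(\eta_\cdot)\bigr]\,\leq\, K_0+\delta,
\end{equation*}
and since $\delta>0$ was arbitrary, send $\delta\downarrow 0$ to conclude. There is no real obstacle here: the uniform boundedness $|W^N_\eps|\leq C$ is exactly what is needed to control the contribution on the bad set $\{W^N_\eps\geq K_0+\delta\}$, while the hypothesis provides the exponential smallness of its probability.
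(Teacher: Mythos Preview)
Your proof is correct and follows essentially the same approach as the paper: split the expectation according to whether $W^N_\eps$ exceeds the threshold $K_0+\delta$, use the uniform bound $C$ on the bad set, and invoke the hypothesis at level $\ell=K_0+\delta$ to make the tail probability exponentially small. Your unfolding of the double $\varlimsup$ (allowing $N_0$ to depend on $\eps$) and your use of $e^{-N\delta/2}$ rather than $e^{-N\delta}$ are in fact slightly more careful than the paper's write-up, but the argument is the same.
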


\begin{proof}
As in Lemma \ref{superexp_replace}, the expectation  
$ \bb E_{\mu_N}\Big[\,
W_\eps^N(\eta_\cdot)\,\Big]$ is bounded from above by
\begin{equation*}
\ell\,+CT\,\bb P_{\mu_N}\Big[\,
W_\eps^N(\eta_\cdot)\geq \ell\,\Big]\,,
\end{equation*} 
for any $\ell\in \mathbb R$.
Let $\delta>0$. Take $\ell=K_0+\delta$, then 
there exists $\eps_0$ and $N_0$ such that
\begin{equation*}
 \bb P_{\mu_N}\Big[\,
W_\eps^N(\eta_\cdot)\geq \ell\,\Big]\;\leq \;e^{-N\,\delta}\,,\quad\forall N\geq N_0\mbox{ and }\forall \eps<\eps_0\,,
\end{equation*} 
we have
\begin{equation*}
 \bb E_{\mu_N}\Big[\,
W_\eps^N(\eta_\cdot)\,\Big]\;\leq \;
\ell+e^{-N\,\delta}\,,\quad\forall N\geq N_0\mbox{ and }\forall \eps<\eps_0\,.
\end{equation*} 
Thus
\begin{equation*}
 \varlimsup_{\eps\downarrow 0}\varlimsup_{N\to\infty} \bb E_{\mu_N}\Big[\,
W_\eps^N(\eta_\cdot)\,\Big]\;\leq\;\ell=K_0+\delta\,,
\end{equation*}
for all $\delta>0$,
which finishes the proof.
\end{proof}

\subsection{Replacement Lemma}\label{replace_section}

\begin{proposition}[Replacement Lemma]\label{replace} Let   $F:\bb T\to \bb R$ be a bounded function and $(\mu_N)_{N\geq 1}$ any sequence of measures. Then, with $i=1,2$, 
\begin{equation}\label{eqlimit}
\varlimsup_{\eps\downarrow 0}\varlimsup_{N\to \infty}\bb E_{\mu_N}\Big[\,\Big|\int_0^t\!\pfrac{1}{N}\!\sum_{x\neq -1} F(\pfrac{x}{N})
\Big\{\tau_x g_i(\eta_s)-\tilde{g}_i(\eta_s^{\eps N}(x), 
\eta_s^{\eps N}(x\!+\!1))\Big\}ds\,\Big|\,\Big]\!=0,
\end{equation}
\begin{equation*}
\varlimsup_{\eps\downarrow 0}\varlimsup_{N\to \infty}\bb E_{\mu_N}\Big[\,\Big|\int_0^t\pfrac{1}{N}\sum_{x\in{\bb{T}_N}} F(\pfrac{x}{N})\{
\eta_s(x)-\eta^{\eps N}_s(x)\}\,ds\,\Big|\,\Big]\,=\,0\,,
\end{equation*}
\begin{equation*}
\varlimsup_{\eps\downarrow 0}\varlimsup_{N\to \infty}\bb E_{\mu_N}\Big[\,\Big|\int_0^t 
\Big\{\tau_{-1} g_i(\eta_s)-\tilde{g}_i(\eta_s^{\eps N}(-1), 
\eta_s^{\eps N}(0))\Big\}\,ds\,\Big|\,\Big]=0\,, \; \forall t\in [0,T],\
\end{equation*}
\begin{equation*}
 \varlimsup_{\eps\downarrow 0}\varlimsup_{N\to\infty} \bb E_{\mu_N}\Big[
\Big\vert\! \int_0^T \{\eta_s(x)-\eta_s^{\eps N}(x)\}\,ds\Big\vert\Big]=0\,,\quad \textrm{ for } x=-1,0\,.
\end{equation*}
\end{proposition}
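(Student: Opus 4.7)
The approach is uniform across the four limits: each one asserts that an expectation of the absolute value of a bounded time-integral tends to zero as $N\to\infty$ and then $\eps\downarrow 0$. Since the integrands involve only occupation variables (valued in $[0,1]$) and the bounded function $F$, they are uniformly bounded by a constant depending solely on $\|F\|_\infty$. The hypothesis of Lemma \ref{superexp_replace} is therefore satisfied in each case, and it suffices to prove that, under $\bb P_{\mu_N}$, the probability that the corresponding absolute value exceeds $\delta$ decays superexponentially in $N$. The four superexponential statements are essentially already contained in Subsection \ref{super_replac_lem}.

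Concretely, the first limit matches the functional $V^{F_1,F_2}_{N,\eps}$ of Corollary \ref{super_lemma} with the choice $F_1(s,u) = F(u)\mathbf{1}_{[0,t]}(s)$ and $F_2 \equiv 0$; the growth hypothesis on $F_1$ is immediate from $\|F\|_\infty < \infty$, and the corollary then yields the required superexponential bound, which Lemma \ref{superexp_replace} converts into the desired expectation bound. The third limit is of the same type at the exceptional site $x=-1$, and it follows from Corollary \ref{super_lemma} with $F_1 \equiv 0$ and $F_2(s,u) = \mathbf{1}_{[0,t]}(s)$. The fourth limit is precisely Corollary \ref{super one} applied with $F(s,u) = \mathbf 1_{[0,t]}(s)$ and $x \in \{-1,0\}$, combined again with Lemma \ref{superexp_replace}.

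The only piece not covered by an off-the-shelf statement is the second limit, $\frac{1}{N}\sum_{x\in\bb T_N} F(\tfrac{x}{N})\{\eta_s(x) - \eta_s^{\eps N}(x)\}$, because Corollary \ref{super one} handles a single site rather than a full sum. For this term I would repeat the Feynman-Kac plus Chebyshev exponential scheme of Proposition \ref{super}, using inequality \eqref{for replace} of Lemma \ref{s01} in place of \eqref{for super1}. After choosing $A=C/N$ inside \eqref{for replace}, the Dirichlet-form term is absorbed by the speed-up factor $N^2$ produced by Feynman-Kac, and what remains is bounded by $4C^2\eps\int_0^T \tfrac{1}{N}\sum_{x\in\bb T_N} F(\tfrac{x}{N})^2\,ds = O(\eps)$, which vanishes in the iterated limit. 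The passage from $\nu_\alpha^N$ to a general $\mu_N$ proceeds via the pointwise bound $\nu_\alpha^N(\eta)\geq (\alpha\wedge(1-\alpha))^N$, exactly as in the proof of Corollary \ref{super_lemma}. Lemma \ref{superexp_replace} then closes this case. There is no real obstacle beyond what has already been overcome in Proposition \ref{super}; the Replacement Lemma is a routine packaging of the superexponential ingredients of this section together with the boundedness-to-expectation reduction of Lemma \ref{superexp_replace}.
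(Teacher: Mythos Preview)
Your proposal is correct. The paper itself remarks, just before its proof, that the proposition ``can be obtained as a consequence of the Lemma \ref{superexp_replace} and the Corollaries \ref{super_lemma} and \ref{super one},'' which is exactly the route you describe; it then presents an ``alternative proof'' for completeness. That alternative bypasses the superexponential-to-expectation conversion of Lemma \ref{superexp_replace} and instead uses the entropy inequality and Jensen directly: the expectation under $\mu_N$ is bounded by $\bs{H}(\mu_N|\nu_\alpha^N)/(\gamma N)$ plus a log-moment-generating functional under $\nu_\alpha^N$, and the latter is controlled by Feynman--Kac together with inequality \eqref{for replace} of Lemma \ref{s01}, choosing $A=\gamma/N$. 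Your approach trades the entropy/Jensen step for Chebyshev plus the pointwise bound $\nu_\alpha^N(\eta)\ge(\alpha\wedge(1-\alpha))^N$ and then invokes Lemma \ref{superexp_replace}; the paper's alternative is marginally more direct for this particular statement, but your route has the virtue of reusing the superexponential machinery already built, which is precisely why the authors note it as the natural first option.
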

 This proposition can be obtained as a consequence of the Lemma \ref{superexp_replace} and the Corollaries \ref{super_lemma} and \ref{super one}, but  just for the sake of completeness we present here an alternative proof.
\begin{proof}
We detail the proof of the first limit, being the others similar.
By the definition of the entropy and Jensen's inequality, the
expectation in \eqref{eqlimit} is bounded from above by 
\begin{equation*}
\frac {\bs{H}(\mu_N | \nu_\alpha^N)}{\gamma N} \,+\,
\frac 1{\gamma N} \log \bb E_{\nu_\alpha^N} \Big[ 
\exp\Big\{\! \gamma \Big| \int_0^t\!\pfrac{1}{N}\sum_{x\in{\bb{T}_N}} F(\pfrac{x}{N})\{
\eta_s(x)-\eta^{\eps N}_s(x)\}\,ds \Big|\Big\} \Big]\,, 
\end{equation*}
for all $\gamma >0$. In view of \eqref{f06},   it
is enough to show that the second term vanishes as $N\to\infty$
and then $\eps\downarrow 0$ for every $\gamma>0$. Since $e^{|x|} \le e^x +
e^{-x}$, we may remove
the absolute value inside the exponential. Thus, to complete the prove of this proposition, we need to show
that
\begin{equation*}
\begin{split}
\frac 1{\gamma N} \log \bb E_{\nu_\alpha^N} \Big[ 
\exp\Big\{\! \gamma \int_0^t\!\pfrac{1}{N}\sum_{x\in{\bb{T}_N}} F(\pfrac{x}{N})\{
\eta_s(x)-\eta^{\eps N}_s(x)\}\,ds \Big\} \Big] 
\end{split}
\end{equation*}
goes to zero when $N\to\infty$ and then $\eps\downarrow 0$.
By the Feynman-Kac formula\footnote{c.f.
 \cite[Lemma 7.2, page 336]{kl}}, for each fixed $N$ the previous expression is
bounded from above by
\begin{equation*}
t\,\sup_{f} \Big\{ \int \pfrac{1}{N}
\sum_{ x\in\bb T_N} 
F(\pfrac{x}{N})\big\{\eta(x)-\eta^{\eps N}(x)\big\}    
f (\eta) \, d \nu_\alpha^N(\eta)
- \pfrac{N}{\gamma}  \mf D_N (f) \Big\}\,,
\end{equation*}
where  the supremum  is carried  over all  density functions  $f$ with
respect to  $\nu_\alpha^N$. From inequality \eqref{for replace} of the Lemma  \ref{s01},  the previous expression is less than or equal to
\begin{equation*}
t\,\sup_{f} \Big\{ 4 A\eps\sum_{ x\in\bb T_N} \big(F(\pfrac{x}{N})\big)^2+\pfrac{1}{A} \mf D_N(f)
- \pfrac{N}{\gamma}  \mf D_N (f) \Big\}\,.
\end{equation*}
Choosing $A=\pfrac{\gamma}{N}$,  last expression becomes
\begin{equation*}
 \frac{4\gamma\eps t}{N}\sum_{ x\in\bb T_N} \big(F(\pfrac{x}{N})\big)^2\,,
\end{equation*}
which vanishes as $N\to\infty$ and then
$\eps\downarrow 0$, concluding the proof of first limit in 
statement of the lemma. 
\end{proof}

\subsection{Sobolev space}\label{sobolev section}

We prove in this section that any limit point $\bb Q^*$ of the
sequence $\bb Q_{\mu_N}^{N}$ is concentrated on trajectories
$\rho(t,u) du$ which belongs to a certain Sobolev space to
be defined ahead.
Let $\bb Q^*$ be a limit point of the sequence $\bb
Q_{\mu_N}^{N}$ along some subsequence.

\begin{proposition}
\label{s05simetrico}
The measure $\bb Q^*$ is concentrated on paths $\rho_t(u) du$ such that
$\rho\in \Sob$.
\end{proposition}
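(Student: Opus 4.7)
The plan is to show $\mc E(\pi) < \infty$ for $\bb Q^*$-almost every $\pi$, and then to use Riesz representation to extract from this the weak spatial derivative of $\rho$ in $L^2([0,T]\times(0,1))$. A standard argument using $\eta(x)\in\{0,1\}$ (so that $\<\pi^N_t, G\>\leq\int G(u)\,du + o(1)$ for any continuous $G\geq 0$) already shows that $\bb Q^*$ is supported on $\DMO$, so I may assume $\pi(du) = \rho_t(u)\,du$ with $0\leq \rho\leq 1$. To control $\mc E(\pi)=\sup_H \mc E_H(\pi)$ I would fix a countable dense family $\{H_j\}_{j\geq 1}$ in $\Ck$. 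For every $k\geq 1$, Corollary \ref{4.7} applied to $H_1,\ldots,H_k$ gives
$$
\varlimsup_{\eps\downarrow 0}\varlimsup_{N\to\infty}\pfrac{1}{N}\log \bb P_{\mu_N}\Big[\max_{1\leq j\leq k}\mc E_{H_j}(\pi^N*\ioe)\geq \ell\Big] \leq -\ell+K_0,
$$
and, since each $H_j$ is fixed and $\pi^N*\ioe$ has density in $[0,1]$, the inner functional is uniformly bounded, so Lemma \ref{superexp_energy} upgrades this to
$$
\varlimsup_{\eps\downarrow 0}\varlimsup_{N\to\infty}\bb E_{\mu_N}\Big[\max_{1\leq j\leq k}\mc E_{H_j}(\pi^N*\ioe)\Big] \leq K_0.
$$

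Next I would pass to the limit under $\bb Q^*$. Because each $\p_u H_j$ is continuous and compactly supported in $[0,T]\times(0,1)$, the map $\pi \mapsto \int_0^T\int_{\bb T}\p_u H_j\,d\pi_t\,dt - 2\<\!\<H_j,H_j\>\!\>$ is bounded and continuous on $\DM$ in the Skorohod topology (the discontinuity of elements of $\DM$ at $u=0$ is irrelevant since $\p_u H_j$ vanishes there); in addition, a routine Taylor estimate using the smoothness of $H_j$ gives $|\mc E_{H_j}(\pi^N*\ioe)-\mc E_{H_j}(\pi^N)|\to 0$ as $\eps\downarrow 0$, uniformly in $N$. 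Passing to the limit along the subsequence $\bb Q^N_{\mu_N}\Rightarrow \bb Q^*$ and invoking Fatou's lemma, I obtain $\bb E_{\bb Q^*}[\max_{j\leq k}\mc E_{H_j}(\pi)] \leq K_0$, and then monotone convergence in $k$ combined with the density of $\{H_j\}$ in $\Ck$ (which makes the countable supremum coincide with $\mc E(\pi)$) yields $\bb E_{\bb Q^*}[\mc E(\pi)]\leq K_0 < \infty$, so in particular $\mc E(\pi)<\infty$ $\bb Q^*$-almost surely.

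Finally, for such $\pi=\rho_t(u)\,du$, replacing $H$ by $\lambda H$ in the defining inequality $\<\!\<\p_u H,\rho\>\!\> - 2\<\!\<H,H\>\!\>\leq \mc E(\pi)$ and optimizing over $\lambda\in\bb R$ produces the uniform bound
$$
\bigl|\<\!\<\p_u H,\rho\>\!\>\bigr| \;\leq\; \sqrt{8\,\mc E(\pi)}\;\|H\|_{L^2([0,T]\times(0,1))} \qquad \forall\,H\in\Ck.
$$
By density of $\Ck$ in $L^2([0,T]\times(0,1))$ and the Riesz representation theorem, there exists $\xi\in L^2([0,T]\times(0,1))$ with $\<\!\<\p_u H,\rho\>\!\> = -\<\!\<H,\xi\>\!\>$ for every $H\in\Ck$; by definition $\xi$ is the weak spatial derivative of $\rho$, and combined with $0\leq\rho\leq 1$ this gives $\rho\in\Sob$.

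The main anticipated obstacle is the passage from the superexponential bound on the smoothed path $\pi^N*\ioe$ to an expectation statement under $\bb Q^*$ phrased in terms of $\pi$ itself: this step uses crucially the compact support of the test functions in $(0,1)$, precisely in order to bypass the discontinuity of trajectories in $\DMO$ at the slow bond $u=0$, and is exactly the reason for introducing the smoothing $\ioe$ in the previous sections.
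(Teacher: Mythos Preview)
Your argument is correct and in fact coincides with the second of the two routes the paper explicitly mentions for establishing Lemma~\ref{s03}: the paper notes that one may either go through the ``classical'' chain of Lemma~\ref{s04} (which it presents in detail) or deduce the expectation bound directly from Corollary~\ref{4.7} combined with Lemma~\ref{superexp_energy}, which is exactly what you do. The final Riesz-representation step, including the $\lambda$-optimization you spell out, is the same as the paper's proof of the proposition from Lemma~\ref{s03}. One small notational slip: by Definition~\ref{energy} one has $\mc E_{H_j}(\pi^N)=\infty$ since $\pi^N\notin\DMO$; what you really mean there is the continuous extension $\pi\mapsto\int_0^T\!\int_{\bb T}\p_u H_j\,d\pi_t\,dt-2\<\!\<H_j,H_j\>\!\>$, which agrees with $\mc E_{H_j}$ on $\DMO$ and is bounded and continuous on all of $\DM$. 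With that understood, the uniform-in-$N$ comparison between $\mc E_{H_j}(\pi^N*\ioe)$ and this extension (an $O(\eps)$ Taylor estimate away from $u=0$, harmless because $H_j$ has compact support in $(0,1)$) and the passage to $\bb Q^*$ by weak convergence both go through, and monotone convergence in $k$ plus density of $\{H_j\}$ finishes the job.
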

The proof is based on the Riesz Representation Theorem and follows from the  next lemma.

\begin{lemma}
\label{s03}
\begin{equation*}
 E_{\bb Q^*} \Big[ \sup_H \Big\{ \int_0^T  \int_{\bb T} \partial_u H (s, u)  \, \rho_s(u)\,du\,ds\, 
- \, 2 \int_0^T  \int_{\bb T} H (s, u)^2 \, du \,ds\Big\} \Big] \, \le \, K_0 \,,
\end{equation*}
where the supremum is carried over all functions $H$ in $\Ck$.
\end{lemma}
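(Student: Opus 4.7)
The quantity inside the expectation equals the energy $\mc E(\pi)=\sup_{H\in\Ck}\mc E_H(\pi)$ of Definition \ref{energy}, so the claim reduces to $\bb E^{\bb Q^*}[\mc E(\pi)]\le K_0$. The two main inputs are Corollary \ref{4.7}, which provides the superexponential estimate on $\max_{j\le k}\mc E_{H_j}(\pi^N*\ioe)$ for any finite family $H_1,\dots,H_k\in\Ck$, and Lemma \ref{superexp_energy}, which converts such superexponential estimates into expectation bounds with the same constant $K_0$. To access the supremum over all $H$, I would first fix once for all a countable subset $\{H_j\}_{j\ge 1}\subset \Ck$ that is dense in the $C^{0,1}$ topology. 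Because $\rho_s$ is bounded by one whenever $\pi\in\DMO$, the map $H\mapsto \mc E_H(\pi)$ is continuous on $\Ck$, and $\bb Q^*(\DMO)=1$ (the set $\DMO$ is closed in $\DM$ and carries every $\bb Q_{\mu_N}^N$). Hence $\mc E(\pi)=\sup_{j\ge 1}\mc E_{H_j}(\pi)$ for $\bb Q^*$-a.s.\ $\pi$.

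For each fixed $k$, applying Corollary \ref{4.7} to $H_1,\dots,H_k$ and then Lemma \ref{superexp_energy}---whose boundedness hypothesis is satisfied because $\pi^N*\ioe$ has density at most one, so $\max_{j\le k}\mc E_{H_j}(\pi^N*\ioe)$ is uniformly bounded in $N$ and $\eps$ by a constant depending only on $H_1,\dots,H_k$---yields
\begin{equation*}
\varlimsup_{\eps\downarrow 0}\varlimsup_{N\to\infty}\bb E_{\mu_N}\Big[\max_{1\le j\le k}\mc E_{H_j}(\pi^N*\ioe)\Big]\;\le\;K_0.
\end{equation*}
For each $\eps>0$ smaller than the distance from $\bigcup_{j\le k}\mathrm{supp}(H_j)$ to the origin, a direct computation using Fubini and the fundamental theorem of calculus rewrites $\<\p_u H_j(s,\cdot),\pi_s*\ioe\>$ as $\<\pi_s,\Phi^{\eps,j}_s\>$, where $\Phi^{\eps,j}_s(v)=\eps^{-1}[H_j(s,v+\eps)-H_j(s,v)]$ is bounded and continuous in $(s,v)$. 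This makes $\pi\mapsto \max_{j\le k}\mc E_{H_j}(\pi*\ioe)$ a bounded continuous functional on $\DM$, so the weak convergence $\bb Q_{\mu_N}^N\Rightarrow \bb Q^*$ along the subsequence gives
\begin{equation*}
\bb E^{\bb Q^*}\Big[\max_{1\le j\le k}\mc E_{H_j}(\pi*\ioe)\Big]\;=\;\lim_{N\to\infty}\bb E_{\mu_N}\Big[\max_{1\le j\le k}\mc E_{H_j}(\pi^N*\ioe)\Big].
\end{equation*}

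Combining the two displays gives $\varlimsup_{\eps\downarrow 0}\bb E^{\bb Q^*}[\max_{j\le k}\mc E_{H_j}(\pi*\ioe)]\le K_0$. Since $\bb Q^*$-a.s.\ $\pi\in\DMO$ with $\pi_s(du)=\rho_s(u)\,du$, the Lebesgue differentiation theorem gives $(\rho_s*\ioe)(u)\to\rho_s(u)$ a.e.\ on $[0,T]\times\bb T$ (the boundary modification of $\ioe$ near zero is irrelevant, $H_j$ having compact support in $(0,1)$), and dominated convergence yields $\mc E_{H_j}(\pi*\ioe)\to \mc E_{H_j}(\pi)$ for $\bb Q^*$-a.s.\ $\pi$; bounded convergence then passes this through the expectation to produce $\bb E^{\bb Q^*}[\max_{j\le k}\mc E_{H_j}(\pi)]\le K_0$. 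Finally, since $\max_{j\le k}\mc E_{H_j}$ is monotone increasing in $k$ and bounded below by the integrable constant $\mc E_{H_1}$, monotone convergence concludes
\begin{equation*}
\bb E^{\bb Q^*}\Big[\sup_{H\in\Ck}\mc E_H(\pi)\Big]\;=\;\bb E^{\bb Q^*}\Big[\sup_{j\ge 1}\mc E_{H_j}(\pi)\Big]\;\le\;K_0,
\end{equation*}
which is the claim.

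The main technical obstacle is the continuity of $\pi\mapsto \mc E_{H_j}(\pi*\ioe)$ on $\DM$ needed to invoke weak convergence: because $\ioe$ is an indicator rather than a smooth kernel, this continuity is not immediate, and it is rescued by the compact support of $H_j$ inside $(0,1)$, which for small $\eps$ removes the piecewise modification of $\ioe$ near the origin and reduces the functional to a linear test of $\pi_s$ against a bounded continuous function.
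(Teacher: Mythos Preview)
Your argument is essentially correct and follows precisely the alternative route the paper itself points to right after stating the lemma (``The other one is just a consequence of the Corollary \ref{4.7} and the Lemma \ref{superexp_energy}''). The paper's written-out proof takes instead the classical route via Lemma \ref{s04}: it works with the functional $U_N(\eps,H_i,\eta_s^{\delta N})$, uses the entropy inequality and Feynman--Kac to bound expectations directly, and then unwinds the parameters $\delta$ and $\eps$ separately. Your approach is cleaner in that it needs only the single smoothing parameter $\eps$ and reuses the tail bound already packaged in Corollary \ref{4.7}; the price is the extra passage through Lemma \ref{superexp_energy} and the continuity argument for $\pi\mapsto\mc E_{H_j}(\pi*\ioe)$, which you handle correctly.

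One point needs fixing: your justification that $\bb Q^*(\DMO)=1$ is wrong as stated. The empirical measure $\pi^N_t$ is a sum of Dirac masses, hence \emph{never} absolutely continuous, so $\bb Q_{\mu_N}^N(\DMO)=0$ for every $N$; the Portmanteau argument you sketch does not apply. The conclusion $\bb Q^*(\DMO)=1$ is nevertheless true and standard: it follows from the exclusion rule (at most one particle per site) via the approximating closed sets $E_m^j$ of Proposition \ref{E}, for which $\bb Q_{\mu_N}^N(E_m^j)=1$ for all large $N$, and $\DMO=\cap_{j,m}E_m^j$. With this correction your proof goes through.
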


There are two ways to prove this lemma, the classical one  is a consequence of several lemmas, which we present after the proof of Proposition \ref{s05simetrico}, following the ideias of \cite{fl}. The other one is just a consequence of the Corollary \ref{4.7} and the Lemma \ref{superexp_energy}.

\begin{proof}[Proof of Proposition \ref{s05simetrico}]
Denote by $\ell : \Ck \to \bb R$ the linear
functional defined by
\begin{equation*}
\ell (H) \,=\, \int_0^T  \int_{\bb T}
\partial_u H (s, u) \,\rho_s(u)\,du\,ds\,.
\end{equation*}
Since $\Ck$ is dense in $L^2([0,T]\times \bb
T)$, by Lemma \ref{s03}, $\bb Q^*$-almost surely $\ell$ is a  bounded linear functional on
$\Ck$. Therefore  we can extend $\ell$ to a $\bb Q^*$-almost surely bounded functional
in $L^2([0,T]\times \bb T)$. By the Riesz Representation
Theorem, there exists a function $G$ in $L^2([0,T]\times \bb T)$
such that
\begin{equation*}
\ell (H) \,=\, - \int_0^T  \int_{\bb T}
 H (s, u) \, G(s,u)\,du\,ds\,,
\end{equation*}
concluding the proof.
\end{proof}


For a function $H:\bb T\to \bb R$,  $\eps
>0$ and a positive integer $N$, define $U_N(\eps,  H,
\eta)$ by
\begin{equation}\label{U_N}
\begin{split}
U_N(\eps, H, \eta ) \;=\;&
\pfrac 1{\eps N}\sum_{x\in\bb T_N} H(\pfrac{x}{N})  
\, \Big\{ \eta(x- \eps N) -\eta (x) \Big\}\\
& - \pfrac {2}{N} \sum_{x\in\bb T_N} 
\big(H(\pfrac{x}{N})\big)^2 \{ 1+\pfrac{1}{\eps}\mathbf{1}_{(-\eps,0]}(\pfrac{x}{N})\}\,.
\end{split}
\end{equation}
Recall the definition of the constant $K_0$ given in \eqref{f06}.

\begin{lemma}
\label{s04}
For $k\ge 1$, let  $H_1,\ldots,H_k:[0,T]\times \bb T\to \bb R $  be bounded functions.
Then, for every $\eps >0$,
\begin{equation*}
\varlimsup_{\delta\downarrow 0} \varlimsup_{N\to\infty}
\bb E_{\mu_N} \Big[ \max_{1\le i\le k} \Big\{
\int_0^T U_N(\eps,  H_i (s, \cdot) , \eta^{\delta N}_s ) \, 
ds \Big\} \Big] \,\le\, K_0\,.
\end{equation*}
\end{lemma}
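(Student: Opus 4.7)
My approach is the standard entropy / Feynman--Kac route, combined with an averaging trick to absorb the $\eta^{\delta N}$-smoothing into the inequality of Lemma~\ref{certo}.

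Applying the entropy inequality with parameter $N$, the bound \eqref{f06}, the elementary inequality $e^{\max_i a_i}\le\sum_i e^{a_i}$ and \eqref{bound_log}, I would reduce the claim to showing that for each fixed $i$,
\begin{equation*}
\varlimsup_{\delta\downarrow 0}\varlimsup_{N\to\infty}\pfrac{1}{N}\log \bb E_{\nu_\alpha^N}\Big[\exp\Big\{N\!\int_0^T\!\! U_N(\eps, H_i(s,\cdot), \eta^{\delta N}_s)\, ds\Big\}\Big]\;\le\; 0.
\end{equation*}
The $\log k / N$ cost of replacing a $\max$ by a sum is harmless since $k$ is fixed, and the relative entropy contributes at most $K_0$, which is exactly the budget on the right-hand side of the lemma.

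By Feynman--Kac applied to the $N^2$-speeded process, the displayed expression is bounded by $\int_0^T \sup_f\bigl\{\int U_N(\eps, H_i(s,\cdot), \eta^{\delta N})\, f\,d\nu_\alpha^N - N\mf D_N(f)\bigr\}\,ds$, so it suffices to show
\begin{equation*}
\int U_N(\eps, H, \eta^{\delta N})\,f\,d\nu_\alpha^N \;\le\; N\mf D_N(f) + r_\eps(\delta)
\end{equation*}
uniformly in densities $f$, with $r_\eps(\delta)\to 0$ as $\delta\downarrow 0$. To invoke Lemma~\ref{certo}, which handles only the unsmoothed difference $\eta(x-\eps N)-\eta(x)$, I would write, for $0<\delta<\eps/2$,
\begin{equation*}
\eta^{\delta N}(x-\eps N) - \eta^{\delta N}(x)\;=\;\pfrac{1}{\delta N}\sum_{j=1}^{\delta N}\bigl[\eta(x-\eps N + j) - \eta(x + j)\bigr];
\end{equation*}
after the change of variables $y=x+j$ inside each $j$-summand, the difference part of $U_N$ becomes an average over $j$ of expressions of the form $\frac{1}{\eps N}\sum_y H^{(j)}(\pfrac{y}{N})\bigl[\eta(y-\eps N)-\eta(y)\bigr]$ with $H^{(j)}(\pfrac{y}{N}) := H(\pfrac{y-j}{N})$. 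Applying Lemma~\ref{certo} to each $j$-term and averaging yields, for each $j$, the bound $N\mf D_N(f) + \frac{2}{N}\sum_x H^{(j)}(\pfrac{x}{N})^2\{1+\frac{1}{\eps}\mathbf{1}_{(-\eps,0]}(\pfrac{x}{N})\}$; since the shift $j/N \le \delta$ leaves the regular $H^2$-sum invariant up to $O(\delta)$ and shifts the singular term by $O(\delta/\eps)$, the averaged quadratic contribution differs from the subtraction in the definition of $U_N$ by an amount that vanishes as $\delta\downarrow 0$, giving the required $r_\eps(\delta)$.

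The main technical care is needed for sites $x$ whose $\eps N$-shift window crosses the slow bond $\{-1,0\}$: the weight $\xi_{-1}^N = 1/N$ there forces the Young-inequality step through that bond to pay an extra factor $N$, which is exactly what the singular $\frac{1}{\eps}\mathbf{1}_{(-\eps,0]}$ coefficient in $U_N$ is calibrated to absorb. Verifying that the averaging over $j\in\{1,\ldots,\delta N\}$ preserves this calibration, under the constraint $\delta<\eps/2$ that keeps the smoothing and shift windows suitably separated near the slow bond, is the bookkeeping I expect to be the main obstacle; the rest reduces to the inequalities already established in Section~\ref{estimates}.
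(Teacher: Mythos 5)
Your proposal is correct in its skeleton and reaches the right conclusion, but it handles the $\delta N$-smoothing differently from the paper. The paper disposes of the local average at the very first step: by Proposition \ref{replace} (applied with the bounded function $F(\cdot)=\eps^{-1}H_i(s,\cdot)$ and its $\eps$-shift), the difference $U_N(\eps,H_i(s,\cdot),\eta^{\delta N}_s)-U_N(\eps,H_i(s,\cdot),\eta_s)$ has expectation vanishing as $N\to\infty$ and then $\delta\downarrow 0$, so it suffices to bound the unsmoothed expression; after that, the argument is exactly your entropy $+$ \eqref{bound_exp}--\eqref{bound_log} $+$ Feynman--Kac chain, closed by a single application of Lemma \ref{certo}, with no averaging needed. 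Your route instead carries $\eta^{\delta N}$ into the variational bound and absorbs it by shift-averaging, applying Lemma \ref{certo} to each translate $H^{(j)}$ and checking that the averaged quadratic terms match the subtraction in $U_N$ up to $O(\delta/\eps)$. That works: the regular $H^2$-sum is exactly translation-invariant on $\bb T_N$, and the singular term only moves by $O(\delta/\eps)\Vert H\Vert_\infty^2$, so your $r_\eps(\delta)$ is genuine. The trade-off is that the paper's version is shorter because Proposition \ref{replace} is already available, while yours is self-contained inside the Feynman--Kac step at the cost of extra bookkeeping.

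One point you should make explicit when writing this up: by Definition \ref{media} the local average is \emph{not} given by the forward box $\pfrac{1}{\delta N}\sum_{j=1}^{\delta N}\eta(x+j)$ when $\pfrac{x}{N}\in(-\delta,0)$ (the box is redefined there precisely to avoid the slow bond), so your telescoping identity for $\eta^{\delta N}(x-\eps N)-\eta^{\delta N}(x)$ fails for the $O(\delta N)$ sites $x$ with $\pfrac{x}{N}$ or $\pfrac{x-\eps N}{N}$ in $(-\delta,0)$. Those exceptional terms are bounded deterministically by $\pfrac{2}{\eps N}\Vert H\Vert_\infty$ each, contributing $O(\delta/\eps)$ in total, so they fold into $r_\eps(\delta)$; but as stated your identity is not literally true and the patch should be recorded. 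With that fix the argument is complete.
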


\begin{proof}
By Proposition \ref{replace},  in order to prove
this lemma it is sufficient to show that
\begin{equation*}
\varlimsup_{N\to\infty} \bb E_{\mu_N} \Big[ \max_{1\le i\le k} \Big\{
\int_0^T U_N(\eps,  H_i (s, \cdot) , \eta_s ) \, ds \Big\}
\Big] \,\le\, K_0\,.
\end{equation*}
By the definition of the entropy and Jensen's inequality,
 the previous expectation is bounded from above by
\begin{equation*}
\frac {\bs{H}(\mu_N \vert \nu^N_{\alpha})}{ N} \, +\, \pfrac 1{N}
\log \bb E_{\nu^N_{\alpha}} \Big[ \exp\Big\{
\max_{1\le i\le k} \Big\{ N \int_0^T \,
U_N(\eps,H_i (s, \cdot) , \eta_s )\,ds \Big\} \Big\} \Big] \,.
\end{equation*}
By \eqref{f06}, the first parcel above is smaller than $K_0$.  By \eqref{bound_exp} and \eqref{bound_log},
 the limit as $N\to\infty$ of 
the previous expression is bounded from above by
\begin{equation*}
K_0\,+\,\max_{1\le i \le k} \varlimsup_{N\to\infty} \pfrac 1{N} \log
\bb E_{\nu^N_{\alpha}} \Big[ \exp
\Big\{ N  \int_0^T \,  U_N(\eps,  H_i (s, \cdot) , \eta_s )\,ds
\Big\} \Big] \,.
\end{equation*}
We claim  that the the $\limsup$  above  is non positive for each fixed $i$ (and therefore the maximum in $i=1,\ldots, k$). Fix $1\le i\le k$.
By Feynman-Kac's formula\footnote{See  \cite{kl}, Lemma 7.2, p. 336.} and the variational formula for the
largest eigenvalue of a symmetric operator, for each fixed
$N$, the second term in the previous expression is bounded from above by
\begin{equation*}
\int_0^T \, \sup_{f} \Big\{  \int U_N(\eps,  H_i (s, \cdot) , \eta_s )
f(\eta)d \nu^N_{\alpha} (\eta) - N \mf D_N (f) \Big\}\, ds\,.
\end{equation*}
In last formula the supremum is taken over all probability densities
$f$ with respect to $\nu^N_{\alpha}$.
Applying  Lemma \ref{certo} finishes the proof.
\end{proof}

%


\begin{proof}[Proof of the Lemma \ref{s03}]
Consider a sequence $\{H_\ell, \, \ell\ge 1\}$  dense in  $\Ck$ 
with respect to the norm $\Vert H\Vert_\infty+ \Vert \partial_u H \Vert_\infty$. Recall that we suppose that $\bb Q_{\mu_N}^{N}$ converges
to $\bb Q^*$. By Lemma \ref{s04}, for every $k\ge 1$,
\begin{equation*}
 \begin{split}
  \varlimsup_{\delta\downarrow 0} E_{\bb Q^*}\Big[ \max_{1\le i\le k} 
&\Big\{ \pfrac 1{\eps} \int_0^T \int_{\bb T} \,
H_i (s,u) \, [ \rho^\delta_s (u-\eps) -
\rho^\delta_s (u ) ] \,du\,ds\\
&-\, 2 \int_0^T  
\int_{\bb T}  \, \big(H_i(s,u)\big)^2  \, \{ 1+\pfrac{1}{\eps}\textbf 1_{(-\eps,0]}(u)\}\,du\,ds
\Big\} \Big]\, \le \, K_0\,,
 \end{split}
\end{equation*}
where $\rho^\delta(u):=(\rho * \iota_\delta)(u)$.
Letting $\delta\downarrow 0$, we obtain
\begin{equation*}
 \begin{split}
E_{\bb Q^*}\Big[ \max_{1\le i\le k} 
&\Big\{\pfrac 1{\eps} \int_0^T \int_{\bb T} \,
H_i (s,u) \, [ \rho_s (u-\eps) -
\rho_s (u ) ] \,du \,ds\\
& -\, 2 \int_0^T 
\int_{\bb T}  \, \big(H_i(s,u)\big)^2  \,\{ 1+\pfrac{1}{\eps}\textbf 1_{(-\eps,0]}(u)\}\,du\,ds
\Big\} \Big]\, \le \, K_0\,.
\end{split}
\end{equation*}
Changing variables in the first integral,
\begin{equation*}
 \begin{split}
E_{\bb Q^*}\Big[ \max_{1\le i\le k} 
&\Big\{  \int_0^T\int_{\bb T} \,
\pfrac{1}{\eps}[ H_i (s,u+\eps) -
 H_i (s,u) ]\rho_s (u) \,  \,du \,ds\\
&-\, 2 \int_0^T  
\int_{\bb T}  \, \big(H_i(s,u)\big)^2  \, \{ 1+\pfrac{1}{\eps}\textbf 1_{(-\eps,0]}(u)\}\,du\,ds
\Big\} \Big]\, \le \, K_0\,.
\end{split}
\end{equation*}
Since $H_i\in \Ck$, this function vanishes in a neighborhood of zero. Making $\eps\downarrow 0$ in the last inequality, we obtain
\begin{equation*}
\begin{split}
E_{\bb Q^*}\Big[ \max_{1\le i\le k}& \Big\{
\int_0^T \!\! \int_{\bb T} \partial_u H_i (s,u)
\rho_s (u)  duds - 2 \int_0^T \!\! \int_{\bb T} \big(H_i(s,u)\big)^2\, du\,ds
\Big\} \Big] \le  K_0.
\end{split}
\end{equation*}
To conclude  it remains to apply the Monotone Convergence
Theorem and recall that $\{H_\ell, \, \ell\ge 1\}$ is a dense sequence.

\end{proof}

%

\section{Hydrodynamic limit of the WASEP with a slow bond}\label{weakly chapter}

 Fix  a function $H\in C^{1,2}([0,T]\times [0,1]\,)$. The probability $\bb P^H_{\mu_N}$
corresponds to the non-homogeneous Markov process $\eta_t=\eta_t^H$  with generator $L^H_N$ defined in \eqref{lhn} accelerated by $N^2$ and with initial measure $\mu_N$. We remark that $\mu_N$ is not invariant.
Denote by $\bb Q^H_{\mu_N}$ the probability measure on the space of trajectories 
$\DM$
induced by the empirical measure $\pi_t^N$.

\begin{proposition}\label{hid asy}
Consider a bounded density profile $\rho_0 : \bb T
\to \bb R$ and  $H\in\C$. 
The sequence of probabilities $\{\bb Q_{\mu_N}^{H};\,N\geq 1\}$ converges in distribution to the probability measure
concentrated on the absolutely continuous path $\pi_t(du)=\rho_t(u)du$, where density $\rho_t(u)$ 
is the unique weak solution of the partial differential equation \eqref{edpasy}.
\end{proposition}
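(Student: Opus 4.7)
The strategy follows the standard three-step entropy method for hydrodynamic limits. First, I would prove tightness of $\{\bb Q^H_{\mu_N}\}$ in $\DM$; second, characterize the limit points as concentrated on weak solutions of \eqref{edpasy}; third, invoke uniqueness to conclude. For tightness, since $\mc M$ is already a compact Polish space under the weak topology, it suffices to check tightness of the real-valued processes $\<\pi^N_\cdot, G\>$ for each $G\in \C$ (in fact for a dense countable subset). Applying Aldous' criterion, this reduces to controlling $\int_0^T N^2 L^H_{N,s}\<\pi^N_s, G_s\>ds$ and the quadratic variation of the associated martingale; both are bounded uniformly in $N$ by a straightforward computation using that $|H|,|\partial_u H|,|\partial_u^2 H|$ are uniformly bounded on $[0,T]\times[0,1]$ and that $\xi^N_{-1}=N^{-1}$ compensates the large drift at the slow bond.

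Next, given a limit point $\bb Q^*$, I would show it is concentrated on trajectories $\pi_t(du)=\rho_t(u)du$ with $\rho\in \Sob$ satisfying the integral equation \eqref{eqint12}. For the absolute continuity and Sobolev regularity, the energy estimates of Subsection \ref{sobolev section} (Proposition \ref{s05simetrico}) apply once one verifies that the Radon--Nikodym derivative $\radonN$ produces at most an $O(e^{CN})$ multiplicative correction inside the relevant exponential estimate, which follows from a Girsanov-type computation for $\bb P^H_{\mu_N}$ against $\bb P_{\mu_N}$. Then, applying Dynkin's formula to the test function $G\in \C$, one writes
\begin{equation*}
M^{N,G}_t\,=\,\<\pi^N_t,G_t\>-\<\pi^N_0,G_0\>-\int_0^t\big(\partial_s+N^2 L^H_{N,s}\big)\<\pi^N_s,G_s\>\,ds\,,
\end{equation*}
and expands $N^2 L^H_{N,s}\<\pi^N_s,G_s\>$ via Taylor of the exponential rates in \eqref{lhn}. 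This generates: (i) a bulk discrete-Laplacian piece converging to $\<\rho_s,\Delta G_s\>$; (ii) a bulk drift piece of the form $\frac{1}{N}\sum_{x\neq -1}\partial_u G_s(\frac{x}{N})\,\partial_u H_s(\frac{x}{N})\,\tau_x g_1(\eta_s)$ (plus its counterpart with $g_2$) which, after the replacement lemma \eqref{for super1} and integration by parts, produces $2\<\chi(\rho_s)\partial_u H_s,\partial_u G_s\>$; (iii) two boundary contributions at the slow bond, one coming from the ``missing'' terms in the discrete Laplacian that give $\rho_s(0^+)\partial_u G_s(0^+)-\rho_s(0^-)\partial_u G_s(0^-)$, and another coming from the scaled rates at the bond $\{-1,0\}$ that produce $\varphi_s(\rho,H)\,\delta G_s(0)$.

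The crucial replacement is the last one: the slow bond contributes the term
\begin{equation*}
\big(e^{\delta_N H_s(-1)}\tau_{-1}g_1(\eta_s)-e^{-\delta_N H_s(-1)}\tau_{-1}g_2(\eta_s)\big)\,\delta_N G_s(-1)\,,
\end{equation*}
and I would apply Proposition \ref{replace} with the localized block averages $\eta^{\eps N}(-1),\eta^{\eps N}(0)$ to replace $\tau_{-1}g_i(\eta_s)$ by $\tilde g_i(\eta_s^{\eps N}(-1),\eta_s^{\eps N}(0))$ in expectation. Passing first $N\to\infty$ and then $\eps\downarrow 0$, together with the fact that $H\in\C$ is right-continuous at zero so that $\delta_N G_{-1}\to \delta G(0)$ and $\delta_N H_{-1}\to \delta H(0)$, yields the desired nonlinear boundary term $\varphi_s(\rho,H)$. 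The martingale term vanishes in $L^2$ as $N\to\infty$ by a standard quadratic-variation bound, so $\bb Q^*$-almost every trajectory satisfies \eqref{eqint12}.

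The main obstacle is precisely the passage to the limit in the nonlinear boundary contribution $\varphi_s(\rho,H)$: one must make sense of the side-limits $\rho_s(0^\pm)$ as $L^2([0,T])$-traces of $\rho\in\Sob$, and verify that the block averages $\eta_s^{\eps N}(-1)$ and $\eta_s^{\eps N}(0)$ converge weakly (after integration in time against bounded continuous functionals of $\pi^N$) to these traces as $\eps\downarrow 0$. This requires combining the Sobolev trace theorem for the limiting $\rho$ (guaranteed by Proposition \ref{s05simetrico}) with a uniform integrability argument to interchange limits. Once \eqref{eqint12} is established for $\bb Q^*$-a.e.~trajectory, the assumed uniqueness of weak solutions to \eqref{edpasy} forces $\bb Q^*$ to be the Dirac mass on the unique weak solution, and thus the full sequence $\bb Q^H_{\mu_N}$ converges in distribution to it.
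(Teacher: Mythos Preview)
Your proposal is correct and follows essentially the same three-step scheme as the paper: tightness via the martingale decomposition of $\<\pi^N_t,G_t\>$ (the paper uses a direct Arzel\`a--Ascoli argument on the integral term rather than Aldous, but this is cosmetic), Sobolev regularity of limit points transferred from the symmetric case via the bound $\Vert d\bb P^H_{\mu_N}/d\bb P_{\mu_N}\Vert_\infty\le e^{C(H,T)N}$, characterization via Dynkin plus the replacement lemma (the paper states a separate version, Proposition~\ref{replaceasy}, under $\bb P^H_{\mu_N}$, obtained exactly by the Radon--Nikodym bound you invoke), and finally the assumed uniqueness.

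The one place where the paper is more concrete than your sketch is the ``main obstacle'' you flag: rather than appealing abstractly to trace theory and uniform integrability, the paper inserts the convolution $\rho_s*\iota_\eps$ into every boundary and nonlinear term, splits the target probability into four pieces \eqref{prob 1}--\eqref{prob4}, applies Portmanteau only to the $\iota_\eps$-regularized piece (which is now a continuous functional on $\DM$), and then lets $\eps\downarrow 0$ using the Sobolev regularity to kill the remaining three pieces. This is precisely the mechanism that makes rigorous your ``interchange of limits'', and it is worth naming explicitly since the boundary functionals $\rho_s(0^\pm)$ are not continuous on $\DM$.
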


Observe that the Theorem \ref{t02} is a corollary of the previous proposition.
The proof of above is divided in two parts. In Subsection \ref{tight asy}, we show that
the sequence  $\{\bb Q_{\mu_N}^{H};\,N\geq 1\}$  is tight. Subsection \ref{charact asy} is reserved to the characterization of
limit points of the sequence.

Uniqueness of limit points is assumed, since we were not able to prove uniqueness of weak solutions of the partial differential equation \eqref{edpasy}.  Additionally, uniqueness of strong solutions of
 \eqref{edpasy} is presented in Appendix \ref{unique asy}. 

\subsection{Tightness}\label{tight asy} In this subsection we present the tightness of $\{\bb Q_{\mu_N}^{H}\}$. 

\begin{proposition}\label{tight asy prop}
For   fixed $H\in\C$, the sequence of measures  $\{\bb Q_{\mu_N}^{H};\,N\geq 1\}$  is tight in the Skorohod topology of 
$\DM$.
\end{proposition}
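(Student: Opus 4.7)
The plan is to apply Aldous's tightness criterion to the real-valued projections $\{\<\pi^N_t, G\>\}_{t\in[0,T]}$ for $G$ ranging over the dense subclass $C^2(\bb T)$ of $C(\bb T)$. Since every $\mu\in\mc M$ has total mass at most one and $\bb T$ is compact, $\mc M$ itself is compact in the weak topology, so tightness of the marginals at each fixed time holds automatically. By a standard reduction (see, e.g., \cite{kl}), tightness of $\{\bb Q^H_{\mu_N}\}$ on $\DM$ then reduces to verifying Aldous's condition for these real-valued projections.

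For $G\in C^2(\bb T)$, Dynkin's formula provides the martingale
\begin{equation*}
M^{N,G}_t \,:=\, \<\pi^N_t, G\> - \<\pi^N_0, G\> - \int_0^t N^2 L^H_{N,s}\<\pi^N_s, G\>\,ds.
\end{equation*}
A summation by parts in $N^2 L^H_{N,s}\<\pi^N_s, G\>$ yields, from bonds $x\neq -1$, a discretization of $\<\pi^N_s,\Delta G\>$ together with a WASEP correction of order $\<\chi(\eta_s),\p_u G\,\p_u H_s\>$; from the slow bond $x=-1$, the cancellation between the $N^2$ acceleration, $\xi^N_{-1}=N^{-1}$, and the Taylor estimate $G(0)-G(-1/N)=O(1/N)$ produces only an $O(1/N)$ contribution. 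Altogether, the drift is uniformly bounded in $(s,\eta)$ by a constant $C=C(G,H)$, and the map $t\mapsto \int_0^t N^2 L^H_{N,s}\<\pi^N_s,G\>\,ds$ is globally Lipschitz with constant $C$. A parallel estimate for the carr\'e du champ gives $\bb E^H_{\mu_N}[\<M^{N,G}\>_T]=O(1/N)$: each of the $N$ bonds contributes $O(1/N^2)$ from the squared discrete gradient $(G((x+1)/N)-G(x/N))^2$, and the slow bond contribution is even smaller thanks to the extra factor $\xi^N_{-1}=N^{-1}$. By Doob's inequality, $M^{N,G}$ is negligible in $L^2$ uniformly on $[0,T]$.

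Aldous's criterion is then verified by splitting, for any stopping time $\tau\le T$ and $\theta\le\gamma$,
\begin{equation*}
\bb P^H_{\mu_N}\big[|\<\pi^N_{\tau+\theta}, G\>-\<\pi^N_\tau, G\>|>\eps\big] \,\le\, \bb P^H_{\mu_N}\big[|M^{N,G}_{\tau+\theta}-M^{N,G}_\tau|>\eps/2\big] + \bb P^H_{\mu_N}\big[C\theta>\eps/2\big].
\end{equation*}
The second term vanishes as soon as $\gamma<\eps/(2C)$, while the first is bounded by $4\bb E^H_{\mu_N}[\<M^{N,G}\>_{\tau+\theta}-\<M^{N,G}\>_\tau]/\eps^2=O(1/N)$ via optional stopping and Chebyshev. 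The one technical subtlety worth flagging is the interplay between the $N^2$ time acceleration and the slow rate $N^{-1}$ at the bond $\{-1,0\}$: their product gives an effective rate of order $N$, which is tempered only by the $N^{-1}$ normalization of the empirical measure. Tracking this cancellation carefully in both the drift and the quadratic variation is the sole point requiring care; everything else is the standard exclusion-process tightness argument.
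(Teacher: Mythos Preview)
Your proof is correct and follows essentially the same strategy as the paper: decompose $\<\pi^N_t,G\>$ via the Dynkin martingale, bound the quadratic variation by $O(1/N)$, and show the drift $N^2 L^H_{N,s}\<\pi^N_s,G\>$ is uniformly bounded by a constant $C(G,H)$. The only cosmetic difference is that the paper, having established the uniform bound on the drift, invokes Arzel\`a--Ascoli directly on the integral term (which is then equi-Lipschitz), whereas you package the same bound into Aldous's criterion; both routes are standard and interchangeable here. One small imprecision: your phrase ``from the slow bond \dots\ produces only an $O(1/N)$ contribution'' applies to the genuine slow-bond term (line three of formula~\eqref{formula_gerador}), but the summation by parts also leaves behind the boundary remainder $N[\eta_s(0)\delta_N G_0 - \eta_s(-1)\delta_N G_{-2}]$ at sites adjacent to the slow bond, which is $O(1)$ rather than $O(1/N)$ for $G\in C^2(\bb T)$---this does not affect your conclusion that the drift is uniformly bounded, but it is worth stating accurately. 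The paper additionally carries out the computation for the larger class $G\in\C$ (discontinuous at $0$) in anticipation of the characterization step, though it notes, as you do, that $G\in C^2(\bb T)$ suffices for tightness alone.
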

\begin{proof}
In order to prove tightness of the sequence of measures $\{\bb Q^H_{\mu_N}:N\geq 1\}$ induced in the Skorohod space 
$\DM$ by the random elements $\{\pi_t^N: 0\leq t\leq T\}$ it is suficient to prove that the sequence of stochastic processes $\<\pi_t^N,H\>$ is tight. We begin by considering the martingale
\begin{equation}\label{MH}
M^H_{N,t}(G)\;=\;\<\pi^{N}_{t}, G_t\>- \<\pi^{N}_{0}, G_0\>-\int_{0}^{t}\<\pi^{N}_{s},\p_s G_s\>+N^{2}L^H_{N,s}\<\pi^{N}_{s},G_s\>\,ds\,,
\end{equation}
with $H,G\in\C$.
To prove tightness would be enough to handle the martingale above in the case $G \in C^2(\bb T)$. However, for future applications in the characterization of limit points, we  treat here the slightly more general  setting $G\in \C$.

First, let us show that the $L^2(\bb P^H_{\mu_N})$-norm of this martingale
vanishes as $N\to \infty$. The  quadratic variation of  $M^{H}_{N,t}(G)$ is given by
\begin{equation*}
\<M^{H}_N(G)\>_t\;=\;\int_{0}^{t} N^{2} \Big[ L^H_{N,s}\<\pi^{N}_{s},G_s\>^{2}-2\<\pi^{N}_{s},G_s\> L^H_{N,s}\<\pi^{N}_{s},G_s\>\Big] ds\,.
\end{equation*}
Applying  definition \eqref{lhn}, the quadratic variation $ \<M^{H}_N(G)\>_t$ can be rewritten as
\begin{equation*}
 \begin{split}
 &\int_{0}^{t}\!\!\!\sum_{x\in\bb T_{N}} \xi^{N}_{x}\big(\delta_{N} G_x\big)^2\!\Big[\!e^{\delta_{N} H_x}\!\eta_{s}(x)(1\!-\!\eta_{s}(x+1)) \!+\!e^{-\delta_{N} H_x}\!\eta_{s}(x\!+\!1)(1\!-\!\eta_{s}(x))\Big]ds
 \end{split}
\end{equation*}
where $\delta_{\!N} F_x$ denotes  $F_s(\pfrac{x+1}{N})-F_s(\pfrac{x}{N})$ and $\xi^N_x=N^{-1}$ if $x=-1$, and $\xi^N_x=1$ otherwise.  We observe that  $\delta_{\!N} F_x$ depends on $s$, but the dependence is dropped by convenience of notation. 
Since  $H,G\in C^{1,2}([0,T]\times[0,1]\,)$,
  the expression above for the quadratic variation can be easily bounded  by $N^{-1}$ times a constant not depending on $N$. 
By Doob inequality, we conclude that the supremum norm of the martingale $M^H_{N,t}(G)$ goes to zero in probability as $N$ goes to infinity. Hence $\{M^H_{N,t}(G)\}_{N\in\bb N}$ is tight.

Expression $\<\pi^{N}_{0}, G_0\>$ is bounded and constant in time, thus tight as well. 
It remains to analyze the tightness of the integral  term in \eqref{MH}. Using Taylor's expansion in the exponentials and performing some elementary computations, expression $N^{2}L^H_{N,s}\<\pi^{N}_{s},G_s\>$  can be written in the form
\begin{equation}\label{formula_gerador}
 \begin{split}
 & N\sum_{x\neq -1,0} \eta_{s}(x)\Big[G_s(\pfrac{x+1}{N})+G_s(\pfrac{x-1}{N})-2G_s(\pfrac{x}{N})\Big]\\
&+\,N\sum_{x\neq -1} \Big[\eta_{s}(x)(1-\eta_{s}(x\!+\!1))+\eta_{s}(x\!+\!1)(1-\eta_{s}(x))\Big]\,\big(\delta_{N} H_x\big)
\,\big(\delta_{N} G_x\big)\\
&+\Big[\,\eta_{s}(-1)(1-\eta_{s}(0))\,e^{\delta_{N} H_{-1}}-\,\eta_{s}(0)(1-\eta_{s}(-1))\,e^{-\delta_{N} H_{-1}}\Big]\,\delta_{N} G_{-1}\\
&+N\Big[\,\eta_{s}(0)\delta_{N} G_{0}\,-\,\eta_{s}(-1)\delta_{N} G_{-2}\Big]+O_{H,G}(\pfrac{1}{N})\,.
 \end{split}
 \end{equation}
 Again by smoothness of $H$ and $G$, the expression above is uniformly bounded in time. Hence, this integral term in \eqref{MH} is uniformly continuous. By Arzel\`a-Ascoli, the integral term is relatively compact, therefore tight. Since a finite sum of tight stochastic processes is tight, the  proof is finished.

\end{proof}

\subsection{Radon-Nikodym derivative}\label{section radon} In this section we deal with the Radon-Nikodym derivative between the SSEP with a slow bond and the WASEP with a slow bond. Its formula will be usefull both in the proof of the hydrodynamic limit for the WASEP with a slow bond and in the proof of the large deviations for the SSEP with a slow bond.

By  $(\radon)(t)$ we denote the Radon-Nikodym derivative of $\bb P^H_{\mu_N}$ 
with respect to $\bb P_{\mu_N}$ restricted to the $\sigma$-algebra generated by $\{\eta_s,\,0\leq s \leq t\}$. 
It is a general fact of stochastic processes
that $(\radon)(t)$ is a mean-one positive martingale.
The explicit formula of the Radon-Nikodym derivative between two Markov process on a countable space state\footnote{See Appendix of \cite{kl}} shows that $(\radon)(T)$  is equal to
\begin{equation}\label{radon1}
 \exp{\Bigg\{\!N\Big[\<\pi^N_T,H_T\>\!-\!\<\pi^N_0,H_0\>\!-\!\pfrac{1}{N}
\int_0^T\! e^{-N\<\pi^N_t,H_t\>}(\partial_t+N^2\LN) 
e^{N\<\pi^N_t,H_t\>} dt\Big]\!\Bigg\}}.
\end{equation}
We are going to write just $\radon$ for $\radon(T)$, since the  time horizon $T>0$ is fixed. Recall the notation $\delta_N H_x=H_t(\pfrac{x+1}{N})-H_t(\pfrac{x}{N})$. 
 Performing elementary calculations, we can rewrite \eqref{radon1} as 
\begin{equation}\label{r2}
\begin{split}
&  \exp \Bigg\{ N\<\pi^N_T,H_T\>-N\<\pi^N_0,H_0\>  -N\int_0^T \<\pi^N_t,\partial_t H_t\>\, dt \\
& - N^2 \int_0^T \sum_{x\in\bb T_N}\xi^{N}_{x} \eta_t(x)\big(1-\eta_t(x\!+\!1)\big)\big(e^{\delta_N H_x}-1\big)\,dt\\
& - N^2 \int_0^T \sum_{x\in\bb T_N} \xi^{N}_{x}\eta_t(x+1)\big(1-\eta_t(x)\big)\big(e^{-\delta_N H_{x}}-1\big)\, dt\,\Bigg\}\,.
\end{split}
\end{equation}
Since $H\in  \C$, by Taylor's expansion  and the inequality
 $\vert e^u-1-u-(1/2)u^2\vert\leq (1/6)\vert u\vert^3 e^{\vert u\vert}$, we observe that all the expressions 
 
 \begin{itemize}
 \item $\pfrac{1}{N}\sum_{x\neq -1,0} \eta_t(x)N^2(\delta_N H_x-\delta_N H_{x-1})
-\pfrac{1}{N}\sum_{x\in\bb T_N} \eta_t(x)\p_u^{2} H_t(\pfrac{x}{N})$,\vspace{0.3cm}

\item $ N^{2}\big(e^{\pm \delta_N H_x}\mp \delta_N H_x-1\big)- 
\pfrac{1}{2}(\p_u H_t)^2(\pfrac{x}{N}) $
\vspace{0.3cm}

\item $ N\delta_N H_{0}-\p_u H_t(\pfrac{0}{N})$ \quad and \quad  $ N\delta_N H_{-2}-\p_u H_t(\pfrac{-1}{N})$
 \end{itemize}
  are, in modulus,  of order $\frac{1}{N}$.
Putting together the facts above, we can rewrite 
\eqref{r2} as
\begin{equation}\label{radon_primeira_versao}
\begin{split}
&\exp{\Bigg\{N\Big[\<\pi^N_T,H_T\>-\<\pi^N_0,H_0\>-\int_0^T \<\pi^N_t,(\partial_t +\Delta) H_t\>\,dt}\\
& -  \int_0^T \Big\{\eta_t(0)\p_u H_t(\pfrac{0}{N})-\eta_t(-1)\p_u H_t(\pfrac{-1}{N})\Big\}\,dt +  O_{H,T}(\pfrac1N)\\
& -  \int_0^T \pfrac{1}{N}\sum_{x\neq -1}\Big[ \eta_t(x)\big(1-\eta_t(x+1)\big)+\eta_t(x+1)\big(1-\eta_t(x)\Big]\pfrac{1}{2}(\partial_u H_t)^2(\pfrac{x}{N})\, dt \\
& - \! \int_0^T\!\!\!\! \eta_t(-1)\!(1\!-\!\eta_t(0)\!)\big(e^{\delta_N H_{-1}}\!-\!1\big)dt -\!\!  \int_0^T\!\!\!\!\eta_t(0)(1\!-\!\eta_t(-1)\!)\big(e^{-\delta_N H_{-1}}\!-\!1\big)dt\Big]\! \Bigg\}\!.
\end{split}
\end{equation}
As we shall see, the expression above is enough in order to prove the hydrodynamical limit of the WASEP with a slow bond. Further estimates on the Radon-Nikodym derivative will be presented at Section \ref{upper}.

 \subsection{Sobolev space}
 In this section, we prove that any limit point $\bb Q^H_*$ of the sequence  $\bb Q_{\mu_N}^{H}$ is concentrated on trajectories $\rho_t(u)du$
belonging the Sobolev space of Definition \ref{Sobolev}.
By expression \eqref{radon_primeira_versao}, there exists a constant
$C(H,T)>0$  not depending on $N$ such that
\begin{equation}\label{RN}
 \left\Vert \frac{\textrm{\textbf{d}}\bb P^H_{\mu_N}}{\textrm{\textbf{d}}\bb P_{\mu_N}}\right\Vert_{\infty}\;\leq\; \exp\big\{C(H,T)\,N\big\}\,.
\end{equation}

\begin{proposition}\label{replaceasy} Given a bounded function $G:\bb T\to \bb R$, then, for all $t\in [0,T]$,
\begin{equation*}
\varlimsup_{\eps\to 0}\varlimsup_{N\to \infty}\bb E_{\mu_N}^H\Big[\,\Big|\int_0^t\pfrac{1}{N}\sum_{x\in{\bb{T}_N}} G(\pfrac{x}{N})\{
\eta_s(x)-\eta^{\eps N}_s(x)\}\,ds\,\Big|\,\Big]\,=\,0\,,
\end{equation*}
\begin{equation*}
\varlimsup_{\eps\to 0}\varlimsup_{N\to \infty}\bb E_{\mu_N}^H\Big[\Big|\int_0^t\!\pfrac{1}{N}\!\sum_{x\neq -1} G(\pfrac{x}{N})
\Big\{\tau_x g_i(\eta_{\color{blue}{s}})-\tilde{g}_i(\eta_{\color{blue}{s}}^{\eps N}(x), 
\eta_{\color{blue}{s}}^{\eps N}(x\!+\!1))\Big\}ds\Big|\Big]\!=\!0,
\end{equation*}
and
\begin{equation*}
\varlimsup_{\eps\to 0}\varlimsup_{N\to \infty}\bb E_{\mu_N}^H\Big[\,\Big|\int_0^t G(\pfrac{-1}{N})
\Big\{\tau_{-1} g_i(\eta_{\color{blue}{s}})-\tilde{g}_i(\eta_{\color{blue}{s}}^{\eps N}(-1), 
\eta_{\color{blue}{s}}^{\eps N}(0))\Big\}\,ds\,\Big|\,\Big]\,=\,0\,,
\end{equation*}
where $g_i$ and  $\tilde{g}_i$ with $i=1,2$ have been defined in \eqref{g1} and \eqref{g2}.
\end{proposition}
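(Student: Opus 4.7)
The strategy is to reduce each of the three limits to the corresponding superexponential replacement estimate under $\bb P_{\mu_N}$ (from Section \ref{super_replac_lem}) and then transfer these estimates to $\bb P^H_{\mu_N}$ via the crude bound \eqref{RN} on the Radon-Nikodym derivative. The essential point making this work is that \eqref{RN} gives $\|\radon\|_\infty \leq e^{C(H,T)N}$, which is only exponential in $N$, while the superexponential estimates deliver probabilities that decay faster than $e^{-MN}$ for every $M>0$; choosing $M > C(H,T)$ then kills the product.

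Fix one of the three integrals and denote it $I^N_\eps := \int_0^t W^N_\eps(s,\eta_s)\,ds$. In each case the integrand $W^N_\eps(s,\eta)$ is uniformly bounded by some constant $C$ depending only on $\|G\|_\infty$: indeed, $\eta(x)\in\{0,1\}$, the local averages $\eta^{\eps N}(x)\in [0,1]$, and $\tau_x g_i(\eta),\tilde{g}_i(\cdot,\cdot)\in[0,1]$. Hence for any $\delta>0$,
\[
\bb E^H_{\mu_N}\big[|I^N_\eps|\big] \;\leq\; \delta \,+\, CT\,\bb P^H_{\mu_N}\big[|I^N_\eps|>\delta\big],
\]
and a change of measure together with \eqref{RN} yields
\[
\bb P^H_{\mu_N}\big[|I^N_\eps|>\delta\big] \;=\; \bb E_{\mu_N}\Big[\mathbf{1}_{\{|I^N_\eps|>\delta\}}\,\radonN\Big] \;\leq\; e^{C(H,T)N}\,\bb P_{\mu_N}\big[|I^N_\eps|>\delta\big].
\]

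Now I invoke the superexponential replacement estimates of Section \ref{super_replac_lem}. The second and third integrals of the statement match $V^{F_1,F_2}_{N,\eps}$ of Proposition \ref{super} with $(F_1,F_2)=(G,0)$ and $(F_1,F_2)=(0,G)$ respectively (this works for both $g_1$ and $g_2$ as noted in Proposition \ref{super}), so Corollary \ref{super_lemma} gives $\varlimsup_{\eps\downarrow 0}\varlimsup_{N\to\infty} N^{-1}\log \bb P_{\mu_N}[|I^N_\eps|>\delta]=-\infty$. The first (bulk) integral is not literally one of the corollaries in the excerpt, but the identical Feynman-Kac / Chebyshev argument of Proposition \ref{super} applies, now using inequality \eqref{for replace} of Lemma \ref{s01} in place of \eqref{for super1}; the passage from $\nu^N_\alpha$ to a general initial distribution $\mu_N$ is then done exactly as in Corollary \ref{super_lemma}, yielding the same superexponential decay.

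Combining the two steps, given any $M > C(H,T)$ the $\bb P_{\mu_N}$-probability is bounded by $e^{-MN}$ for $\eps$ small and $N$ large, so
\[
\bb P^H_{\mu_N}\big[|I^N_\eps|>\delta\big]\;\leq\; e^{(C(H,T)-M)N}\;\longrightarrow\; 0,
\]
which gives $\varlimsup_{\eps\downarrow 0}\varlimsup_{N\to\infty}\bb E^H_{\mu_N}[|I^N_\eps|]\leq \delta$. Since $\delta>0$ is arbitrary, the three limits all vanish. The only genuine obstacle is the exponential growth of the Radon-Nikodym derivative, which is precisely the reason that the preceding section was developed at the level of $N^{-1}\log$ (superexponential) bounds rather than in mere $L^1$; all of the hard work has already been done there.
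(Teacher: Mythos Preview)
Your proof is correct and the mechanism is sound: the Radon-Nikodym bound \eqref{RN} is only exponential in $N$, so it is overwhelmed by the superexponential decay of $\bb P_{\mu_N}[|I^N_\eps|>\delta]$ furnished by Corollary \ref{super_lemma} (and, for the bulk term, by the obvious variant of Proposition \ref{super} based on \eqref{for replace}). Your layer-cake step $\bb E^H_{\mu_N}[|I^N_\eps|]\le \delta + CT\,\bb P^H_{\mu_N}[|I^N_\eps|>\delta]$ is exactly the device of Lemma \ref{superexp_replace}, now combined with the change of measure.

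The paper takes a slightly different path. It first applies the entropy inequality and Jensen's inequality to bound
\[
\bb E^H_{\mu_N}\big[|I^N_\eps|\big]\;\le\;\frac{\bs H(\mu_N|\nu^N_\alpha)}{\gamma N}\;+\;\frac{1}{\gamma N}\log \bb E^H_{\nu^N_\alpha}\Big[\exp\big\{\gamma N |I^N_\eps|\big\}\Big],
\]
then uses \eqref{RN} to replace $\bb E^H_{\nu^N_\alpha}$ by $\bb E_{\nu^N_\alpha}$ at the cost of an additive $C(H,T)/\gamma$, and finally invokes the Feynman-Kac computation from the proof of Proposition \ref{replace} to show that the remaining log-moment vanishes as $N\to\infty$, $\eps\downarrow 0$. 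Sending $\gamma\to\infty$ disposes of the residual $K_0/\gamma + C(H,T)/\gamma$.

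Both routes hinge on \eqref{RN}. Your approach is more modular --- it black-boxes the superexponential probability estimates already established in Section \ref{super_replac_lem} --- whereas the paper re-enters the exponential-moment/Feynman-Kac machinery directly. The trade-off is that the paper's argument is self-contained at the level of $L^1$ once Lemma \ref{s01} is available, while yours makes the logical dependence on the superexponential section explicit.
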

\begin{proof}
Let us prove the first limit above. Fix $\gamma >0$. From definition of the entropy and Jensen's inequality, the
expectation appearing there is bounded from above by 
\begin{equation*}
\pfrac {\bs{H} (\mu_N | \nu_\alpha^N)}{\gamma N} \,+\,
\pfrac 1{\gamma N} \log \bb E^H_{\nu_\alpha^N} \Bigg[ 
\exp\Big\{ \gamma \,\Big| \int_0^t  
\sum_{x\in{\bb{T}_N}}  G(\pfrac{x}{N})\Big\{\eta_s(x)-\eta_s^{\eps N}(x)\Big\}  
   ds \,\Big|\Big\}  \Bigg]\,. 
\end{equation*}
In view of \eqref{f06},  it
is enough to show that the second term vanishes as $N\to\infty$
and then $\eps\downarrow 0$ for every $\gamma>0$. By \eqref{RN}, the expression above is bounded by 
\begin{equation*}
\pfrac {\bs{H} (\mu_N | \nu_\alpha^N)}{\gamma N} \,+\,\pfrac{C(H,T)}{\gamma}+\pfrac 1{\gamma N} \log \bb E_{\nu_\alpha^N} \Bigg[ 
\exp\!\Big\{\! \gamma \Big|\! \int_0^t  \!\!\!
\sum_{x\in{\bb{T}_N}}\!  G(\pfrac{x}{N})\Big\{\!\eta_s(x)-\eta_s^{\eps N}(x)\!\Big\}  
   ds \Big|\Big\} \! \Bigg]\!.
\end{equation*}
Invoking proof of the Proposition \ref{replace} and noticing that $\gamma$ is arbitrary large gives the result. The remaining limits follow analogous steps.
\end{proof}

\begin{proposition}
\label{p5.3.1}
The measure $\bb Q^H_*$ is concentrated on paths $\rho_t(u) du$ such that
$\rho\in\Sob$.
\end{proposition}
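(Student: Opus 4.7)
The plan is to transport the proof of Proposition \ref{s05simetrico} to the perturbed setting, using the Radon-Nikodym bound \eqref{RN} as the bridge. The short route the paper points to right after Lemma \ref{s03} (via Corollary \ref{4.7} and Lemma \ref{superexp_energy}) is perfectly suited: the Radon-Nikodym bound adds only a deterministic constant $C(H,T)$ to each tail estimate, and this gets absorbed into the final bound without affecting the subsequent Riesz-representation step.

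First I would use \eqref{RN} to write $\bb P^H_{\mu_N}(A) \leq e^{C(H,T) N}\, \bb P_{\mu_N}(A)$ for every measurable event $A$. Plugging this into Corollary \ref{4.7} immediately yields, for any $G_1, \ldots, G_k \in \Ck$ and $\ell \in \bb R$,
\[
\varlimsup_{\eps \downarrow 0} \varlimsup_{N \to \infty} \pfrac{1}{N} \log \bb P^H_{\mu_N}\!\left[\max_{1 \leq j \leq k} \mc E_{G_j}(\pi^N * \ioe) \geq \ell\right] \;\leq\; -\ell + K_0 + C(H,T).
\]
Since $0 \leq \eta \leq 1$ forces $0 \leq \pi^N*\ioe \leq 1$, each $\mc E_{G_j}(\pi^N * \ioe)$ is uniformly bounded by a constant depending only on $G_j$ and $T$. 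Re-running the two-line proof of Lemma \ref{superexp_energy} under $\bb P^H_{\mu_N}$ (replacing $K_0$ by $K_0 + C(H,T)$) gives
\[
\varlimsup_{\eps \downarrow 0} \varlimsup_{N \to \infty} \bb E^H_{\mu_N}\!\left[\max_{1 \leq j \leq k} \mc E_{G_j}(\pi^N * \ioe)\right] \;\leq\; K_0 + C(H,T).
\]

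Along a subsequence where $\bb Q^H_{\mu_N} \to \bb Q^H_*$, I then pass to the limit first in $N$ and then in $\eps \downarrow 0$, exactly as in the final computation of Lemma \ref{s03}; the boundary terms at $0 \in \bb T$ vanish because every $G_i \in \Ck$ has compact support inside $(0,1)$. This produces
\[
E_{\bb Q^H_*}\!\left[\max_{1 \leq i \leq k}\!\left\{\int_0^T\!\!\int_{\bb T}\!\partial_u G_i(s,u)\rho_s(u)\,du\,ds \,-\, 2\!\int_0^T\!\!\int_{\bb T}\! G_i(s,u)^2\,du\,ds\right\}\right] \leq K_0 + C(H,T).
\]
Choosing $\{G_i\}$ dense in $\Ck$ for the norm $\|G\|_\infty + \|\partial_u G\|_\infty$ and applying monotone convergence as $k \to \infty$ gives the analog of Lemma \ref{s03} under $\bb Q^H_*$. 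The Riesz-representation argument in the proof of Proposition \ref{s05simetrico} then furnishes a function $\partial \rho \in L^2([0,T] \times \bb T)$ with $\<\!\< \partial_u G, \rho \>\!\> = -\<\!\< G, \partial \rho \>\!\>$ for all $G \in \Ck$, which by the remark following Definition \ref{Sobolev} is exactly $\rho \in \Sob$, $\bb Q^H_*$-almost surely.

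The main obstacle is only organizational: verifying that the superexponential estimates of Section \ref{chapter super}, originally stated under $\bb P_{\mu_N}$ or $\bb P_{\nu_\alpha^N}$, transfer cleanly to $\bb P^H_{\mu_N}$. Because \eqref{RN} is a deterministic, $H$-dependent bound, this transfer costs only the additive constant $C(H,T)$ in the rate function and introduces no new analytical difficulty.
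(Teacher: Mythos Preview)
Your proposal is correct and follows essentially the same approach as the paper: the paper's proof is a one-liner stating that the result ``follows from Proposition \ref{s05simetrico} put together with estimate \eqref{RN}'', and you have spelled out exactly that strategy in detail, using the short route via Corollary \ref{4.7} and Lemma \ref{superexp_energy} (which the paper explicitly mentions as an alternative proof of Lemma \ref{s03}) together with the Radon-Nikodym bound to absorb the additional constant $C(H,T)$.
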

\begin{proof}
As before, the proof of this result follows from Proposition \ref{s05simetrico} put together with estimate \eqref{RN}.  
\end{proof}

\subsection{Characterization of limit points}\label{charact asy}

Here, we prove that all limit points of the
sequence $\{\bb Q^{H}_{\mu_N}: N\geq{1}\}$ are concentrated on trajectories of measures absolutely
continuous with respect to the Lebesgue measure: $\pi(t,du) = \rho_t(u) du$, whose density
$\rho_t(u)$ is a weak solution of the hydrodynamic equation
 \eqref{edpasy}.

Let $\bb Q_*^H$ be a limit point of the sequence $\{\bb Q^{H}_{\mu_N}: N\geq{1}\}$
and assume, without loss of generality, that $\{\bb Q^{H}_{\mu_N}: N\geq{1}\}$
converges to $\bb Q_*^H$. The existence of $\bb Q^{H}_{*}$ is guaranteed by Proposition \ref{tight asy prop}.

In Proposition \ref{p5.3.1}, we have proved that 
$\rho_t(\cdot)$ belongs to $\Sob$.
It is well known that the Sobolev space $\mc H^1(0,1)$ has special properties: its elements are
 absolutely continuous functions  with bounded variation, c.f. \cite{e}, 
with well-defined side limits at zero. Such property is inherited by
 $\Sob$ in the sense that we can
integrate in time the side limits at the boundaries. Let $G\in\C$. We begin by claiming  that
\begin{equation}\label{QH}
\begin{split}
&\bb Q_{*}^H \Big[\pi_\cdot:\,  \<\rho_t, G_t \> \,-\,  \<\rho_0,G_0 \> \,-\, \int_0^t  \,  
\<\rho_s , (\p_s +\Delta )G_s\> \,ds\\
& -2\int_0^t\!\<\chi(\rho_s) ,\,\p_u H_s \p_u G_s \> \,ds -\,\int_0^t \varphi_s(\rho,H)\,\delta G_s(0)\,ds\\
&-\int_0^t\!\big\{\rho_s(0^+)\partial_u G_s(0^+)-\rho_s(0^-)\partial_u G_s(0^-) \big\}\,ds =0\,,\;\forall t\in[0,T] \Big]=1,
\end{split}
\end{equation}
where $\varphi_s(\rho,H) $ was defined in \eqref{varphi}.
In order to prove the equality above, its enough to show
 \begin{equation*}
\begin{split}
&\bb Q_{*}^H \Big[\pi_\cdot:\sup_{0\leq t\leq T}\,\Big\vert\,   \<\rho_t, G_t \> \,-\,  \<\rho_0,G_0 \> \,-\, \int_0^t  \,  
\<\rho_s ,(\p_s +\Delta )G_s\> \,ds \\
&-2\int_0^t\<\chi(\rho_s) ,\,\p_u H_s\p_u G_s  \> \,ds-\,\int_0^t \varphi_s(\rho,H)\,\delta G_s(0)\,ds\\
&-\,\int_0^t\big\{\rho_s(0^+)\partial_u G_s(0^+)-\rho_s(0^-)\partial_u G_s(0^-) \big\}\,ds  \,\Big\vert\,>\,\zeta\, \Big]\;=\;0\,,
\end{split}
\end{equation*}
for every $\zeta >0$.
Since the boundary integrals and the integral involving $\chi(\rho_s)$ are not defined in the whole Skorohod 
space $\DM$, we cannot use
directly Portmanteau's Theorem to obtain the claim above. To overcome this technical obstacle, fix $\eps>0$, which will be taken small later.
Recall \eqref{iota}. Adding and subtracting the convolution of $\rho_t(u)$ with $\ioe$, we bound the probability above by the sum of the probabilities
\begin{equation}\label{prob 1}
\begin{split}
&\bb Q_{*}^H \Big[\pi_\cdot:\sup_{0\leq t\leq T}\,\Big\vert\,   \<\rho_t, G_t \> -  \<\rho_0,G_0 \> \,- \int_0^t  \,  
\<\rho_s ,(\p_s +\Delta )G_s\> \,ds\\
& -2\int_0^t\<\chi(\rho_s*\ioe) ,\,\p_u H_s\p_u G_s  \> \,ds -\int_0^t \varphi_s(\rho*\ioe,H)\,\delta G_s(0)\,ds \\
&-
\int_0^t\big\{(\rho_s*\ioe)(0^+)\partial_u G_s(0^+)-(\rho_s*\ioe)(0^-)\partial_u G_s(0^-) \big\}\,ds\,\Big\vert>\zeta/4\, \Big]\,,
\\
\end{split}
\end{equation}
\begin{equation}\label{prob2}
\begin{split}
\bb Q_{*}^H \Big[\pi_\cdot:\sup_{0\leq t\leq T}&\,\Big\vert\,  
2\int_0^t\<\chi(\rho_s*\ioe) - \chi(\rho_s) ,\,\p_u H_s\p_u G_s  \> \,ds\,\Big\vert\,>\,\zeta/4\, \Big],
\end{split}
\end{equation}
\begin{equation}\label{prob3}
\begin{split}
\bb Q_{*}^H \Big[\pi_\cdot:\!\sup_{0\leq t\leq T}\Big\vert \!
&\int_0^t\!\Big\{\big[(\rho_s*\ioe)(0^+)\!-\!\rho_s(0^+)\big]\partial_u G_s(0^+)\\
&-\big[(\rho_s*\ioe)(0^-)\!-\!\rho_s(0^-)\big]\partial_u G_s(0^-) \Big\}ds   \Big\vert>\zeta/4\, \Big],
\end{split}
\end{equation}
and
\begin{equation}\label{prob4}
\begin{split}
\bb Q_{*}^H \Big[\pi_\cdot:\sup_{0\leq t\leq T}&\,\Big\vert\, 
\int_0^t\Big[ \varphi_s(\rho*\ioe,H) - \varphi_s(\rho,H)\Big]\,\delta G_s(0)\,ds\,\Big\vert\,>\,\zeta/4\, \Big]\,.
\end{split}
\end{equation}
By the Proposition \ref{p5.3.1}, the sets in \eqref{prob2}, \eqref{prob3} and \eqref{prob4} decrease to sets of null probability as $\eps\downarrow 0$. It remains to deal with \eqref{prob 1}.
By Portmanteau's Theorem, \eqref{prob 1} is bounded from above by
\begin{equation*}
\begin{split}
&\varliminf_{N\to\infty} \bb Q^{H}_{\mu_N} \Big[\pi_\cdot:\sup_{0\leq t\leq T}\,\Big\vert\, 
  \<\pi_t, G_t \> \,-\,  \<\pi_0,G_0 \> \,-\, \int_0^t  \,  
\<\pi_s ,(\p_s +\Delta )G_s\> \,ds\\
& -2\int_0^t\<\chi(\pi_s*\ioe) ,\,\p_u H_s\p_u G_s  \> \,ds-\,\int_0^t \varphi_s(\pi*\ioe,H)\,\delta G_s(0)\,ds\\
&-\int_0^t\big\{(\pi_s*\ioe)(0^+)\partial_u G_s(0^+)-(\pi_s*\ioe)(0^-)\partial_u G_s(0^-) \big\}\,ds \,\Big\vert \,>\,\zeta/4\, \Big]\,.
\end{split}
\end{equation*}
Recalling the identity \eqref{identity}, the definition of $\varphi_s(\cdot, H)$ given in \eqref{varphi}, and the definition of  $\bb Q^{H}_{\mu_N}$, 
we can rewrite the previous expression as
\begin{equation*}
\begin{split}
&\varliminf_{N\to\infty}\; \bb P^{H}_{\mu_N} \Big[\sup_{0\leq t\leq T}\,\Big\vert\,   
\<\pi^N_t, G_t \> \,-\,  \<\pi^N_0,G_0 \> \,-\, \int_0^t  \,  
\<\pi^N_s ,(\p_s +\Delta )G_s\> \,ds \\
& -\!2\!\!\int_0^t\!\!\<\chi(\pi^N_s\!*\!\ioe) ,\p_u H_s\p_u G_s \> ds -\!\!\int_0^t\!\!\!\Big\{\eta_s^{\eps N}\!(0)\partial_u G_s(0^+\!)\!-\!\eta_s^{\eps N}\!(-1)\partial_u G_s(0^-\!) \!\Big\}ds\\
&-\!\!\!\int_0^t\!\!\!\Big\{\!\eta_s^{\eps N}\!(-1)(1\!-\!\eta_s^{\eps N}\!(0)\!)e^{\delta H_s(0)}\!-\!\eta_s^{\eps N}\!(0)(1\!-\!\eta_s^{\eps N}\!(-1)\!)
e^{-\delta H_s(0)}\!
\Big\}\delta G_s(0)ds 
\Big\vert\!>\!\pfrac{\zeta}{4}\! \Big]\!.\\
\end{split}
\end{equation*}
Adding and subtracting $N^2 L_{N,s}^H\<\pi_s^N,\,G_s\>$, we bound the previous probability by the sum of
\begin{equation}\label{eq40}
\varlimsup_{N\to\infty}\! \!\bb P^{H}_{\mu_N}\! \Big[\sup_{0\leq t\leq T}\!\Big\vert  
\<\pi^N_t, G_t \> -  \<\pi^N_0,G_0 \> -\!\! \int_0^t\!\!    
\<\pi^N_s ,\p_s G_s\>+N^2 L_{N,s}^H\<\pi_s^N,G_s\> ds \Big\vert\!>\!\pfrac{\zeta}{8} \Big]
\end{equation}
and
\begin{equation}\label{second prob}
\begin{split}
& \varlimsup_{N\to\infty} \bb P^{H}_{\mu_N} \Big[\sup_{0\leq t\leq T}\Big\vert  
 \int_0^t  \,  N^2 L_{N,s}^H\<\pi_s^N,\,G_s\> ds
- \int_0^t    \<\pi^N_s ,\Delta G_s\> ds \\
&-2\int_0^t\<\chi(\pi^N_s*\ioe) ,\,\p_u H_s\p_u G_s  \>\, ds\\
&-\int_0^t\Big\{\eta_s^{\eps N}(0)\partial_u G_s(0^+)-\eta_s^{\eps N}(-1)\partial_u G_s(0^-) \Big\}\,ds \\
&-\int_0^t\Big\{\eta_s^{\eps N}(-1)(1-\eta_s^{\eps N}(0))e^{\delta H_s(0)}\\
&
 -\eta_s^{\eps N}(0)(1-\eta_s^{\eps N}(-1))e^{-\delta H_s(0)}\Big\}
\delta G_s(0)ds \Big\vert>\zeta/8 \Big].
\end{split}
\end{equation}
 The expression inside the first probability
is the martingale $M^{H}_{N,t}(G)$ defined in \eqref{MH}. Since its quadratic variations goes to zero, by Doob's inequality, the limit \eqref{eq40} is zero.
By the formula \eqref{formula_gerador} for $N^2 L_{N,s}^H\<\pi_s^N,\,G_s\>$ and a  few applications of Proposition \ref{replaceasy} we obtain that \eqref{second prob}  is null,  proving the claim \eqref{QH}.

\begin{proposition}\label{prop charat asy} 
Fix a measurable profile $\rho_0 : \bb T \to [0,1]$ and consider a sequence $\{\mu_N : N\ge 1\}$ of probability 
measures on $\{0,1\}^{\bb T_N}$ associated to $\rho_0$ in the sense of \eqref{f09}. Then any limit point of 
$\bb Q_{\mu_N}^{H}$ will be concentrated on absolutely continuous paths $\pi_t(du)=\rho(t,u)du$,
with positive density $\rho_t$ bounded by $1$, such that $\rho$ is a weak solution of \eqref{edpasy} 
with initial condition $\rho_0$.
\end{proposition}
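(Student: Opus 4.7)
The plan is to assemble the ingredients already developed in the excerpt. By Proposition \ref{tight asy prop}, the sequence $\{\bb Q^H_{\mu_N}\}$ is tight, so fix any limit point $\bb Q^H_*$ along a subsequence and assume along that subsequence $\bb Q^H_{\mu_N}\to \bb Q^H_*$. I would then verify in turn: absolute continuity with density in $[0,1]$, the Sobolev regularity, the initial condition, and the integral equation of Definition \ref{6.1.1}.

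First I would argue that $\bb Q^H_*$ is concentrated on $\DMO$. Each $\pi_t^N$ is a positive measure with total mass at most $1$, and the bounds $0\le\langle\pi_t^N,G\rangle\le \frac{1}{N}\sum_{x}G(x/N)$ for nonnegative continuous $G$ pass to $\bb Q^H_*$ by weak convergence, forcing $\pi_t(du)=\rho_t(u)\,du$ with $0\le\rho_t\le 1$ almost everywhere. The regularity $\rho\in\Sob$ in condition (1) of Definition \ref{6.1.1} is supplied directly by Proposition \ref{p5.3.1}. The initial profile is identified using the hypothesis that $\mu_N$ is associated to $\rho_0$ in the sense \eqref{f09}: this gives $\pi_0^N\to \rho_0(u)\,du$ in probability, and continuity of $\pi\mapsto\pi_0$ at paths continuous at $0$ yields $\bb Q^H_*[\pi_0=\rho_0(u)\,du]=1$.

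For condition (2), I would invoke the claim \eqref{QH} proved in the paragraphs immediately preceding the proposition: for each fixed $G\in\BC$, the integral equation \eqref{eqint12} holds for all $t\in[0,T]$ on a set of full $\bb Q^H_*$-measure. Intersecting the countably many such events over a countable family of test functions dense in $\BC$ with respect to the norm $\Vert G\Vert_\infty+\Vert\partial_s G\Vert_\infty+\Vert\partial_u G\Vert_\infty+\Vert\Delta G\Vert_\infty$, and using continuity of both sides of \eqref{eqint12} in $G$ under the $L^\infty$ bound on $\rho$, the identity then extends to all $G\in\BC$ simultaneously.

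The main obstacle, which has already been absorbed into the derivation of \eqref{QH}, is that the boundary traces $\rho_s(0^\pm)$, the energy term $\langle\chi(\rho_s),\partial_u H_s\partial_u G_s\rangle$, and the nonlinear current $\varphi_s(\rho,H)$ are not continuous functionals on $\DM$, so Portmanteau cannot be applied directly. The device is to mollify $\rho_s$ via convolution $\rho_s*\ioe$, apply Portmanteau to the smoothed equation, rewrite $\rho_s*\ioe$ at the microscopic level through the identity \eqref{identity} as the block averages $\eta_s^{\eps N}$, then use Doob's inequality on the martingale $M^H_{N,t}(G)$ from \eqref{MH} (whose quadratic variation is $O(1/N)$ by the computation in Proposition \ref{tight asy prop}) together with the replacement Proposition \ref{replaceasy} to handle the $N^2 L_{N,s}^H\langle\pi_s^N,G_s\rangle$ term. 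Finally letting $\eps\downarrow 0$ and using the Sobolev regularity of $\rho$ to recover $(\rho_s*\ioe)(0^\pm)\to \rho_s(0^\pm)$ in the integrated sense closes the argument and shows that $\bb Q^H_*$ is concentrated on weak solutions of \eqref{edpasy}, completing the characterization.
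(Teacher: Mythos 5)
Your proposal is correct and follows essentially the same route as the paper: the proposition is proved by invoking the claim \eqref{QH} (established just before the statement via mollification, Portmanteau, the martingale \eqref{MH} with Doob's inequality, and the replacement Proposition \ref{replaceasy}) and then intersecting over a countable dense family of test functions, with the Sobolev regularity supplied by Proposition \ref{p5.3.1}. The only quibble is notational: the test-function space is $\C$, not $C^2_{\textrm{bc}}(\bb T)$, and your added verifications of absolute continuity and the initial condition are standard and unobjectionable.
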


\begin{proof}
 Let $\{G_i: i\geq{1}\}$ be a countable dense set of functions on $\C$,
 with respect to the norm $\|G\|_{\infty}+\|\partial_u G\|_\infty+\|\partial_u^2G\|_\infty$.
 Provided by \eqref{QH} and intercepting a countable number of sets of probability one, we can extend \eqref{QH} for
all functions $G\in \C$ simultaneously.
\end{proof}
 
\section{Large deviations upper bound}\label{upper}
The proof of the large deviations upper bound  is constructed by an optimization over a class of
mean-one positive martingales, which must be functions of the process, or, as in our case, 
close to functions of the process.
In the Section \ref{section radon} we have obtained a good candidate to be the mean-one positive martingale, the Radon-Nikodym derivative
of the measure $\bb P^H_{\mu_N}$  with respect to $\bb P_{\mu_N}$.
Since $\radon$ is not a function of the empirical measure, the first step  is  to show that it 
is superexponentially close to a function of the empirical measure. 
\subsection{Radon-Nikodym derivative (continuation)}
To write \eqref{radon_primeira_versao} in a simpler form, let us introduce some notation. Given
$H\in \C$,  consider the linear functional 
$\ell^{^{\textit{int}}}_H:\DM\to \bb R$ 
\begin{equation}\label{l}
\ell^{^{\textit{int}}}_H(\pi)\;=\;\<\pi_T,H_T\>-\<\pi_0,H_0\>-\int_0^T \<\pi_t,(\partial_t +\Delta) H_t\>\, dt\,.
\end{equation}
With this notation and recalling  \eqref{g1} and \eqref{g2},  we can rewrite  $\radon$ as
\begin{equation}\label{radon3}
\begin{split}
& \exp{\!\Bigg\{ N\Bigg[\ell^{^{\textit{int}}}_H(\pi^N)\!-\!\!\int_0^T\!\!\!\pfrac{1}{2N}\!\! \sum_{x\neq -1}\!\!\big\{\tau_x g_1( \eta_t)\!+\!
\tau_x g_2( \eta_t)\big\}(\partial_u H_t)^2(\pfrac{x}{N})  dt}\\
& -\!\! \int_0^T\!\!\! \big\{\eta_t(0)\p_u H_t(\pfrac{0}{N})\!-\!\eta_t(-1)\p_u H_t(\pfrac{-1}{N})\big\}dt\\
& - \int_0^T\!\! \big\{\tau_{-1} g_1( \eta_t)\big(e^{\delta_N H_{-1}}-1\big)+
\tau_{0} g_2( \eta_t)\big(e^{-\delta_N H_{-1}}-1\big)\big\}\,dt\Bigg]\! +\! NO_{H,T}(\pfrac{1}{N})\Bigg\}.\\
\end{split}
\end{equation}
We  begin by defining a set where the Radon-Nikodym derivative $\radon$ is close to a function of the empirical measure.  Consider
\begin{equation*}
\begin{split}
W^{1}_{N, \eps}(t,\eta)\;:=\;V^{F_1,F_2}_{N, \eps}(t,\eta)\,,\qquad &W^{2}_{N, \eps}(t,\eta)\;:=\; V^{G_1,G_2}_{N, \eps}(t,\eta)\,,\\
W^{3}_{N, \eps}(t,\eta)\;:=\;\hat{V}^{\p_u H,-1}_{N, \eps}(t,\eta)\,,\qquad & W^{4}_{N, \eps}(t,\eta)\;:=\; \hat{V}^{\p_u H,0}_{N, \eps}(t,\eta)\,,
\end{split}
\end{equation*}
where $V$ and $\hat{V}$ have been defined in Proposition \ref{super} and Corollary \ref{super one} considering
$ F_1(t,u)=\pfrac{1}{2}(\partial_u H_t)^2(u)$, $F_2(t,\pfrac{-1}{N})= e^{\delta_N H_{-1}}-1$,
$ G_1(t,u)=\pfrac{1}{2}(\partial_u H_t)^2(u)$ and $G_2(t,\pfrac{-1}{N})= e^{-\delta_N H_{-1}}-1$.
Define the set
\begin{equation}\label{set B}
\begin{split}
B^H_{\delta,\eps}=\Bigg\{&\eta\in\Ddiscreto\,; \,\,\,
\Big\vert \int_0^T W^{i}_{N, \eps}(t,\eta_t)\,dt\,\Big\vert\leq \delta\,, i=1,2,3,4\,\Bigg\}\,.\\
\end{split}
\end{equation}
From  Proposition \ref{super} and Corollary \ref{super one}, this set $B^H_{\delta,\eps}$ has probability superexponentially close to one, i.e.,
for each $\delta>0$,
\begin{equation}\label{B}
 \varlimsup_{\eps\downarrow 0}\varlimsup_{N\to\infty}\pfrac 1N \log 
 \bb P_{\mu_N}\Big[(B^H_{\delta,\eps})^{\complement}\Big]\;=\;-\infty\,.
\end{equation}
 In view of identity \eqref{identity} and 
expression \eqref{radon3}, restricted to the set $B^{H}_{\delta,\eps}$ the Radon-Nikodym derivative $\radon$ 
is equal to
\begin{equation}\label{RN in B}
\begin{split}
 &\exp{\Bigg\{} N\Bigg[\ell^{^{\textit{int}}}_H(\mathcal{A})+ \,O_{H,T}(\pfrac{1}{N})\,+\, O(\delta)\\
 &
-\! \!\int_0^T\!\!\!\!\pfrac{1}{2N}\!\! \!\sum_{x\neq -1}\!\!\Big\{\tilde{g}_1\Big( \mathcal{A}(\pfrac{x}{N}),\mathcal{A}(\pfrac{x+1}{N})\Big) \!+\!\tilde{g}_2 \Big(\mathcal{A}(\pfrac{x}{N}),\mathcal{A}(\pfrac{x+1}{N})\Big)\Big\}(\partial_u H_t)^2(\pfrac{x}{N})  dt\\
& - \!\int_0^T \Big[\mathcal{A}(\pfrac{0}{N})\p_u H_t(\pfrac{0}{N})-\mathcal{A}(\pfrac{-1}{N})\p_u H_t(\pfrac{-1}{N})\Big]\,dt\\
& - \!\int_0^T \tilde{g}_1\Big( \mathcal{A}(\pfrac{-1}{N}),\mathcal{A}(\pfrac{0}{N})\Big)(e^{\delta_N H_{-1}}-1)\,dt\\
&  -\! \int_0^T \tilde{g}_2\Big( \mathcal{A}(\pfrac{-1}{N}),\mathcal{A}(\pfrac{0}{N})\Big)(e^{-\delta_N H_{-1}}-1)\,dt \,\Bigg]\,\Bigg\}\,,
\end{split}
\end{equation}
where $\mathcal{A}=\pi_t^N*\iota_\eps$. At this point we have a function of the empirical measure modulo some small errors. 
Unfortunately, this is not enough to handle with limits on boundary terms. The reason is simple, 
the convolution $\pi^N*\ioe$ is a function (not a measure anymore) but not a \emph{smooth} function, therefore not necessarily possessing well-behaved side limits.
Hence, the next step is to replace  $\pi^N*\ioe$ by $(\pi^N*\iog)*\ioe$, where $\iog$ is a smooth approximation of identity to be defined next. Notice that $\iog$ shall not be misunderstood with $\ioe$ defined in \eqref{iota}.

Fix $f:\bb T\to \bb R_+$ a 
continuous function with support  contained in $[-\pfrac{1}{4},\pfrac{1}{4}]$,
$0\leq f \leq 4$, $f(0)>0$, $\int f=1$ and symmetric around zero,  in other words,  satisfying  $f(u)=f(1-u)$ for all $u\in \bb T$. Define  the continuous approximation of identity
$\iog$  by
$
\iog(u)=\pfrac{1}{\gamma} f(\pfrac{u}{\gamma})$.

At this point, we  need some approximation estimates to be presented in three next lemmas.
Recall that $\ell^{^{\textit{int}}}_H$ is the linear functional defined in \eqref{l}. 
\begin{lemma}\label{tildeiota}
$
\vert (\pi^N_t*\ioe)(v)-((\pi^N_t*\iog)*\ioe)(v)\vert\leq \pfrac{\gamma}{\eps}\,,
$
uniformly in $v\in\bb T$, $N\in \bb N$, and $t\in[0,T]$.
\end{lemma}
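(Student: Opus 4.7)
The plan is a direct calculation that unfolds both convolutions; there is no serious obstacle beyond book-keeping. By the definition \eqref{iota}, the function $\eps\,\ioe(\cdot,v)$ is the indicator of an interval $I_v$ of length $\eps$: namely $I_v=(v,v+\eps)$ when $v\notin(-\eps,0)$ and $I_v=(-\eps,0)$ when $v\in(-\eps,0)$. Setting $G=\mathbf{1}_{I_v}$, one reads off directly
\[
(\pi^N_t*\ioe)(v)\;=\;\pfrac{1}{\eps N}\sum_{x\in\bb T_N}\eta_t(x)\,G(\pfrac{x}{N}),
\]
while Fubini together with the symmetry $\iog(-w)=\iog(w)$ yields
\[
((\pi^N_t*\iog)*\ioe)(v)\;=\;\pfrac{1}{\eps}\int_{\bb T} G(u)\,(\pi^N_t*\iog)(u)\,du\;=\;\pfrac{1}{\eps N}\sum_{x\in\bb T_N}\eta_t(x)\,(G*\iog)(\pfrac{x}{N}).
\]
Subtracting, the quantity to be estimated equals $\pfrac{1}{\eps N}\sum_{x\in\bb T_N}\eta_t(x)\,\{G-G*\iog\}(\pfrac{x}{N})$.

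For the bound, the key observation is that because $\iog$ is a probability density supported in $[-\gamma/4,\gamma/4]$, the mollification $G*\iog$ coincides with $G$ outside the $\gamma/4$-neighborhood of each endpoint of $I_v$; hence $G-G*\iog$ is supported in a set of Lebesgue measure at most $\gamma$ and satisfies $|G-G*\iog|\le 1$ pointwise. Combining this with $0\le\eta_t(x)\le 1$, the preceding sum is bounded in absolute value by the number of $x\in\bb T_N$ whose rescaled position $\pfrac{x}{N}$ lies in that support, divided by $\eps N$; since an interval of Lebesgue measure $\gamma$ in $\bb T$ contains at most $\gamma N+O(1)$ such sites, one obtains the advertised bound $\gamma/\eps$ (up to a discretization remainder $O((\eps N)^{-1})$, which is harmless in the asymptotic regime $N\to\infty$ followed by $\eps\downarrow 0$ in which the lemma is invoked). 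The estimate is uniform in $v$, $N$, and $t$ because every constant depends only on $\eps$, $\gamma$, and the pointwise bound $\eta_t\le 1$.
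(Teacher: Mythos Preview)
Your proof is correct and follows essentially the same approach as the paper: both expand the convolutions explicitly, use the exclusion bound $\eta_t\le 1$, observe that the difference $\iota_\eps(\cdot,v)-\iota_\gamma^{\mathrm s}*\iota_\eps(\cdot,v)$ (equivalently your $G-G*\iota_\gamma^{\mathrm s}$) is supported in the $\gamma/4$-neighborhoods of the two endpoints of $I_v$, bound each summand by $1/\eps$, and count $O(\gamma N)$ nonzero terms. The discretization remainder $O((\eps N)^{-1})$ you flag is present in the paper's argument as well (it writes ``of order $\gamma N$''); since every subsequent use of the lemma is of the form $O_H(\gamma/\eps)$ in the regime $N\to\infty$ before $\gamma,\eps\downarrow 0$, this is indeed harmless.
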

\begin{proof}
 Writing the expression 
$\vert (\pi^N_t*\iota_\eps)(v)-((\pi^N_t*\iota_\gamma)*\iota_\eps)(v)\vert$ as
\begin{equation*}
\Big\vert\pfrac{1}{N}\sum_{x\in\bb T_N}\eta_t(x) \iota_\eps (\pfrac{x}{N}, v)
- \int_{\bb T}\pfrac{1}{N}
\sum_{x\in\bb T_N}\eta_t(x)
\iota_\gamma(u-\pfrac{x}{N}) \iota_\eps (u, v)\,du \Big\vert \,.
\end{equation*}
Using the rule of maximum of one particle per site, the last expression is bounded by
\begin{equation*}
\pfrac{1}{N}\sum_{x\in\bb T_N}\Big\vert \iota_\eps (\pfrac{x}{N}, v)
- \int_{\bb T} \iota_\gamma(u-\pfrac{x}{N}) \iota_\eps (u, v)\,du\Big\vert \,.
\end{equation*}
Fix $N$, $v$ and $\eps$, then $\iota_\eps (\cdot, v)$ is the indicator
 function of an open interval $(z,z+\eps)$, for $z=v$ or $z=1-\eps$.
 The summand above is possibly not zero only if $\pfrac{x}{N}$
 belongs to the open intervals $(z-\pfrac{\gamma}{4},z+\pfrac{\gamma}{4})$
 or $(z+\eps-\pfrac{\gamma}{4},z+\eps+\pfrac{\gamma}{4})$. 
The summands are bounded by $\pfrac{1}{\eps}$, and the number of non zero
 summands is of order $\gamma N$, which  concludes the proof.
\end{proof}

\begin{lemma}\label{lhtilde} 
$
 \ell^{^{\textit{int}}}_H(\pi^N) =
 \ell^{^{\textit{int}}}_H\Big((\pi^N*\iog)*\ioe\Big) + 
O_H(\eps)+O_H(\pfrac{\gamma}{\eps})\,,
$
uniformly in $N\in \bb N$.
\end{lemma}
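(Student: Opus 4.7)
My plan is to prove the lemma via a triangle inequality, splitting the error into two pieces:
\begin{equation*}
\bigl|\ell^{^{\textit{int}}}_H(\pi^N)-\ell^{^{\textit{int}}}_H(\pi^N*\ioe)\bigr|=O_H(\eps),
\qquad
\bigl|\ell^{^{\textit{int}}}_H(\pi^N*\ioe)-\ell^{^{\textit{int}}}_H((\pi^N*\iog)*\ioe)\bigr|=O_H(\pfrac{\gamma}{\eps}).
\end{equation*}
Recalling \eqref{l}, the functional $\ell^{^{\textit{int}}}_H$ is linear in its argument and evaluates $\pi$ only via the three pairings $\<\pi_0,H_0\>$, $\<\pi_T,H_T\>$, and $\int_0^T\<\pi_t,(\partial_t+\Delta)H_t\>dt$. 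Since $H\in\C$, each of the test functions $H_0$, $H_T$ and $(\partial_t+\Delta)H_t$ is uniformly bounded (with bounds depending only on $H$ and $T$).

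The second estimate is essentially immediate from Lemma~\ref{tildeiota}: that lemma gives $\|(\pi^N_t*\ioe)-((\pi^N_t*\iog)*\ioe)\|_\infty\leq \gamma/\eps$ uniformly in $t$, so pairing this difference against any uniformly bounded test function on $\bb T$ and integrating in $t\in[0,T]$ produces an error of size $O_H(\gamma/\eps)$. Summing the contributions of the three terms defining $\ell^{^{\textit{int}}}_H$ yields the claim.

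For the first estimate, I would write, for a bounded test function $G:\bb T\to\bb R$,
\begin{equation*}
\<\pi^N_t,G\>-\<\pi^N_t*\ioe,G\>\,=\,\pfrac{1}{N}\sum_{x\in\bb T_N}\eta_t(x)\bigl[G(\pfrac{x}{N})-\tilde G_\eps(\pfrac{x}{N})\bigr],
\end{equation*}
where $\tilde G_\eps(u)=\int_{\bb T}G(v)\ioe(u,v)\,dv$ is, by the definition \eqref{iota}, a local average of $G$ on an interval of length $\eps$ near $u$ which is designed to avoid straddling the slow bond. When the $\eps$-neighborhood used in $\tilde G_\eps$ lies entirely in a region where $G$ is smooth, Taylor expansion gives $|G(u)-\tilde G_\eps(u)|\leq \eps\|\partial_uG\|_\infty$. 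At the $O(\eps N)$ sites whose $\eps$-neighborhood straddles the discontinuity at $0$, we only get $|G(u)-\tilde G_\eps(u)|=O(\|G\|_\infty)$, but these contribute $O(\eps N)\cdot\|G\|_\infty/N=O(\eps\|G\|_\infty)$ to the sum. Altogether, $|\<\pi^N_t,G\>-\<\pi^N_t*\ioe,G\>|=O_H(\eps)$. Applying this to $G=H_0,H_T$ and, after integrating in $t$, to $G=(\partial_t+\Delta)H_t$, establishes the first estimate.

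The main (mild) subtlety is precisely the handling of the $O(\eps N)$ "bad" sites near the slow bond, where test functions in $\C$ may have a jump at $0$; the point is that their number is proportional to $\eps N$, so averaging by $1/N$ still yields an $O(\eps)$ bound. Combining the two estimates by the triangle inequality gives the desired $O_H(\eps)+O_H(\gamma/\eps)$.
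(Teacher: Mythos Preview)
Your proposal is correct and follows essentially the same approach as the paper: the paper also splits via the intermediate quantity $\ell^{^{\textit{int}}}_H(\pi^N*\ioe)$, invokes Lemma~\ref{tildeiota} for the $O_H(\gamma/\eps)$ piece, and then obtains the $O_H(\eps)$ piece by expressing $\<\pi^N_t*\ioe,H_t\>$ as $\pfrac{1}{N}\sum_y\eta_t(y)\int_{\bb T}H_t(v)\ioe(\pfrac{y}{N},v)\,dv$ and comparing to $\<\pi^N_t,H_t\>$. Your treatment is in fact slightly more explicit than the paper's about the $O(\eps N)$ sites near the slow bond where the test function may jump; the paper simply remarks that the approximation holds uniformly because $H\in C^{1,2}([0,T]\times\overline{(0,1)})$ and there is at most one particle per site.
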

\begin{proof}
First we compare $\ell^{^{\textit{int}}}_H(((\pi^N*\iota_\gamma)*\iota_\eps))$ with
$\ell^{^{\textit{int}}}_H((\pi^N*\iota_\eps)) $. Using the Lemma \ref{tildeiota}, we obtain the difference
between this functions is
\begin{equation*}
\begin{split}
&\Bigg\vert\Big\<((\pi^N_T*\iota_\gamma)*\iota_\eps)-
(\pi^N_T*\iota_\eps),H_T\Big\>-
\Big\<((\pi^N_0*\iota_\gamma)*\iota_\eps)-(\pi^N_0*\iota_\eps),H_0\Big\>\\
&-\int_0^T \Big\<((\pi^N_t*\iota_\gamma)*\iota_\eps)-
(\pi^N_t*\iota_\eps),(\partial_t +\Delta) H_t\Big\> \,dt\Bigg\vert
\leq C(H) \pfrac{\gamma}{\eps}\,.
\end{split}
\end{equation*}
Then, we need only analyze the expression  below
\begin{equation*}
\begin{split}
&\Big\vert\ell^{^{\textit{int}}}_H((\pi^N*\iota_\eps))-\ell^{^{\textit{int}}}_H(\pi^N) \Big\vert=\Big\vert\Big\<(\pi^N_T*\iota_\eps)-\pi_T^N,H_T\Big\>-
\Big\<(\pi_0^N*\iota_\eps)-\pi_0^N,H_0\Big\>\\
&-\int_0^T \Big\<\,(\pi_t^N*\iota_\eps)-
\pi_t^N,(\partial_t +\Delta) H_t\Big\> \, dt\Big\vert\,.
\end{split}
\end{equation*}
We handle only the first term, because the others terms are similar. Thus,
\begin{equation*}
\begin{split}
&\Big\<(\pi_t^N*\iota_\eps),H_t\Big\>=
\int_{\bb T}\!(\pi_t^N*\iota_\eps)( v)H_t(v)dv
= \int_{\bb T}\!\pfrac{1}{N}\!\!\sum_{y\in {\bb T}_N}
\eta_t(y)\iota_\eps^a(\pfrac{y}{N},v)H_t(v)dv\\&
 = \pfrac{1}{N}\sum_{y\in {\bb T}_N}\eta_t(y)
\int_{\bb T}H_t(v)\,\iota_\eps^a(\pfrac{y}{N},v)\,dv =\<\pi^N_t,H_t\> +O_H(\eps)\,.
\end{split}
\end{equation*}
This approximation holds uniformly in time and $N$, since 
$H\in C^{1,2}([0,T]\times\overline{(0,1)}\,)$ 
and there is at most one particle per site.
Therefore,
\begin{equation*}
\vert\ell^{^{\textit{int}}}_H(\pi^N*\iota_\eps)-
\ell^{^{\textit{int}}}_H(\pi^N) \vert=O_H(\eps)\,.
\end{equation*}

\end{proof}


\begin{lemma}\label{g1g2}
The function 
$ 
\Big\vert\,\tilde{g}_i\Big(\!(\pi^N_t*\ioe)(\pfrac xN),
(\pi^N_t*\ioe)(\pfrac{x+1}{N})\!\Big)-
\tilde{g}_i\Big(\! ((\pi^N_t*\iog)* \ioe) ( \pfrac xN),
((\pi^N_t*\iog)* \ioe) ( \pfrac{x+1}{N})\!\Big)\!\,\Big\vert
$
is  $O(\pfrac{\gamma}{\eps})$ for $i=1,2$.
\end{lemma}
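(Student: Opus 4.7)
The plan is to reduce the claim to a direct application of Lemma \ref{tildeiota} via the Lipschitz property of $\tilde{g}_i$ on the unit square.

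First, I would observe that both arguments in each $\tilde{g}_i$ take values in $[0,1]$: for $(\pi^N_t*\ioe)(u)$ this is immediate from the exclusion rule (at most one particle per site) together with the fact that $\ioe(\cdot,v)$ is a probability density on $\bb T$, and for $((\pi^N_t*\iog)*\ioe)(u)$ one uses the same reasoning twice (convex combinations preserve the bound $[0,1]$, since $\iog$ is a probability density by construction). Hence both pairs of arguments lie in the compact set $[0,1]^2$.

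Next, I would use that $\tilde{g}_1(\alpha,\beta)=\alpha(1-\beta)$ and $\tilde{g}_2(\alpha,\beta)=\beta(1-\alpha)$ are polynomials of degree two, with gradients bounded in supremum norm by a universal constant $L$ on $[0,1]^2$ (one can take $L=1$). Consequently, for any $(a_1,a_2),(b_1,b_2)\in [0,1]^2$,
\begin{equation*}
\bigl|\tilde{g}_i(a_1,a_2)-\tilde{g}_i(b_1,b_2)\bigr|\;\leq\; L\bigl(|a_1-b_1|+|a_2-b_2|\bigr).
\end{equation*}

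Applying this inequality with $a_k=(\pi^N_t*\ioe)(\tfrac{x+k-1}{N})$ and $b_k=((\pi^N_t*\iog)*\ioe)(\tfrac{x+k-1}{N})$, the expression in the statement is bounded by
\begin{equation*}
L\sum_{k=0,1}\Bigl|(\pi^N_t*\ioe)(\tfrac{x+k}{N})-((\pi^N_t*\iog)*\ioe)(\tfrac{x+k}{N})\Bigr|.
\end{equation*}
By Lemma \ref{tildeiota}, each summand is bounded by $\gamma/\eps$ uniformly in $N$, $t$ and $x$, which yields the claimed bound of order $O(\gamma/\eps)$. There is no real obstacle here; the statement is a one-line consequence of the preceding lemma combined with the (trivial) smoothness of the polynomials $\tilde{g}_i$ on the unit square.
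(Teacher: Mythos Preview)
Your approach is exactly the paper's: reduce the estimate to the Lipschitz property of the polynomials $\tilde g_i$ (the paper phrases this as ``the definition of $\tilde g_1,\tilde g_2$ and the triangular inequality'') and then apply Lemma~\ref{tildeiota}. One small inaccuracy worth noting: the double convolution $((\pi^N_t*\iog)*\ioe)(\cdot)$ is not guaranteed to lie in $[0,1]$, because $\pi^N_t*\iog$ is only a Riemann-sum approximation of a probability density and can overshoot $1$ (indeed the paper uses $\Vert\pi^N_t*\iog\Vert_\infty\le 4$ in the proof of Lemma~\ref{g1g2Riemann}); this is harmless here, since the Lipschitz constant of $\tilde g_i$ is still $O(1)$ on the relevant bounded set and the $O(\gamma/\eps)$ conclusion is unaffected.
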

\begin{proof}
This proof follows by  the definition of $\tilde{g}_1$ and $\tilde{g}_2$
 (see \eqref{g1} and \eqref{g2}), the triangular inequality and 
the Lemma \ref{tildeiota}.
\end{proof}

Lemmas \ref{tildeiota}, \ref{lhtilde} and \ref{g1g2} allow to replace
 $\pi^N_t$ by $(\pi^N_t*\iog)$ 
in the expression of Radon-Nikodym derivative \eqref{RN in B} modulus small errors.
Hence, restricted to the set $B^H_{\delta,\eps}$,  the Radon-Nikodym derivative 
$\radon$ becomes
\begin{equation}\label{RN in B gamma}
\begin{split}
 &\exp{\Bigg\{} N\Bigg[\ell^{^{\textit{int}}}_H(\mathcal{B}) + \,O_{H,T}(\pfrac{1}{N})\,+\,O(\delta)\,+\,O_H(\eps)\,+\,O_H(\pfrac{\gamma}{\eps})\\
 &  -\! \int_0^T\!\!\!\!\!\pfrac{1}{2N}\! \!\sum_{x\neq -1}\!\!\Big\{\tilde{g}_1\Big( \mathcal{B} ( \pfrac xN),
\mathcal{B} ( \pfrac{x+1}{N})\Big)\!+\!\tilde{g}_2 \Big(\mathcal{B} ( \pfrac xN),
\mathcal{B} ( \pfrac{x+1}{N})\Big)\Big\}(\partial_u H_t)^2(\pfrac{x}{N})  \,dt\\
& - \int_0^T \!\!\Big[\mathcal{B}(\pfrac{0}{N})\p_u H_t(\pfrac{0}{N})-
\mathcal{B}(\pfrac{-1}{N})\p_u H_t(\pfrac{-1}{N})\Big]\,dt\\
& - \int_0^T \!\tilde{g}_1\Big( \mathcal{B}(\pfrac{-1}{N}),\mathcal{B}(\pfrac{0}{N})\Big)(e^{\delta_N H_{-1}}-1)\,dt\\
&  - \int_0^T \!\tilde{g}_2\Big( \mathcal{B}(\pfrac{-1}{N}),\mathcal{B}(\pfrac{0}{N})\Big)(e^{-\delta_N H_{-1}}-1)\,dt\,\Bigg]\,\Bigg\}\,,
\end{split}
\end{equation}
where $\mathcal{B}=(\pi^N_t*\iog)* \ioe$.
Recall   $\chi(u)=u(1-u)$. The next three lemmas allow to replace the sum  involving  $\tilde{g}_i$ by an integral in $\chi$ and to make a little adjustment at the boundaries. 
\begin{lemma}\label{g1g2Riemann}The difference 
 \begin{equation*}
\begin{split}
&\Big\vert\,\pfrac{1}{N}
\sum_{x\neq -1}  \tilde{g}_i\Big(\! ((\pi^N_t*\iog)* \ioe) ( \pfrac xN),
((\pi^N_t*\iog)* \ioe) ( \pfrac{x+1}{N})\!\Big) (\partial_u H_t)^2(\pfrac{x}{N})\\
&
-\int_{\bb T}  \chi\Big(\! ((\pi^N_t*\iog)* \ioe) ( v)\!\Big)(\partial_u H_t)^2(v)\,dv\,\Big\vert
\,,
\end{split}
 \end{equation*}
 can be denoted by  some function $R_N^1(H,t,\eps,\gamma)$, which goes to zero, when $N\to\infty$,
uniformly in  $t\in[0,T]$, with  $i=1,2$.
\end{lemma}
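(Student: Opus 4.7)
The plan is to exploit the smoothness of $\mathcal{B}=(\pi^N_t*\iog)*\ioe$ that is granted by the convolution with the mollifier $\iog$. For fixed $\gamma$, the function $v\mapsto (\pi^N_t*\iog)(v)$ is of class $C^1$ with $\sup_{v,t,N}|\partial_v(\pi^N_t*\iog)(v)|\le \|\iog'\|_\infty\le C/\gamma^2$ because $\pi^N_t$ is a positive measure of total mass at most $1$. A subsequent convolution with $\ioe$ preserves this type of bound. Consequently
\begin{equation*}
\bigl|\mathcal{B}(\tfrac{x+1}{N})-\mathcal{B}(\tfrac{x}{N})\bigr|\;\le\; C_{\gamma,\eps}/N\,,
\end{equation*}
uniformly in $x,t,N$. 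In particular, since $\mathcal{B}$ takes values in $[0,1]$ and $\tilde g_i$ is smooth, we get
\begin{equation*}
\tilde g_i\bigl(\mathcal{B}(\tfrac{x}{N}),\mathcal{B}(\tfrac{x+1}{N})\bigr)\;=\;\chi\bigl(\mathcal{B}(\tfrac{x}{N})\bigr)+O(C_{\gamma,\eps}/N)\,,
\end{equation*}
for $i=1,2$, with the error uniform in $x,t,N$.

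With this replacement, the first sum in the statement becomes
\begin{equation*}
\frac{1}{N}\sum_{x\ne -1}\chi\bigl(\mathcal{B}(\tfrac{x}{N})\bigr)(\partial_u H_t)^2(\tfrac{x}{N})\;+\;O(C_{\gamma,\eps,H}/N)\,,
\end{equation*}
where we used that $(\partial_u H_t)^2$ is uniformly bounded by a constant depending only on $H$. The missing summand at $x=-1$ contributes at most a further $O(1/N)$, so one can sum over all of $\bb T_N$ at the price of a negligible error. What is left is the comparison of the Riemann sum
\begin{equation*}
\frac{1}{N}\sum_{x\in\bb T_N}\chi\bigl(\mathcal{B}(\tfrac{x}{N})\bigr)(\partial_u H_t)^2(\tfrac{x}{N})
\end{equation*}
with the integral $\int_{\bb T}\chi(\mathcal{B}(v))(\partial_u H_t)^2(v)\,dv$.

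For fixed $\eps$ and $\gamma$, the function $v\mapsto \chi(\mathcal{B}(v))$ is Lipschitz (again with constant $C_{\gamma,\eps}$) uniformly in $t$ and $N$, and the function $v\mapsto (\partial_u H_t)^2(v)$ is piecewise Lipschitz on $\bb T\setminus\{0\}$ with at most one jump discontinuity at the origin, uniformly in $t$ because $H\in\C$. The Riemann-sum error for a product of a uniformly Lipschitz function with a bounded, piecewise Lipschitz function over a partition of mesh $1/N$ is classically of order $1/N$, with a constant depending only on $\gamma,\eps,H$. Therefore the total discrepancy is some $R_N^1(H,t,\eps,\gamma)=O(C_{H,\gamma,\eps}/N)$, uniform in $t\in[0,T]$ and vanishing as $N\to\infty$.

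The only mildly delicate point is the discontinuity of $\partial_u H_t$ at $0$; but because $(\partial_u H_t)^2$ is bounded and has one-sided limits at $0$, the single interval of the partition straddling $0$ contributes at most an additional $O(1/N)$ to the Riemann-sum error, which is absorbed into the constant. No further replacement lemmas are needed here since $\mathcal{B}$ is already a smooth function of $v$, which was precisely the purpose of introducing the extra mollification by $\iog$.
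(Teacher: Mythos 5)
Your proof follows essentially the same strategy as the paper's: use the regularity of $\mathcal{B}=(\pi^N_t*\iog)*\ioe$ to replace $\tilde g_i$ by $\chi$ up to $O(1/N)$, and then control the Riemann-sum error. Two points in your write-up deserve correction, though neither is fatal. First, your increment bound $|\mathcal{B}(\tfrac{x+1}{N})-\mathcal{B}(\tfrac{x}{N})|\le C_{\gamma,\eps}/N$ is \emph{not} uniform in $x$: by the definition \eqref{iota} of $\ioe$, the map $v\mapsto(\varrho*\ioe)(v)$ equals $\tfrac1\eps\int_{-\eps}^0\varrho$ for $v\in(-\eps,0)$ and jumps to $\tfrac1\eps\int_0^\eps\varrho$ at $v=0$, so convolution with $\ioe$ does not "preserve" the Lipschitz bound across the origin, and the estimate fails precisely for the bond $(x,x+1)=(-1,0)$. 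This is the real reason the sum in the statement excludes $x=-1$ (the paper's $x\ne a_N$), not merely a negligible missing summand; since that bond is excluded, your argument still goes through, but the claim "uniformly in $x$" should be restricted to $x\ne-1$. Second, you control the oscillation of $\pi^N_t*\iog$ via $\Vert\p_u\iog\Vert_\infty$, which requires $f\in C^1$, whereas the paper only assumes $f$ continuous; the paper instead gets the needed $O(\tfrac{1}{\eps N})$ oscillation of $\mathcal{B}$ directly from the structure of $\ioe$ together with the bound $\Vert\pi^N_t*\iog\Vert_\infty\le 4$ (note also that $\mathcal{B}$ is bounded by $4$, not by $1$ as you state). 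With these repairs your argument coincides with the paper's.
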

\begin{proof}
Consider $i=1$. To simplify notation, denote
\begin{equation*}
f^N(\pfrac{x}{N}):= \tilde{g}_1\Big(\! ((\pi^N_t*\iota_\gamma)* \iota_\eps) ( \pfrac xN),
((\pi^N_t*\iota_\gamma)* \iota_\eps) ( \pfrac{x+1}{N})\!\Big)
\end{equation*}
and
\begin{equation*}
g^N(v):= \tilde{g}_1\Big(\! ((\pi^N_t*\iota_\gamma)* \iota_\eps) ( v),
((\pi^N_t*\iota_\gamma)* \iota_\eps) ( v)\!\Big)=\chi\Big(((\pi^N_t*\iota_\gamma)* \iota_\eps) ( v) \Big)\,.
\end{equation*}
From the definition of $\iota_\eps$, if $x\neq a_N$, 
\begin{equation*}
 \vert (\varrho*\iota_\eps)(\pfrac{x}{N})-(\varrho*\iota_\eps)(v)\vert\leq\frac{\Vert\varrho\Vert_{\infty}}{\eps N}\,,\qquad 
\forall v\in[\pfrac{x}{N},\pfrac{x+1}{N}]\,,
\end{equation*}
where $\varrho$ is any bounded function defined on the torus. The same inequality is still valid
with $x+1$ replacing $x$ in left side of inequality. Since 
$\Vert\pi^N_t*\iota_\gamma \Vert_\infty\leq 4$, if $x\neq a_N$,
\begin{equation*}
 \vert f^N(\pfrac{x}{N})-g^N(v)\vert = O(\pfrac{1}{\eps N}), \qquad
\forall v\in[\pfrac{x}{N},\pfrac{x+1}{N}]\,.
\end{equation*}
Then, 
\begin{equation*}
\begin{split}
&\Big\vert\pfrac{1}{N}\sum_{x\neq a_N}f^N(\pfrac{x}{N})(\partial_u H_t)^2(\pfrac{x}{N})
-\int_{\bb T}g^N(v)(\partial_u H_t)^2(v)\,dv\Big\vert\\
&\leq\Big\vert\pfrac{1}{N}
\sum_{x\neq a_N}f^N(\pfrac{x}{N})\Big[(\partial_u H_t)^2(\pfrac{x}{N})-
N\int_{\bb T}\textbf{1}_{[\frac{x}{N},\frac{x+1}{N})}(v)(\partial_u H_t)^2(v)\,dv\Big]\Big\vert\\
&+\Big\vert
\sum_{x\neq a_N}\!\!\int_{\bb T}\textbf{1}_{[\frac{x}{N},\frac{x+1}{N})}(v)\Big[f^N(\pfrac{x}{N})
-  g^N(v)\Big](\partial_u H_t)^2(v)\,dv\Big\vert\\
&+\Big\vert
\int_{\bb T}\textbf{1}_{[\frac{a_N}{N},\frac{a_N+1}{N})}(v)g^N(v)(\partial_u H_t)^2(v)\,dv\Big\vert\\
&\leq\pfrac{1}{N}\sum_{x\neq a_N}\Big\vert (\partial_u H_t)^2(\pfrac{x}{N})-
N\int_{\bb T}\textbf{1}_{[\frac{x}{N},\frac{x+1}{N})}(v)(\partial_u H_t)^2(v)\,dv\Big\vert\\
&+O(\pfrac{1}{\eps N})\int_{\bb T}\vert(\partial_u H_t)^2(v)\vert\,dv+\pfrac{1}{N}\Vert (\partial_u H_t)^2\Vert_{\infty}\,.
\end{split}
 \end{equation*}
 Since $H$ belongs to $C^{1,2}([0,T]\times\overline{(0,1)}\,)$, the first sum goes to zero, when $N\to \infty$,
 which finishes the proof.
\end{proof}

\begin{lemma}\label{a_Nderivada} Denote by $R_N^2(H,t,\eps,\gamma)$ the following expression:
 \begin{equation*}
\begin{split}
&   \Big\vert\,((\pi^N_t*\iog)* \ioe)(\pfrac{0}{N})\p_u H_t(\pfrac{0}{N})-
((\pi^N_t*\iog)* \ioe)(\pfrac{-1}{N})\p_u H_t(\pfrac{-1}{N})\\
&  -((\pi^N_t*\iog)* \ioe)(0^+)\p_u H_t(0^+)-
((\pi^N_t*\iog)* \ioe)(0^-)\p_u H_t(0^-)\,\Big\vert\,.
\end{split}
\end{equation*}
 Then  $R_N^2(H,t,\eps,\gamma)$ goes to zero, when $N$ increases to $\infty$, uniformly in  $t\in[0,T]$.
\end{lemma}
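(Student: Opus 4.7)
The approach is to exploit that the piecewise kernel $\ioe$ from \eqref{iota} was designed so that, when applied to a continuous function, its values at the lattice points $0/N$ and $-1/N$ coincide \emph{exactly} with the side limits at $0^+$ and $0^-$ respectively. Set $\Psi_t := \pi^N_t * \iog$; since $\iog(u) = \gamma^{-1}f(u/\gamma)$ is continuous by hypothesis on $f$, the function $\Psi_t$ is continuous on $\bb T$, and is uniformly bounded in $N$ and $t$ by a Riemann sum comparison with $\int_{\bb T}\iog = 1$.

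First I would unfold the definition of $\ioe$ at the four relevant evaluation points. Since $0 \in \bb T \setminus (-\eps,0)$, one has $(\Psi_t * \ioe)(0) = \eps^{-1}\int_0^{\eps}\Psi_t(u)\,du$; continuity of $\Psi_t$ together with dominated convergence then gives the identical value for $(\Psi_t * \ioe)(0^+)$, so these two quantities are exactly equal. For $N$ large enough that $1/N < \eps$, the point $-1/N$ lies inside $(-\eps,0)$, so by the second branch of \eqref{iota} we get $(\Psi_t * \ioe)(-1/N) = \eps^{-1}\int_{-\eps}^0 \Psi_t(u)\,du$; but $(\Psi_t*\ioe)(v)$ is \emph{constant} in $v\in(-\eps,0)$, hence $(\Psi_t*\ioe)(0^-)$ equals that same integral. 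Thus $(\Psi_t * \ioe)(-1/N) = (\Psi_t * \ioe)(0^-)$ exactly, for $N$ large.

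Next I would handle the factors $\p_u H_t$ using that $H \in \C$ extends to a $C^{1,2}$ function on $[0,T]\times[0,1]$ by Definition \ref{C2descont}. Under this extension, $\p_u H_t(0/N) = \p_u H_t(0^+)$ is an exact equality, while $\p_u H_t(-1/N) = \p_u H_t(1-1/N)$ differs from $\p_u H_t(1) = \p_u H_t(0^-)$ by at most $\|\p_u^2 H\|_{\infty}/N$, by the mean value theorem, uniformly in $t \in [0,T]$ (compactness of $[0,T]$ guarantees the uniform bound on $\p_u^2 H$). Substituting the two exact convolution identities together with this $O(1/N)$ estimate for $\p_u H_t$, and using the uniform bound $|\Psi_t*\ioe| \le C$, collapses the matched pairs in $R_N^2(H,t,\eps,\gamma)$ and leaves only a remainder of order $C(H)/N$, which tends to zero as $N\to\infty$ uniformly in $t\in[0,T]$.

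I do not expect a serious obstacle here: the whole lemma is a bookkeeping exercise matching the piecewise definition of $\ioe$ near the slow bond with the structure of the side limits at $0$, coupled with the classical one-dimensional smoothness of $H$. The only delicate point is that $\Psi_t$ must genuinely be continuous so that the limit defining $(\Psi_t*\ioe)(0^+)$ can be computed by passing to the limit under the integral; this is exactly the reason the intermediate smoothing by the continuous kernel $\iog$ was introduced in the earlier manipulation \eqref{RN in B gamma}, rather than leaving the bare convolution $\pi^N_t * \ioe$.
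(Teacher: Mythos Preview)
Your proof is correct and follows essentially the same route as the paper's: both rely on the convergence (in fact, for $N$ large, exact equality) of $\ioe(\cdot,\tfrac{0}{N})$ and $\ioe(\cdot,\tfrac{-1}{N})$ to $\ioe(\cdot,0^+)$ and $\ioe(\cdot,0^-)$, together with the convergence of $\p_u H_t(\tfrac{-1}{N})$ to $\p_u H_t(0^-)$ coming from the $C^{1,2}$ extension of $H$ to $[0,T]\times[0,1]$. Your version is in fact slightly sharper than the paper's one-line sketch, since you observe that the convolution values at $0/N$ and $-1/N$ are \emph{exactly} equal to the side limits (not merely convergent), so the only genuine error term comes from $\p_u H_t(\tfrac{-1}{N})-\p_u H_t(0^-)=O(1/N)$.
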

\begin{proof}
 This proof follows by fact that $\iota_\eps (\cdot, \pfrac{-1}{N})$, 
$\iota_\eps (\cdot, \pfrac{0}{N})$, $\p_u H_t(\pfrac{-1}{N})$
and $\p_u H_t(\pfrac{0}{N})$ converges to  $\iota_\eps (\cdot, 0^-)$, $\iota_\eps (\cdot, 0^+)$, $\p_u H_t(0^-)$
and $\p_u H_t(0^+)$, respectively, as $N$ increases to infinity.
\end{proof}

\begin{lemma}\label{a_Ng1g2} The expression bellow
\begin{equation*}
\begin{split}
&\Big\vert\,\tilde{g}_1\Big(\! ((\pi^N_t*\iog)* \ioe) ( 0^-),
((\pi^N_t*\iog)* \ioe) ( 0^+)\!\Big)
(e^{\delta H_t(0)}-1)
\\&
-\tilde{g}_1\Big(\! ((\pi^N_t*\iog)* \ioe) ( \pfrac{-1}{N}),
((\pi^N_t*\iog)* \ioe) ( \pfrac{0}{N})\!\Big)(e^{\delta_N H_{-1}}-1)\,\Big\vert
\end{split}
\end{equation*}
is a function  $R_N^3(H,t,\eps,\gamma)$, which goes to zero, when $N$ increases to $\infty$,  uniformly in  $t\in[0,T]$.
Analogous statement for $\tilde{g}_2$.
\end{lemma}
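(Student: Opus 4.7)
The plan is to exploit the fact that, by the definition \eqref{iota} of the asymmetric kernel $\ioe$, the value $(\psi*\ioe)(v)$ depends only on whether $v$ lies in $(-\eps,0)$ or not, and equals a fixed integral average in each case. This will collapse the two instances of $\tilde g_1$ in the statement to evaluations with identical arguments, so that the whole expression reduces to a bounded prefactor times a difference of two exponentials, which will in turn be controlled by the smoothness of $H$.

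First I would verify that for $N$ large enough so that $1/N<\eps$, the lattice points $-1/N$ and $0$ fall in the same branches of $\ioe$ as the respective one-sided limits at zero. Since $-1/N\in(-\eps,0)$, from \eqref{iota} one has $((\pi^N_t*\iog)*\ioe)(-1/N)=\pfrac{1}{\eps}\int_{-\eps}^{0}(\pi^N_t*\iog)(u)\,du=((\pi^N_t*\iog)*\ioe)(0^-)$, and since $0\notin(-\eps,0)$, $((\pi^N_t*\iog)*\ioe)(0)=\pfrac{1}{\eps}\int_{0}^{\eps}(\pi^N_t*\iog)(u)\,du=((\pi^N_t*\iog)*\ioe)(0^+)$. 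Hence the arguments of $\tilde g_1$ in the two expressions of the statement coincide \emph{exactly} for such $N$, and the entire quantity reduces to $\tilde g_1\bigl(((\pi^N_t*\iog)*\ioe)(0^-),((\pi^N_t*\iog)*\ioe)(0^+)\bigr)\cdot\bigl|e^{\delta_N H_{-1}}-e^{\delta H_t(0)}\bigr|$.

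Next I would bound each factor separately. Since $\|\iog\|_\infty\leq 4/\gamma$ and $\pi^N_t$ has total mass at most one, $(\pi^N_t*\iog)*\ioe$ is uniformly bounded (by a constant depending only on $\gamma$), so the first factor is bounded uniformly in $t$ and $N$. For the exponential factor, the right continuity of $H$ at zero (i.e., $H_t(0)=H_t(0^+)$, as in Definition \ref{C2descont}) gives $\delta_N H_{-1}-\delta H_t(0)=H_t(0^-)-H_t(-1/N)$. The $C^{1,2}$ extension of $H$ to $[0,T]\times[0,1]$ provided by the same definition yields $\|\partial_u H\|_\infty<\infty$, whence $|H_t(-1/N)-H_t(0^-)|\leq\|\partial_u H\|_\infty/N$ uniformly in $t$. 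Combined with $|e^x-e^y|\leq e^{\max(|x|,|y|)}|x-y|$ and the uniform bound $\|H\|_\infty<\infty$, this gives $|e^{\delta_N H_{-1}}-e^{\delta H_t(0)}|=O_H(1/N)$ uniformly in $t$.

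Putting these estimates together one concludes $R_N^3(H,t,\eps,\gamma)=O_{H,\gamma}(1/N)\to 0$ as $N\to\infty$, uniformly in $t\in[0,T]$. The analogous statement for $\tilde g_2$ follows verbatim, simply replacing $e^{\delta H_t(0)}$ by $e^{-\delta H_t(0)}$ (and $\tilde g_1(\alpha,\beta)=\alpha(1-\beta)$ by $\tilde g_2(\alpha,\beta)=\beta(1-\alpha)$), both of which enjoy the same boundedness. The proof is essentially routine; the only structural observation needed, and the one that makes it simple, is the exact coincidence of arguments of $\tilde g_i$ for $N$ large, so that no difference-of-arguments term must be estimated at all.
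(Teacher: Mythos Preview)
Your proof is correct and, in fact, slightly sharper than the paper's own argument. The paper proceeds by expanding $\tilde g_1(\alpha,\beta)=\alpha-\alpha\beta$, bounding the resulting two differences by the triangle inequality, and then invoking the \emph{convergence} $\iota_\eps(\cdot,\frac{-1}{N})\to\iota_\eps(\cdot,0^-)$, $\iota_\eps(\cdot,\frac{0}{N})\to\iota_\eps(\cdot,0^+)$ and $e^{\delta_N H_{-1}}\to e^{\delta H_t(0)}$. You instead observe that the particular piecewise definition of $\ioe$ in \eqref{iota} forces $((\pi^N_t*\iog)*\ioe)(\frac{-1}{N})=((\pi^N_t*\iog)*\ioe)(0^-)$ and $((\pi^N_t*\iog)*\ioe)(\frac{0}{N})=((\pi^N_t*\iog)*\ioe)(0^+)$ \emph{exactly} once $1/N<\eps$, so no decomposition of $\tilde g_1$ is needed at all: the whole expression collapses to a bounded factor times $|e^{\delta_N H_{-1}}-e^{\delta H_t(0)}|$, which you then estimate directly via the $C^{1,2}$ extension of $H$. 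Both routes give the result; yours makes more explicit use of the structure of $\ioe$ and yields the quantitative rate $O_{H,\gamma}(1/N)$ rather than merely $o(1)$.
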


\begin{proof}
 We only analyze the first statement, the second one is just the same argument.
 By definition of $\tilde{g}_1$, the expression in the left side of the first equality is bounded above by
\begin{equation*}
\begin{split}
 &\Big\vert ((\pi^N_t*\iota_\gamma)* \iota_\eps) ( \pfrac{-1}{N})
(e^{\nabla_N H_{-1}}-1)- ((\pi^N_t*\iota_\gamma)* \iota_\eps) ( 0^-)
(e^{H_t(0^+)-H_t(0^-)}-1)\Big\vert\\
& +\Big\vert ((\pi^N_t*\iota_\gamma)* \iota_\eps) ( \pfrac{-1}{N})
((\pi^N_t*\iota_\gamma)* \iota_\eps) ( \pfrac{0}{N})(e^{\nabla_N H_{-1}}-1)\\
&- ((\pi^N_t*\iota_\gamma)* \iota_\eps) ( 0^-)
((\pi^N_t*\iota_\gamma)* \iota_\eps) ( 0^+)(e^{H_t(0^+)-H_t(0^-)}-1)\Big\vert\,.
\end{split}
\end{equation*}
The conclusion follows by fact that $\iota_\eps (\cdot, \pfrac{-1}{N})$, 
$\iota_\eps (\cdot, \pfrac{0}{N})$ 
and $e^{\nabla_N H_{-1}}-1$ converges to  $\iota_\eps (\cdot, 0^-)$, $\iota_\eps (\cdot, 0^+)$ and
 $e^{H_t(0^+)-H_t(0^-)}-1$, respectively, as $N$ increases to infinity.

\end{proof}
Denote  $R_N(H,T,\eps,\gamma)$ the errors from the lemmas  \ref{g1g2Riemann}, \ref{a_Nderivada} and \ref{a_Ng1g2}, notice that
\begin{equation}\label{error}
\lim_{N\to\infty}R_N(H,T,\eps,\gamma)=0\,.
\end{equation}
By means of these lemmas,  we can 
rewrite the  expression \eqref{RN in B gamma} of the Radon-Nikodyn derivative $\radon$ on the set $B^H_{\delta,\eps}$ as
\begin{equation}\label{endRN}
 \begin{split}
& \exp{\Bigg\{} N\Bigg[\ell^{^{\textit{int}}}_H(\mathcal{B})
 - \int_0^T\!\!\int_{\bb T} 
 \chi(\mathcal{B}( v))\,(\partial_u H_t)^2(v)\,dv\,dt\\
 & - \int_0^T \!\!\Big[\mathcal{B}(0^+)\p_u H_t(0^+)-
\mathcal{B}(0^-)\p_u H_t(0^-)\Big]\,dt\\
& - \int_0^T \!\tilde{g}_1\Big( \mathcal{B}(0^-),
\mathcal{B}(0^+)\Big)(e^{\delta H_t(0)}-1)\,dt\\
&  - \int_0^T \!\tilde{g}_2\Big( \mathcal{B}(0^-),
\mathcal{B}(0^+)\Big)(e^{-\delta H_t(0)}-1)\,dt\\
& + \,R_N(H,T,\eps,\gamma)\,+\,O(\delta)\,+\,O_H(\eps)\,+\,O_H(\pfrac{\gamma}{\eps})\,\Bigg]\,\Bigg\}\,,
\end{split}
\end{equation}
where $\mathcal{B}=(\pi^N_t*\iog)* \ioe$ as before.

Now, we observe  that the  functional $\ell_H$ defined in \eqref{ell} and the functional $\ell^{^{\textit{int}}}_H$ given in Definition \eqref{l} are related by
\begin{equation*}
\begin{split}
  \ell_H(\pi)&=\ell^{^{\textit{int}}}_H(\pi)
 -\int_0^T\{\rho_t(0^+)\partial_u H_t(0^+)-\rho_t( 0^-)\partial_u H_t(0^-)\}\,dt\\
 &+\int_0^T(\rho_t(0^+)-\rho_t(0^-))(H_t(0^+)-H_t(0^-))\,dt\,.
\end{split}
\end{equation*}
Moreover, because of its smoothness, $(\pi^N*\iog)* \ioe$ has finite energy, see Definition \ref{energy}.
Recalling  Definition \ref{J def} of the functional $J_H$,
 and expression \eqref{endRN}, we conclude that $\radon$ restricted to $B_{\delta,\eps}^H$ is 
\begin{equation}\label{radon J}
\exp{\Big\{ N\Big[J_H\Big((\pi^N*\iog)*\ioe\Big)
 + R_N(H,T,\eps,\gamma)+O(\delta)+O_H(\eps)+O_H(\pfrac{\gamma}{\eps})\Big]\Big\}}\,.
\end{equation}
Let us proceed to the next step. It is not difficult to see that the set $\{\pi \in \DM\;;\;\mc E(\pi)<\infty\}$  is not closed in the concerning topology (the Skorohod topology on $\DM$). 
 This is an obstacle in order to apply the Minimax Lemma, see \cite[Lemma 3.3, page 364]{kl}, which is an important device in the proof of the large deviations upper bound. To invoke the Minimax Lemma, the functional $J_H$ should be lower semi-continuous\footnote{About signs and conventions: in \cite[Lemma 3.3, page 364]{kl} the statement is about an upper continuous functional, but the functional $J_\beta$ appearing there  corresponds  to minus our functional $J_H$ here.}, what is not true precisely because the set $\{\pi\in \DM\;;\;\mc E(\pi)<\infty\}$ is not closed.

To overcome this obstacle, we begin by introducing the next sets.
\begin{definition}\label{As}
Let $A_{k,l}$, $A_{k,l}^{\eps}$, and $A_{k,l}^{\eps,\gamma}$  be the subsets of trajectories  given by
\begin{equation*}
\begin{split}
 &A_{k,l}\;=\;\{\pi\in \DM\,;\,\,\,\max_{1\leq j\leq k}\mc E_{H_j}(\pi)\leq l\}\,,
\\&
A_{k,l}^{\eps}\;=\;\left\{\pi\in \DM\,;\,\,\,\pi* \ioe
\in A_{k,l}\right\}\,,
\\&
A_{k,l}^{\eps,\gamma}\;=\;\left\{\pi\in \DM\,;\,\,\,(\pi*\iog)* \ioe
\in A_{k,l}\right\}\,.
\end{split}
\end{equation*}
\end{definition}
\begin{proposition}\label{closed}
For fixed $\eps,\gamma,k,l$, the set $A_{k,l}^{\eps,\gamma}$ is closed.
\end{proposition}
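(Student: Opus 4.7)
The plan is to verify that for each $1\leq j\leq k$, the functional $I_j:\pi\mapsto \mc E_{H_j}\bigl((\pi*\iog)*\ioe\bigr)$ is lower semicontinuous on $\DM$ in the Skorohod topology, so that $A_{k,l}^{\eps,\gamma}=\bigcap_j\{\pi\in\DM:I_j(\pi)\leq l\}$ is automatically closed. The key structural observation is that the double smoothing by the continuous kernel $\iog$ followed by $\ioe$ turns $\pi$ into a quantity that depends on $\pi$ only through integration against continuous test functions, and such linear functionals are Skorohod-continuous.

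First I would linearize. By Fubini, the density of $(\pi_t*\iog)*\ioe$ at $v$ equals
\[
\rho^{\eps,\gamma}(t,v) \,=\, \<\pi_t,\, K_{\eps,\gamma}(\cdot,v)\>\,,\qquad
K_{\eps,\gamma}(w,v) \,:=\, \int_{\bb T}\iog(u-w)\,\ioe(u,v)\,du\,.
\]
Since $\iog$ is continuous with $\|\iog\|_\infty\leq 4/\gamma$ and $\ioe(\cdot,v)$ is a normalized indicator of an interval of fixed length (depending only on whether $v\in(-\eps,0)$), the kernel $w\mapsto K_{\eps,\gamma}(w,v)$ is continuous on $\bb T$ and uniformly bounded. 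For $H_j\in\Ck$, set
\[
\Phi_j(t,u) \,:=\, \int_{\bb T}\p_u H_j(t,v)\,K_{\eps,\gamma}(u,v)\,dv\,,
\]
which by dominated convergence is continuous and bounded on $[0,T]\times\bb T$. A further application of Fubini gives, whenever $(\pi*\iog)*\ioe\in\DMO$,
\[
I_j(\pi)\,=\,\int_0^T\<\pi_t,\,\Phi_j(t,\cdot)\>\,dt\,-\,2\<\!\<H_j,H_j\>\!\>\,=:\,\tilde{\mc E}_j(\pi)\,,
\]
and the expression $\tilde{\mc E}_j(\pi)$ on the right is now defined for \emph{every} $\pi\in\DM$.

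Next I would establish the two continuity properties needed to pass to the limit. Take a Skorohod-convergent sequence $\pi^n\to\pi$; by a standard fact, at every continuity point $t$ of $\pi$ (which is all but countably many $t$) one has $\pi^n_t\to\pi_t$ weakly in $\mc M$. Continuity of $\Phi_j(t,\cdot)$ and of $K_{\eps,\gamma}(\cdot,v)$ then yields $\<\pi^n_t,\Phi_j(t,\cdot)\>\to\<\pi_t,\Phi_j(t,\cdot)\>$ and $\rho^{\eps,\gamma,n}(t,v)\to\rho^{\eps,\gamma}(t,v)$ for a.e.\ $t$ and every $v$. Since the first integrand is uniformly bounded by $\|\Phi_j\|_\infty$, dominated convergence gives $\tilde{\mc E}_j(\pi^n)\to\tilde{\mc E}_j(\pi)$. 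Similarly, $\rho^{\eps,\gamma,n}(t,v)\leq 1$ whenever $(\pi^n*\iog)*\ioe\in\DMO$, so the pointwise limit satisfies $\rho^{\eps,\gamma}(t,v)\leq 1$ for a.e.\ $(t,v)$, i.e.\ $(\pi*\iog)*\ioe\in\DMO$. Combining, if $\pi^n\in A_{k,l}^{\eps,\gamma}$ for all $n$, then both $I_j(\pi^n)=\tilde{\mc E}_j(\pi^n)\leq l$ and the density bound hold, so in the limit $I_j(\pi)=\tilde{\mc E}_j(\pi)=\lim_n\tilde{\mc E}_j(\pi^n)\leq l$ and $\pi\in A_{k,l}^{\eps,\gamma}$.

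The main point requiring care is the role of the inner mollifier $\iog$: without it the kernel in the integral representation of $\tilde{\mc E}_j$ would reduce to $\ioe$ itself, which is only an indicator function in its first argument. Thus $\pi_t\mapsto\<\pi_t,\ioe(\cdot,v)\>$ would be merely upper semicontinuous under weak convergence of measures in $\mc M$ and $\tilde{\mc E}_j$ would fail to be continuous, so that $A_{k,l}^{\eps}$ need not be closed. It is precisely the continuity of $\iog$ that upgrades $K_{\eps,\gamma}(\cdot,v)$ to a continuous function of $w$ and restores the Skorohod continuity on which the whole argument hinges; this explains the authors' need for the extra smoothing step $A_{k,l}^{\eps}\rightsquigarrow A_{k,l}^{\eps,\gamma}$.
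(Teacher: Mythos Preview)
Your proof is correct and follows essentially the same route as the paper: both arguments show that the map $\pi\mapsto\mc E_{H_j}\bigl((\pi*\iog)*\ioe\bigr)$ is continuous on $\DM$ by using that Skorohod convergence gives weak convergence $\pi^n_t\to\pi_t$ at all but countably many times, then exploiting the continuity of $\iog$ to pass this through the convolution, and finishing with dominated convergence. Your version is somewhat more explicit---you write out the Fubini kernel $K_{\eps,\gamma}$ and the resulting test function $\Phi_j$, and you check separately that the $\DMO$ constraint on $(\pi*\iog)*\ioe$ is preserved in the limit, a point the paper's proof leaves implicit---but the underlying mechanism is identical.
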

\begin{proof}
It is sufficient to show that  the function $\psi:  \DM\to \overline{\bb R}$
 given by $\psi(\pi)=\mc E_{H_j}((\pi^N*{\iog})* \ioe) $ is  continuous.
 Let $\{\pi^n_t;t\in[0,T]\}_n$ converging to
 $\{\pi_t;t\in[0,T]\}$ on $\DM$. Therefore, $\pi^n_t\stackrel{\omega^*}{\to}\pi_t$,
 almost surely in time.
 For such $t$, 
$ \pi_t*  \iog = \lim_{n\to\infty}\pi^n_t*  \iog $, since   $\iog$ is a continuous function.
By the Dominated Convergence Theorem,
\begin{equation}\label{limite}
 ((\pi_t*\iog)* \ioe)(v)
\;=\;\int_{\bb T}\lim_{n\to\infty}(\pi^n_t* \iog) (u)\, \ioe(u,v)\,du
\;=\; \lim_{n\to\infty}((\pi^n_t*\iog)* \ioe)(v)\,.
\end{equation}
Again by the Dominated Convergence Theorem,
\begin{equation*}
\begin{split}
\Big \<\!\!\Big\< \partial_u H_j,\;(\pi_t*\iog)* \ioe \Big\>\!\!\Big\>
&\; =\;\int_0^T\int_{\bb T}\partial_u H_j(t,v)((\pi_t*\iog)* \ioe)(v)\,dv\,dt\\
&\;=\;\lim_{n\to\infty}\int_0^T\!\!\int_{\bb T}\partial_u H_j(t,v)
((\pi^n_t*\iog)* \ioe)(v)\,dv\,dt\\
&\;=\;
\lim_{n\to\infty}\Big \<\!\!\Big\< \partial_u H_j,\;
(\pi^n*\iog)* \ioe\,\Big\>\!\!\Big\>\,.
\end{split}
\end{equation*}
\end{proof}

\begin{proposition}\label{A}
For fixed $k$, and $l$,
\begin{equation*}
\varlimsup_{\eps\downarrow 0}\varlimsup_{\gamma\downarrow 0}
\varlimsup_{N\to\infty}\pfrac{1}{N}\log \bb P_{\mu_N}\Big[\pi^N\in (A_{k,l}^{\eps,\gamma})^{\complement}\Big]
\;\leq\; -l+K_0T \,.
\end{equation*}
\end{proposition}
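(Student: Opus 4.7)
The plan is to reduce the statement to Corollary \ref{4.7} by means of a deterministic comparison between $\mc E_{H_j}\bigl((\pi^N*\iog)*\ioe\bigr)$ and $\mc E_{H_j}(\pi^N*\ioe)$. The crucial observation is that, since $\mc E_H(\omega) = \<\!\<\partial_u H,\rho\>\!\> - 2\<\!\<H,H\>\!\>$ whenever $\omega(t,du)=\rho_t(u)\,du$, the map $\omega \mapsto \mc E_H(\omega)$ is affine in $\omega$. Hence
\begin{equation*}
\mc E_{H_j}\bigl((\pi^N*\iog)*\ioe\bigr) - \mc E_{H_j}(\pi^N*\ioe) \;=\; \int_0^T\!\!\!\int_{\bb T} \partial_u H_{j,t}(v)\Big[((\pi^N_t*\iog)*\ioe)(v)-(\pi^N_t*\ioe)(v)\Big]\,dv\,dt.
\end{equation*}
By Lemma \ref{tildeiota}, the bracketed difference is bounded in modulus by $\gamma/\eps$ uniformly in $(t,v,N)$. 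Setting $C := \max_{1\leq j\leq k} T\,\|\partial_u H_j\|_\infty\,\cdot\lambda(\mathrm{supp}\,H_j)$, where $\lambda$ denotes Lebesgue measure, one obtains the deterministic estimate
\begin{equation*}
\max_{1\leq j\leq k}\mc E_{H_j}\bigl((\pi^N*\iog)*\ioe\bigr) \;\leq\; \max_{1\leq j\leq k}\mc E_{H_j}(\pi^N*\ioe) + C\,\gamma/\eps.
\end{equation*}

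Consequently, on the event $\{\pi^N \in (A_{k,l}^{\eps,\gamma})^{\complement}\}$ one has $\max_j \mc E_{H_j}(\pi^N*\ioe) \geq l - C\gamma/\eps$. Since the $H_j$ all have compact support in $[0,T]\times(0,1)$, for $\eps$ sufficiently small (smaller than the distance from $\bigcup_j \mathrm{supp}(H_j)$ to $\{0\}$ in $\bb T$) Corollary \ref{4.7} applies and yields
\begin{equation*}
\varlimsup_{N\to\infty}\pfrac{1}{N}\log \bb P_{\mu_N}\Big[\pi^N\in (A_{k,l}^{\eps,\gamma})^{\complement}\Big] \;\leq\; -l + C\,\gamma/\eps + K_0 T.
\end{equation*}
Taking $\gamma\downarrow 0$ with $\eps>0$ fixed eliminates the $C\gamma/\eps$ term, and then taking $\eps\downarrow 0$ delivers the claimed bound.

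There is no real obstacle here: the entire content is the interchange between the smoothing $\iog$ and the functional $\mc E_{H_j}$. The reason this is manageable is that the quadratic term $-2\<\!\<H_j,H_j\>\!\>$ in $\mc E_{H_j}$ does not see $\pi^N$ at all, so the comparison is \emph{linear} in the difference of the smoothed trajectories, and the uniform-in-$N$ estimate from Lemma \ref{tildeiota} can be absorbed into the level $l$. The slight care needed is to handle the $\max$ over $j=1,\dots,k$, which is trivially handled since the same $\gamma/\eps$ bound applies to each $\mc E_{H_j}$.
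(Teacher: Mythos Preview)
Your proof is correct and follows essentially the same approach as the paper: both reduce to Corollary~\ref{4.7} via the affine structure of $\mc E_{H_j}$ and the uniform bound from Lemma~\ref{tildeiota}. The only cosmetic difference is that the paper introduces an auxiliary parameter $r>0$, splits into the events $\{\max_j \mc E_{H_j}(\pi^N*\ioe)\geq l-r\}$ and $\{\max_j \<\!\<\partial_u H_j,(\pi^N*\iog)*\ioe-\pi^N*\ioe\>\!\>\geq r\}$, shows the second has zero probability for $\gamma$ small, applies Corollary~\ref{4.7} at level $l-r$, and sends $r\downarrow 0$; you instead track the level shift $C\gamma/\eps$ directly. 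The paper's $r$-trick has the minor advantage that Corollary~\ref{4.7} can be invoked exactly as stated (with the $\varlimsup_{\eps\downarrow 0}$ outside), whereas your route needs the bound for each fixed small $\eps$---but you correctly note that this follows from the compact support of the $H_j$ via the proof of Proposition~\ref{-l}, so there is no gap.
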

\begin{proof}
For all $r>0$,
\begin{equation*}
\begin{split}
 \bb P_{\mu_N}\Big[\max_{1\leq j\leq k}\mc E_{H_j}\big(&(\pi^N*\iog)* \ioe\,\big)
\geq l\,\Big]
\leq    \bb P_{\mu_N}\Big[\max_{1\leq j\leq k}\mc E_{H_j}\big(\pi^N* \ioe\,\big)\;\geq\; l-r\,\Big]\\
&+\bb P_{\mu_N}\Big[\max_{1\leq j\leq k}\mc E_{H_j}\Big((\pi^N*\iog)*\ioe -\pi^N*\ioe\Big)\;\geq\; r\,\Big]\,.
\end{split}
\end{equation*}
By Lemma \ref{tildeiota},  we have that
\begin{equation*}
 \max_{1\leq j\leq k}\mc E_{H_j}\Big((\pi^N*\iog)*\ioe -\pi^N*\ioe\Big){\color{blue}{\leq}}
\max_{1\leq j\leq k}\Big\<\!\!\Big\<\partial_u H_j,(\pi^N*\iog)*\ioe -\pi^N*\ioe\Big\>\!\!\Big\>
\leq \pfrac{C\gamma}{\eps},
\end{equation*}
where $C=C(\{H\}_{1\leq j\leq k})$. Therefore,
\begin{equation*}
\bb P_{\mu_N}\Big[\max_{1\leq j\leq k}
\mc E_{H_j}\big((\pi^N*\iog -\pi^N)*\ioe\,\big)\geq r\,\Big]\;\leq\;
 \bb P_{\mu_N}\Big[\pfrac{C\gamma}{\eps}\;\geq\; r\,\Big]\,,
\end{equation*}
which is zero for $\gamma$  small enough.
Hence,
\begin{equation*}
\begin{split}
&\varlimsup_{\gamma\downarrow 0}\varlimsup_{N\to\infty}\pfrac 1N \log \bb P_{\mu_N}\Big[\max_{1\leq j\leq k}
\mc E_{H_j}\big((\pi^N*\iog)* \ioe\,\big)
\geq l\,\Big]\\
&\leq\; \varlimsup_{N\to\infty}\pfrac 1N \log \bb P_{\mu_N}\Big[\max_{1\leq j\leq k}
\mc E_{H_j}\big(\pi^N* \ioe\,\big)\geq l-r\,\Big]\,.
\end{split}
\end{equation*}
By Corollary \ref{4.7}, we get
\begin{equation*}
\varlimsup_{\eps\downarrow 0}\varlimsup_{\gamma\downarrow 0}\varlimsup_{N\to\infty}\pfrac 1N \log
 \bb P_{\mu_N}\Big[\max_{1\leq j\leq k}\mc E_{H_j}\big((\pi^N*\iog)* \ioe\,\big)
\;\geq\; l\,\Big]\;\leq\; -l+K_0 T+r\,.
\end{equation*}
Since $r$ is arbitrary, the proof is finished.
\end{proof}

In  \eqref{radon J} appears the term $(\pi^N*\iog)*\ioe$ and we would like to take  $\gamma \downarrow 0$ and $\eps\downarrow 0$. To avoid  technical problems that would come into scene from the fact $\pi^N_t$ does not have density with respect to the Lebesgue measure,  we define below another family of sets.

Fix a sequence $\{ F_i\}_{i\geq 1}$ of smooth non negative functions dense in the subset of non-negative
functions $C(\bb T)$ with respect to the uniform topology.
For $i\geq 1$ and $j\geq 1$, we define the set
\begin{equation}\label{D_i}
 D_i^j\; =\; \Big\{\pi\in \DM\;;\;0\leq \<\pi_t,F_i\>\leq \int_{\bb T} F_i(u)\,du
+\pfrac{1}{j}\Vert F'_i\Vert_\infty ,\,
0\leq t\leq T\Big\}\,,
\end{equation}
and for $m\geq 1$ and $j\geq 1$, let
$ E_m^j=\bigcap_{i=1}^{m} D_i^j$.
\begin{proposition}\label{E}
 It holds:
\begin{itemize}
 \item[\textbf{(i)}] Given $i\geq 1$ and $j\geq 1$, the set $D_i^j$ is a closed subset of $\DM$;
 \item[\textbf{(ii)}] $\DMO=\cap_{j\geq 1}\cap_{m\geq 1} E_m^j$;
 \item[\textbf{(iii)}] Given $m\geq 1$ and $j\geq 1$, 
$\varlimsup_{N\to\infty}\pfrac{1}{N}\log \bb P_{\mu_N}[\pi^N\in (E_m^j)^{\complement}]=-\infty $\,.
\end{itemize}
\end{proposition}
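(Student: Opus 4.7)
The plan is to handle the three items separately; each boils down to elementary considerations and nothing substantial beyond what is already in the paper is needed.

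For item (i), I would argue that the functional $\pi \mapsto \langle\pi_\cdot, F_i\rangle$ maps $\DM$ continuously into the Skorohod space $D([0,T],\mathbb R)$, so if $\pi^n \to \pi$ in $\DM$ one has pointwise convergence $\langle\pi^n_t,F_i\rangle \to \langle\pi_t,F_i\rangle$ at every $t$ which is a continuity point of the limit trajectory $s \mapsto \langle\pi_s,F_i\rangle$. If $\pi^n \in D_i^j$ for every $n$, then the bound $0 \le \langle\pi^n_t,F_i\rangle \le \int F_i\,du + j^{-1}\|F_i'\|_\infty$ passes to the limit on the complement of an at most countable set. Right-continuity of the c\`adl\`ag map $t\mapsto\langle\pi_t,F_i\rangle$ then upgrades the bound to every $t\in[0,T]$: pick a sequence $s_k \downarrow t$ of continuity points and take limits. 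Hence $D_i^j$ is closed, and $E_m^j$ is closed as a finite intersection.

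For item (ii), the forward inclusion $\DMO \subseteq \cap_{j,m} E_m^j$ is immediate: if $\pi_t(du) = \rho_t(u)\,du$ with $0 \le \rho_t \le 1$, then $0 \le \langle\pi_t,F_i\rangle = \int F_i \rho_t\,du \le \int F_i\,du$ because $F_i \ge 0$, and this is obviously below $\int F_i + j^{-1}\|F_i'\|_\infty$ for every $j$. For the reverse inclusion, suppose $\pi \in \cap_{j,m} E_m^j$. Letting $j \to \infty$ gives $0 \le \langle\pi_t, F_i\rangle \le \int F_i\,du$ for all $i$ and all $t$. By uniform density of $\{F_i\}$ in the cone of non-negative continuous functions on $\bb T$, this inequality extends to every $F \in C(\bb T)$ with $F \ge 0$. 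Comparing $\pi_t$ with Lebesgue measure as positive linear functionals on $C(\bb T)$ and applying Radon--Nikodym then forces $\pi_t$ to be absolutely continuous with density in $[0,1]$, hence $\pi \in \DMO$.

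For item (iii), the key observation is that $\pi^N$ belongs to $E_m^j$ \emph{deterministically} once $N$ is large enough, so the probability in question is in fact zero. Indeed, using $0 \le \eta_t(x) \le 1$ and $F_i \ge 0$,
\begin{equation*}
0 \;\le\; \langle\pi^N_t, F_i\rangle \;=\; \frac{1}{N}\sum_{x\in\bb T_N} F_i\bigl(\tfrac{x}{N}\bigr)\eta_t(x) \;\le\; \frac{1}{N}\sum_{x\in\bb T_N} F_i\bigl(\tfrac{x}{N}\bigr).
\end{equation*}
Since $F_i \in C^1(\bb T)$, the Riemann-sum error satisfies $\bigl|N^{-1}\sum_x F_i(x/N) - \int F_i\,du\bigr| \le N^{-1}\|F_i'\|_\infty$, so $\langle\pi^N_t,F_i\rangle \le \int F_i\,du + N^{-1}\|F_i'\|_\infty$. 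For $N \ge j$ this is bounded by $\int F_i\,du + j^{-1}\|F_i'\|_\infty$, so $\pi^N \in D_i^j$ for each $1 \le i \le m$, and therefore $\pi^N \in E_m^j$. Consequently $\bb P_{\mu_N}[\pi^N \in (E_m^j)^\complement] = 0$ for every sufficiently large $N$, and the lim sup of $N^{-1}\log 0$ is $-\infty$.

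The main obstacle, if one insists on calling it that, is purely technical and confined to item (i), where one must be careful to pass from convergence at continuity points to a bound at every $t$ via right-continuity; items (ii) and (iii) are essentially direct.
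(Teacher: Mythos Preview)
Your proposal is correct and follows essentially the same approach as the paper: item (iii) is handled identically (the probability is exactly zero for $N\ge j$ via the Riemann-sum bound), item (ii) by the same density-in-the-nonnegative-cone argument, and item (i) by a slightly more explicit variant of the paper's one-line claim that $\pi\mapsto\sup_{0\le t\le T}\langle\pi_t,F_i\rangle$ is continuous on $\DM$. Your treatment of (i) via continuity points plus right-continuity is a bit more careful than the paper's, but the content is the same.
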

\begin{proof}
\textbf{(i)} Since $F_i$ continuous,  the function $\pi\mapsto\sup_{0\leq t\leq T}
\<\pi_t,F_i\>$ is continuous.

\textbf{(ii)} The inclusion $\DMO\subset\cap_{j\geq 1}\cap_{m\geq 1} E_m^j$ is trivial.
The inclusion on the other hand follows by approximating indicators functions of open intervals by a suitable sequence
in $\{F_i\}_{i\geq 1}$ and in $j$. 

\textbf{(iii)} The probability $\bb P_{\mu_N}[\pi^N\in (E_m^j)^{\complement}]$ is 
\begin{equation*}
\bb P_{\mu_N} \Big[\bigcup_{i=1}^{m}\Big\{\pfrac{1}{N}\sum_{x\in \bb T_N} F_i(\pfrac{x}{N})\eta_t(x) > 
\int_{\bb T} F_i(u)\,du+\pfrac{1}{j}\Vert  F'_i\Vert_\infty,\,\textrm{for some}\, t\in[0,T]\Big\}\Big].
\end{equation*}
From the elementary inequality
\begin{equation*}
\Big\vert \pfrac{1}{N}\sum_{x\in \bb T_N} F_i(\pfrac{x}{N})-\int_{\bb T} F_i(u)\,du\Big\vert\;\leq\; 
\sum_{x\in \bb T_N} \int_{[\frac{x}{N},\frac{x+1}{N})}\vert F_i(\pfrac{x}{N})-F_i(u)\vert\,du 
\leq\pfrac{\Vert F_i'\Vert_\infty}{N}\,,
\end{equation*}
and by the fact that there is at most one particle per site, we conclude that  $\bb P_{\mu_N}\big[\pi^N\in (E_m^j)^{\complement}\big]$ vanishes for $N$ sufficiently large,  concluding the proof.
\end{proof}
Keeping in mind that  $\mc E((\pi*\iog)*\ioe)<\infty$, for all $\pi\in \DM$, define
\begin{equation}\label{Jmuito}
J_{H,\gamma,\eps,\zeta}^{k,l,m,j}(\pi)\;=\;\left\{\begin{array}{cl}
\hat{J}_H\Big((\pi*\iog)*\ioe\Big), &  \mbox{if}\,\,\,\,\pi\in A_{k,l}^{\zeta,\gamma}\cap E_m^j \,,\\ 
+\infty, &\mbox{otherwise.}
\end{array}
\right. 
\end{equation}
Finally, $\radon$ restricted to the set 
$\{\pi^N \in  A_{k,l}^{\zeta,\gamma}\cap E_m^j\}\cap B_{\delta,\eps}^H$ is
\begin{equation}\label{radon J indices}
\exp{\Big\{ N\Big[J_{H,\gamma,\eps,\zeta}^{k,l,m,j}(\pi^N)
 + R_N(H,T,\eps,\gamma)+O(\delta)+O_H(\eps)+O_H(\pfrac{\gamma}{\eps})\Big]\Big\}}\,.
\end{equation}
This is the appropriate form  for the Radon-Nikodym derivative to be used in the next section.

\subsection{Upper bound for compact sets}

We start by studying the upper bound {\color{blue}{for}} open sets.
Let $\mc O\subseteq\DM$ be an open set and fix a function  $H\in \C$. Then
\begin{equation*}
\begin{split}
 &\varlimsup_{N\to\infty}\pfrac{1}{N}\log \bb Q_{\mu_N}[\mc O] 
= \varlimsup_{N\to\infty}\pfrac{1}{N}\log \bb P_{\mu_N}[\pi^N\in\mc O]\\
  & \leq  \max\Big\{\!\varlimsup_{N\to\infty}\!\pfrac{1}{N}
\log \bb P_{\mu_N}[\{\pi^N\in\mc O\cap A_{k,l}^{\zeta,\gamma}\!\cap\! E_m^j\}\!
\cap\! B^H_{\delta,\eps}],  R_k^l(\zeta,\gamma),R_m^j, R_H^{\delta}(\eps)\!\Big\}
\end{split}
 \end{equation*}
 where we have  denoted
 \begin{equation*}
\begin{split}
& R_k^l(\zeta,\gamma)=\varlimsup_{N\to\infty}\pfrac{1}{N}
\log \bb P_{\mu_N}[\{\pi^N\in (A_{k,l}^{\zeta,\gamma})^{\complement}\}]\,,
\\&
 R_m^j=\varlimsup_{N\to\infty}\pfrac{1}{N}
\log \bb P_{\mu_N}[\{\pi^N\in (E_m^j)^{\complement}\}]\,,
\\&
 R_H^{\delta}(\eps) = \varlimsup_{N\to\infty}\,\pfrac{1}{N}
\log \bb P_{\mu_N}[(B^H_{\delta,\eps})^{\complement}]\,.
\end{split}
\end{equation*}
\medskip

\noindent
By Propositions \ref{A} and  \ref{E} and the limit \eqref{B}, the expressions above satisfy
\begin{equation*}
\varlimsup_{\zeta\downarrow 0}\,\varlimsup_{\gamma\downarrow 0}\, R_k^l(\zeta,\gamma)\;\leq\; -l+K_0T\,,
\quad
R_m^j=-\infty\,,
\quad\textrm{and}\quad
 \varlimsup_{\eps\downarrow 0}\, R_H^{\delta}(\eps)\;=\;-\infty\,.
\end{equation*}
Transforming the measure by the Radon-Nikodym derivative and recalling its expression \eqref{radon J indices},
\begin{equation*}
\begin{split}
& \bb P_{\mu_N}\Big[\{\pi^N\in\mc O\cap A_{k,l}^{\zeta,\gamma}\cap E_m^j\}
\cap B^H_{\delta,\eps} \Big]\!
\!=\! \bb E^H_{\mu_N}\Big[\Big(\pfrac{\textrm{d}\bb P^H_{\mu_N}}{\textrm{d}\bb P_{\mu_N}}\Big)^{-1}\!
\textbf 1_{\{\pi^N\in \mc O\cap A_{k,l}^{\zeta,\gamma}\cap E_m^j\}\cap B^H_{\delta,\eps}}\Big]\\
&\!=\! \bb E^H_{\mu_N}\Bigg[\exp{\Big\{\! N\Big[ \!-\!J_{H,\gamma,\eps,\zeta}^{k,l,m,j}(\pi^N)  
\!+\!R_N(H,T,\eps,\gamma)\!+\!O(\delta) \!+\! O_H(\eps)\!+\!O_H(\pfrac{\gamma}{\eps})\Big]\!\Big\}}
\textbf 1_{{\textbf{D}}}\Bigg],
\end{split}
\end{equation*}
being $\textbf{D}:=\{\pi^N\in \mc O\cap A_{k,l}^{\zeta,\gamma}\cap E_m^j\}\cap B^H_{\delta,\eps}$.
Therefore,
\begin{equation*}
\begin{split}
 &\pfrac{1}{N}\log \bb P_{\mu_N}[\{\pi^N\in \mc O\cap A_{k,l}^{\zeta,\gamma}\cap E_m^j\}\cap B^H_{\delta,\eps}]\\
&\leq  \sup_{\pi\in\mc O}\{-J_{H,\gamma,\eps,\zeta}^{k,l,m,j}(\pi)\} +R_N(H,T,\eps,\gamma)+O(\delta) + O_H(\eps)+O_H(\pfrac{\gamma}{\eps})\,.
\end{split}
\end{equation*}
By \eqref{error}, for all $\gamma,\eps,\zeta, \delta>0$, for all $k,l,m,j\in \bb N$ and  $H\in \C$, we have
\begin{equation*}
\begin{split}
&\varlimsup_{N\to\infty}\pfrac{1}{N}\log \bb Q_{\mu_N}[\mc O]\\
  &\leq \max\Big\{ \!\sup_{\pi\in\mc O}
\{-J_{H,\gamma,\eps,\zeta}^{k,l,m,j}(\pi)\} \!+\!O(\delta) \!+\! O_H(\eps)\!+\!O_H(\pfrac{\gamma}{\eps}),
R_k^l(\zeta,\gamma),\,R_m^j,\,R_H^{\delta}(\eps)\!\Big\}\\
& =\max\Big\{ \sup_{\pi\in\mc O}
\{-J_{H,\gamma,\eps,\zeta}^{k,l,m,j}(\pi)\}  +O(\delta) + O_H(\eps)+O_H(\pfrac{\gamma}{\eps}),
R_k^l(\zeta,\gamma),\,R_H^{\delta}(\eps)\Big\}\,.
\end{split}
\end{equation*}
Since we do not have any restrictions on the parameters,
 we can optimize over  $\gamma,\eps,\zeta,\delta,k,l,m,j,H$, which yields
\begin{equation}\label{infsup}
\begin{split}
&\varlimsup_{N\to\infty}\pfrac{1}{N}\log \bb Q_{\mu_N}[\mc O]\\
&\leq\!\!\!\! \inf_{\at{\gamma,\eps,\zeta,\delta,}{k,l,m,j,H}}\max\Big\{
\sup_{\pi\in\mc O}\{-J_{H,\gamma,\eps,\zeta}^{k,l,m,j}(\pi)\}  
\!+\!O(\delta) \!+\! O_H(\eps)\!+\!O_H(\pfrac{\gamma}{\eps}),
R_k^l(\zeta,\gamma),R_H^{\delta}(\eps)\!\Big\}
\\
&=\!\!\!\!\inf_{\at{\gamma,\eps,\zeta,\delta,}{k,l,m,j,H}}\!\sup_{\pi\in\mc O}
\max\Big\{\!\!-\!\!J_{H,\gamma,\eps,\zeta}^{k,l,m,j}(\pi)  
\!+\!O(\delta) \!+\! O_H(\eps)\!+\!O_H(\pfrac{\gamma}{\eps}),
R_k^l(\zeta,\gamma),\,R_H^{\delta}(\eps)\Big\}\,.
\end{split}
\end{equation}

\begin{proposition}\label{cont}
 For fixed $\gamma,\eps,\zeta,\delta,k,l,m,j,H$, the functional 
\begin{equation*}
\max\Big\{-J_{H,\gamma,\eps,\zeta}^{k,l,m,j}(\pi)  
+O(\delta) + O_H(\eps)+O_H(\pfrac{\gamma}{\eps}),\,R_k^l(\zeta,\gamma),\,R_H^{\delta}(\eps)\Big\}
\end{equation*}
is upper semi-continuous in $\DM$.
\end{proposition}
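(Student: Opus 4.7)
The first entry inside the maximum equals $-J_{H,\gamma,\eps,\zeta}^{k,l,m,j}(\pi)$ plus a finite constant independent of $\pi$, while $R_k^l(\zeta,\gamma)$ and $R_H^\delta(\eps)$ are themselves constants. The plan is to exploit that the maximum of finitely many upper semi-continuous functions is upper semi-continuous, thereby reducing the statement to showing that $\pi\mapsto J_{H,\gamma,\eps,\zeta}^{k,l,m,j}(\pi)$ is lower semi-continuous on $\DM$.

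By \eqref{Jmuito}, this functional equals $\hat{J}_H((\pi*\iog)*\ioe)$ on the set $A_{k,l}^{\zeta,\gamma} \cap E_m^j$ and $+\infty$ outside. From Proposition \ref{closed} and item (i) of Proposition \ref{E}, both $A_{k,l}^{\zeta,\gamma}$ and $E_m^j = \bigcap_{i=1}^{m} D_i^j$ are closed subsets of $\DM$, and so is their intersection. A function equal to $+\infty$ off a closed set $C$ and continuous on $C$ is globally lower semi-continuous, so the task will reduce to proving continuity of $\pi \mapsto \hat{J}_H((\pi*\iog)*\ioe)$ on $A_{k,l}^{\zeta,\gamma} \cap E_m^j$ in the Skorohod topology.

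To establish this continuity, I would take $\pi^n \to \pi$ in $\DM$, so that $\pi^n_t \stackrel{\omega^*}{\to} \pi_t$ for almost every $t \in [0,T]$, and use that $\iog$ is continuous with compact support to obtain pointwise convergence $(\pi^n_t*\iog)(u) \to (\pi_t*\iog)(u)$ for every $u \in \bb T$. A second convolution with $\ioe$ then yields pointwise convergence of $((\pi^n_t*\iog)*\ioe)(v)$ to $((\pi_t*\iog)*\ioe)(v)$ for every $v \in \bb T$. The main delicate point is to verify continuity of the side limits at the origin, but these are the explicit averages $\eps^{-1}\int_0^{\eps}(\pi_t*\iog)(u)\,du$ and $\eps^{-1}\int_{-\eps}^0(\pi_t*\iog)(u)\,du$, so the double smoothing handles this point transparently. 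Inspecting \eqref{ell} and \eqref{Phi}, each term in $\hat{J}_H((\pi*\iog)*\ioe)$ is then a space-time integral of a polynomial of degree at most two in the bounded quantities $((\pi_t*\iog)*\ioe)(v)$ and $((\pi_t*\iog)*\ioe)(0^\pm)$, weighted by bounded expressions built from $H$ (namely $H_t$, $\partial_u H_t$, $(\partial_u H_t)^2$ and $\psi(\pm\delta H_t(0))$). These integrands are uniformly bounded in $n$ and $(t,v)$ by a constant depending only on $T$, $\eps$ and $H$, so dominated convergence yields convergence of each term and hence continuity of $\hat{J}_H((\pi*\iog)*\ioe)$, concluding the argument.
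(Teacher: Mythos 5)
Your proposal is correct and follows essentially the same route as the paper: reduce to the lower semi-continuity of $J_{H,\gamma,\eps,\zeta}^{k,l,m,j}$, invoke Propositions \ref{closed} and \ref{E} for the closedness of $A_{k,l}^{\zeta,\gamma}\cap E_m^j$, and establish continuity of $\pi\mapsto\hat{J}_H((\pi*\iog)*\ioe)$ via almost-everywhere-in-time weak$^*$ convergence and repeated dominated convergence. Your treatment of the side limits at the origin as explicit averages of $\pi_t*\iog$ simply makes explicit a step the paper leaves implicit.
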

\begin{proof}
In the maximum above, the only term that depends on $\pi$    is
 $J_{H,\gamma,\eps,\zeta}^{k,l,m,j}(\pi)$.
By the
 Propositions  \ref{closed} and \ref{E}, it is enough to prove the
 continuity of $\hat{J}((\pi*\iog)* \ioe)$ in $\DM$.
 
 Let $\pi^n\to\pi$ in the topology of $\DM$. 
In particular, $\pi^n_t$ converges weakly$^{*}$ to $\pi_t$ in $\mc M$, for almost all $t \in [0,T]$.
  According to \eqref{limite} and iterated applications
 of Dominated Convergence Theorem we can assure the continuity of 
$\hat{J}((\pi*\iog)* \ioe)$.
\end{proof}

Provided by the proposition above, we may apply the  Minimax Lemma \cite[Lemma A2.3.3]{kl},
interchanging supremum with infimum in \eqref{infsup}, and passing to compacts sets.
Then, for all $\mc K\subset \DM$ compact, 
\begin{equation}\label{compact}
\begin{split}
 &\varlimsup_{N\to\infty}\frac{1}{N}\log \bb Q_{\mu_N}[\mc K]\\
 &\leq\sup_{\pi\in\mc K}
\inf_{\at{\gamma,\eps,\zeta,\delta,}{k,l,m,j,H}}\!
\max\Big\{\!-\!J_{H,\gamma,\eps,\zeta}^{k,l,m,j}(\pi)  
\!+\!O(\delta)\! +\! O_H(\eps)\!+\!O_H(\pfrac{\gamma}{\eps}),R_k^l(\zeta,\gamma),R_H^{\delta}(\eps)\Big\}.
\end{split}
\end{equation}
The next result connects $J_H(\pi)$ and  $J_{H,\gamma,\eps,\zeta}^{k,l,m,j}(\pi)$.
\begin{proposition}\label{label} For all $\pi\in \DM$,
\begin{equation*}
\varlimsup_{\eps\downarrow 0}
\varlimsup_{l\to\infty}\varlimsup_{k\to\infty}
\varlimsup_{\zeta\downarrow 0}
\varlimsup_{\gamma\downarrow 0}
\varlimsup_{j\to\infty}\varlimsup_{m\to\infty}
J_{H,\gamma,\eps,\zeta}^{k,l,m,j}(\pi)
\;\geq\; J_H(\pi)\,.
\end{equation*}
\end{proposition}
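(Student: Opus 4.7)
The plan is to separate into three cases according to $\pi$: (i) $\pi\notin\DMO$; (ii) $\pi\in\DMO$ with $\mc E(\pi)=+\infty$; (iii) $\pi\in\DMO$ with $\mc E(\pi)<\infty$. In cases (i) and (ii) we have $J_H(\pi)=+\infty$, so it suffices to show the iterated limsup diverges. In case (iii) we have $J_H(\pi)=\hat J_H(\pi)$, and the inequality becomes an equality obtained by continuity as the approximation parameters disappear.

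Case (i) is immediate: Proposition \ref{E}(ii) supplies indices $m_0,j_0$ with $\pi\notin E_{m_0}^{j_0}$, and since the family $\{E_m^j\}$ is decreasing in both parameters, $J_{H,\gamma,\eps,\zeta}^{k,l,m,j}(\pi)=+\infty$ for every $m\geq m_0$ and $j\geq j_0$, forcing the two innermost limsups to be $+\infty$. For case (ii), I would invoke density of the fixed sequence $\{H_p\}$ in $\Ck$ with respect to $\Vert H\Vert_\infty+\Vert\partial_u H\Vert_\infty$ together with the continuity of $H\mapsto\mc E_H(\pi)$ in this norm, which gives $\sup_p\mc E_{H_p}(\pi)=\mc E(\pi)=+\infty$. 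Given $l$, choose $p$ with $\mc E_{H_p}(\pi)>l$. A dominated convergence argument gives $\mc E_{H_p}((\pi*\iog)*\iota_\zeta)\to\mc E_{H_p}(\pi*\iota_\zeta)$ as $\gamma\downarrow 0$ and $\mc E_{H_p}(\pi*\iota_\zeta)\to\mc E_{H_p}(\pi)>l$ as $\zeta\downarrow 0$. Hence for every $k\geq p$, once $\zeta$ and then $\gamma$ are small enough, $\pi\notin A_{k,l}^{\zeta,\gamma}$, so $J=+\infty$ in the tail and the iterated limsup equals $+\infty=J_H(\pi)$.

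For case (iii), $\pi\in E_m^j$ for all $m,j$ by Proposition \ref{E}(ii), so the condition $\pi\in E_m^j$ is harmless. For any fixed $k$, $\max_{p\leq k}\mc E_{H_p}(\pi)\leq\mc E(\pi)<\infty$, so once $l$ exceeds this value the smoothed maxima $\max_{p\leq k}\mc E_{H_p}((\pi*\iog)*\iota_\zeta)$ stay below $l$ for all small $\gamma,\zeta$, placing $\pi$ in $A_{k,l}^{\zeta,\gamma}$. On this set $J_{H,\gamma,\eps,\zeta}^{k,l,m,j}(\pi)=\hat J_H((\pi*\iog)*\ioe)$, and as $\gamma\downarrow 0$ this converges to $\hat J_H(\pi*\ioe)$: the function $(\rho*\iog)*\ioe$ converges uniformly to $\rho*\ioe$ (since $\rho*\iog\to\rho$ almost everywhere and $\ioe$ averages over a fixed window), and its near-boundary values $((\rho*\iog)*\ioe)(0^\pm)$ converge to $(\rho*\ioe)(0^\pm)$, so every linear, quadratic, and exponential boundary term in $\hat J_H$ passes to the limit by bounded convergence. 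The subsequent limsups in $\zeta,k,l$ leave this value unchanged.

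The hardest step, and the main obstacle, is the final limit $\hat J_H(\pi*\ioe)\to\hat J_H(\pi)$ as $\eps\downarrow 0$. The interior integrals converge by $L^2$-convergence of $\rho*\ioe$ to $\rho$. The delicate part is the boundary terms of $\hat J_H$: by the definition of $\ioe$ one has $(\rho*\ioe)(0^+)=\eps^{-1}\!\int_0^\eps\rho(t,v)\,dv$ and $(\rho*\ioe)(0^-)=\eps^{-1}\!\int_{-\eps}^0\rho(t,v)\,dv$. Because $\rho\in\Sob$ has well-defined traces $\rho_t(0^\pm)\in L^2(0,T)$, classical trace approximation results ensure these averages converge to $\rho_t(0^\pm)$ in $L^2(0,T)$, which is enough to pass to the limit in every boundary term of $\hat J_H$, including the nonlinear exponential factors $\psi(\pm\delta H_t(0))$, via bounded convergence. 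This yields $\hat J_H(\pi*\ioe)\to\hat J_H(\pi)=J_H(\pi)$, closing case (iii) and completing the proof.
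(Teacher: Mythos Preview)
Your proof is correct and follows essentially the same route as the paper's own argument. The only difference is organizational: the paper tracks the iterated $\varlimsup$ step by step, at each stage rewriting the result as a piecewise function (finite on a shrinking set, $+\infty$ elsewhere), whereas you split into the three cases $\pi\notin\DMO$, $\pi\in\DMO$ with $\mc E(\pi)=+\infty$, and $\pi\in\DMO$ with $\mc E(\pi)<\infty$ at the outset and handle each directly. The underlying ingredients are identical in both: Proposition~\ref{E}(ii) for the $m,j$ limits, continuity of $\mc E_{H_p}$ under the two smoothings for the $\gamma,\zeta$ limits, the convergence $\hat J_H((\pi*\iog)*\ioe)\to\hat J_H(\pi*\ioe)$ for fixed $\eps$, and finally the Sobolev trace property $\rho\in\Sob\Rightarrow(\rho*\ioe)(0^\pm)\to\rho(0^\pm)$ for the $\eps\downarrow 0$ limit. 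Your case-based presentation is arguably cleaner to read; the paper's step-by-step version makes the role of the auxiliary sets $A_{k,l}^{\zeta,\gamma}$, $A_{k,l}^\zeta$, $A_{k,l}$ more visible.
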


\begin{proof} Recal  \eqref{Jmuito} and fix $\pi\in \DM$.  We claim that
\begin{equation}\label{57}
\varlimsup_{j\to\infty}\varlimsup_{m\to\infty}
J_{H,\gamma,\eps,\zeta}^{k,l,m,j}(\pi)\;=\;
\begin{cases}
\;\hat{J}_H((\pi*\iog)*\ioe), &  
\mbox{if}\,\,\,\,\pi\in A_{k,l}^{\zeta,\gamma}\cap \DMO \\ 
\;+\infty, &\mbox{otherwise}
\end{cases}
\;.
\end{equation}
The equality above derives from the fact that if $\pi\notin\DMO$, there exist  $m$ and $j$ such that $\pi\notin E_m^j$. To check this,
apply the definition of a absolute continuity with respect
to the Lebesgue measure. This proves \eqref{57}.

Let us step to the limit in $\gamma$. We claim that
\begin{equation}\label{eps}
\varlimsup_{\gamma\downarrow 0}
\begin{cases}\!
\hat{J}_H((\pi\!*\!\iog)*\ioe), \!\!\!\!&  
\mbox{if }\pi\in A_{k,l}^{\zeta,\gamma}\cap \DMO  \\ 
\!+\infty, &\mbox{otherwise}
\end{cases}
\!\geq\!
\begin{cases}
\!\hat{J}_H(\pi\!*\!\ioe), \!\!\!\!&  
\mbox{if }\pi\in A_{k,l+1}^{\zeta}\cap \DMO. \\ 
\!+\infty, &\mbox{otherwise}
\end{cases} 
\end{equation}
If $\pi\notin A_{k,l}^{\zeta,\gamma}\cap \DMO $ for all $\gamma$, the inequality \eqref{eps} is obvious.
From Definition \ref{As}, if $\pi\in A_{k,l}^{\zeta,\gamma}\cap \DMO $, it is immediate that
\begin{equation*}
 \max_{1\leq j \leq k}\mc E_{H_j}\big(\,\pi*\ioz\,\big)
\;\leq\; l+\max_{1\leq j \leq k}
\Big\<\!\!\Big\< \,\partial_u H_j,\;\,\pi*\ioz-(\pi*\iog)*\ioz\,\Big\>\!\!\Big\>\,.
\end{equation*}
For fixed $\zeta$ and $k$, we can find $\gamma$  small enough
 in such a way 
 \begin{equation*}
 \max_{1\leq j \leq k}\mc E_{H_j}\big(\,\pi*\ioz\,\big)
\;\leq\; l+1\,,
\end{equation*}
 implying $\pi \in A_{k,l+1}^{\zeta}\cap \DMO$. Besides, for fixed $\eps>0$, the double convolution $(\pi*\iog)*\ioe$ converges
uniformly to $\pi*\ioe$, leading to 
\begin{equation*}
\lim_{\gamma \downarrow 0} \hat{J}_H((\pi*\iog)*\ioe)\;=\; \hat{J}_H(\pi*\ioe)
\end{equation*}
and hence proves \eqref{eps}.  
The ensuing step is to take the limit in $\zeta\downarrow 0$.
We claim that
\begin{equation}\label{59}
\varlimsup_{\zeta\downarrow 0}
\begin{cases}
 \hat{J}_H(\pi*\ioe), &  
\!\!\!\mbox{if }\pi\in A_{k,l+1}^{\zeta}\cap \DMO\\ 
 +\infty, &\mbox{otherwise}
\end{cases}
 \geq \! 
\begin{cases}
\hat{J}_H(\pi*\ioe), & \!\!\! 
\mbox{if }\pi\in A_{k,l+2}\cap \DMO\\ 
+\infty, &\mbox{otherwise}
\end{cases}.
\end{equation}
In fact, if $\pi\in A_{k,l+1}^{\zeta}\cap \DMO$, then
\begin{equation*}
\begin{split}
\max_{1\leq j\leq k} \mc E_{H_j}(\pi)&\;=\;\max_{1\leq j\leq k} \mc E_{H_j}(\,\pi*\ioz\,)+
\max_{1\leq j \leq k}
\Big\<\!\!\Big\<\, \partial_u H_j,\;\,\pi-\pi*\ioz\,\Big\>\!\!\Big\>
\\
&
\;\leq\; l+1+\max_{1\leq j \leq k}\int_0^T\!\!\int_{\bb T} \partial_u H_j(t,u)\big(\rho_t(u)-(\pi_t*\ioz)(u)\big)\,du\,dt\,.
\end{split}
\end{equation*}
By the Lebesgue Differentiation Theorem, it is possible to choose small $\zeta$ such that the integral term in the right hand side of above is smaller than $1$. This proves \eqref{59}. 
 Taking  the limit in $k\to\infty$ in the right hand side of  \eqref{59}, we  obtain
\begin{equation}\label{60}
\varlimsup_{k\to \infty}
\begin{cases}
\;\hat{J}_H(\pi*\ioe), &  
\mbox{if}\,\,\pi\in A_{k,l+2}\cap \DMO\\ 
\;+\infty, &\mbox{otherwise}
\end{cases}
 = 
\begin{cases}
\;\hat{J}_H(\pi*\ioe), &  
\mbox{if}\,\,\mc E(\pi)\leq l+2\\ 
\;+\infty, &\mbox{otherwise}
\end{cases}
,
\end{equation}
because  $\{\pi;\,\mc E(\pi)\leq l+2\}\subset\DMO$.
Next,  taking the limit in $l\to\infty$ in the right hand side of \eqref{60}, we get
\begin{equation*}
\varlimsup_{l\to \infty}
\begin{cases}
\;\hat{J}_H(\pi*\ioe), &  
\mbox{if}\,\,\,\,\mc E(\pi)\leq l+2\\ 
\;+\infty, &\mbox{otherwise}
\end{cases}
\quad \geq \quad 
\begin{cases}
\;\hat{J}_H(\pi*\ioe), &  
\mbox{if}\,\,\,\,\mc E(\pi)<\infty\\ 
\;+\infty, &\mbox{otherwise}
\end{cases}
\;.
\end{equation*}
Finally, taking the limit when $\eps\downarrow 0$ in the right hand side of above, it yields
\begin{equation*}
 \varlimsup_{\eps\downarrow 0} 
\begin{cases}
\;\hat{J}_H(\pi*\ioe), &  
\mbox{if}\,\,\,\,\mc E(\pi)<\infty\\\ 
\;+\infty, &\mbox{otherwise}
\end{cases}\qquad =\qquad J_H(\pi)\;,
\end{equation*}
where we have used that, for $\pi\in\{\pi;\,\mc E(\pi)<\infty\}$ it holds that  $\pi_t(du)=\rho_t(u)du$,
 where $\rho$ has 
well-defined left and right side limits around zero.
\end{proof}

\begin{proposition}[Upper bound for compact sets] 
For every $\mc K$ compact subset of $\DM$,
 \begin{equation*}
 \varlimsup_{N\to\infty}\pfrac{1}{N}\log \bb Q_{\mu_N}[\mc K]
\;\leq\; -\inf_{\pi\in\mc K} I(\pi)\,.
 \end{equation*}
\end{proposition}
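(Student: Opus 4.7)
Starting from the inequality \eqref{compact}, the plan is to take the infimum one parameter at a time, in an order dictated by the three convergence statements already proved: Proposition \ref{label}, Proposition \ref{A}, and the superexponential bound \eqref{B}. Fix $\pi\in\mc K$ and $H\in\C$, and denote the right-hand side of the inner maximum by
\begin{equation*}
\Phi(H,\pi;\gamma,\eps,\zeta,\delta,k,l,m,j)\;:=\;\max\Big\{-J_{H,\gamma,\eps,\zeta}^{k,l,m,j}(\pi)+O(\delta)+O_H(\eps)+O_H(\pfrac{\gamma}{\eps}),\;R_k^l(\zeta,\gamma),\;R_H^{\delta}(\eps)\Big\}.
\end{equation*}
Since the infimum in \eqref{compact} dominates every iterated $\varliminf$, it is enough to show that along the sequence of limits $m\to\infty,\,j\to\infty,\,\gamma\downarrow 0,\,\zeta\downarrow 0,\,k\to\infty,\,l\to\infty,\,\eps\downarrow 0,\,\delta\downarrow 0$, the three entries of the max can simultaneously be made at most $-J_H(\pi)+\eta$ for any prescribed $\eta>0$.

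For the first entry, Proposition \ref{label} gives that the iterated $\varlimsup$ of $J_{H,\gamma,\eps,\zeta}^{k,l,m,j}(\pi)$ (in the order above) is at least $J_H(\pi)$; the correction $O_H(\pfrac{\gamma}{\eps})$ vanishes when $\gamma\downarrow 0$ with $\eps$ still fixed, $O_H(\eps)$ vanishes at the $\eps\downarrow 0$ step, and $O(\delta)$ vanishes at the final $\delta\downarrow 0$ step. Hence the first entry can be made $\leq -J_H(\pi)+\eta$. For the second entry, Proposition \ref{A} provides $\varlimsup_{\zeta\downarrow 0}\varlimsup_{\gamma\downarrow 0}R_k^l(\zeta,\gamma)\leq -l+K_0T$, so choosing $\gamma,\zeta$ small and $l$ large forces $R_k^l(\zeta,\gamma)\to -\infty$. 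For the third entry, \eqref{B} gives $\varlimsup_{\eps\downarrow 0}R_H^{\delta}(\eps)=-\infty$ for every fixed $\delta>0$. Combining these, for any $\eta>0$ one can pick the parameters (in the above order) so that the two residuals are arbitrarily negative while the first entry is within $\eta$ of $-J_H(\pi)$, proving $\inf_{\text{params}}\Phi(H,\pi;\,\cdot\,)\leq -J_H(\pi)$.

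Taking now $\inf_{H\in\C}$ and invoking Definition \ref{I def} yields
\begin{equation*}
\inf_{H\in\C}\,\inf_{\text{params}}\Phi(H,\pi;\,\cdot\,)\;\leq\;\inf_{H\in\C}\bigl(-J_H(\pi)\bigr)\;=\;-\sup_{H\in\C}J_H(\pi)\;=\;-I(\pi).
\end{equation*}
Taking finally $\sup_{\pi\in\mc K}$ on the right-hand side of \eqref{compact},
\begin{equation*}
\varlimsup_{N\to\infty}\pfrac{1}{N}\log\bb Q_{\mu_N}[\mc K]\;\leq\;\sup_{\pi\in\mc K}\bigl(-I(\pi)\bigr)\;=\;-\inf_{\pi\in\mc K}I(\pi),
\end{equation*}
as claimed.

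The main obstacle was overcome in the preceding constructions rather than in this final step: the delicate task was to design the regularized functionals $J_{H,\gamma,\eps,\zeta}^{k,l,m,j}$ and the truncation sets $A_{k,l}^{\zeta,\gamma}\cap E_m^j$ so that the functional being optimized is upper semicontinuous (Proposition \ref{cont}), which is what permits the use of the Minimax Lemma yielding \eqref{compact}, while at the same time being close enough to $J_H$ to recover $J_H(\pi)$ in the iterated limit (Proposition \ref{label}). Given that machinery, the compact-set upper bound reduces to the bookkeeping of iterated limits described above.
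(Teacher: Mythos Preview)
Your proof is correct and follows essentially the same route as the paper's: bound the infimum over parameters in \eqref{compact} by taking the iterated limits in the order prescribed by Proposition \ref{label}, using Proposition \ref{A} and \eqref{B} to force the two residual terms $R_k^l(\zeta,\gamma)$ and $R_H^{\delta}(\eps)$ to $-\infty$, and then optimize over $H$. The paper compresses all this into two lines; your version spells out the bookkeeping of how the three entries of the max are handled simultaneously, which is a welcome elaboration rather than a different argument. One small wording slip: you write that the infimum ``dominates'' every iterated $\varliminf$, but you mean the opposite inequality---the infimum over all parameters is \emph{bounded above by} any iterated $\varliminf$---which is exactly what you then use.
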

\begin{proof}
 Proposition \ref{label} can be restated in the form
\begin{equation*}
\varliminf_{\eps\downarrow 0}
\varliminf_{l\to\infty}
\varliminf_{k\to\infty}
\varliminf_{\zeta\downarrow 0}
\varliminf_{\gamma\downarrow 0}
\varliminf_{j\to\infty}\varliminf_{m\to\infty}
- J_{H,\gamma,\eps,\zeta}^{k,l,m,j}(\pi)
\;\leq\; -J_H(\pi)\,,
\end{equation*}
for all $\pi\in \DM$.
Plugging  this into \eqref{compact} leads  
to
\begin{equation*}
 \varlimsup_{N\to\infty}\pfrac{1}{N}\log \bb Q_{\mu_N}[\mc K]
\leq \sup_{\pi\in\mc K} \inf_{H}\,\{ -J_H(\pi)\}=  -\inf_{\pi\in\mc K} \sup_{H}\, J_H(\pi)= -\inf_{\pi\in\mc K} I(\pi)\,.
 \end{equation*}
\end{proof}

\subsection{Upper bound for closed sets}\label{upper closed}
\begin{proposition}[Upper bound for closed sets] 
For every $\mc C$ closed subset of $\DM$,
 \begin{equation*}
 \varlimsup_{N\to\infty}\pfrac{1}{N}\log \bb Q_{\mu_N}[\mc C]
\;\leq\; -\inf_{\pi\in\mc C} I(\pi)\,.
 \end{equation*}
\end{proposition}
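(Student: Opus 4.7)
The plan is the standard one: combine the already-established upper bound on compact sets with \emph{exponential tightness} of the sequence $\{\bb Q_{\mu_N}^N\}$, and then pass from compact to closed sets via the elementary decomposition $\bb Q_{\mu_N}^N[\mc C]\leq \bb Q_{\mu_N}^N[\mc C\cap\mc K_l]+\bb Q_{\mu_N}^N[\mc K_l^\complement]$ together with \eqref{bound_log}.

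By exponential tightness we mean: for every $l>0$ there exists a compact $\mc K_l\subset\DM$ with $\varlimsup_{N\to\infty}\pfrac{1}{N}\log\bb Q_{\mu_N}^N[\mc K_l^\complement]\leq -l$. Since $\mc M$ is already compact in the weak topology, by the Skorohod analogue of Arzel\`a--Ascoli the construction of $\mc K_l$ reduces to obtaining a superexponential control of the modulus of continuity of $\<\pi^N_t,G_k\>$ for each member of a countable dense family $\{G_k\}\subset C^2(\bb T)$, namely
\[
\varlimsup_{\delta\downarrow 0}\varlimsup_{N\to\infty}\pfrac{1}{N}\log \bb P_{\mu_N}\Big[\sup_{\at{s,t\in[0,T]}{|t-s|\leq\delta}}|\<\pi^N_t-\pi^N_s,G_k\>|>\eta\Big]=-\infty.
\]
To obtain this, I would use the martingale decomposition \eqref{MH} with $H\equiv 0$. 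The drift $N^2\LN\<\pi^N_t,G_k\>$ equals $\<\pi^N_t,\Delta G_k\>+O(N^{-1})$, where the only non-standard contribution comes from the slow bond and is again of order $N^{-1}$ (being one out of $N$ terms); hence the drift integral has \emph{deterministic} modulus of continuity at most $C\|G_k\|_{C^2}\,\delta$. The martingale part $M^{0}_{N,t}(G_k)$ has predictable quadratic variation bounded by $C\|\partial_u G_k\|_\infty^2\, t/N$, so an exponential martingale (Azuma-type) inequality applied to $\pm \theta M^{0}_{N,\cdot}(G_k)$ combined with Doob's maximal inequality yields $\bb P_{\mu_N}[\sup_{t\leq T}|M^{0}_{N,t}(G_k)|>\eta]\leq 2\exp\{-c(G_k)\eta^2 N\}$, which is in fact stronger than needed. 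A diagonal argument over $\{G_k\}$ and a countable sequence of mesh sizes $\delta\downarrow 0$ then produces $\mc K_l$.

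With exponential tightness in hand, the conclusion is immediate. For any closed $\mc C\subset\DM$ and any $l>0$, the set $\mc C\cap\mc K_l$ is compact, so the upper bound for compact sets gives
\[
\varlimsup_{N\to\infty}\pfrac{1}{N}\log\bb Q_{\mu_N}^N[\mc C\cap\mc K_l]\;\leq\;-\inf_{\pi\in\mc C\cap\mc K_l}I(\pi)\;\leq\;-\inf_{\pi\in\mc C}I(\pi).
\]
Inserting this into the decomposition at the beginning and using \eqref{bound_log} yields $\varlimsup_N \pfrac{1}{N}\log\bb Q_{\mu_N}^N[\mc C]\leq \max\{-\inf_{\pi\in\mc C}I(\pi),\,-l\}$, and letting $l\to\infty$ finishes the proof. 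The only delicate point, and hence the main obstacle, is the superexponential modulus-of-continuity estimate described in the preceding paragraph; the presence of the slow bond does not spoil it because the rate $1/N$ enters only in a single term and its contribution to both the drift and the quadratic variation is $O(N^{-1})$.
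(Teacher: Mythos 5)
Your overall architecture coincides with the paper's: upper bound on compacts plus exponential tightness, with tightness reduced (via compactness of $\mc M$) to a superexponential estimate on the modulus of continuity of $\<\pi^N_\cdot,G_k\>$, obtained from a deterministic bound on the drift integral and an exponential martingale inequality; the passage from exponential tightness to the closed-set bound is correct and standard. The gap is in the martingale estimate. What is needed is
\begin{equation*}
\varlimsup_{\delta\downarrow0}\varlimsup_{N\to\infty}\pfrac1N\log\bb P_{\mu_N}\Big[\sup_{|t-s|\leq\delta}|\<\pi^N_t-\pi^N_s,G_k\>|>\eta\Big]\;=\;-\infty\,,
\end{equation*}
and the $-\infty$ is essential: to build a compact $\mc K_l$ with $\bb Q_{\mu_N}^N[\mc K_l^\complement]\leq Ce^{-lN}$ for \emph{arbitrary} $l$, one must, for each $k$ and each $\eta=1/m$, be able to choose $\delta$ making the probability above smaller than $e^{-N(l+k+m)}$. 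The bound you actually state, $\bb P_{\mu_N}[\sup_{t\leq T}|M^{0}_{N,t}(G_k)|>\eta]\leq 2e^{-c(G_k)\eta^2N}$, is over the whole horizon $[0,T]$ and does not improve as $\delta\downarrow0$; combined with the drift bound it only yields $\varlimsup_{\delta}\varlimsup_N\pfrac1N\log\bb P_{\mu_N}[\cdots]\leq-c(G_k)\eta^2/16$, a finite number. So it is not ``stronger than needed''---it is strictly weaker than needed. The repair stays within your toolkit: apply the exponential inequality on each window $[j\delta,(j+1)\delta]$ separately, where the predictable quadratic variation is $O(\delta/N)$, so that optimizing the free parameter $\theta$ gives a per-window bound of order $\exp\{-c\eta^2N/\delta\}$; the union over the $O(\delta^{-1})$ windows leaves the exponent $-c\eta^2/\delta$, which does diverge as $\delta\downarrow0$. (Equivalently, keep $\theta$ free, bound the exponential compensator by $C(\theta)\delta$, send $\delta\downarrow0$ first and then $\theta\to\infty$.) One must also justify the Gaussian-type maximal inequality for a jump martingale: here the jumps are $O(N^{-2})$, so a Bernstein--Freidlin type bound gives the Gaussian regime for fixed $\eta$.

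A secondary inaccuracy: the slow bond's contribution to the drift $N^2\LN\<\pi^N_t,G_k\>$ is not $O(N^{-1})$. The direct exchange term at the bond $\{-1,0\}$ is indeed $O(N^{-1})$, but the \emph{absence} of the unit-rate exchange there leaves, after summation by parts, a boundary term $(\eta_t(0)-\eta_t(-1))\,\partial_uG_k(0)+O(N^{-1})$ of order one. This is still uniformly bounded, so your deterministic estimate $C\|G_k\|_{C^2}\delta$ on the drift integral survives; only the stated reason is off.
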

 By exponential tightness, we mean that there exists compact sets  $K_n\subset \DM$ such that
\begin{equation*}
 \varlimsup_{N\to\infty}\pfrac{1}{N}\log  \bb Q_{\mu_N}[K_n^\complement]\;\leq\; -n\;,\quad\quad\forall\; n\in \bb N\;.
\end{equation*}
It is well known that the upper bound for closed sets is an immediate consequence of upper bound for compact sets plus exponential
tightness. We include the proposition below for sake of completeness.
\begin{proposition}
If the sequence  of probabilities $\{ \bb Q_{\mu_N}\}_{N\geq 1}$ is exponentially tight and  the inequality
\begin{equation*}
 \varlimsup_{N\to\infty}\pfrac{1}{N}\log  \bb Q_{\mu_N}[K]\;\leq\; - \inf_{\pi\in K} I(\pi)
\end{equation*}
holds for any compact set $K$, then $\{\bb Q_{\mu_N}\}_{N\geq 1}$ satisfies 
\begin{equation*}
 \varlimsup_{N\to\infty}\pfrac{1}{N}\log  \bb Q_{\mu_N}[C]\;\leq\; - \inf_{\pi\in C} I(\pi)\;,
\end{equation*}
for any closed set $C$.
\end{proposition}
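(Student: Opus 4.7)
The plan is to reduce the closed set estimate to the already-established compact set estimate by intersecting with the compact sets provided by exponential tightness and then controlling the error with the tightness rate. Specifically, let $\mc C\subset\DM$ be closed and let $\{K_n\}_{n\geq 1}$ be the compact sets from the exponential tightness hypothesis. First I would observe that $\mc C\cap K_n$ is compact, since it is a closed subset of the compact set $K_n$. The idea is then to split $\bb Q_{\mu_N}[\mc C]\leq \bb Q_{\mu_N}[\mc C\cap K_n]+\bb Q_{\mu_N}[K_n^\complement]$ and treat each piece separately.

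Next, I would apply inequality \eqref{bound_log} (the elementary identity $\varlimsup_N N^{-1}\log(a_N+b_N)=\max\{\varlimsup_N N^{-1}\log a_N,\varlimsup_N N^{-1}\log b_N\}$) together with the compact-set upper bound and the exponential tightness estimate to get
\begin{equation*}
\varlimsup_{N\to\infty}\pfrac{1}{N}\log\bb Q_{\mu_N}[\mc C]\;\leq\;\max\Big\{-\inf_{\pi\in \mc C\cap K_n} I(\pi),\,-n\Big\}\,.
\end{equation*}
Since $\mc C\cap K_n\subset \mc C$, one has $\inf_{\pi\in \mc C\cap K_n} I(\pi)\geq \inf_{\pi\in \mc C} I(\pi)$, so the right-hand side is bounded by $\max\{-\inf_{\pi\in \mc C} I(\pi),\,-n\}$. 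Letting $n\to\infty$ eliminates the second term and yields the desired bound $-\inf_{\pi\in \mc C} I(\pi)$.

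There is essentially no obstacle here: the whole argument is a soft general-principles reduction, and the only inputs are the compact upper bound, exponential tightness, and the $\max$-identity for $\limsup$ of sums on exponential scale, all of which are at hand. The only mild care needed is the trivial observation that $\mc C\cap K_n$ is compact (closed in $K_n$), which justifies applying the compact-set upper bound to it. In particular, one does not need to verify that the infimum over $\mc C\cap K_n$ is attained, nor any continuity of $I$, since the inequality direction is the easy one.
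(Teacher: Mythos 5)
Your argument is correct and is essentially identical to the paper's own proof: the same decomposition $\bb Q_{\mu_N}[\mc C]\leq \bb Q_{\mu_N}[\mc C\cap K_n]+\bb Q_{\mu_N}[K_n^\complement]$, the same use of the $\max$-identity \eqref{bound_log}, the same monotonicity of the infimum, and the same passage $n\to\infty$. Nothing to add.
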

\begin{proof}
Let $C$ be a closed set. Since $ \bb Q_{\mu_N}C]\;\leq\;  \bb Q_{\mu_N}[C\cap K_n]+ \bb Q_{\mu_N}[K_n^\complement]$ and $C\cap K_n$
is compact,
\begin{equation*}
\begin{split}
 \varlimsup_{N\to\infty}\pfrac{1}{N}\log  \bb Q_{\mu_N}[C]
& \leq \max\left\{\varlimsup_{N\to\infty}\pfrac{1}{N}\log  \bb Q_{\mu_N}[C\cap K_n],
\varlimsup_{N\to\infty}\pfrac{1}{N}\log  \bb Q_{\mu_N}[K_n^\complement]\!\right\}\\
 &\leq \max\Big\{- \inf_{\pi\in C\cap K_n} I(\pi),-n\Big\}\leq
\max\Big\{\!- \inf_{\pi\in C} I(\pi),-n\Big\}.
\end{split}
\end{equation*}
Since $n$ is arbitrary, the inequality follows.
\end{proof}

The rest of this section is concerned about exponential tightness, which we
claim it is a consequence of next lemma:

\begin{lemma}\label{l1}
 For  $\eps>0$, $\delta>0$ and $H\in C^2(\bb T)$, 
 denote
$$\mc C_{H,\delta,\eps}:=\Big\{\pi\in \DM\,;\, \sup_{s\leq t\leq s+\delta}|\<\pi_t,H\>-\<\pi_s,H\>|\,\leq\,\eps,\,\;\forall s\in[0,T]\Big\}\,.
$$Then, for every $\eps>0$ and every  function $H\in C^2(\bb T)$, the following limit holds:
\begin{equation*}
 \lim_{\delta\downarrow 0}\varlimsup_{N\to\infty}\pfrac{1}{N}\log \bb Q_{\mu_N}
\big[\pi\notin \mc C_{H,\delta,\eps}\big]\,=\,-\infty\,.
\end{equation*}
\end{lemma}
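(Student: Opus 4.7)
The plan is a classical martingale argument adapted to account for the slow bond. Start with the Dynkin decomposition
\begin{equation*}
\<\pi^N_t,H\>-\<\pi^N_s,H\> \;=\; M^{H,0}_{N,t}-M^{H,0}_{N,s} \,+\, \int_s^t N^2\LN\<\pi^N_u,H\>\,du,
\end{equation*}
where $M^{H,0}_{N,t}$ is the Dynkin martingale analogous to \eqref{MH} built from the SSEP generator $\LN$ and the time-independent test function $H$. A direct summation by parts shows that, since $H\in C^2(\bb T)$ is smooth across the slow bond so that $|\delta_N H_{-1}|=O(N^{-1})$, the drift $N^2\LN\<\pi^N_u,H\>$ is a discrete Laplacian plus boundary corrections and is uniformly bounded by a constant $C(H)$ depending only on $\|H'\|_\infty$ and $\|H''\|_\infty$. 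Hence the bounded-variation contribution to $\<\pi^N_t,H\>-\<\pi^N_s,H\>$ is at most $C(H)\delta$ deterministically, and it suffices to control the martingale part super-exponentially (choosing $\delta<\eps/(2C(H))$ from the outset).

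For the martingale part I would use exponential Chebyshev. For each $\theta>0$ the process
\begin{equation*}
Z^\theta_{s,t} \;=\; \exp\Big\{\theta N\bigl[\<\pi^N_t,H\>-\<\pi^N_s,H\>\bigr]\,-\,\int_s^t e^{-\theta N\<\pi^N_u,H\>}\,N^2\LN\, e^{\theta N\<\pi^N_u,H\>}\,du\Big\}
\end{equation*}
is a mean-one $\bb P_{\mu_N}$-martingale. A Taylor expansion using $|e^y-1-y|\le\tfrac12 y^2 e^{|y|}$, together with the fact that each bond $x\neq -1$ contributes a term of size $\xi^N_x N^2\theta^2(\delta_N H_x)^2=O(\theta^2)$ to the compensator (and the slow bond contributes only $O(\theta^2/N)$ because $\xi^N_{-1}=N^{-1}$), yields
\begin{equation*}
\Big|\,e^{-\theta N\<\pi^N_u,H\>}\,N^2\LN\, e^{\theta N\<\pi^N_u,H\>}\,-\,\theta N\cdot N^2\LN\<\pi^N_u,H\>\,\Big| \;\le\; C(H)\theta^2 N
\end{equation*}
for $\theta$ in a fixed bounded range. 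Combined with the drift bound from the first paragraph, using $\bb E_{\mu_N}[Z^\theta_{s,t}]=1$ and cancelling the linear-in-$\theta$ piece gives
\begin{equation*}
\bb E_{\mu_N}\!\bigl[\exp\{\theta N(M^{H,0}_{N,t}-M^{H,0}_{N,s})\}\bigr]\;\le\;\exp\{C(H)\theta^2 N(t-s)\}.
\end{equation*}

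To close the argument I would partition $[0,T]$ into $\lceil T/\delta\rceil$ intervals $[t_k,t_{k+1}]$ of length $\delta$; any $s,t$ with $|t-s|\le\delta$ lie in at most two consecutive intervals, so it suffices to bound $\sup_{t_k\le t\le t_{k+1}}|M^{H,0}_{N,t}-M^{H,0}_{N,t_k}|$ simultaneously over all $k$. Applying Doob's submartingale inequality to $\exp\{\theta N(M^{H,0}_{N,t}-M^{H,0}_{N,t_k})\}$, taking a union bound over $k$, and optimizing $\theta=\eps/(16 C(H)\delta)$ produces a bound of the form $2\lceil T/\delta\rceil\exp\{-c N\eps^2/\delta\}$ with $c=c(H)>0$. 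Taking $N^{-1}\log$ kills the combinatorial factor $\log(T/\delta)$, leaving $-c\eps^2/\delta$, which tends to $-\infty$ as $\delta\downarrow 0$. The main technical subtlety is verifying that the slow bond does not spoil the quadratic compensator bound: since the slow-bond exchange rate carries the factor $\xi^N_{-1}=N^{-1}$ while its jump amplitude is already $O(N^{-1})$ (continuity of $H$ at the origin), its contribution to the compensator is $O(\theta^2/N)$ and is absorbed trivially by the error term; everything else is standard.
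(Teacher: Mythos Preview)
Your argument is correct and shares the paper's skeleton: partition $[0,T]$ into blocks of length $\delta$, bound the drift $\int_s^t N^2\LN\<\pi^N_u,H\>\,du$ deterministically by $C(H)\delta$, and control the martingale increment via an exponential martingale and Doob's inequality. The substantive difference is that you carry a free tilt parameter $\theta$ and optimize it to $\theta\asymp\eps/\delta$, producing a per-block estimate of order $\exp\{-cN\eps^2/\delta\}$; after the union bound over $O(T/\delta)$ blocks and the limit $N\to\infty$, this gives $-c\eps^2/\delta\to-\infty$ as $\delta\downarrow 0$. The paper instead uses the exponential martingale with the single fixed tilt (effectively $\theta=1$), obtains only $-\eps/20$ per block independently of $\delta$, and then closes by bounding $\max_k\varlimsup_N(\cdot)$ from above by $\sum_k\varlimsup_N(\cdot)$---an inequality that actually goes the wrong way when the summands are negative. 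Your free-parameter approach avoids this issue entirely and is the standard, robust way to obtain the limit $-\infty$.

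One imprecision to fix: you assert the compensator bound $C(H)\theta^2 N$ ``for $\theta$ in a fixed bounded range'' and then choose $\theta=\eps/(16C(H)\delta)$, which diverges as $\delta\downarrow 0$. What you really need is that the jump size in $\theta N\<\pi^N,H\>$ is $\theta\,\delta_N H_x=O(\theta/N)$, so the factor $e^{|y|}$ in your Taylor remainder is at most $e^{C\theta/N}$. Since the limit $N\to\infty$ is taken \emph{before} $\delta\downarrow 0$, for each fixed $\delta$ (hence fixed $\theta$) this factor is at most $2$ for all large $N$, and the quadratic bound on the compensator follows with a constant independent of $N$.
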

Indeed, suppose the statement above. Let $\{H_\ell\}_{\ell\in \bb N}\subset C^2(\bb T)$ be a dense set of functions in $C(\bb T)$ for the uniform topology.
For each $\delta>0$ and  $\ell,m\in\bb N$,  denote by $C_{\ell,\delta,\frac{1}{m}}$ the set $\mc C_{H_\ell,\delta,\eps}$ with $\eps=\frac{1}{m}$.
Assuming   Lemma \ref{l1}, in particular we have that
\begin{equation}\label{eqq01}
 \lim_{\delta\downarrow 0}\varlimsup_{N\to\infty}\pfrac{1}{N}\log \bb Q_{\mu_N}\Big[\pi\not\in C_{\ell,\delta,\frac{1}{m}} \Big]\,=\,-\infty\,, \quad\forall\, \ell,m\geq 1.
\end{equation}
Fix  positive integers $\ell,m$. In view of \eqref{eqq01}, for any $n\in\bb N$ we can find  $\delta_0=\delta_0(\ell,m,n)>0$ such that
 \begin{equation*}
\varlimsup_{N\to\infty}\pfrac{1}{N}\log \bb Q_{\mu_N}\Big[\pi\not\in C_{\ell,\delta,\frac{1}{m}} \Big]\,\leq\,-n \, m\,\ell\,, \quad \forall \delta\in (0,\delta_0]\,.
\end{equation*}
 Hence, for each $ \delta\in (0,\delta_0]$ there exists
  $N_\delta=N_\delta(\delta,\ell,m,n)\in \bb N$ such that
\begin{equation*}
 \bb Q_{\mu_N}\Big[\pi\not\in C_{\ell,\delta,\frac{1}{m}} \Big]\,\leq\, e^{-N\,n\, m\,\ell}\,, \quad\forall\, N\geq N_\delta.
\end{equation*}
At this point, some efforts are necessary in order to remove the restriction above on $N$ (by suitably re-defining $\delta$). This is the content of the claim:

{\emph{Claim:}} For all positive integers $\ell,m,n$, there exists $\tilde{\delta}=\tilde{\delta}(\ell,m,n)>0$ such that
$$  \bb Q_{\mu_N}\Big[\pi\notin C_{\ell,\tilde{\delta},\frac{1}{m}} \Big]\,\leq\, e^{-N\,n\,m\,\ell}\,,\quad\forall N\in\bb N.$$

To prove this claim, we start by observing that,
 if  $0<\delta_1<\delta_2$, then
$C_{\ell,\delta_2,\frac{1}{m}} \subseteq C_{\ell,\delta_1,\frac{1}{m}}$. Hence
\begin{equation}\label{delllta}
\big[\pi\not\in C_{\ell,\delta_1,\frac{1}{m}} \big]\;\subseteq\; \big[\pi\not\in C_{\ell,\delta_2,\frac{1}{m}} \big]\,,~~~\mbox{ for }0<\delta_1<\delta_2\,.
\end{equation}
Now, denoting
  $N_0=N_{\delta_0}(\ell,m,n)$ (which depends only on $\ell, m,n$, because $\delta_0$ is a function of $\ell,m,n$), we have that
\begin{equation}\label{delllta2}
 \bb Q_{\mu_N}\Big[\pi\not\in C_{\ell,\delta,\frac{1}{m}} \Big]\,\leq
 \bb Q_{\mu_N}\Big[\pi\not\in C_{\ell,\delta_0,\frac{1}{m}} \Big]\,\leq\, e^{-N\,n\, m\,\ell}\,,
\end{equation}
$\forall\, \delta\in (0,\delta_0]$ and $\forall\, N\geq N_0$.

Observe that, for fixed $\ell,m\in \bb N$, we have that 
$C_{\ell,\delta,\frac{1}{m}}\nearrow\DM$ as $\delta\searrow 0$, which is true because the set $\DM$ is composed of c\`adl\`ag trajectories.
 Since the sets $\big[\pi\not\in C_{\ell,\delta,\frac{1}{m}}\big]$ decrease  to the empty set as $\delta\searrow 0$, then for each fixed $N$, the 
probability  
   $\bb Q_{\mu_N}[\pi\not\in C_{\ell,\delta,\frac{1}{m}} ]$ decreases to zero as $\delta\searrow 0$.
 Therefore, for each fixed $N\in\bb N$, we can choose 
\begin{equation}\label{deltatilde}
\tilde{\delta}_N=\tilde{\delta}_N(\ell,m,n)\;\leq\; \delta_0(\ell,m,n)=\delta_0
\end{equation} 
such that
\begin{equation}\label{delllta1}
 \bb Q_{\mu_N}\Big[\pi\not\in C_{\ell,\tilde{\delta}_N,\frac{1}{m}} \Big]\,\leq\, e^{-N\,n\,m\,\ell}\,.
\end{equation}
Denote now
$$\tilde{\delta}:=\min_{N< N_0}\tilde{\delta}_N\leq \delta_0\,.$$ 
Let $N\in \bb N$. 
If $N< N_0$, then, by $\tilde{\delta}\leq \tilde{\delta}_N$, \eqref{delllta} and \eqref{delllta1}, we have that
$$  \bb Q_{\mu_N}\Big[\pi\not\in C_{\ell,\tilde{\delta},\frac{1}{m}} \Big]\,\leq\,\bb Q_{\mu_N}\Big[\pi\not\in C_{\ell,\tilde{\delta}_N,\frac{1}{m}} \Big]\,\leq\, e^{-N\,n\,m\,\ell}\,.$$
Furthermore, if $N\geq N_0$,  the construction $\tilde{\delta}\leq \delta_0$ (see \eqref{delllta2} and \eqref{deltatilde}) assures that
$$  \bb Q_{\mu_N}\Big[\pi\not\in C_{\ell,\tilde{\delta},\frac{1}{m}} \Big]\,\leq\, e^{-N\,n\,m\,\ell}\,,$$ finishing the proof of the claim.

\bigskip

Keeping in mind that our goal is to prove that the sequence $\bb Q_{\mu_N}$ is exponentially tight, we define
\begin{equation*}
 K_n\;=\;\bigcap_{\ell\geq 1,m\geq 1} C_{\ell,\tilde{\delta},\frac{1}{m}}\,,
\end{equation*}
which is a intersection of closed sets, hence closed as well. 
 In order to   prove that  $K_n$ is a compact set for each $n\geq 1$,  we
use a version of Arzel\`a-Ascoli theorem, which states  that a set of functions $K_n\subset \DM$ is relatively compact if it is uniformly bounded, and
\begin{equation}\label{limlim}
\lim_{\delta\rightarrow 0} \sup_{{\pi\in K_n}}\inf_{\{t_i\}} \max_i \sup_{s,t\in[t_{i-1},t_i)} d(\pi_s,\pi_t)=0\,,
\end{equation}
where the infimum is taken over all partitions $0=t_0<t_1<\ldots<t_r$ with $t_i-t_{i-1}>\delta$ and $d$ is the metric on $\mc M$. 
 We start by observing that $ K_n$ is uniformly bounded, because 
$ K_n\subset \DM$ (c.f. the Definition of $\mc M$ in \eqref{M}). 
The limit \eqref{limlim} is a consequence of  
\begin{equation}\label{limlim1}\lim_{\delta\rightarrow 0}\sup_{\pi\in K_n}\sup_{|t-s| \leq \delta}d(\pi_s,\pi_t)=0\,.
\end{equation} 
 To prove the limit above we start by observing that if
 $\pi\in K_n$ and $|t-s| \leq \tilde\delta$ (we can suppose without loss of generality that $s\leq t$, thus $s\leq t\leq s+\tilde\delta$), then
$${\displaystyle|\<\pi_t,H_\ell\>-\<\pi_s,H_\ell\>|\,\leq\,\pfrac 1m\,, }\quad \forall \; \ell,m\in \bb N\,.$$
Now, recalling  that the metric $d$ on $\mc M$ is
$$d(\pi_s,\pi_t)\;=\;\sum_{\ell\in \bb N}\frac{1}{2^\ell}\frac{|\<\pi_t,H_\ell\>-\<\pi_s,H_\ell\>|}{1+|\<\pi_t,H_\ell\>-\<\pi_s,H_\ell\>|}\;\leq\; \sum_{\ell\in \bb N}\pfrac{1}{2^\ell}\,|\<\pi_t,H_\ell\>-\<\pi_s,H_\ell\>|\,,$$
we 
have, for $\pi\in K_n$ and $|t-s| \leq \tilde\delta$, that
$d(\pi_s,\pi_t)\leq \pfrac 1m\,,$ for all $m\in\bb N$, leading to  \begin{equation}\label{kkkk}
\sup_{\pi\in K_n}\sup_{|t-s| \leq \tilde{\delta}}d(\pi_s,\pi_t)\leq \pfrac 1m\,, \quad \mbox{for all}\quad m\in\bb N\,.
\end{equation} 
 Since $\delta\mapsto \sup_{|t-s| \leq \delta}d(\pi_s,\pi_t)$ is decreasing on $\delta$ (for $\pi$ fixed), 
 the inequality \eqref{kkkk} holds for $\delta\leq \tilde{\delta}$ in place of $ \tilde{\delta}$.   Therefore, the limit \eqref{limlim1} follows.

  Since $K_n$ is  relatively compact and closed, we conclude  that $K_n$ is a compact set.
Furthermore, by construction of the set $K_n$ and the last claim, we have that
\begin{equation*}
 \bb Q_{\mu_N}\Big[\pi\not\in K_n\Big] \;\leq\;  \sum_{\at{\ell\geq 1}{m\geq 1}}e^{-N\,n\,m\,\ell} \;\leq\; C\,e^{-N\,n}\,,
\end{equation*}
where $C$ is a constant not depending in the parameters. In particular, 
\begin{equation*}
 \varlimsup_{N\to\infty}\pfrac{1}{N}\log \bb Q_{\mu_N}\Big[\pi\not\in K_n \Big]\;\leq\;-n\,,
\end{equation*}
which is the exponential tightness. Therefore, it only remains to prove the Lemma~\ref{l1}.

\begin{proof}[Proof of Lemma \ref{l1}]

 Fix $\eps>0$ and $H\in C^2(\bb T)$. Recalling the definition of the set $\mc C_{H,\delta,\eps}$, we can rewrite the set $\big[\pi\notin\mc C_{H,\delta,\eps}\big]$ as
\begin{equation*}
\Big\{\pi\in\DM;\;\,\sup_{s\leq t
\leq s+\delta}
|\<\pi_t,H\>-\<\pi_{s},H\>|\;>\;\eps,\, \mbox{ for some } s\in[0,T]\,\Big\}\;.
\end{equation*} 
Consider the partition of the interval $[0,T]$ with mesh size equal to $\delta$.  For each $s\in[0,T]$  there exists $k\in\{0,\dots,\lfloor T\delta^{-1}\rfloor\}$ such that $k\delta\leq s<(k+1)\delta$. 
Thus,
\begin{equation*}
\begin{split}
&\sup_{s\leq t
\leq s+\delta}
|\<\pi_t,H\>-\<\pi_{s},H\>|\\
\leq&\sup_{s\leq t\leq (k+1)\delta}
|\<\pi_t,H\>-\<\pi_{s},H\>|\;\;+\;\sup_{(k+1)\delta\leq t\leq s+\delta}
|\<\pi_t,H\>-\<\pi_{s},H\>|\,.
 \end{split}\end{equation*}
 Adding and subtracting $\<\pi_{k\delta},H\>$ in  both    terms above, and adding and subtracting  $\<\pi_{(k+1)\delta},H\>$ in the second term, we bound the last expression   by
\begin{equation*}\begin{split}&
\;4\sup_{k\delta\leq t
\leq (k+1)\delta}\;\;
\big|\<\pi_t,H\>-\<\pi_{k\delta},H\>\big|\\&\;+ \;\sup_{(k+1)\delta\leq t
\leq (k+2)\delta}
\big|\<\pi_t,H\>-\<\pi_{(k+1)\delta},H\>\big|\,.
  \end{split}\end{equation*}
Then,
\begin{equation*}
\Big\{\pi;\;\,\sup_{s\leq t
\leq s+\delta}
|\<\pi_t,H\>-\<\pi_{s},H\>|\;>\;\eps,\, \mbox{ for some } s\in[0,T]\,\Big\}
\subseteq
\bigcup_{k=0}^{\lfloor T\delta^{-1}\rfloor}A_{k,\delta,\eps}^{H,N}\,,
\end{equation*} 
 where
$$A_{k,\delta,\eps}^{H,N}\;=\;\Big\{\,\sup_{k\delta\leq t
\leq (k+1)\delta}
\big|\<\pi_t,H\>-\<\pi_{k\delta},H\>\big|\;>\;\eps/5\,\Big\}\,.$$
Thus, for all $\delta>0$,
\begin{equation}\label{ineq}
\varlimsup_{N\to\infty}\pfrac{1}{N}\log \bb Q_{\mu_N}
\Big[\,\pi\notin \mc C_{\ell,\delta,\frac{1}{m}}\Big]\;\leq\;
\varlimsup_{N\to\infty}\pfrac{1}{N}\log \sum_{k=0}^{\lfloor T\delta^{-1}\rfloor}\bb Q_{\mu_N}
\big[A_{k,\delta,\eps}^{H,N}\big] \,.
\end{equation}
Since
\begin{equation}\label{bound_log}
\varlimsup_N N^{-1} \log \{a_N + b_N\}\;= \;
\max\Big\{\varlimsup_N N^{-1} \log a_N,\varlimsup_N N^{-1} \log b_N\Big\}\,,
\end{equation} the limit in the right-hand side of \eqref{ineq} is bounded from above by
\begin{equation*}
\max_{k\in\{0,\dots,\lfloor T\delta^{-1}\rfloor\}}
\varlimsup_{N\to\infty}\pfrac{1}{N}\log \bb Q_{\mu_N}
\big[A_{k,\delta,\eps}^{H,N}\big] \,.
\end{equation*}
Then, in order to prove the Lemma \ref{l1},  it is enough to show that 
\begin{equation}\label{l2}
 \lim_{\delta\downarrow 0}\max_{k\in\{0,\dots,\lfloor T\delta^{-1}\rfloor\}}\varlimsup_{N\to\infty}\pfrac{1}{N}\log \bb Q_{\mu_N}\Big[A_{k,\delta,\eps}^{H,N}\,\Big]\;=\;-\infty\,.
\end{equation}
 We begin by observing that $A_{k,\delta,\eps}^{H,N}=B_{k,\delta,\eps}^{H,N}\cup B_{k,\delta,\eps}^{-H,N}$, where
\begin{equation*}
B_{k,\delta,\eps}^{H,N}\;=\;\Big\{\,\sup_{k\delta\leq t \leq (k+1)\delta}
\<\pi_t,H\>-\<\pi_{k\delta},H\>\;>\;\eps/10\,\Big\} \,.
\end{equation*}
Hence, recalling \eqref{bound_log}, to obtain \eqref{l2} it is  sufficient to assure that 
\begin{equation}\label{l22}
 \lim_{\delta\downarrow 0}\max_{k\in\{0,\dots,\lfloor T\delta^{-1}\rfloor\}}\varlimsup_{N\to\infty}\pfrac{1}{N}\log \bb Q_{\mu_N}\Big[B_{k,\delta,\eps}^{H,N}\,\Big]\;=\;-\infty\,,
\end{equation}
for any $H\in C^2(\bb T)$ and $\eps>0$. To obtain the claim above we  analyze the limit \break $\varlimsup_{N\to\infty}\frac{1}{N}\log \bb Q_{\mu_N}\Big[B_{k,\delta,\eps}^{H,N}\,\Big]$ for fixed   $k$, $\delta$, $\eps$ and $H$. Let $a>0$. Denote  
\begin{equation*}
\begin{split}
 & M^{a,H}_t=\exp{\Big\{aN\Big[\<\pi^N_t,H\>-\<\pi^N_0,H\>}
- \int_0^tU_N^a(H,s,\eta_s)\, ds\Big]\Big\}\,,
\end{split}
\end{equation*}
where
\begin{equation*}
\begin{split}
 U_N^a(H,s,\eta_s)=
\pfrac{1}{aN} \,e^{-aN\<\pi^N_s,H\>}(\partial_s\!+\!N^2\LN) 
e^{aN\<\pi^N_s,H\>}\,.
\end{split}
\end{equation*}
Note that $ M^{a,H}_t$ is a positive mean one martingale with respect to the natural filtration.
And,
$\{M^{a,H}_t/M^{a,H}_{k\delta}\}_{t\geq k\delta}$ is also a positive  mean one martingale.
Adding and subtracting the integral part, we get 
$$\bb Q_{\mu_N}[B_{k,\delta,\eps}^{H,N}\,]\;\leq\; \bb Q_{\mu_N}[C_{k,\delta,\eps}^{a,H,N}\,]+\bb Q_{\mu_N}[D_{k,\delta,\eps}^{a,H,N}\,]\,,$$ where
\begin{equation*}\label{l3}
C_{k,\delta,\eps}^{a,H,N}\;=\;\Big\{\;\sup_{k\delta\leq t\leq (k+1)\delta}\;
\frac{1}{aN}\log \Bigg(\frac{M^{a,H}_t}{M^{a,H}_{k\delta}}\; \Bigg)\;>\;\,\eps/20\; \Big\}
\end{equation*}
and
\begin{equation*}
D_{k,\delta,\eps}^{a,H,N}\;=\;\Big\{\;\sup_{k\delta\leq t\leq (k+1)\delta}\;
\int_{k\delta}^t U_N^a(H,s,\eta_s)\,ds\;>\;\,\eps/20\;\Big\}\;.
\end{equation*}
By the considerations above and again  \eqref{bound_log}, we have that
\begin{equation}\label{ABC_2}
\begin{split}
&\varlimsup_{N\to\infty}\frac{1}{N}\log \bb Q_{\mu_N}\Big[B_{k,\delta,\eps}^{H,N}\,\Big]\\
&\leq\; \max\Big\{\varlimsup_{N\to\infty}\frac{1}{N}\log \bb Q_{\mu_N}\Big[C_{k,\delta,\eps}^{a,H,N}\,\Big],\,\varlimsup_{N\to\infty}\frac{1}{N}\log \bb Q_{\mu_N}\Big[D_{k,\delta,\eps}^{a,H,N}\,\Big]\Big\}\;,
\end{split}
\end{equation}
 for all $\delta>0$ and $k\in\{0,1,\dots,\lfloor T\delta^{-1}\rfloor\}$. 
Since $H\in C^2(\bb T)$, by  Taylor expansion 
it is easy\footnote{One can do similar computations of those in the Subsection \ref{section radon}.} to verify that
$
 |\int_{k\delta}^t U_N^a(H,s,\eta_s)\,ds |
$
is bounded by $C(a,H)\delta$, for all $t\in[k\delta,(k+1)\delta]$. 
Thus, if we take $\delta\in(0,\tilde{C})$ with $\tilde{C}:=\eps/(20\,C(a,H))$, then $\bb Q_{\mu_N}[D_{k,\delta,\eps}^{a,H,N}]=0$ for all $k\in\{0,1,\dots,\lfloor T\delta^{-1}\rfloor\}$, and therefore the inequality \eqref{ABC_2} becomes
\begin{equation*}\label{AB}
\varlimsup_{N\to\infty}\frac{1}{N}\log \bb Q_{\mu_N}\Big[B_{k,\delta,\eps}^{H,N}\,\Big]\leq \varlimsup_{N\to\infty}\frac{1}{N}\log \bb Q_{\mu_N}\Big[C_{k,\delta,\eps}^{a,H,N}\,\Big]\;,
\end{equation*}
provided $\delta<\tilde{C}$.
We handle now the set $C_{k,\delta,\eps}^{a,H,N}$ in the following way:
\begin{equation*}\label{ABC}
\begin{split}
 \bb Q_{\mu_N}\Big[C_{k,\delta,\eps}^{a,H,N}\,\Big] 
& \; = \; \bb Q_{\mu_N}\Big[\,\sup_{k\delta\leq t \leq (k+1)\delta}
\frac{M^{a,H}_t}{M^{a,H}_{k\delta}}>e^{aN\eps /20}\,\Big] \;\leq\; \frac{1}{e^{aN\eps /20}}\;,
\end{split}
\end{equation*}
where in  last inequality we have used Doob's inequality since $\{M^{a,H}_t/M^{a,H}_{k\delta}\}_{t\geq k\delta}$  is a mean one positive martingale.  Thus 
\begin{equation*}
\begin{split}
 &\varlimsup_{N\to\infty}\pfrac{1}{N}\log
\bb Q_{\mu_N}\Big[B_{k,\delta,\eps}^{H,N}\,\Big] \leq -a\eps /20\;,
\end{split}
\end{equation*}
for all  $a>0$, $\eps>0$, $H\in C^2(\bb T)$, $\delta\in (0,\tilde{C})$ and  $k=0,1,\dots,\lfloor T\delta^{-1}\rfloor$. Fix $a>0$.  Taking the  limit $\delta\searrow 0$  in the inequality above gives us 
\begin{equation*}
\varlimsup_{\delta\downarrow 0}\max_{k\in\{0,\dots,\lfloor T\delta^{-1}\rfloor\}}\varlimsup_{N\to\infty}\pfrac{1}{N}\log \bb Q_{\mu_N}\Big[B_{k,\delta,\eps}^{H,N}\,\Big]\,\leq\,\frac{-a\eps}{20}\,.
\end{equation*}
Now, taking the limit when $a\to+\infty$  leads  to \eqref{l22}, finishing the proof.

\end{proof}

\section{Large deviations lower bound for smooth profiles}\label{lower}

Next, we  obtain a non-variational  formulation of the rate functional $I$ for profiles $\rho$ whose are solutions of the hydrodynamical equation  for some perturbation $H\in \C$.

\begin{proposition}\label{charact_H_suave}
  Given  $H\in \C$, let  $\rho^H$ be the unique  weak solution of \eqref{edpasy}. Then,
\begin{equation}\label{charact_H_suave_eq}
\begin{split}
& I(\rho^H)\,:=\,\sup_G\hat{J}_G(\rho^H)\, =\,\hat{J}_H(\rho^H)\\
 &=  \,  \int_0^T\big\<\chi(\rho_t^H),(\partial_u H_t)^2\big\>\,dt 
  +\int_0^T\rho_t^H(0^-) \big(1-\rho_t^H( 0^+)\big)\,\bs \Gamma\big(\delta H_t(0)\big)\,dt \\
   & +\int_0^T\rho_t^H(0^+) \big(1-\rho_t^H(0^-)\big)\,\bs \Gamma\big(-\delta H_t(0)\big)\,dt\,,
\end{split}
\end{equation}
where $\bs \Gamma(y)=1-e^{y}+y\,e^y$,  $\;\forall\, y\in\bb R$.
\end{proposition}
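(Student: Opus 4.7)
The plan has two parts. First I would verify the explicit formula for $\hat J_H(\rho^H)$ in \eqref{charact_H_suave_eq}; second I would show that the supremum $I(\rho^H) = \sup_G \hat J_G(\rho^H)$ is attained at $G = H$, by exploiting concavity of $G \mapsto \hat J_G(\rho^H)$ together with a G\^ateaux derivative computation. The single analytic input throughout is the weak formulation \eqref{eqint12} applied with various test functions; everything else is algebra on the boundary.

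For the first part, I would expand $\hat J_H(\rho^H) = \ell_H(\rho^H) - \Phi_H(\rho^H)$ and apply \eqref{eqint12} with test function $G = H$. This replaces the combination $\<\rho^H_T, H_T\> - \<\gamma, H_0\> - \int_0^T \<\rho^H_s, (\partial_s + \Delta) H_s\>\, ds - \int_0^T \{\rho^H_s(0^+) \partial_u H_s(0^+) - \rho^H_s(0^-) \partial_u H_s(0^-)\}\, ds$ appearing in $\ell_H(\rho^H)$ by $2\int_0^T \<\chi(\rho^H_s) (\partial_u H_s)^2\>\, ds + \int_0^T \varphi_s(\rho^H, H)\, \delta H_s(0)\, ds$. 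After subtracting $\Phi_H(\rho^H)$ the bulk collapses to $\int_0^T \<\chi(\rho^H_t)(\partial_u H_t)^2\>\, dt$, while the pointwise boundary contribution, writing $y = \delta H_t(0)$, $a = \rho^H_t(0^-)$, $b = \rho^H_t(0^+)$, becomes $a(1-b)\big[y e^y - \psi(y)\big] + b(1-a)\big[-y e^{-y} - \psi(-y)\big] + (b-a)\,y$. Using $\psi(x) = e^x - x - 1$ one checks $y e^y - \psi(y) = \bs\Gamma(y) + y$ and $-y e^{-y} - \psi(-y) = \bs\Gamma(-y) - y$; the residual terms linear in $y$ cancel by $a(1-b) - b(1-a) + (b-a) = 0$, yielding precisely the claimed formula.

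For the second part, the key observation is that $G \mapsto \hat J_G(\rho^H)$ is concave on $\C$: $\ell_G$ is linear in $G$ by \eqref{ell}, and $\Phi_G(\rho^H)$ is a sum of convex functionals of $G$ (the bulk involves $(\partial_u G)^2$ and the boundary terms involve $\psi(\pm \delta G(0))$) weighted by the nonnegative coefficients $\chi(\rho^H_t) \geq 0$ and $\rho^H_t(0^{\mp})(1 - \rho^H_t(0^{\pm})) \geq 0$, both nonnegative since $0 \leq \rho^H \leq 1$. Hence it suffices to show that the G\^ateaux derivative of $\hat J_{\cdot}(\rho^H)$ at $H$ in every direction $K \in \C$ vanishes. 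Using $\psi'(x) = e^x - 1$ together with the identity $a(1-b)(e^y - 1) - b(1-a)(e^{-y} - 1) = \varphi_t(\rho^H, H) + (b - a)$, a direct computation reduces this derivative to
\[ \ell_K(\rho^H) - 2\int_0^T \<\chi(\rho^H_t)\, \partial_u H_t,\, \partial_u K_t\>\, dt - \int_0^T \big[\varphi_t(\rho^H, H) + (\rho^H_t(0^+) - \rho^H_t(0^-))\big]\, \delta K_t(0)\, dt. \]
Applying \eqref{eqint12} once more, now with test function $K$, expresses $\ell_K(\rho^H)$ as exactly the sum of the three subsequent terms with opposite sign, so the derivative vanishes; $H$ is a critical point, hence by concavity a global maximizer of $\hat J_{\cdot}(\rho^H)$.

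The hard part will not be analytic but rather careful bookkeeping of the boundary objects at the origin — the side limits $\rho^H(0^\pm)$, the jumps $\delta G(0)$, and the traces $\partial_u G(0^\pm)$ — when applying the weak formulation, together with the algebraic cancellation of all terms linear in $y$ so that the exponential pieces reassemble into $\bs\Gamma(\pm y)$. All such objects are well defined because $H, K \in \C$ and $\rho^H \in \Sob$ possess side limits and traces at $0$, so no additional analytic estimates beyond what is built into Definitions \ref{6.1.1} and \ref{C2descont} are required.
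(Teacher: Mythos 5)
Your proposal is correct, and both parts check out: the algebra $ye^y-\psi(y)=\bs\Gamma(y)+y$, $-ye^{-y}-\psi(-y)=\bs\Gamma(-y)-y$, and the cancellation $a(1-b)-b(1-a)+(b-a)=0$ are right, as is the identity reducing the G\^ateaux derivative to $\ell_K(\rho^H)-2\int_0^T\langle\chi(\rho^H_t)\partial_u H_t,\partial_u K_t\rangle dt-\int_0^T[\varphi_t(\rho^H,H)+(\rho^H_t(0^+)-\rho^H_t(0^-))]\,\delta K_t(0)\,dt$, which vanishes by the weak formulation tested against $K$. The route is genuinely different from the paper's, though. The paper applies \eqref{eqint12} only once, with the arbitrary test function $G$, and obtains the closed formula
$\hat{J}_G(\rho^H)=\int_0^T\langle\chi(\rho_t^H),(\partial_u H_t)^2\rangle dt-\int_0^T\langle\chi(\rho_t^H),(\partial_u H_t-\partial_u G_t)^2\rangle dt+\int_0^T\rho_t^H(0^-)(1-\rho_t^H(0^+))\,\bar{\Gamma}(\delta G_t(0),\delta H_t(0))\,dt+\int_0^T\rho_t^H(0^+)(1-\rho_t^H(0^-))\,\bar{\Gamma}(-\delta G_t(0),-\delta H_t(0))\,dt$
with $\bar{\Gamma}(x,y)=1-e^x+xe^y$, and then maximizes pointwise: the quadratic bulk term is nonpositive and vanishes at $G=H$, and $x\mapsto\bar{\Gamma}(x,y)$ attains its maximum at $x=y$, where $\bar{\Gamma}(y,y)=\bs\Gamma(y)$. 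That yields the maximizer and the value in one stroke and, as a bonus, quantifies the gap $\hat J_H(\rho^H)-\hat J_G(\rho^H)$ explicitly for every $G$. Your version splits the work into two applications of the weak formulation (one with $H$ to get the value, one with $K$ to get criticality) and replaces the pointwise maximization by the standard convex-analysis fact that a critical point of a concave functional is a global maximum; this is slightly longer but arguably more systematic, and it isolates concavity of $G\mapsto\hat J_G(\rho^H)$ (with the nonnegativity of $\chi(\rho^H)$ and of the boundary weights as the only hypotheses used), which is the same structural fact the paper exploits later in Proposition \ref{convex}.
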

Although of quite simple proof, this result has a deep interpretation. The functional $-\hat{J}_G(\rho)$ has the meaning of being  \emph{the price} to observe the profile $\rho$ when we perturb the system by $G$. The equality $\sup_G\hat{J}_G(\rho^H)=\hat{J}_H(\rho^H)$ says  that the minimum cost to observe the profile $\rho$ is reached  by picking up the perturbation $G=H$, where $H$ is such that $\rho=\rho^H$, i.e., such that $\rho$ is a solution of \eqref{edpasy}. 
\begin{proof}
Replacing the integral equation \eqref{eqint12} in the definition of $\hat{J}$ given in \eqref{J hat}, we get
\begin{equation*}
\begin{split}
\hat{J}_G(\rho^H) \; =\; & \int_0^T\big\<\chi(\rho_t^H),(\partial_u H_t)^2\big\>\,dt
-\int_0^T\big\<\chi(\rho_t^H),(\partial_u H_t-\partial_u G_t)^2\big\>\,dt \\ &+\int_0^T\rho_t^H(0^-) \big(1-\rho_t^H( 0^+)\big)\,\bar{\Gamma}\big(\delta G_t(0),\,\delta H_t(0)\big)\,dt\\
 &+\int_0^T\rho_t^H(0^+) \big(1-\rho_t^H(0^-)\big)\,\bar{\Gamma}\big(-\delta G_t(0),\,-\delta H_t(0)\big)\,dt\,,
\end{split}
\end{equation*}
where $\bar{\Gamma}(x,y)= 1-e^{x}+x\,e^{y}$,  $\;\forall x,y\in\bb R$. 
Let  $y\in \bb R$ fixed. The function $x\mapsto \bar{\Gamma}(x,y)$ assumes its maximum  at $x=y$. Therefore, $I(\rho^H)=\sup_{G}\hat{J}_G(\rho^H)=\hat{J}_H(\rho^H)$.
Noticing that  $\bs \Gamma(y)=\bar{\Gamma}(y,y)$ we arrive at \eqref{charact_H_suave_eq}.
\end{proof}
\begin{remark}\rm As natural,  if  $\lambda$ is the unique weak solution of \eqref{edp}, then the rate functional vanishes at $\lambda$.
In fact, given $G\in \C$, we have $\ell_G(\lambda)=0$ because $\lambda$ satisfies the integral equation \eqref{eqint1}.
Since $\psi(u)=e^u-u-1\geq 0$, it yields   $\hat{J}_G(\lambda)\leq 0$. And $\hat{J}_G(\lambda)= 0$ if $G$ is constant.
\end{remark}
By Proposition \ref{charact_H_suave}, profiles that are solution of \eqref{edpasy} for some $H$ provides a special representation for the rate functional. This motivates the next definition.
\begin{definition}
Denote by $\DME$ the subset of $\DMO$ consisting of all paths $\pi_t(du)=\rho_t(u)\,du$ for which there exists some 
$H\in \C$ such that  $\rho=\rho^H$ is the unique weak solution of \eqref{edpasy}. 
\end{definition}
We begin by  proving  the lower bound for trajectories in $\DME$. In the following we present the lower bound in the  set of smooth trajectories,  $\DMS$.
\begin{proposition}\label{lower subset}
 Let $\mc O$ be an open set of $\DM$. Then
\begin{equation*}
 \varliminf_{N\to\infty}
\frac{1}{N}\log\bb Q_{\mu_N}[\,\mc O\,]\;\geq\; -\inf_{\pi\in \mc O\cap \DME}I(\pi)\;.
\end{equation*}
\end{proposition}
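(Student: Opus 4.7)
The plan is to fix an arbitrary $\pi^H \in \mc O \cap \DME$ with associated perturbation $H \in \C$ (so that $\pi^H_t(du) = \rho^H_t(u)\,du$ is the unique weak solution of \eqref{edpasy}) and establish
\begin{equation*}
\varliminf_{N\to\infty} \pfrac{1}{N} \log \bb Q_{\mu_N}^N[\mc O] \;\geq\; -I(\pi^H);
\end{equation*}
taking the infimum over such $\pi^H$ then yields the proposition. This is the classical change-of-measure argument, with the WASEP perturbed by $H$ playing the role of the tilted measure that realizes the atypical event $\{\pi^N \in \mc O\}$ with probability tending to one. Writing
\begin{equation*}
\bb Q_{\mu_N}^N[\mc O] \;=\; \bb E^H_{\mu_N}\!\bigl[\radonNinv\, \mathbf{1}_{\{\pi^N \in \mc O\}}\bigr]
\end{equation*}
and applying Jensen's inequality to the conditional distribution of $\radonNinv$ given $\{\pi^N \in \mc O\}$, one obtains
\begin{equation*}
\pfrac{1}{N}\log \bb Q_{\mu_N}^N[\mc O] \;\geq\; \pfrac{1}{N}\log \bb P^H_{\mu_N}[\pi^N \in \mc O] \;-\; \pfrac{1}{N \bb P^H_{\mu_N}[\pi^N \in \mc O]}\, \bb E^H_{\mu_N}\!\bigl[\mathbf{1}_{\{\pi^N \in \mc O\}} \log \radonN\bigr].
\end{equation*}
Since $\mc O$ is open and contains $\pi^H$, Theorem \ref{t02} gives $\bb P^H_{\mu_N}[\pi^N \in \mc O] \to 1$, so the first term vanishes.

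The crux is then to identify
\begin{equation*}
\lim_{N\to\infty} \pfrac{1}{N}\,\bb E^H_{\mu_N}\!\bigl[\log \radonN\bigr] \;=\; \hat J_H(\rho^H) \;=\; I(\pi^H),
\end{equation*}
the second equality being Proposition \ref{charact_H_suave}. Inspection of \eqref{radon_primeira_versao} combined with the bound \eqref{RN} shows that $\pfrac{1}{N}|\log \radonN|$ is uniformly bounded, so the indicator $\mathbf{1}_{\{\pi^N \in \mc O\}}$ may be dropped via dominated convergence. Starting from \eqref{radon_primeira_versao}, I would analyze each summand under $\bb P^H_{\mu_N}$ separately. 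The linear contribution $\ell^{^{\textit{int}}}_H(\pi^N)$ together with the interior-boundary piece $-\int_0^T \{\eta_t(0)\partial_u H_t(\pfrac{0}{N}) - \eta_t(-1)\partial_u H_t(\pfrac{-1}{N})\}\,dt$ converges, via Theorem \ref{t02} and Proposition \ref{replaceasy}, to $\ell^{^{\textit{int}}}_H(\rho^H) - \int_0^T \{\rho^H_t(0^+)\partial_u H_t(0^+) - \rho^H_t(0^-)\partial_u H_t(0^-)\}\,dt$ (the side limits being well defined since $\rho^H \in \Sob$ by Proposition \ref{p5.3.1}). The quadratic-in-$\partial_u H$ term involving $\tau_x g_1 + \tau_x g_2$ converges to $\int_0^T \langle \chi(\rho^H_t), (\partial_u H_t)^2\rangle\,dt$ after Proposition \ref{replaceasy}. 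Finally, the slow-bond contributions $\tau_{-1} g_1(\eta_t)(e^{\delta_N H_{-1}} - 1)$ and $\tau_0 g_2(\eta_t)(e^{-\delta_N H_{-1}} - 1)$ converge to the corresponding continuum integrals involving $\rho^H_t(0^\pm)$. A short algebraic rearrangement using $e^x - 1 = \psi(x) + x$, with $\psi$ as in \eqref{Phi}, assembles the sum of all these limits (with their signs read off from \eqref{radon_primeira_versao}) into exactly $\ell_H(\rho^H) - \Phi_H(\rho^H) = \hat J_H(\rho^H)$.

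The principal technical obstacle is the passage to the hydrodynamic limit for the nonlinear cylinder functions at the slow bond under the perturbed law. This is precisely the content of Proposition \ref{replaceasy}, whose proof relies on the exponential bound \eqref{RN}. Once the replacement is carried out, the remaining quantities are continuous functionals of the empirical measure, and convergence in expectation follows from Theorem \ref{t02} together with the uniform a.s.\ bound on $\pfrac{1}{N}\log \radonN$.
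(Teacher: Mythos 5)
Your proposal is correct and follows essentially the same route as the paper: change of measure to $\bb P^H_{\mu_N}$, Jensen's inequality on the conditioned expectation, the hydrodynamic limit of the WASEP to make $\bb P^H_{\mu_N}[\pi^N\in\mc O]\to 1$, and the identification $\lim_N \frac1N\bb E^H_{\mu_N}[\log\radonN]=\hat J_H(\rho^H)=I(\rho^H)$ via Proposition \ref{charact_H_suave}. The last step is exactly the paper's Lemma \ref{entropy}; the paper carries it out by restricting to the set $B^H_{\delta,\eps}$ and using the representation \eqref{radon J} of the Radon--Nikodym derivative as a continuous functional of the mollified empirical measure, which is the rigorous packaging of the term-by-term analysis you sketch.
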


\begin{proof}
This proof is essentially the same as that
found in \cite{kl}. Fix the open set $\mc O$.  
 Given $\pi\in \mc O\cap \DME$, by definition there exists $H\in \C$ such that 
$\pi_t(du)=\rho^H_t(u)\,du$, where $\rho^H$ is the weak solution of \eqref{edpasy}.
Denote by $\bb P_{\mu_N}^{H,\mc O}$ the probability on the space $\Ddiscreto$ defined by
\begin{equation*}
 \bb P_{\mu_N}^{H,\mc O}[A]\;=\;\frac{\bb P_{\mu_N}^{H}[A\,,\, \pi^N\in \mc O]}{\bb P_{\mu_N}^{H}[\pi^N\in \mc O]}\;,
\end{equation*}
for any $A$ measurable subset  of $\Ddiscreto$. Within  this definition,
\begin{equation*}
\frac{1}{N}\log\bb Q_{\mu_N}[\mc O]\;=\;
\frac{1}{N}\log\bb E_{\mu_N}^{H,\mc O}\Big[\,\radonNinv \,\Big]+\frac{1}{N}\log\bb Q_{\mu_N}^H[\mc O]\;.
\end{equation*}
Since $\mc O$ is a open set that contains $\rho^H$, by the Proposition \ref{hid asy} the second term in the right hand side of above converges to zero
as $N$ increases to infinity. Since the logarithm is a concave function, by Jensen's inequality the first  term in the right hand side of above is bounded from below by
\begin{equation*}
\bb E_{\mu_N}^{H ,\mc O}\Big[\frac{1}{N}\log\radonNinv \Big]\,.
\end{equation*}
Adding and subtracting the indicator function of the set $\{\pi^N\in{\mc O}^\complement\}$, the last expectation becomes
\begin{equation}\label{65a}
\frac{1}{\bb Q_{\mu_N}^{H}[\mc O]}\Bigg\{ -\frac{1}{N}\bs{H} \big(\bb P_{\mu_N}^{H}\vert\bb P_{\mu_N}\big)-
\bb E_{\mu_N }^H\Big[\frac{1}{N}\log\radonNinv\,\textbf{1}_{\{\pi^N\in{\mc O}^\complement\}} \Big]\Bigg\}\,,
\end{equation}
where 
\begin{equation}\label{ent}
\bs{H} \big(\bb P_{\mu_N}^{H}\vert\bb P_{\mu_N}\big)
\;:=\;\bb E_{\mu_N}^{H}\Big[\log\radonN\,\Big]
\;=\;-\bb E_{\mu_N}^H\Big[\log\radonNinv\,\Big]
\end{equation}
 is the so-called relative entropy of $\bb P_{\mu_N}^{H}$ with respect to $\bb P_{\mu_N}$.
Again by Proposition \ref{hid asy}  we have that $\bb Q_{\mu_N}^H[\mc O]$ converges to one as $N$ increases to  infinity. By \eqref{RN}
the expression $\frac{1}{N}\log\radonNinv$ is bounded, hence the second term inside braces in \eqref{65a} vanishes as $N$ increases to  $\infty$. Thus
\begin{equation*}
 \varliminf_{N\to\infty}
\frac{1}{N}\log\bb Q_{\mu_N}[\mc O]\;\geq\;\lim_{N\to\infty} -\frac{1}{N}\bs{H} \big(\bb P_{\mu_N}^{H}\vert\bb P_{\mu_N}\big)
\;=\; -I(\rho^H)\,,
\end{equation*}
where the last equality has an importance for itself and for this reason it is postponed to the Lemma \ref{entropy} proved next.
\end{proof}

\begin{lemma}\label{entropy}
Let  $H\in \C$.
Then
\begin{equation*}
 \lim_{N\to\infty}\frac{1}{N}\bs{H} \big(\bb P_{\mu_N}^{H}\vert\bb P_{\mu_N}\big)\;=\;I(\rho^H)\,,
\end{equation*}
where  $\rho^H$ is the unique weak solution of \eqref{edpasy}. 
\end{lemma}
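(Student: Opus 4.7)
The plan is to compute the limit directly from the explicit formula \eqref{radon3} for $\radonN$. By \eqref{ent} the quantity $N^{-1}\bs{H}(\bb P^H_{\mu_N}\vert \bb P_{\mu_N})$ equals $\bb E^H_{\mu_N}[N^{-1}\log\radonN]$, and the deterministic bound \eqref{RN} shows that this integrand is uniformly bounded in $N$. Hence by bounded convergence it suffices to identify the limit in probability, under $\bb P^H_{\mu_N}$, of each term appearing in \eqref{radon3}.

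The linear piece $\ell^{^{\textit{int}}}_H(\pi^N)$ converges to $\ell^{^{\textit{int}}}_H(\rho^H)$ by the hydrodynamic limit for the WASEP with a slow bond, Proposition \ref{hid asy}. For the bulk nonlinear contribution, Proposition \ref{replaceasy} lets me replace each cylinder $\tau_x g_i(\eta_t)$ by the local-average counterpart $\tilde g_i(\eta^{\eps N}_t(x),\eta^{\eps N}_t(x{+}1))$; since $\tilde g_1(\alpha,\beta)+\tilde g_2(\alpha,\beta)=\alpha+\beta-2\alpha\beta$, which tends to $2\chi(\rho)$ when $\alpha,\beta\to\rho$, taking $N\to\infty$ and then $\eps\downarrow 0$ together with the hydrodynamic limit yields $\int_0^T\<\chi(\rho^H_t),(\partial_u H_t)^2\>\,dt$. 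The boundary terms are treated analogously: the last displayed limit of Proposition \ref{replaceasy} replaces the occupations $\eta_t(-1),\eta_t(0)$ by local averages whose limits are $\rho^H_t(0^-),\rho^H_t(0^+)$ by the hydrodynamic limit; the boundary cylinders $\tau_{-1}g_i(\eta_t)$ are handled by the third displayed limit of Proposition \ref{replaceasy}; and $\partial_u H_t(\pm 1/N)$, $\delta_N H_{-1}$ converge to $\partial_u H_t(0^\pm)$, $\delta H_t(0)$ by the regularity of $H\in\C$. Assembling these contributions gives
\begin{equation*}
\begin{split}
\lim_{N\to\infty}\pfrac{1}{N}\bs{H}(\bb P^H_{\mu_N}\vert\bb P_{\mu_N})
\;=\;&\ell^{^{\textit{int}}}_H(\rho^H)-\int_0^T\<\chi(\rho^H_t),(\partial_u H_t)^2\>\,dt\\
&-\int_0^T\{\rho^H_t(0^+)\partial_u H_t(0^+)-\rho^H_t(0^-)\partial_u H_t(0^-)\}\,dt\\
&-\int_0^T\rho^H_t(0^-)(1-\rho^H_t(0^+))(e^{\delta H_t(0)}-1)\,dt\\
&-\int_0^T\rho^H_t(0^+)(1-\rho^H_t(0^-))(e^{-\delta H_t(0)}-1)\,dt.
\end{split}
\end{equation*}

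To identify this limit with $I(\rho^H)$, I will use the integral equation \eqref{eqint12} with $G=H$, which rewrites $\ell^{^{\textit{int}}}_H(\rho^H)$ as the sum of $2\int_0^T\<\chi(\rho^H_s),(\partial_u H_s)^2\>\,ds$, the boundary-derivative difference, and $\int_0^T\varphi_s(\rho^H,H)\,\delta H_s(0)\,ds$. Substituting and writing $a_s=\rho^H_s(0^-)(1-\rho^H_s(0^+))$, $b_s=\rho^H_s(0^+)(1-\rho^H_s(0^-))$, $h_s=\delta H_s(0)$, the boundary-derivative pieces cancel and the remaining boundary contributions collapse via the elementary identity $a_sh_se^{h_s}-b_sh_se^{-h_s}-a_s(e^{h_s}-1)-b_s(e^{-h_s}-1)=a_s\bs\Gamma(h_s)+b_s\bs\Gamma(-h_s)$, recovering precisely the closed-form expression for $I(\rho^H)=\hat J_H(\rho^H)$ stated in Proposition \ref{charact_H_suave}. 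The only delicate point beyond these algebraic manipulations is justifying the passage to the limit inside the expectation, cleanly settled by the uniform bound \eqref{RN}; the replacement of microscopic cylinder functions under the tilted law $\bb P^H_{\mu_N}$ is exactly the content of Proposition \ref{replaceasy}, and constitutes the main technical input.
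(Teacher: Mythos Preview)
Your approach is correct and takes a somewhat different route from the paper. The paper restricts first to the set $B^H_{\delta,\eps}$ on which $\radonN$ has already been rewritten as $\exp\{N[\hat J_H((\pi^N*\iog)*\ioe)+\text{errors}]\}$ (formula \eqref{radon J}), uses the continuity of $\pi\mapsto\hat J_H((\pi*\iog)*\ioe)$ established in Proposition~\ref{cont} together with Proposition~\ref{hid asy} to pass to $\hat J_H((\rho^H*\iog)*\ioe)$, and then sends $\gamma,\eps,\delta\downarrow 0$ before finally invoking Proposition~\ref{charact_H_suave}. You instead work term by term directly from \eqref{radon3}, applying Proposition~\ref{replaceasy} and the hydrodynamic limit to each contribution, and then reconstitute $I(\rho^H)$ algebraically through the integral equation \eqref{eqint12} and the identity with $\bs\Gamma$. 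Your route is more transparent and avoids introducing the packaged functional $\hat J_H$ on the mollified empirical measure; the paper's route has the advantage of handling all continuity issues at once.

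One technical point deserves more care in your write-up. After the replacement via Proposition~\ref{replaceasy}, the bulk and boundary terms involve $\eta^{\eps N}_t(x)=(\pi^N_t*\ioe)(x/N)$, and $\ioe(\cdot,v)$ is an indicator, hence not a continuous test function. So ``the hydrodynamic limit'' does not immediately give the convergence you claim; one needs either the extra smoothing by $\iog$ (this is exactly what Lemmas~\ref{tildeiota}--\ref{a_Ng1g2} accomplish in the paper) or a short argument exploiting the absolute continuity of the limit $\rho^H$ to extend weak convergence to indicators. The same machinery already appears in the characterization of limit points (Subsection~\ref{charact asy}), so you may simply point there; but as written, the phrase ``together with the hydrodynamic limit'' hides this step.
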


\begin{proof}
Using the formula \eqref{ent} for the relative entropy, we get
\begin{equation}\label{ent1} 
\frac{1}{N}\bs{H} \big(\bb P_{\mu_N}^{H}\vert\bb P_{\mu_N}\big)\;=\;
\frac{1}{N}\bb E_{\mu_N}^{H}\Big[\log\radonN\,\textbf{1}_{B^H_{\delta, \eps}}\Big]+\frac{1}{N}\bb E_{\mu_N}^{H}\Big[\log\radonN\,\textbf{1}_{(B^H_{\delta, \eps})^\complement}\Big]\,,
\end{equation}
where the set $B^H_{\delta, \eps}$ was defined in \eqref{set B}. We  claim that the event
$(B^H_{\delta, \eps})^\complement$  is superexponentially small with respect to $\bb P_{\mu_N}^{H}$.
Indeed, by \eqref{RN} we have
\begin{equation*}
 \bb P_{\mu_N}^{H}\Big[(B^H_{\delta, \eps})^\complement\Big]\;=\;
\bb E_{\mu_N}\Big[\,\radonN \,\textbf{1}_{(B^H_{\delta, \eps})^\complement}\Big]
\;\leq\; e^{C(H,T)N}\bb P_{\nu_\alpha^N}\Big[(B^H_{\delta, \eps})^\complement\Big]
\end{equation*}
and then by \eqref{B} we get
\begin{equation*}
 \varlimsup_{\eps\downarrow 0}\varlimsup_{N\to\infty}\frac 1N \log 
 \bb P_{\mu_N}^H\Big[(B^H_{\delta,\eps})^{\complement}\Big]\;=\;-\infty\,.
\end{equation*}
Provided by the limit above and  the fact that $\frac{1}{N}\log\radonN$ is bounded, the right hand
side of \eqref{ent1} is 
\begin{equation}\label{entrop}
\frac{1}{N}\bb E_{\mu_N}^{H}\Big[\log\radonN\,\textbf{1}_{B^H_{\delta, \eps}}\Big]+o_N(1)\,,
\end{equation}
for all $\delta>0$ and each   small enough $\eps=\eps(\delta)$. 
Applying the expression \eqref{radon J} for the Radon-Nikodym derivative,  $\frac{1}{N}\log\radonN$ on the  set $B^H_{\delta, \eps}$ is equal to 
\begin{equation*}
\hat{J}_H\big((\pi^N*\iota_\gamma^{\textrm{s}})*\iota_\eps\big)
 + O_{H,T,\eps,\gamma}(\pfrac{1}{N})+O(\delta)+O_H(\eps)+O_H(\pfrac{\gamma}{\eps})\,,
\end{equation*}
for all $\delta>0$ and all $\eps$ and $\gamma$ small enough. Since this expression is bounded and the probability of 
$(B^H_{\delta, \eps})^\complement$ with respect to $\bb P_{\nu_\alpha^N}^{H}$ vanishes as $N$ increases to infinity, 
the expression \eqref{entrop} becomes
\begin{equation*}
\bb E_{\mu_N}^{H}\Big[\hat{J}_H\big((\pi^N*\iota_\gamma^{\textrm{s}})*\iota_\eps\big)\Big]
 + O_{H,T,\eps,\gamma}(\pfrac{1}{N})+O(\delta)+O_H(\eps)+O_H(\pfrac{\gamma}{\eps})+o_N(1)\,,
\end{equation*}
for all $\delta>0$ and all $\eps$ and $\gamma$ small enough. For fixed $\eps$ and $\gamma$, the map
 $\rho\mapsto \hat{J}_H\big((\rho*\iota_\gamma^{\textrm{s}})*\iota_\eps\big)$ is continuous with respect to the Skorohod topology, see the Proposition \ref{cont}. Moreover, by Proposition \ref{hid asy} the sequence  $\bb Q_{\mu_N}^{H}$ converges weakly to the probability concentrated on the weak solution of 
\eqref{edpasy}. In particular, as $N$ increases to infinity, the previous expectation converges to
\begin{equation*}
\hat{J}_H\big((\rho^H*\iota_\gamma^{\textrm{s}})*\iota_\eps\big)
 +O(\delta)+O_H(\eps)+O_H(\pfrac{\gamma}{\eps})\,.
\end{equation*}
Letting $\gamma\downarrow 0$, then taking $\eps\downarrow 0$, finally $\delta\downarrow 0$ and then invoking Lemma \ref{charact_H_suave} concludes the proof.
\end{proof}

Since  weak solutions of \eqref{edpasy} for some $H$ implies the special representation \eqref{charact_H_suave_eq} for the rate functional, it is natural to study in what conditions a profile $\rho$ can be written as a solution of \eqref{edpasy}. This is the content of the next proposition. Notice that the first equation in \eqref{eliptic} ahead is nothing else than the partial differential equation \eqref{edpasy} rearranged.

\begin{proposition}\label{eliptic lemma}
Let  $\rho\in \C$  such that
 $0<\eps\leq \rho \leq 1-\eps$,
 for some $\eps>0$.  Then, there exists a unique  (strong) solution $H\in \C$ 
    of the elliptic equation 
   
\begin{equation}\label{eliptic}
\begin{cases}
\vspace{0.3cm}
 \; \p_u^2 H_t(u)\,+ \,\pfrac{\p_u\big(\chi(\rho_t(u))\big)}{\chi(\rho_t(u))}\,\p_u H_t(u)\, = \,\pfrac{\Delta \rho_t(u)\,-\, \partial_t \rho_t(u) }{2\,\chi(\rho_t(u))} \,,\,  \forall u\in(0,1) \\
\vspace{0.3cm}

\; \p_u H_t (0)=\pfrac{1 }{2\,\chi(\rho_t(0))}\big[B e^{\delta H_t(0)}- Ce^{-\delta H_t(0)}
+\p_u\rho_t(0)\big]\\
\vspace{0.3cm}

\; \p_u H_t (1)= \pfrac{1 }{2\,\chi(\rho_t(1))}\big[Be^{\delta H_t(0)}- Ce^{-\delta H_t(0)}
+\p_u\rho_t(1)\big]\\
\vspace{0.2cm}

 \; H_t(0) \,=\, 0\\
\end{cases}
\end{equation}
  where
$ B= B(\rho_t)=\rho_t(1)(1-\rho_t(0))$ and $C=C(\rho_t)=\rho_t(0)(1-\rho_t(1))$, for all $t\in[0,T]$. Above we are denoting $0=0^+$ and $1=0^-$. 
\end{proposition}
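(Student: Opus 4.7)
The plan is to treat \eqref{eliptic} as a two-point boundary value problem on $[0,1]$ for each fixed $t\in[0,T]$, reducing existence and uniqueness to a one-dimensional fixed-point equation. First I would put the ODE into divergence form: multiplying through by $\chi(\rho_t(u))$ gives
\begin{equation*}
\partial_u\big[\chi(\rho_t(u))\,\partial_u H_t(u)\big]\;=\;\tfrac{1}{2}\big[\Delta\rho_t(u)-\partial_t\rho_t(u)\big].
\end{equation*}
Integrating from $0$ to $u$ and substituting the prescribed Neumann condition at $u=0$ yields an explicit expression for $\partial_u H_t(u)$ depending on the single unknown scalar $v:=H_t(1)$, via the identification $\delta H_t(0)=H_t(0)-H_t(1)=-v$ (recall $H_t(0)=0$).

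A second integration from $0$ to $u$ using $H_t(0)=0$ produces $H_t(u)$ as an explicit affine--exponential function of $v$. Imposing the consistency $H_t(1)=v$ reduces the whole problem to the scalar equation
\begin{equation*}
F_t(v)\;:=\;v\,-\,A_t\,B\,e^{-v}\,+\,A_t\,C\,e^{v}\,-\,K_t\;=\;0,
\end{equation*}
where $A_t:=\int_0^1 (2\chi(\rho_t(u)))^{-1}\,du>0$ and $K_t$ is a constant assembled from explicit integrals of $\partial_u\rho_t/\chi(\rho_t)$ and $\partial_t\rho_t/\chi(\rho_t)$. The hypothesis $\eps\le \rho\le 1-\eps$ ensures $A_t,B,C>0$, whence $F_t'(v)=1+A_tBe^{-v}+A_tCe^{v}>0$ and $F_t(v)\to\pm\infty$ as $v\to\pm\infty$. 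The intermediate value theorem then delivers a unique root $v^{\ast}(t)$, and $H_t$ is recovered from the integral formulas.

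The step I expect to be the main obstacle is the verification that the Neumann condition at $u=1$ is automatically satisfied by the $H_t$ just constructed. Integrating the divergence form over $(0,1)$ and comparing with the two prescribed Neumann conditions, one sees that the two boundary terms match precisely up to $\tfrac{1}{2}\int_0^1 \partial_t\rho_t(u)\,du$; the right boundary condition is therefore consistent with the left one if and only if the compatibility identity $\int_0^1\partial_t\rho_t(u)\,du=0$ (conservation of total mass on $\bb T$) holds. This must either be imposed on $\rho$ in the intended application or inserted as an explicit hypothesis, since otherwise the Neumann--Neumann problem is overdetermined.

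Finally, for the regularity $H\in\C$, I would invoke the implicit function theorem: since $F_t'(v^{\ast}(t))>0$ and the coefficients of $F_t$ depend $C^1$-smoothly on $t$ through $\rho_t\in\C$, we get $v^{\ast}\in C^1([0,T])$. The explicit integral representation then transfers $C^{1,2}$ regularity from $\rho$ to $H_t(u)$ on $[0,T]\times[0,1]$, and the normalization $H_t(0)=0$ supplies the right-continuity at the origin of the torus. Uniqueness follows from the uniqueness of $v^{\ast}(t)$ at each $t$ together with the uniqueness of solutions to the initial value problem once $\partial_u H_t(0)$ is fixed.
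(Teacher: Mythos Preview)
Your approach is essentially the same as the paper's: both reduce the problem, for each fixed $t$, to a single transcendental equation in the unknown $v=H_t(1)$ (the paper writes this as $z=(Be^{-z}-Ce^{z})\alpha+A$ with $\alpha=\int_0^1(2\chi(\rho_t))^{-1}\,du$ and $A$ the remaining integral term), prove uniqueness of the root via strict monotonicity, and then recover $H_t$ by the explicit integral formula. The paper is terser, writing down the formula for $H_t$ and leaving ``it can be directly checked'' for the verification of the boundary conditions and the regularity in $t$.

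You go further than the paper in two respects. First, your use of the implicit function theorem to upgrade $t\mapsto v^\ast(t)$ to $C^1$ is a detail the paper omits entirely. Second, and more importantly, you correctly isolate the compatibility condition $\partial_t\int_0^1\rho_t(u)\,du=0$ needed for the Neumann data at $u=1$ to be consistent with the one at $u=0$ after integrating the divergence form. The paper's explicit formula tacitly requires this too (evaluate its $\partial_u H_t$ at $u=1$ and compare with the prescribed boundary condition), but the paper does not mention it. In the intended application the profiles $\rho$ arise from the exclusion process, where total mass is conserved, so the condition is automatically met; but as the proposition is stated, your observation that mass conservation should be added as a hypothesis is well taken.
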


\begin{proof} For fixed time, the first equation in \eqref{eliptic} is a linear second order  ordinary differential equation  in $H$. The only work is to adjust the solution to satisfy the boundary conditions. 
Let $z_0\in \bb R$  be the unique solution of the transcendental equation
$ z\;=\; (Be^{-z}-Ce^z)\,\alpha+A$, where 
\begin{equation*}
 \alpha=\alpha(\rho_t):=\int_0^1\frac{1 }{2\,\chi(\rho_t(v))}\,dv\,,
\end{equation*}
\begin{equation*}
A=A(\rho_t):=\int_0^1\frac{\p_u\rho_t(v)-\p_t\int_0^v\rho_t(w)\,dw}{2\,\chi(\rho_t(v))}\,dv\,,
\end{equation*}
and $B>0$ and $C>0$ are those ones in the statement of the proposition.
Let
\begin{equation*}
 H_t(u):=(Be^{-z_0}-Ce^{z_0})\int_0^u\frac{1 }{2\chi(\rho_t(v))}dv+
\int_0^u\frac{\p_u\rho_t(v)-\p_t\int_0^v\rho_t(w)\,dw}{2\chi(\rho_t(v))}\,dv,
\end{equation*}
for all $t\in[0,T]$. It can be directly checked that $H$ is the solution of  \eqref{eliptic}.
\end{proof}
Recalling the definition of $\DMS$ given in the Theorem \ref{t03} and the definition of $\DME$, Proposition \ref{eliptic lemma} can be resumed as:
\begin{corollary}\label{coro} The set
\begin{equation*}
\DMS \cap \{\pi\in \DM\;;\; \pi_t(du)=\rho_t(u)du\,, \;\textrm{ with } \eps\leq \rho\leq 1-\eps \;\textrm{ for some } \eps>0\}
\end{equation*}
is contained in $\DME$.
 \end{corollary}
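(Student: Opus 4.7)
The plan is essentially to reduce the statement to Proposition \ref{eliptic lemma}. Fix $\pi \in \DMS$ with $\pi_t(du) = \rho_t(u)\,du$, $\rho \in \C$, and $\eps \leq \rho \leq 1-\eps$ for some $\eps > 0$. First I would apply Proposition \ref{eliptic lemma} to this $\rho$ to obtain some $H \in \C$ satisfying the elliptic system \eqref{eliptic} pointwise. Note that the hypothesis $\eps \leq \rho \leq 1-\eps$ enters here through the condition $\chi(\rho) \geq \eps(1-\eps) > 0$, which is required to make sense of the coefficient $\p_u(\chi(\rho))/\chi(\rho)$ and to divide by $2\chi(\rho_t)$ at the boundary.

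Next I would verify the key algebraic point, already noted in the paper just before Proposition \ref{eliptic lemma}: the elliptic system \eqref{eliptic} is nothing other than \eqref{edpasy} rearranged. Expanding
\begin{equation*}
\partial_u\bigl(\chi(\rho_t)\,\partial_u H_t\bigr) \,=\, \p_u(\chi(\rho_t))\,\p_u H_t \,+\, \chi(\rho_t)\,\p_u^2 H_t
\end{equation*}
in the bulk equation $\p_t\rho = \Delta\rho - 2\p_u(\chi(\rho)\p_u H)$ and solving algebraically for $\p_u^2 H_t$ yields precisely the first line of \eqref{eliptic}. Solving the Robin conditions of \eqref{edpasy} for $\p_u H_t(0^\pm)$, after identifying $0^+ \leftrightarrow 0$ and $0^- \leftrightarrow 1$ under the opening of the torus $\bb T \setminus \{0\} \cong (0,1)$, gives the boundary lines of \eqref{eliptic} with $B = \rho_t(0^-)(1-\rho_t(0^+))$ and $C = \rho_t(0^+)(1-\rho_t(0^-))$, matching the definition of $\varphi_t(\rho,H)$ in \eqref{varphi}. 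Consequently $\rho$ is a classical solution of \eqref{edpasy} driven by this $H$.

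To conclude I would invoke the remark following Definition \ref{6.1.1}, which states that any classical solution of \eqref{edpasy} is automatically a weak solution. Combined with the standing assumption of uniqueness of weak solutions of \eqref{edpasy}, this gives $\rho = \rho^H$, hence $\pi \in \DME$, which is the claim.

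There is no substantive obstacle in this argument: all of the analytic content is already packaged in Proposition \ref{eliptic lemma} (existence of $H$ via the transcendental equation for $z_0$), and the rest is routine. The only mildly delicate step is purely bookkeeping, namely keeping track of the correspondence between the two sides of the slow bond on the torus and the two endpoints of the interval $[0,1]$, so that the coefficients $B$ and $C$ line up with the nonlinearity $\varphi_t(\rho,H)$ as written in \eqref{varphi}. Once the identifications $0^+ = 0$ and $0^- = 1$ are fixed, the verification that \eqref{eliptic} is equivalent to \eqref{edpasy} is a one-line computation.
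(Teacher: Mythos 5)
Your argument is correct and is essentially the paper's own: the corollary is stated there as an immediate restatement of Proposition \ref{eliptic lemma}, and the details you supply (the algebraic rearrangement of \eqref{edpasy} into \eqref{eliptic} under the identification $0^+=0$, $0^-=1$, the matching of $B$, $C$ with $\varphi_t(\rho,H)$, and the passage from classical to weak solution plus uniqueness) are exactly the bookkeeping the paper leaves implicit.
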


Despite not convex in general, the rate functional $I$ obtained in our model is convex in some sense.  This is subject of the next proposition, to be used in a density argument.

\begin{proposition}\label{convex}
Let $\rho,\lambda\in\DM$ with $ I(\rho)$ and $I(\lambda)$  finite such that 
 $\big(\rho_t(0^+)-\lambda_t(0^+)\big)\big(\rho_t(0^-)-\lambda_t(0^-)\big)\geq 0$, 
almost surely in $t\in[0,T]$. Then, for $\theta\in [0,1]$,
\begin{equation}\label{eq_convex}
 I(\theta\rho+(1-\theta)\lambda)\;\leq\;\theta\, I(\rho)+(1-\theta)\,I(\lambda)\,.
\end{equation}
\end{proposition}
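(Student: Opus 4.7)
Writing $\mu = \theta\rho + (1-\theta)\lambda$, the plan is to establish the inequality $J_H(\mu) \le \theta J_H(\rho) + (1-\theta) J_H(\lambda)$ for every $H \in \C$ and then take the supremum over $H$. First I would handle the finite-energy issue: from Definition \ref{energy}, each $\mc E_H$ is affine in $\pi$ (the term $-2\<\!\<H,H\>\!\>$ does not depend on $\pi$, and $\<\!\<\partial_u H,\rho\>\!\>$ is linear in $\rho$), so $\mc E$, being a supremum of affine functionals, is convex on $\DMO$. Since $\DMO$ is itself convex and both $I(\rho), I(\lambda) < \infty$ force $\rho,\lambda \in \DMO$ with finite energy, we obtain $\mc E(\mu) \le \theta\mc E(\rho)+(1-\theta)\mc E(\lambda) < \infty$, so in Definition \ref{J def} the value $J_H(\mu) = \hat J_H(\mu)$ is the relevant one and the trace values $\mu_t(0^\pm) = \theta\rho_t(0^\pm) + (1-\theta)\lambda_t(0^\pm)$ make sense.

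Next, I would decompose $\hat J_H = \ell_H - \Phi_H$ and note that $\ell_H$ is linear in $\rho$, so the inequality reduces to showing that $\Phi_H$ is concave in $\rho$ under the sign hypothesis. The bulk piece $\int_0^T \<\chi(\rho_t),(\partial_u H_t)^2\>\,dt$ is concave because $\chi(u)=u(1-u)$ is concave and $(\partial_u H_t)^2 \ge 0$. The delicate part are the two boundary contributions. For the first one I use the elementary identity, valid for all $(x_1,y_1),(x_2,y_2) \in \bb R^2$ and $\theta \in [0,1]$,
\begin{equation*}
\theta\, x_1(1-y_1) + (1-\theta)\,x_2(1-y_2) - (\theta x_1+(1-\theta)x_2)\bigl(1-\theta y_1-(1-\theta)y_2\bigr) = -\theta(1-\theta)(x_1-x_2)(y_1-y_2).
\end{equation*}
Applied with $x_i = \rho_t(0^-),\lambda_t(0^-)$ and $y_i = \rho_t(0^+),\lambda_t(0^+)$, together with $\psi(\delta H_t(0)) \ge 0$ and the hypothesis $(\rho_t(0^+)-\lambda_t(0^+))(\rho_t(0^-)-\lambda_t(0^-))\ge 0$, this shows concavity of the first boundary term. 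The symmetric identity with $x \leftrightarrow y$ handles the second boundary term in $\Phi_H$ analogously. Hence $\Phi_H$ is concave in $\rho$, so $\hat J_H = \ell_H - \Phi_H$ is convex in $\pi$.

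Finally, taking the supremum over $H \in \C$ preserves convexity:
\begin{equation*}
I(\mu) = \sup_H \hat J_H(\mu) \le \sup_H \bigl\{\theta \hat J_H(\rho) + (1-\theta)\hat J_H(\lambda)\bigr\} \le \theta I(\rho) + (1-\theta) I(\lambda),
\end{equation*}
which is exactly \eqref{eq_convex}. The one conceptual point requiring care is the bilinear sign condition on the boundary; indeed, the identity above shows that without any assumption on the relative ordering of $\rho_t(0^\pm)$ and $\lambda_t(0^\pm)$, the boundary terms in $\Phi_H$ are genuinely bilinear and hence not concave, so convexity of $I$ can fail on pairs $(\rho,\lambda)$ violating the sign hypothesis. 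This is precisely why the statement is formulated as convexity ``in a specific sense'' rather than as unconditional convexity.
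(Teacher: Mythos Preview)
Your proof is correct and follows essentially the same route as the paper: both arguments reduce to showing that $\hat J_H$ is convex in $\pi$ for each fixed $H$ by isolating the bilinear boundary contribution and using the sign hypothesis, then taking the supremum over $H$. The paper rewrites $\hat J_H$ to extract the term $\rho_t(0^-)\rho_t(0^+)$ and invokes convexity of $f(x,y)=xy$ along lines of nonnegative slope, whereas you work directly with the $x(1-y)$ form in $\Phi_H$ via the explicit identity; these are algebraically equivalent. Your treatment is slightly more careful in one respect: you verify explicitly that $\mc E(\theta\rho+(1-\theta)\lambda)<\infty$ (so that $J_H=\hat J_H$ on the convex combination), a point the paper leaves implicit.
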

\begin{proof}
Let $\theta\in [0,1]$.
We claim that  
\begin{equation}\label{667}
 \hat{J}_H(\theta\rho+(1-\theta)\lambda)\;\leq\;\theta \hat{J}_H(\rho)+(1-\theta)\hat{J}_H(\lambda)\,,
\end{equation}
for any $H\in\C$. Recall that $\hat{J}_H(\rho)$ is the sum of  linear part in $\rho$, namely
\begin{equation*}
\begin{split}
& \ell_H(\rho)  -\int_0^T\Big\{\rho_t(0^-)\, \psi(\delta H_t(0))\,
+\,\rho_t( 0^+) \,\psi\big(-\delta H_t(0)\big)\Big\}\,dt\,,
\end{split}
\end{equation*}
plus a convex part in $\rho$, namely $-\int_0^T\<\chi(\rho_t),(\partial_u H_t)^2\>\,dt$, and 
\begin{equation}\label{67}
\begin{split}
& \int_0^T\rho_t(0^-)\, \rho_t( 0^+)\,\Big\{\psi(\delta H_t(0))+\psi\big(-\delta H_t(0)\big)\Big\}\,dt\,,
\end{split}
\end{equation}
wherefore we only need to care about this last term. 
Since $\psi(x)=e^x-x-1\geq 0$, we have that  $\psi(\delta H_t(0))+\psi(-\delta H_t(0))\geq 0$. 
Let $f: \bb R^2\to \bb R$ be the function defined by $f(x,y)=xy$. If $(x_1,y_1)$ and $(x_2,y_2)$ are two points of $\bb R^2$ such that $(x_2-x_1)(y_2-y_1)\geq 0$, then
\begin{equation}\label{68}
f\Big(\theta (x_1,y_1) +(1-\theta) (x_2,y_2) \Big)\;\leq \; \theta f(x_1,y_1) +(1-\theta) f(x_2,y_2)\,.
\end{equation} 
To see this, just note that $f$ is convex along lines of the form $y=ax+b$, provided $a>0$.
The inequality \eqref{68} applied to \eqref{67} permits to conclude the inequality \eqref{667}, which in his hand leads to \eqref{eq_convex}.
\end{proof}

\begin{proposition}\label{1aprox}
Let $\pi\in\DM$ with $I(\pi)<\infty$.  There exists a sequence
 $\{\pi^\eps\}_{\eps>0}$  in $\DMO$
such that 
 $\pi^\eps$ converges to $\pi$ in $\DM$ and  
$\pi^\eps_t(du)=\rho^\eps_t(u)\,du$ with $\eps\leq \rho^\eps_t(u)\leq 1-\eps$. Moreover, $
\varlimsup_{\eps\downarrow 0}I(\pi^\eps)\leq I(\pi)$.
\end{proposition}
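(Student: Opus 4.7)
The plan is to build $\pi^\eps$ as a two-step convex combination of $\pi$ with the extremal constant profiles $\lambda\equiv 0$ and $\lambda\equiv 1$. First I would set $\rho^{(1),\eps}_t(u):=(1-2\eps)\rho_t(u)+2\eps$, a convex combination of $\rho$ with the constant $1$, and then $\rho^\eps_t(u):=(1-2\eps)\rho^{(1),\eps}_t(u)=(1-2\eps)^2\rho_t(u)+2\eps(1-2\eps)$, a convex combination of $\rho^{(1),\eps}$ with the constant $0$. Since $0\le\rho_t(u)\le 1$ (which, together with $\pi\in\DM$, follows from $I(\pi)<\infty$: a density exceeding $1$ on a positive-measure set would make some $\chi(\rho)<0$ and $\sup_H J_H=+\infty$), an elementary bound gives $\rho^\eps_t(u)\in[2\eps(1-2\eps),\,1-2\eps]$, so for $\eps\le 1/4$ one has $\eps\le\rho^\eps\le 1-\eps$ and $\pi^\eps\in\DMO$. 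The uniform convergence $\rho^\eps\to\rho$ is immediate from the explicit formula and yields $\pi^\eps\to\pi$ in $\DM$.

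The cornerstone is the identity $I(\lambda)=0$ for both $\lambda\equiv 0$ and $\lambda\equiv 1$. For $\lambda\equiv 0$ every summand in $\ell_H(\lambda)$ and $\Phi_H(\lambda)$ is manifestly zero because $\chi(0)=0$, the factors $\lambda(0^\pm)(1-\lambda(0^\mp))$ vanish, and all pairings $\<0,\cdot\>$ are zero. For $\lambda\equiv 1$ the analogous check relies on mass conservation $\int_{\bb T}(\partial_tH_t)\,du$ cancelling $\<1,H_T\>-\<1,H_0\>$ in $\ell_H$, and on integration by parts $\int_{\bb T}\Delta H_t\,du=\partial_uH_t(0^-)-\partial_uH_t(0^+)$ cancelling the boundary integral already present in $\ell_H$; meanwhile $\chi(1)=0$ and $1-\lambda(0^\pm)=0$ kill every term in $\Phi_H$. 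Hence $\hat{J}_H(\lambda)\equiv 0$ in both cases and the two anchors are cost-free.

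With $I(0)=I(1)=0$ at hand, I would apply Proposition \ref{convex} twice. In the first step, with anchor $\lambda\equiv 1$, the sign condition $(\rho_t(0^+)-1)(\rho_t(0^-)-1)\ge 0$ holds automatically since both factors are nonpositive, which gives $I(\rho^{(1),\eps})\le(1-2\eps)I(\rho)+2\eps\cdot 0<\infty$. In the second step, with anchor $\lambda\equiv 0$, the sign condition $\rho^{(1),\eps}_t(0^+)\,\rho^{(1),\eps}_t(0^-)\ge 0$ holds because each factor is at least $2\eps>0$, which yields $I(\rho^\eps)\le(1-2\eps)I(\rho^{(1),\eps})+2\eps\cdot 0\le(1-2\eps)^2\,I(\rho)$. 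Sending $\eps\downarrow 0$ gives the desired $\varlimsup_{\eps\downarrow 0}I(\pi^\eps)\le I(\pi)$.

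The main obstacle that forces this two-step construction is the sign hypothesis in Proposition \ref{convex}: the naive single choice $(1-2\eps)\rho+\eps$ corresponds to anchoring at $\lambda\equiv 1/2$, and the required inequality $(\rho_t(0^+)-1/2)(\rho_t(0^-)-1/2)\ge 0$ fails exactly on the set of times where the side limits of $\rho$ at the slow bond straddle $1/2$, a set that in general has positive measure and cannot be excluded a priori. Using the extremal anchors $0$ and $1$ instead forces both factors in the sign condition to share a sign automatically, regardless of where the jump of $\rho$ at the origin sits, at the negligible cost of replacing the contraction factor $(1-2\eps)$ by its square.
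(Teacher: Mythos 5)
Your proof is correct and follows essentially the same route as the paper: interpolate $\rho$ with the constant profiles $\tilde 0$ and $\tilde 1$ (both of which have zero rate function) and invoke Proposition \ref{convex}. Your version is in fact more careful than the paper's one-line argument, which applies the two-profile convexity statement directly to the three-way combination $\eps\tilde 1+(1-2\eps)\rho+\eps\tilde 0$; your explicit two-step iteration, with the sign condition checked against the anchors $1$ and $0$ at each stage and the verification that $I(\tilde 0)=I(\tilde 1)=0$, is exactly what is needed to make that step rigorous (your profile $(1-2\eps)^2\rho+2\eps(1-2\eps)$ differs harmlessly from the paper's $(1-2\eps)\rho+\eps$, which could also be reached by a two-step interpolation with weights $\theta=(1-2\eps)/(1-\eps)$ and $\theta'=1-\eps$).
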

\begin{proof}
 Let $\pi\in\DM$ with $I(\pi)<\infty$, then $\pi_t(du)=\rho_t(u)\,du$ and $0\leq \rho\leq 1$. 
Consider $\tilde{1}(t,u)=1$ and $\tilde{0}(t,u)=0$, for all $t\in[0,T]$ and $u\in \bb T$.
Define $\rho^\eps=\eps\tilde{1}+(1-2\eps)\rho+\eps\tilde{0}$ and $\pi^\eps_t(du)=\rho^\eps_t(u)du$.
By Lemma \ref{convex}, $I(\pi^\eps)\leq\eps I(\tilde{1})+(1-2\eps)I(\rho)+\eps I(\tilde{0})$. Hence
$\varlimsup_{\eps\downarrow 0}I(\pi^\eps)\leq I(\pi)$. 

\end{proof}

We are in position to prove the lower bound for smooth profiles.
\begin{proof}[Proof of the Theorem \ref{t03}, item (ii).]
Fix $\pi\in \DMS\cap \mc O$  and consider the sequence $\pi^\eps_t(du)=\rho^\eps_t(u)du$, where $\rho^\eps_t(u)=\eps+ (1-2\eps)\rho_t(u)$, as in the proof of the  Proposition \ref{1aprox}.  That is, such that $\eps<\rho^\eps<1-\eps$ with $\rho^\eps\in\C$. By Corollary \ref{coro} and since $\mc O$ is open,  we have that $\pi^\eps\in  \DME\cap \mc O$ for small enough $\eps>0$.

 By Proposition \ref{lower subset}, 
\begin{equation*}
 \varliminf_{N\to\infty}
\frac{1}{N}\log\bb Q_{\mu_N}[\,\mc O\,]\;\geq\; -\inf_{\lambda\in \mc O\cap \DME }I(\lambda)\;\geq\; -I(\pi^\eps)\,.
\end{equation*}
Taking the limit infimum in the right hand side of inequality above and using the Lemma \ref{1aprox}, we get 
\begin{equation*}
 \varliminf_{N\to\infty}
\frac{1}{N}\log\bb Q_{\mu_N}[\,\mc O\,]
\;\geq\;   -\varlimsup_{\eps\to 0}I(\pi^\eps)\;\geq\;  -I(\pi)\,.
\end{equation*}
Since $\pi$ is an arbitrary  trajectory on the set $\mc O\cap \mc D^{\mc S}_{ \mc M_0}$, we can optimize over all elements in this set, obtaining therefore
\begin{equation*}
 \varliminf_{N\to\infty}
\frac{1}{N}\log\bb Q_{\mu_N}[\,\mc O\,]
\;\geq\; \sup_{\pi\in \mc O\cap \mc D^{\mc S}_{ \mc M_0}} -I(\pi)\;=\; -\inf_{\pi\in \mc O\cap \mc D^{\mc S}_{ \mc M_0}} I(\pi)\,,
\end{equation*}
which finishes the proof.
\end{proof}

\appendix
\section{Uniqueness of strong solutions}\label{unique asy}
As aforementioned, we have assumed uniqueness of weak solutions of \eqref{edpasy}, a delicate problem in the area of partial differential equations for which we have no argument. In this appendix we present uniqueness of strong solutions of \eqref{edpasy}.
\begin{theorem}
Let $\rho_0:\bb R\to [0,1]$ be measurable profile. Then, there exists at most one strong solution of the partial differential equation \eqref{edpasy}.
\end{theorem}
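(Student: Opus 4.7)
The plan is a classical energy estimate. Let $\rho^1,\rho^2$ be two strong solutions of \eqref{edpasy} with the same initial profile $\gamma$, and put $w=\rho^1-\rho^2$. Opening the torus at $0$ so that the relevant interval is $(0,1)$ with boundary traces $w(0^+)$ and $w(0^-)$, the difference $w$ satisfies, pointwise,
\begin{equation*}
\partial_t w \;=\; \Delta w \;-\; 2\,\partial_u\!\big[(\chi(\rho^1)-\chi(\rho^2))\,\partial_u H\big],
\end{equation*}
together with the boundary conditions $\partial_u w(0^\pm)=2[\chi(\rho^1(0^\pm))-\chi(\rho^2(0^\pm))]\,\partial_u H(0^\pm) - (\varphi_t(\rho^1,H)-\varphi_t(\rho^2,H))$. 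The scheme is: multiply by $w$, integrate on $(0,1)$, integrate by parts twice, and carefully cancel the four surface terms at $0^\pm$ against the Robin boundary conditions.

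The computation gives
\begin{equation*}
\tfrac12\tfrac{d}{dt}\|w\|_{L^2}^2 \;=\; -\!\int_0^1 (\partial_u w)^2\,du \;+\; 2\!\int_0^1 \partial_u w\,(\chi(\rho^1)-\chi(\rho^2))\,\partial_u H\,du \;+\; \Sigma_{\partial},
\end{equation*}
where $\Sigma_\partial$ denotes the boundary contribution. Substituting the Robin conditions into $[w\,\partial_u w]_{0^+}^{0^-}-2[w(\chi(\rho^1)-\chi(\rho^2))\partial_u H]_{0^+}^{0^-}$, the terms proportional to $\partial_u H(0^\pm)$ cancel exactly and one is left with $\Sigma_\partial = \big(w(0^+)-w(0^-)\big)\,(\varphi_t(\rho^1,H)-\varphi_t(\rho^2,H))$. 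Since $\varphi_t(\cdot,H)$ is bilinear in the traces with coefficients bounded by $e^{\|\delta H\|_\infty}$, and since $\rho^i(0^\pm)\in[0,1]$, a direct expansion yields $|\Sigma_\partial|\le C(H)\,(w(0^+)^2+w(0^-)^2)$. For the interior drift term, $|\chi'|\le 1$ gives $|\chi(\rho^1)-\chi(\rho^2)|\le|w|$, so by Young's inequality it is bounded by $\tfrac14\|\partial_u w\|_{L^2}^2 + 4\|\partial_u H\|_\infty^2\|w\|_{L^2}^2$.

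The remaining ingredient is the one-dimensional trace inequality: for every $\xi\in H^1(0,1)$ and every $\varepsilon>0$,
\begin{equation*}
\xi(0)^2+\xi(1)^2 \;\le\; \varepsilon\,\|\partial_u \xi\|_{L^2}^2 \;+\; C_\varepsilon\,\|\xi\|_{L^2}^2,
\end{equation*}
obtained from $\xi(x_0)^2=\xi(x)^2+\int_x^{x_0}2\xi\partial_u\xi\,dy$ followed by integration in $x$ and Cauchy–Schwarz. Applying this to $w$ with $\varepsilon$ chosen so small that $C(H)\varepsilon + \tfrac14 < 1$, the boundary term and the drift term are both absorbed into $\int_0^1(\partial_u w)^2$, leaving
\begin{equation*}
\tfrac{d}{dt}\|w(t)\|_{L^2}^2 \;\le\; K(H)\,\|w(t)\|_{L^2}^2.
\end{equation*}
Since $w(0,\cdot)\equiv 0$, Gronwall's lemma yields $w\equiv 0$, hence uniqueness.

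The main obstacle I anticipate is the algebraic bookkeeping at step two: verifying that, after substituting the Robin conditions, the four surface contributions produced by integration by parts really collapse to the single clean expression $(w(0^+)-w(0^-))(\varphi_t(\rho^1,H)-\varphi_t(\rho^2,H))$, with no leftover cross terms involving $\partial_u H(0^\pm)$. Once that identity is in hand, everything else is a routine $H^1$ absorption using the 1D trace inequality; crucially, the exponential nonlinearity in $\varphi$ affects only the constant $C(H)$ because the traces $\rho^i(0^\pm)$ live in the compact interval $[0,1]$, so no maximum-principle argument is required.
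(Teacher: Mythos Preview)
Your proof is correct and follows essentially the same approach as the paper: an $L^2$ energy estimate on the difference, integration by parts, the one-dimensional trace inequality to absorb the boundary contributions into $\|\partial_u w\|_{L^2}^2$, and Gronwall. The only cosmetic difference is that the paper casts the problem abstractly as $\partial_t u=\partial_x^2 u+F(t,x,u,\partial_x u)$ with nonlinear Neumann data $H_0,H_1$ and bounds everything via Lipschitz estimates on $F,H_0,H_1$, whereas you work with the concrete PDE and explicitly verify that the $\partial_u H(0^\pm)$ surface terms cancel, leaving $\Sigma_\partial=(w(0^+)-w(0^-))(\varphi_t(\rho^1,H)-\varphi_t(\rho^2,H))$; this identity is indeed correct, so your anticipated obstacle is not one.
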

\begin{proof} 
We will describe a general situation that includes the PDE \eqref{edpasy}. Let $u_1$ and $u_2$ two strong  solutions of
\begin{equation*}
\begin{cases}
\; \p_t u =\p_x^2 u + F(t,x,u,\p_x u)\\
\;  u(0,x)=\bar{u}(x)\\
\;  \p_x u(0)=H_0(t,x,u(0),u(1))\\
\;  \p_x u(1)=H_1(t,x,u(0),u(1))\\
\end{cases}
\end{equation*}
where $F$, $H_0$, $H_1$ are smooth functions.
Let $v=u_1-u_2$. Hence $v(0,x)=0$ and
$
\p_t v= \p_x^2 v + L$,
where 
\begin{equation*}
L\;=\;F(t,x,u_1,\p_x u_1)-F(t,x,u_2,\p_x u_2)\,.
\end{equation*}
By smoothness, there exists a constant $C>0$ such that hold the estimates
\begin{equation*}
\begin{split}
& |F(t,x,u_1,\p_x u_1)-F(t,x,u_2,\p_x u_2)|\;\leq\; C(|v|+|\p_x v|) \,,\\
&|H_i(t,x,u_1(0),u_1(1))-H_i(t,x,u_2(0),u_2(1))|\;\leq\; C(|v(0)|+|v(1)|)\,,\\
\end{split}
\end{equation*}
for $i=0,1$. 
An application of Young's inequality implies that, for all $\eps>0$, there exists $A(\eps)>0$ such that
\begin{equation}\label{eq29}
|v(0)|^2+|v(1)|^2 \;\leq\; \eps \int_0^1(\p_x v)^2\,dx+ A(\eps)\int_0^1 v^2\,dx\,,
\end{equation}
for any time $t>0$. Define $q(t)=\int_0^1v^2(t,x)\,dx$. Then
\begin{equation*}
\begin{split}
& q'(t)\;=\;2\int_0^1 v\,\p_t v\,dx  \;=\; 2 \int_0^1 v\,\p_x^2 v\, dx +2 \int_0^1 v\,L\, dx\\
&  = \; 2\,v(1)\,\p_xv(1)- 2\,v(0)\,\p_xv(0)-2\int_0^1 (\p_x v)^2\,dx +  2 \int_0^1 v\,L\, dx\,.
\end{split}
\end{equation*}
Thus, by previous estimates,
\begin{equation*}
\begin{split}
q'(t)\;\leq\; &   C_1 \Big((v(0))^2+|v(0)|\,|v(1)|+(v(1))^2\Big)-2\int_0^1 (\p_x v)^2\,dx\\
&+C_1\int_0^1 v^2\,dx+ C_1\int_0^1 |v|\,|\p_x v|\,dx\,.
\end{split}
\end{equation*}
Again by Young's Inequality,
\begin{equation*}
\begin{split}
q'(t)\;\leq\; &  -2\int_0^1 (\p_x v)^2\,dx + C_2 \Big((v(0))^2+(v(1))^2\Big)\\
&+C_2\int_0^1 v^2\,dx+ \beta \int_0^1 (\p_x v)^2\,dx\;.\\
\end{split}
\end{equation*}
where $\beta$ can be chosen small as necessary. Recalling \eqref{eq29} with small $\eps$ gives us
\begin{equation*}
q'(t)\;\leq\;  -\frac{1}{2}\int_0^1(\p_x v)^2\,dx + C_3 \int_0^1 v^2\,dx
\end{equation*}
implying $q'(t)\leq C_3\, q(t)$. Noticing that $q(0)=0$, Gronwall's inequality finishes the proof.
\end{proof}


%
%
%
%
%
%

\end{document}